\DeclareSymbolFont{bbold}{U}{bbold}{m}{n} 
\DeclareSymbolFontAlphabet{\mathbbold}{bbold}
\setlist[itemize]{align=parleft,left=1em..2em}
\numberwithin{equation}{section}
\newtheorem{Thm}[equation]{Theorem}
\newtheorem*{Thm*}{Theorem}
\newtheorem{Prop}[equation]{Proposition}
\newtheorem{Lem}[equation]{Lemma}
\newtheorem{Cor}[equation]{Corollary}
\theoremstyle{remark}
\newtheorem{Def}[equation]{Definition}
\newtheorem{Ter}[equation]{Terminology}
\newtheorem{Not}[equation]{Notation}
\newtheorem{Exa}[equation]{Example}
\crefname{Exa}{Example}{Examples}
\newtheorem{Hyp}[equation]{Hypothesis}
\newtheorem{Rem}[equation]{Remark}
\newtheorem{Que}[equation]{Question}
\newcommand{\nc}{\newcommand}
\nc{\dmo}{\DeclareMathOperator}
\nc{\Beren}[1]{{\color{MidnightBlue}#1}}
\nc{\Bout}[1]{\Beren{\sout{#1}}}
\newcommand*{\tikztwocircle}[2]{%
  \setbox0=\hbox{\strut}%
  \begin{tikzpicture}
    \useasboundingbox (0,0.0em) rectangle (0,0em);
\draw (0,-0.03) node {$\scriptscriptstyle{\color{#1}\LEFTCIRCLE}\kern-.80em{\color{#2}\RIGHTCIRCLE}$};
\draw (0,0.05) circle[radius=.32em];
  \end{tikzpicture}%
}
\newcommand*{\tikzbulletaux}[2]{%
  \setbox0=\hbox{\strut}%
  \begin{tikzpicture}
    \useasboundingbox (-.10em,0) rectangle (.10em,0);
    \filldraw[draw=#1,fill=#2] (0,0) circle[radius=.20em];
  \end{tikzpicture}%
}
\newcommand*{\tikzbullet}[1]{%
	\tikzbulletaux{black}{#1}
}
\renewcommand{\emptyset}{\varnothing}
\nc{\specializesto}{\rightsquigarrow}
\nc{\AK}{A_{\cat K}}
\nc{\RK}{R_{\cat K}}
\nc{\RT}{R_{\cat T}}
\nc{\Kp}{\cat K_{\mathfrak p}}
\nc{\KP}{\cat K_{\cat P}}
\nc{\unitation}[1]{#1_{\hspace{-0.15ex}\langle\unit\rangle}}
\nc{\concentration}[2]{#1_{\hspace{-0.15ex}\langle #2 \rangle}}
\nc{\Tconc}[1]{\concentration{\cat T}{#1}}
\nc{\SpcTconc}[1]{\Spc\hspace{-0.45ex}\big(\concentration{\cat T}{#1}^c\big)}
\nc{\Spcconc}[2]{\Spc\hspace{-0.45ex}\big(\concentration{#1}{#2}^c\big)}
\nc{\Spcunit}[1]{\Spc\hspace{-0.45ex}\big(\unitation{#1}^c\big)}
\nc{\SpcT}{\Spc(\cat T^c)}
\nc{\SpcS}{\Spc(\cat S^c)}
\nc{\cTc}{\cat T^c}
\nc{\cSc}{\cat S^c}
\nc{\cT}{\cat T}
\nc{\cS}{\cat S}
\nc{\altmathbb}[1]{\mathbbold{#1}}
\nc{\eh}{\altmathbb{e}_h}
\nc{\fh}{\altmathbb{f}_h}
\nc{\en}{\altmathbb{e}_n}
\nc{\fn}{\altmathbb{f}_n}
\nc{\eY}{\altmathbb{e}_Y}
\nc{\fY}{\altmathbb{f}_Y}
\nc{\tY}{\altmathbb{t}_Y}
\nc{\eZ}{\altmathbb{e}_Z}
\nc{\fZ}{\altmathbb{f}_Z}
\nc{\tZ}{\altmathbb{t}_Z}
\nc{\gP}{\altmathbb{g}_{\cat P}}
\nc{\gm}{\altmathbb{g}_{\mathfrak m}}
\nc{\SpecRK}{\Spec(R_{\cat K})}
\nc{\Pone}{\mathbb{P}^1_k}
\nc{\PS}{\mathbb{P}^1_{\hspace{-0.2ex}S}}
\nc{\frakm}{\mathfrak m}
\nc{\overbar}[1]{\mkern 1.5mu\overline{\mkern-1.5mu#1\mkern-1.5mu}\mkern 1.5mu}
\nc{\bbullet}{{\scriptscriptstyle\hspace{-1pt}\bullet}}
\nc{\bullett}{{\scriptscriptstyle\bullet}\hspace{-1pt}}
\nc{\LF}{L\hspace{-0.2ex}F}
\nc{\SpG}{\Sp^G}
\nc{\Prst}{{\cat P}\mathrm{r^{st}}}
\nc{\Mack}{\mathcal{M}ack}
\nc{\SC}{S\cat C}
\nc{\OX}{\cat O_{\hspace{-0.2ex}X}}
\nc{\OXx}{\cat O_{\hspace{-0.2ex}X,x}}
\nc{\OXX}{\OX\hspace{-0.1ex}(X)}
\nc{\OK}{\cat O_{\hspace{-0.075ex}\cat K}}
\nc{\OKP}{\cat O_{\hspace{-0.075ex}\cat K,\cat P}}
\nc{\OT}{\cat O_{\hspace{-0.075ex}\cat T}}
\nc{\OS}{\cat O_{\hspace{-0.075ex}\cat S}}
\renewcommand{\complement}{{\smash{\mathsf{c}}}}
\newcommand{\cc}{\complement}
\nc{\frakp}{{\mathfrak p}}
\nc{\frakq}{{\mathfrak q}}
\dmo{\Mot}{Mot}
\dmo{\GW}{GW}
\dmo{\StMod}{StMod}
\dmo{\stmod}{stmod}
\dmo{\Stab}{Stab}
\dmo{\Aff}{Aff}
\dmo{\Ext}{Ext}
\dmo{\gen}{gen}
\dmo{\Pic}{Pic}
\dmo{\DM}{DM}
\dmo{\DTM}{DTM}
\dmo{\DATM}{DATM}
\dmo{\DAM}{DAM}
\dmo{\DRep}{DRep}
\dmo{\DPerm}{DPerm}
\dmo{\Perm}{Perm}
\dmo{\Gal}{Gal}
\nc{\DMeff}{{\DM^{{eff}}}}
\nc{\DMeffkR}{\DMeff\hspace{-0.3ex}(k;R)}
\nc{\DMQ}{\DM_Q}
\nc{\Dbcoh}[1]{\Der^b\hspace{-0.2ex}(\mathrm{coh}(#1))}
\nc{\Dbmod}[1]{\Der^b\hspace{-0.2ex}(\mathrm{mod}(#1))}
\nc{\DbSmod}[1]{\Der_S^b\hspace{-0.2ex}(\mathrm{mod}(#1))}
\nc{\Dbkmod}[1]{\Der_{\{k\}}^b\hspace{-0.2ex}(\mathrm{mod}(#1))}
\nc{\Derdg}{\Der_{\mathrm{dg}}}
\dmo{\DerKal}{DMack}
\nc{\DMack}{\DerKal}
\dmo{\rep}{rep}
\dmo{\Rep}{Rep}
\dmo{\Inj}{Inj}
\dmo{\Der}{D}
\nc{\Derqc}{\Der_{\mathrm{qc}}}
\dmo{\DMot}{DMot}
\dmo{\rmH}{H}
\dmo{\piu}{\underline{\pi}}
\dmo{\Sphere}{\mathbb{S}}
\nc{\HA}{{\rmH \hspace{-0.2em}\bbA}}
\nc{\Hk}{{\rmH \hspace{-0.15em}k}}
\nc{\Hkbar}{{\rmH \hspace{-0.15em}\underline{k}}}
\nc{\fieldk}{\mathbb{k}}
\nc{\Nerve}{{\rm N}}
\nc{\dgNerve}{{\rm N}_{\rm dg}}
\nc{\NAb}{{\rm N} \hspace{-0.15em}\Ab}
\nc{\HR}{{\rmH \hspace{-0.15em}R}}
\nc{\HQ}{{\rmH \hspace{-0.15em}\bbQ}}
\nc{\HZ}{{\rmH \hspace{-0.15em}\bbZ}}
\nc{\HZp}{{\rmH \hspace{-0.15em}\bbZ_{(p)}}}
\nc{\HZbar}{{\rmH \hspace{-0.15em}\underline{\bbZ}}}
\nc{\Fp}{{\bbF_{\hspace{-0.1em}p}}}
\nc{\Ftwo}{{\bbF_{\hspace{-0.1em}2}}}
\nc{\HFp}{{\rmH \hspace{-0.15em}\bbF_{\hspace{-0.1em}p}}}
\nc{\HZG}{\rmH_{G,\mathbb{Z}}}
\nc{\HRG}{\rmH_{G,R}}
\nc{\DHZpG}{\Der(\rmH_{G,\mathbb{Z}_{(p)}})}
\nc{\DHZG}{\Der(\rmH_{G,\mathbb{Z}})}
\nc{\DHQG}{\Der(\rmH_{G,\mathbb{Q}})}
\nc{\DHZH}{\Der(\rmH_{H,\mathbb{Z}})}
\nc{\DHZGN}{\Der(\rmH_{G/N,\mathbb{Z}})}
\nc{\DHZGOp}{\Der(\rmH_{G/{O^p(G)},\mathbb{Z}})}
\nc{\DHZpGOp}{\Der(\rmH_{G/{O^p(G)},\mathbb{Z}_{(p)}})}
\nc{\DHZpGK}{\Der(\rmH_{G/K,\mathbb{Z}_{(p)}})}
\nc{\DHZGK}{\Der(\rmH_{G/K,\mathbb{Z}})}
\nc{\DHZpCp}{\Der(\rmH_{C_p,\mathbb{Z}_{(p)}})}
\nc{\DHZqCp}{\Der(\rmH_{C_p,\mathbb{Z}_{(q)}})}
\nc{\DHZCp}{\Der(\rmH_{C_p,\mathbb{Z}})}
\nc{\DHZ}{\Der(\HZ)}
\nc{\DHZp}{\Der(\HZ_{(p)})}
\nc{\Z}{\mathbb{Z}}
\nc{\SSG}{\text{sSet}_*^G}
\nc{\sSet}{\text{sSet}}
\dmo{\Loc}{Loc}
\dmo{\Coloc}{Coloc}
\dmo{\Locideal}{Locid}
\dmo{\Colocideal}{Colocid}
\nc{\LOCO}{\Locideal}
\nc{\COLOCO}{\Colocideal}
\nc{\Loco}[1]{\LOCO\langle #1 \rangle}
\nc{\Coloco}[1]{\COLOCO\langle #1 \rangle}
\dmo{\Con}{Conj}
\dmo{\Sub}{Sub}
\dmo{\Id}{Id}
\dmo{\rmK}{\textrm{\rm K}}
\dmo{\Spc}{Spc}
\dmo{\thick}{thick}
\dmo{\thickid}{thickid}
\nc{\thickt}[1]{\thickid\langle #1 \rangle}
\dmo{\cone}{cone}
\dmo{\End}{End}
\dmo{\eend}{\mathsf{end}}
\dmo{\Mor}{Mor}
\dmo{\Hom}{Hom}
\dmo{\id}{id}
\dmo{\incl}{incl}
\dmo{\Img}{Im}
\dmo{\im}{im}
\dmo{\Ker}{Ker}
\dmo{\ind}{ind}
\dmo{\CoInd}{coind}
\dmo{\res}{res}
\dmo{\infl}{infl}
\dmo{\triv}{triv}
\dmo{\Tel}{Tel}
\dmo{\Mod}{Mod}
\dmo{\opname}{op}
\dmo{\SH}{SH}
\nc{\SHcell}{\SH_{\mathrm{cell}}}
\dmo{\smallb}{b}
\dmo{\Spec}{Spec}
\dmo{\supp}{supp}
\dmo{\Supp}{Supp}
\dmo{\Cosupp}{Cosupp}
\nc{\SHc}{{\SH^c}}
\nc{\SHp}{{\SH_{(p)}}}
\nc{\SHcp}{{\SH^c_{(p)}}}
\nc{\SHGN}{\SH_{G/N}}
\nc{\SHG}{\SH_G}
\nc{\SHGp}{(\SH_G)_{(p)}}
\nc{\SHGc}{\SHG^c}
\nc{\SHGcp}{\SHG^c_{(p)}}
\nc{\quadtext}[1]{\quad\textrm{#1}\quad}
\nc{\qquadtext}[1]{\qquad\textrm{#1}\qquad}
\nc{\adj}{\dashv}
\nc{\adjto}{\rightleftarrows}
\nc{\bbL}{\mathbb{L}}
\nc{\bbA}{\mathbb{A}}
\nc{\bbN}{\mathbb{N}}
\nc{\bbQ}{\mathbb{Q}}
\nc{\bbZ}{\mathbb{Z}}
\nc{\bbF}{\mathbb{F}}
\nc{\bbR}{\mathbb{R}}
\nc{\bbC}{\mathbb{C}}
\nc{\bbE}{\mathbb{E}}
\nc{\cat}[1]{\mathscr{#1}}
\nc{\ie}{{\sl i.e.}, }
\nc{\into}{\mathop{\rightarrowtail}}
\nc{\inv}{^{-1}}
\nc{\isoto}{\mathop{\overset{\sim}\to}}
\nc{\isotoo}{\mathop{\overset{\sim}\too}}
\nc{\onto}{\mathop{\twoheadrightarrow}}
\nc{\too}{\mathop{\longrightarrow}\limits}
\nc{\mapstoo}{\longmapsto}
\nc{\adh}[1]{\overline{#1}}
\nc{\adhpt}[1]{\adh{\{#1\}}}
\nc{\aka}{{a.\,k.\,a.}\ }
\nc{\calF}{\mathcal{F}}
\nc{\eg}{{\sl e.\,g.}}
\nc{\Homcat}[1]{\Hom_{\cat #1}}
\nc{\hook}{\hookrightarrow}
\nc{\ideal}[1]{\langle #1\rangle}
\nc{\ihomname}{\mathsf{hom}}
\nc{\ihom}[1]{\mathsf{hom}(#1)}
\nc{\Mid}{\,\big|\,}
\nc{\MMod}{\,\text{-}\Mod}%
\nc{\op}{^{\opname}}
\nc{\oto}[1]{\overset{#1}\to}
\nc{\otoo}[1]{\overset{#1}{\,\too\,}}
\nc{\sminus}{\!\smallsetminus\!}
\nc{\poplus}[1]{^{\oplus #1}}%
\nc{\potimes}[1]{^{\otimes #1}}
\nc{\sbull}{{\scriptscriptstyle\bullet}}
\nc{\SET}[2]{\big\{\,#1\Mid#2\,\big\}}
\nc{\SETT}[1]{\big\{\,#1\,\big\}}
\nc{\SpcK}{\Spc(\cat K)}
\nc{\then}{\Rightarrow}
\nc{\unit}{\mathbbmtt{1}}
\nc{\unitT}{\unit_{\cat T}}
\nc{\unitS}{\unit_{\cat S}}
\nc{\xra}{\xrightarrow}
\nc{\phigeom}[1]{\widetilde{\Phi}^{#1}}
\nc{\phigeomb}[1]{\Phi^{#1}}
\dmo{\Oname}{O}
\dmo{\proper}{proper}
\dmo{\lenormal}{\unlhd}
\dmo{\lnormal}{\lhd}
\nc{\normal}{\trianglelefteq}
\nc{\Op}{\Oname^p}
\nc{\Oq}{\Oname^q}
\dmo{\Sp}{Sp}
\dmo{\Ho}{Ho}
\dmo{\Fin}{Fin}
\dmo{\add}{add}
\dmo{\smd}{smd}
\dmo{\Fun}{Fun}
\dmo{\Ind}{Ind}
\dmo{\CAlg}{CAlg}
\dmo{\CMon}{CMon}
\dmo{\Map}{Map}
\dmo{\Span}{Span}
\dmo{\N}{N}
\dmo{\Cat}{Cat}
\dmo{\colim}{colim}
\dmo{\Ch}{Ch}
\dmo{\A}{\mathbb{A}^{eff}}
\nc{\AGeff}{\mathbb{A}_G^{\mathrm{eff}}}
\nc{\BGeff}{\mathcal{B}_G^{\mathrm{eff}}}
\nc{\BG}{{\mathcal{B}_G}}
\nc{\NBGeff}{{\N}{\BGeff}}
\dmo{\Ab}{Ab}
\dmo{\Set}{Set}
\dmo{\ev}{ev}
\dmo{\Spcl}{Spcl}
\nc{\Funadd}{\Fun_{\add}}
\dmo{\proj}{proj}
\dmo{\cof}{cof}
\newcounter{enum-resume-hack}
\Crefname{Thm}{Theorem}{Theorems}
\Crefname{Prop}{Proposition}{Propositions}
\Crefname{thmx}{Theorem}{Theorems}
\begin{document}


\title{The tensor triangular geometry of fully~faithful functors}
\author{Beren Sanders}
\date{\today}

\address{Beren Sanders, Mathematics Department, UC Santa Cruz, 95064 CA, USA}
\email{beren@ucsc.edu}
\urladdr{http://people.ucsc.edu/$\sim$beren/}

\keywords{tensor triangular geometry, fully faithful functor, unitation, concentration, unigenic, monogenic, affinization, spectral quotient map, Zariski Connectedness Theorem}

\begin{abstract}
	We prove that the map on Balmer spectra induced by a fully \mbox{faithful} geometric functor is a quotient map whose fibers are connected. This is an analogue of the Zariski Connectedness Theorem in algebraic geometry and it can be applied to a plethora of examples in equivariant and motivic \mbox{mathematics.} We isolate a significant source of examples by introducing the ``concentration'' of a tt-category at a well-behaved chosen set of compact generators. Various categories of cellular objects arise in this way. In particular, the ``unitation'' of a tt-category is the concentration at the unit object. We compute the Balmer spectrum of the unitation of the equivariant stable homotopy category as well as related categories arising in equivariant higher algebra. We also apply our results to the study of the comparison map of a tt-category. Among other results, we prove that the comparison map of a connective category is a quotient map with connected fibers. This involves studying the tt-geometry of weight complex functors, which may be of independent interest. We also study the relationship between the comparison map and the affinization of the Balmer spectrum viewed as a locally ringed space. These results provide a layered approach to understanding the spectrum of a given tt-category, by starting with the Zariski spectrum of the endomorphism ring of the unit, and then passing backwards to larger and larger concentrations (through quotient maps with connected fibers). Significant stages along the way include the passage to the unitation and the passage to the concentration at the Picard group.
	\vspace*{-2.3em}
\end{abstract}

\vspace*{-1.0cm}
\maketitle

{
\hypersetup{linkcolor=black}
\tableofcontents
}

\newpage
\section{Introduction}\label{sec:intro}

This paper is a study of fully faithful functors in tensor triangular geometry. Throughout $\cS$ and $\cT$ will denote rigidly-compactly generated tt-categories. We prove the following: 

\begin{Thm}\label{thm:intro-fully-faithful}
	Let $f^*:\cT\to \cS$ be a fully faithful geometric functor. Then the induced map $\varphi:\SpcS\to\SpcT$ is a spectral quotient map whose fibers are connected.
\end{Thm}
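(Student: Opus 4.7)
My plan is to verify the three asserted properties of $\varphi$ in turn. The central input is that full faithfulness of $f^*$ is equivalent to the unit $\eta:\id_\cT\to f_*f^*$ being a natural isomorphism; in particular, $f_*\unit_\cS\simeq\unit_\cT$, which is the tt-analog of the classical condition $f_*\cat O_X=\cat O_Y$ appearing in Zariski's theorem.

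Surjectivity is the easy part: a fully faithful left adjoint is conservative (if $f^*X=0$ then $X\cong f_*f^*X=0$), so restricting to compacts gives that $f^c\colon\cTc\to\cSc$ is a conservative tt-functor. Balmer's general criterion (a conservative tt-functor between essentially small tt-categories induces a surjective map on spectra) then yields that $\varphi$ is surjective. For the spectral quotient property, given surjectivity I would argue that a subset of $\SpcT$ is Thomason iff its preimage under $\varphi$ is Thomason. One direction is automatic; for the other, a Thomason subset of $\SpcS$ of the form $\varphi^{-1}(V)$ corresponds to a radical thick $\otimes$-ideal of $\cSc$ generated by pullbacks $f^c X$, and applying $f_*$ (which lands in $\cTc$ since $f^*$ is geometric) together with the projection formula $f_*(f^* X\otimes Y)\simeq X\otimes f_* Y$ lets one recover a thick $\otimes$-ideal of $\cTc$ whose support is $V$.

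For the connectedness of fibers---the main content and the analog of Zariski Connectedness---I would fix a prime $\cat P\in\SpcT$ and realize the fiber $\varphi^{-1}(\cat P)$ as the spectrum of an appropriate ``fiber'' tt-category $\cS_{\cat P}$. Morally this is the tt-base change $\cS\otimes_\cT\kappa(\cat P)$, where $\kappa(\cat P)$ is a suitable local tt-category associated to $\cat P$. The essential observation is that the induced functor $\kappa(\cat P)\to\cS_{\cat P}$ remains fully faithful, since the base change preserves the isomorphism $\id\to f_*f^*$ on unit objects, thanks to the projection formula. Full faithfulness then forces
\[
	\End_{\cS_{\cat P}}(\unit)\;\simeq\;\End_{\kappa(\cat P)}(\unit),
\]
and the right-hand side has no non-trivial idempotents because $\kappa(\cat P)$ is local. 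Since clopen subsets of $\Spc(\cS_{\cat P}^c)=\varphi^{-1}(\cat P)$ are in bijection with idempotents in the endomorphism ring of the unit, the fiber is connected.

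The main obstacle I expect is the rigorous construction of the fiber tt-category $\cS_{\cat P}$ as a rigidly-compactly generated tt-category whose Balmer spectrum literally equals $\varphi^{-1}(\cat P)$, together with the verification that full faithfulness is preserved under this base change. This latter point is not entirely formal: it relies on the projection formula and the geometric hypothesis on $f^*$, and probably on a careful choice of $\kappa(\cat P)$. Once these foundational pieces are set up, the quotient property and the connectedness of fibers both fall out cleanly from the adjunction calculus, with the idempotent criterion applied to $\End(\unit)$ delivering the final step.
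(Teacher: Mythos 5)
There are two genuine gaps, one in each half of your plan. For the quotient property, recall that a spectral quotient is about quasi-compact opens: one must show a constructible set $C\subseteq\SpcT$ is Thomason closed whenever $\varphi^{-1}(C)$ is, so your reduction to ``$V$ Thomason iff $\varphi^{-1}(V)$ Thomason'' for arbitrary subsets is neither quite the right statement nor established. More seriously, the two pivotal steps of your sketch fail: the radical thick $\otimes$-ideal of $\cSc$ supported on $\varphi^{-1}(V)$ is not known to be generated by objects $f^*(x)$ --- that is essentially the statement you are trying to prove --- and $f_*$ does \emph{not} land in $\cTc$ in general (geometric means $f^*$ preserves coproducts and hence compacts; $f_*$ preserving compacts would require $f^!$ to preserve coproducts, which fails already for the unitation $\unitation{\cT}\hookrightarrow\cT$ in typical examples), so ``applying $f_*$'' does not produce a thick $\otimes$-ideal of $\cTc$ with support $V$. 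The paper's proof of \cref{thm:faithful-are-quotient} uses faithfulness through a quite different mechanism: by \cite[Proposition~2.29]{PatchkoriaSandersWimmer22}, for quasi-compact opens $U,V$ with $Y=U^\cc$ and $\supp(x)=V^\cc$, the set $U\cap V^\cc$ is Thomason iff the map $\fY\otimes x\to\Sigma\eY\otimes x$ vanishes; faithfulness transports this vanishing back from $\cS$, and a purely topological characterization of spectral quotients as heritable weak spectral quotients (\cref{prop:univ-weak-spec-is-spec}, via reduction to finite targets) finishes the job. I do not see how your route can be repaired without an input of this kind.

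For connectedness of fibers, the obstacle you flag is indeed fatal as stated: there is no general construction of a residue tt-category $\kappa(\cat P)$, nor of a base change $\cS\otimes_\cT\kappa(\cat P)$ whose Balmer spectrum is literally $\varphi^{-1}(\{\cat P\})$; fibers of $\Spc$ are not spectra of base-changed categories in current technology, and even when a candidate exists one would still have to prove that full faithfulness survives the base change (it is known to survive finite localizations, not arbitrary base changes). The paper's argument replaces $\kappa(\cat P)$ by the finite localization $\cT_{\cat P}=\cT|_{\gen(\cat P)}$, along which full faithfulness \emph{is} preserved, reducing to a local $\cT$ and the fiber over its unique closed point; since that singleton is closed but generally not Thomason, the proof shows that the preimage of each nonempty Thomason closed $Z$ is connected --- here the work is \cref{lem:local-ring}, that $\End_{\cT}(\eZ)$ is local (nontrivial because $\eZ$ is not compact), transported by full faithfulness to $\End_{\cS}(\altmathbb{e}_{\varphi^{-1}(Z)})$ and combined with Mayer--Vietoris for the idempotents $\altmathbb{e}_{Z_i}$ to exclude clopen decompositions --- and then a cofiltered-intersection lemma (\cref{lem:get-rid-of-weak}) handles arbitrary closed, hence specialization closed, subsets. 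Your endgame via idempotents in the endomorphism ring of a unit is in the right spirit, but without an actual category whose spectrum is the fiber the argument does not get started. (A minor point: for surjectivity, conservativity of a small tt-functor is not the operative hypothesis; the correct inputs are faithfulness/nil-faithfulness on compacts or nil-conservativity of the big functor, which your fully faithful $f^*$ does satisfy, so this is easily patched.)
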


This is an analogue in tensor triangular geometry of the Zariski Connectedness Theorem in algebraic geometry. It is established as \cref{thm:main-thmb}. The notion of a spectral quotient map (\cref{ter:spectral-quotient}) is a slight generalization of the usual notion of a topological quotient map in point-set topology. It coincides with the usual notion if the domain is noetherian. In fact, the spectral quotient map of \cref{thm:intro-fully-faithful} is a topological quotient map whenever either of the spaces is noetherian; see \cref{rem:spectral-is-strong-ff}. The first part of the theorem holds more generally:

\begin{Thm}\label{thm:intro-faithful}
	Let $f^*:\cT \to \cS$ be a faithful geometric functor. Then the induced map $\varphi:\SpcS \to \SpcT$ is a spectral quotient map.
\end{Thm}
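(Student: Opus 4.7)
The plan is to split the proof into two largely independent steps, corresponding to the two requirements of a spectral quotient map: surjectivity of $\varphi$, and the condition that a subset $V\subseteq\SpcT$ be Thomason whenever its preimage $\varphi^{-1}(V)\subseteq\SpcS$ is Thomason.

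For surjectivity I would argue that any faithful additive functor is conservative: if $f^*(X)=0$ then $f^*(\id_X)=0=f^*(0)$ forces $\id_X=0$, whence $X=0$. Restricting to $\cTc$ yields a conservative tt-functor $f^*\colon\cTc\to\cSc$, and Balmer's standard criterion (conservativity on compacts implies surjectivity of the induced map on Balmer spectra) then gives surjectivity of $\varphi$.

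For the quotient property, assume $V\subseteq\SpcT$ has Thomason preimage $\varphi^{-1}(V)=\supp(J)$ for some radical thick tt-ideal $J\subseteq\cSc$. I would introduce the candidate thick tt-ideal
\[
I:=\{\,x\in\cTc\,:\,f^*(x)\in J\,\}
\]
of $\cTc$ and aim to prove $\supp(I)=V$, thereby exhibiting $V$ as a Thomason subset of $\SpcT$. The inclusion $\supp(I)\subseteq V$ is a brief verification: if $x\in I\setminus\frakp$ and $\frakq\in\varphi^{-1}(\frakp)$ (which exists by the previous step), then $f^*(x)\in J\setminus\frakq$, so $\frakq\in\supp(J)=\varphi^{-1}(V)$, and hence $\frakp\in V$.

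The reverse inclusion $V\subseteq\supp(I)$ is the heart of the matter and where I expect the main obstacle. Given $\frakp\in V$ with a lift $\frakq\in\varphi^{-1}(\frakp)\cap\supp(J)$ and witness $y\in J\setminus\frakq$, one must produce a compact $x\in\cTc$ with $\frakq\in\supp(f^*(x))\subseteq\supp(y)$. I would attack this by combining the projection formula $f_*(y\otimes f^*(x))\cong f_*(y)\otimes x$ with faithfulness of $f^*$, and localizing at the prime $\frakq$ via Balmer--Favi stalks in order to reduce to a residue-field-type argument. The delicate point is that $f_*(y)$ need not be compact in $\cT$, so the desired compact $x$ must be extracted by an approximation exploiting the rigid-compact generation of $\cT$.
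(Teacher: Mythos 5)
Your overall architecture (surjectivity plus a Thomason-detection statement) would yield the theorem, but both halves have problems, and the second one is serious. For surjectivity, the criterion you invoke is false: conservativity of $f^*|_{\cTc}:\cTc\to\cSc$ does \emph{not} imply surjectivity of $\varphi$. For instance, the restriction to compact objects of $\SHp\to\Der(\HFp)$ is conservative (a finite $p$-local spectrum with trivial mod-$p$ homology vanishes), yet the induced map on spectra hits only the single point $\cat C_{p,\infty}$. Balmer's criterion concerns detection of $\otimes$-nilpotence of \emph{morphisms}, not of objects. What is actually used in the paper is the full strength of faithfulness of the big functor: $\unitT$ is a direct summand of $f_*\unitS$ (\cref{prop:faithful-chars}), and surjectivity then follows from \cite[Theorem~1.3]{Balmer18} or \cite[Theorem~1.4]{BarthelCastellanaHeardSanders24}. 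This part is repairable, but not by the lemma as you state it.

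The quotient step contains the genuine gap. Your reformulation (``$V$ is Thomason whenever $\varphi^{-1}(V)$ is Thomason,'' for arbitrary $V$) does imply the spectral quotient property (apply it to constructible $C$ and use that a constructible Thomason subset is Thomason closed), but it is in general a strictly stronger assertion --- it says that $\varphi$ is a topological quotient for the Hochster-dual topologies --- and it is exactly this stronger statement for which you offer no argument. Indeed, the ``reverse inclusion'' $V\subseteq\supp(I)$ is not a reduction but a restatement of the goal: since $J$ is radical, $f^*(x)\in J$ is equivalent to $\varphi^{-1}(\supp(x))\subseteq\varphi^{-1}(V)$, i.e.\ (by surjectivity) to $\supp(x)\subseteq V$, so producing your compact $x$ means producing, through each point $\frakp\in V$, a Thomason closed subset of $\SpcT$ contained in $V$ --- which is literally the statement that $V$ is Thomason. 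The tools you list do not engage this: the obstruction is global ($V$ may be an arbitrary, non-constructible union), whereas the projection formula, Balmer--Favi stalks at $\frakq$, and compact approximations of the non-compact $f_*(y)$ give only stalkwise or support-theoretic information in $\cS$, with no mechanism to bound the support of a compact object of $\cTc$ from above by $V$. The paper's proof of \cref{thm:faithful-are-quotient} is structured precisely to avoid this issue: faithfulness is used only to transfer, along $f^*$, the vanishing criterion of \cite[Proposition~2.29]{PatchkoriaSandersWimmer22} --- $U\cap V^{\cc}$ is Thomason if and only if $\fY\otimes x\to\Sigma\eY\otimes x$ vanishes, where $f^*$ sends the idempotents for $Y=U^{\cc}$ to those for $\varphi^{-1}(Y)$ --- and this only establishes the weak spectral quotient property for \emph{basic constructible} sets (\cref{def:weak-spectral-quotient}); the upgrade to the actual spectral quotient property is then purely topological, via corestriction to complements of Thomason subsets (\cref{rem:corestriction-is-fully-faithful}) and the characterization of spectral quotients as heritable weak spectral quotients (\cref{prop:univ-weak-spec-is-spec}, proved by reduction to finite targets). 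To salvage your ideal-theoretic route you would at minimum need to restrict to constructible $V$ and then supply the missing construction of $x$, at which point some morphism-vanishing criterion of the above kind appears unavoidable.
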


This is established as \cref{thm:faithful-are-quotient}. Examples of faithful and fully faithful geometric functors are plentiful in the algebraic, topological and motivic domains where tt-geometry has taken root and a large portion of the paper is devoted to explaining the consequences of the above two theorems in these various settings. We formalize a significant source of examples as follows: For any set $\cat G \subseteq \cTc$ of compact objects such that $\thick\langle\cat G\rangle \subseteq \cTc$ is a rigid tensor-subcategory, the localizing subcategory
	\[
		\concentration{\cT}{\cat G} \coloneqq \Loc\langle \cat G \rangle \subseteq \cT
	\]
is itself a rigidly-compactly generated tt-category and the inclusion $\concentration{\cT}{\cat G} \hookrightarrow \cT$ is a fully faithful geometric functor. We call $\concentration{\cT}{\cat G}$ the \emph{concentration} of $\cT$ at $\cat G$. For example, the category of cellular motivic spectra $\SHcell(\bbC) \subseteq \SH(\bbC)$ is the concentration of $\SH(\bbC)$ at the motivic spheres and the derived category of Tate motives $\DTM(\bbC)\subseteq \DM(\bbC)$ is the concentration of $\DM(\bbC)$ at the Tate twists.

Although the concentration at $\cat G$ is a tt-subcategory $\concentration{\cT}{\cat G} \subseteq \cat T$, we regard concentration as a process $\cT \rightsquigarrow \concentration{\cT}{\cat G}$ which geometrically effects a quotient with connected fibers: $\Spc(\cTc)\to\Spc(\concentration{\cT}{\cat G}^c)$. We call the extreme case $\cat G=\{ \unit \}$ the \emph{unitation} of $\cT$:
	\[
		\unitation{\cT} \coloneqq \Loc\langle\unit\rangle \subseteq \cT
	\]
Note that $\unitation{\cT}=\cT$ says that $\cT$ is \emph{unigenic} meaning that it is generated by the tensor unit. We thus also call $\unitation{\cT}$ the \emph{unigenic core} of $\cT$.

\subsection*{Examples in equivariant higher algebra}
We study the process of concentration for various examples arising in equivariant higher algebra with a particular emphasis on unitation. For example, in \cref{thm:SHG} we completely determine the map
	\[
		\Spc(\SHG^c) \to \Spc(\unitation{(\SHG)}^c)
	\]
induced by the unitation of the equivariant stable homotopy category $\SHG$ for any finite group $G$. The $G=C_p$ case is depicted in \cref{fig:SHCp}.
\begin{figure}[h]
\resizebox{\textwidth}{!}{
\input{figure-SH-Cp}
}
\caption{The Balmer spectrum of the unitation of $\SH_{C_p}$.}
\label{fig:SHCp}
\end{figure}
In general, it glues together those points which become glued in the spectrum of the Burnside ring, \emph{but only at height infinity}.

This result for $\SHG$ is obtained by first understanding the unitation of its linearization --- the category of derived Mackey functors $\DHZG$ --- and its chromatic truncations $(\SHG)_{\le n}$. This is accomplished in \cref{thm:DHZG-comp} and \cref{thm:truncation}, respectively. We discover that unitation behaves very differently in these two cases, which reflects the curious behaviour of the unitation of~$\SHG$.

We also describe the unitations of the derived category of permutation modules~$\DPerm(G;k)$, the category of derived representations~$\DRep(G;k)$, and the derived category of the abelian category of Mackey functors $\Der(\Mack(G))$. There is a chain of tt-functors
	\begin{equation}\label{eq:chain}
		\SH_G \to \DHZG \to \Der(\Mack(G)) \to \DPerm_G \to \DRep_G
	\end{equation}
and we will see that each category behaves in a qualitatively different way with respect to unitation; see \cref{sec:equivariant}.

\subsection*{The comparison map}
These constructions are closely related to the (graded and ungraded) comparison map
	\[
		\rho:\SpcT \to \Spec(\RT)
	\]
whose target is the Zariski spectrum of the (graded or ungraded) endomorphism ring of the unit. Since $\unitation{\cT}\hookrightarrow \cT$ is fully faithful, this ring is the same for both categories, and we have a factorization
	\begin{equation}\label{eq:intro-factorization}
	\begin{tikzcd}
		\SpcT \ar[d,"\varphi"'] \ar[rd,bend left,"\rho"] & \\
		\Spcunit{\cT} \ar[r,"\rho"'] & \Spec(\RT).
	\end{tikzcd}
	\end{equation}
If the top $\rho$ is a homeomorphism then the spectral quotient $\varphi$ is injective and hence a homeomorphism. On the other hand, if the bottom $\rho$ is a homeomorphism then unitation coincides with the comparison map. Both of these extreme possibilities can occur, although the example of $\SHG$ shows that in general each map can be nontrivial.

The first step in a general strategy for understanding the Balmer spectrum of a category $\cT$ is to understand $\Spec(\RT)$. The task of computing $\SpcT$ then amounts to determining the fibers of the associated comparison map. This approach has been highly effective; see \cite{BalmerSanders17} and \cite{AroneBarthelHeardSanders24pp}, for example. From our current point of view, this method factors into two steps:
	\begin{enumerate}
		\item Understand the spectrum of the unitation $\Spc(\unitation{\cT}^c)$ by understanding the fibers of its comparison map $\rho:\Spc(\unitation{\cT}^c) \to \Spec(\RT)$; then
		\item Understand the fibers of $\varphi : \SpcT \to \Spcunit{\cT}$ where we have \cref{thm:intro-fully-faithful} at our disposal.
	\end{enumerate}
This perspective is particularly fruitful because we can prove strong results about the comparison map of a unigenic category --- which covers step (a) above --- especially when the category is connective (\cref{def:connective}):

\begin{Thm}\label{thm:intro-local}
	Let $\cT=\Ho(\cat C)$ be a rigidly-compactly generated tt-$\infty$-category which is connective and unigenic. The ungraded comparison map $\rho:\SpcT \to \Spec(\RT)$ is a closed quotient map whose fibers are local. 
\end{Thm}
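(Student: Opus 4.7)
My plan is to unfold the connectivity hypothesis via the natural $t$-structure on $\cat C$ and use unigenicity to push everything down to the commutative ring $R_{\cT} = \End_{\cT}(\unit)$. Surjectivity of $\rho$ is a general feature of the comparison map (Balmer), so the content is in establishing closedness, local fibers, and then deducing the quotient property. Connectivity provides a bounded-below $t$-structure on $\cT$ whose heart $\cT^{\heartsuit}$ is an ordinary abelian category; combined with unigenicity, this identifies $\cT^{\heartsuit}$ as a subcategory of discrete $R_\cT$-modules, and every compact $x \in \cT^c$ admits a bounded Postnikov filtration whose homotopy objects $\pi_i(x)$ are finitely presented over $R_\cT$.

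For closedness I would argue that for each compact $x$,
\[
    \rho(\supp(x)) \;=\; \bigcup_{i} \Supp_{R_\cT}(\pi_i x),
\]
a finite union of Zariski-closed subsets of $\Spec(R_\cT)$, each closed because the $\pi_i x$ are finitely presented. Since $\{\supp(x)\}_{x \in \cT^c}$ forms a basis for the closed subsets of $\Spc(\cT^c)$, this is enough to make $\rho$ a closed map.

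For the local fibers, I fix $\frakp \in \Spec(R_\cT)$ and form the ``residue'' tt-category obtained by first localizing $\cT$ at the multiplicative set $R_\cT \setminus \frakp$ and then killing the open complement of $\frakp$ in the sense of tt-geometry; the fiber $\rho^{-1}(\frakp)$ is identified with its Balmer spectrum. Connectivity furnishes a canonical object in this residue category corresponding to $R_\cT / \frakp$, and I would show that every point of the fiber specializes to the thick tensor-ideal generated by this object. This yields the unique closed point and makes the fiber a local spectral space. Finally, a closed continuous surjection of spectral spaces with nonempty local fibers is automatically a quotient map: openness of a saturated subset in the source can be detected at the unique closed point of each fiber, and closedness lets one pass between a set and its complement.

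The main obstacle is the local-fiber statement. The Postnikov filtration reduces any thick tensor-ideal to data controlled by the heart, but one must still show that the residue tt-category at $\frakp$ has only two radical thick tensor-ideals: zero and the whole category. Both connectivity (to truncate the Postnikov tower to finitely many layers) and unigenicity (to keep every piece $R_\cT$-linear, so that the residue at $\frakp$ genuinely reduces to modules over $\kappa(\frakp)$) are needed in an essential way. I expect this is precisely where the tt-geometry of weight-complex functors, alluded to in the introduction, enters to rigidify the reduction from $\cT$ to its heart.
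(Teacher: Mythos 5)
Your overall architecture (reduce to the ring via connectivity and unigenicity, prove closedness plus local fibers, deduce the quotient property) points in the right direction, but both of your two substantive steps have genuine gaps. For closedness, the formula $\rho(\supp(x))=\bigcup_i\Supp_{\RT}(\pi_i x)$ cannot be justified the way you propose: for a compact object $x$ the homotopy modules $\pi_i(x)$ need not be finitely presented over $\RT$ (already $\pi_i(\unit)=\pi_i$ of the endomorphism ring spectrum can be infinitely generated over $\pi_0$), and there are in general infinitely many nonzero $\pi_i(x)$ --- in the paradigm example $\cT=\SH$ a finite spectrum has unbounded homotopy --- so "a finite union of Zariski-closed sets" fails on both counts. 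Worse, even the set-theoretic identity $\rho(\supp(x))=\SET{\frakp}{x_\frakp\neq 0}$ is not free: the inclusion $\supseteq$ requires that $\Ker(\cT^c\to\cT^c_\frakp)$ be a \emph{prime} tt-ideal, i.e.\ precisely the locality statement you defer, so the closedness argument is circular as written. (Also, surjectivity of the ungraded $\rho$ is not a general feature; the paper's own example $\Der(\bbA^2_k\setminus\{0\})$ shows it can fail. Here it has to come out of the argument, e.g.\ from a section of $\rho$.)

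For the local fibers --- which you correctly identify as the crux --- what you propose to prove is too strong and in fact false: the fiber being local means it has a unique relatively closed point, not that the "residue category at $\frakp$" has only the zero and unit thick tensor-ideals; over $\frakp=(p)$ in $\SH$ the fiber is the whole chromatic family of primes, far from a point, yet it is local. The statement actually needed is that the algebraic localization $\cT_\frakp$ (still connective and unigenic) is a \emph{local tt-category}, and your Postnikov/t-structure apparatus gives no mechanism for this. The paper's route is different in exactly this place: connectivity plus unigenicity yield a canonical bounded \emph{weight} structure with heart $\proj(\RT)$ (\cref{exa:weight-connective}), and the Bondarko--Sosnilo--Aoki weight complex (equivalently, base change along $\eend_{\cat C}(\unit)\to\rmH(\RT)$, \cref{prop:weight-truncation}) is a \emph{conservative} tt-functor $\cT^c_\frakp\to\Der((\RT)_\frakp)^c$ into a local tt-category, which forces $\cT_\frakp$ to be local (\cref{prop:R-local-K-local}); this simultaneously produces a specialization-preserving section $\frakp\mapsto\Ker(\cT^c\to\cT^c_\frakp)$ of $\rho$, from which closedness follows by a purely topological argument (images of specialization-closed sets are proconstructible and specialization-closed, \cref{lem:local-algebraic}), with no finiteness of homotopy modules required. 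To repair your proof you would need to supply this conservativity-on-compacts input (a Hurewicz-type statement for $\bbE\to\rmH\pi_0\bbE$); the t-structure truncations alone, not being exact monoidal functors on compacts in the relevant sense, do not deliver it.
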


This theorem is well illustrated by the familiar map $\rho:\Spc(\SH^c) \to \Spec(\bbZ)$ whose fibers each contain a unique closed point. The key to the theorem is \cref{prop:R-local-K-local} which establishes that, under the above hypotheses, each algebraic localization of $\cT$ is a local category (\cref{prop:R-local-K-local}). For this we utilize the theory of weight structures. Any connective and unigenic category~$\cT$ admits a canonical weight structure and we prove that the associated weight complex functor provides a splitting $\omega:\Spec(\RT)\hookrightarrow \SpcT$ of the comparison map, which embeds a copy of $\Spec(\RT)$ inside $\SpcT$. This copy of $\Spec(\RT)$ is precisely the subspace consisting of the unique relative closed points of the fibers of $\rho:\Spc(\cTc)\to\Spec(\RT)$. See \cref{thm:connective-unigenic}. We also provide another perspective on these weight complex functors, which may be of independent interest; see \cref{prop:weight-truncation}.

It is worth pointing out that~\cref{thm:intro-local} is false without the unigenic hypothesis. Nevertheless, combined with our results on unitation we obtain:

\begin{Thm}
	Let $\cT=\Ho(\cat C)$ be a rigidly-compactly generated tt-$\infty$-category which is connective. The ungraded comparison map $\rho:\SpcT \to \Spec(\RT)$ is a spectral quotient map whose fibers are connected.
\end{Thm}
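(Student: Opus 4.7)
The plan is to exploit the factorization from~\eqref{eq:intro-factorization},
\[
	\SpcT \xrightarrow{\;\varphi\;} \Spcunit{\cT} \xrightarrow{\;\rho\;} \Spec(\RT),
\]
and to establish the two desired properties on each leg separately before combining them. The top leg $\varphi$ is induced by the fully faithful geometric inclusion $\unitation{\cT}\hookrightarrow\cT$, so \cref{thm:intro-fully-faithful} immediately yields that $\varphi$ is a spectral quotient map with connected fibers.

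For the bottom leg, the strategy is to apply \cref{thm:intro-local} to the unitation. By construction $\unitation{\cT}$ is unigenic, and I would argue it is connective by exhibiting it as $\Ho(\Loc\langle\unit\rangle)$, where $\Loc\langle\unit\rangle\subseteq\cat C$ is the $\infty$-categorical localizing subcategory generated by the unit; connectivity should pass from $\cat C$ to this subcategory since the unit is connective and the connective part is stable under the colimits used. Then \cref{thm:intro-local} gives that the bottom $\rho$ is a closed quotient map with local fibers. Any closed surjection of spectral spaces qualifies as a spectral quotient map, and any local spectral space is connected (every point specializes to its unique closed point), so the bottom $\rho$ is also a spectral quotient map with connected fibers.

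It remains to combine the two legs. Spectral quotient maps should be closed under composition, essentially by definition (\cref{ter:spectral-quotient}). For the fibers, given $\frakp\in\Spec(\RT)$, the composite fiber is $\varphi\inv(F_\frakp)$ where $F_\frakp$ is the (connected) fiber of the bottom $\rho$ over $\frakp$. I would then run the standard saturation argument: any clopen partition $\varphi\inv(F_\frakp) = U \sqcup V$ is automatically saturated over $F_\frakp$ (since each fiber of $\varphi$ is connected and hence lies entirely in $U$ or entirely in $V$), so it descends to a clopen partition of $F_\frakp$, contradicting its connectedness. The main obstacle I anticipate is to verify this saturation-and-descent step within the generalized framework of spectral quotient maps rather than merely for ordinary topological quotients; this ought to follow from the defining property of $\varphi$ being a spectral quotient, possibly after base-change to $F_\frakp$ at the level of the constructible topology.
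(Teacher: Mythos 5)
Your treatment of the first half — factoring $\rho$ as $\Spc(\cTc)\xrightarrow{\varphi}\Spcunit{\cT}\xrightarrow{\rho}\Spec(\RT)$, applying \cref{thm:intro-fully-faithful} to the top leg and \cref{thm:intro-local} to the unitation (which is indeed connective and unigenic, with an underlying $\infty$-categorical model), and composing spectral quotient maps — is correct and is exactly the paper's route for the quotient statement (the paper records the slightly stronger conclusion that the composite is a \emph{strong} spectral quotient, using \cref{exa:closed-quotient-is-strong}).

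The gap is in the final combination step for connectedness of fibers, precisely where you flag it. The fiber $F_\frakp=\rho^{-1}(\{\frakp\})$ of the bottom map is a proconstructible subset which is in general neither quasi-compact open nor Thomason closed, so the corestriction $\varphi^{-1}(F_\frakp)\to F_\frakp$ is not known to be any kind of quotient map, and the saturation-and-descent argument has nothing to descend along: connectedness of the fibers of $\varphi$ shows a clopen decomposition of $\varphi^{-1}(F_\frakp)$ is saturated, but you cannot conclude that its image pieces are open (or closed) in $F_\frakp$. The preimage-of-connected-sets statement available from the hypotheses on $\varphi$ (\cref{cor:preimage-connected}) only covers connected subsets that are quasi-compact open or Thomason closed, and your suggested repair via the constructible topology cannot work, since in the constructible topology every spectral space is totally disconnected, so no connectedness argument survives that passage. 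The way the paper closes this gap is to base-change along algebraic localization at $\frakp$: since algebraic localization commutes with unitation (\cref{rem:algebraic-commute}) and the relevant square on spectra is cartesian, one reduces to the fiber over the unique closed point $\frakm$ of $\Spec((\RT)_\frakp)$, whose preimage in $\Spcunit{(\cT_\frakp)}$ is specialization closed. Then connectivity is used a second time: $\unitation{(\cT_\frakp)}$ is connective, unigenic and has local endomorphism ring, hence is a \emph{local} tt-category by \cref{prop:R-local-K-local}, and \cref{prop:connected-preimage} (a tt-theoretic statement about fully faithful functors out of local categories, proved via local endomorphism rings of idempotents and Mayer--Vietoris, not via quotient-map topology) gives connectedness of $\varphi^{-1}(\rho^{-1}(\{\frakm\}))$. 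This categorical input is genuinely stronger than the purely topological information "spectral quotient with connected fibers" that your combination step tries to get by with.
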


This is proved in \cref{cor:ungraded-comparison}. As with \cref{thm:intro-fully-faithful}, $\rho$ is a topological quotient map if either of the spaces are noetherian; see \cref{rem:connective-noetherian}.
\medskip

We also obtain results for the graded comparison map:

\begin{Thm}
	Let $\cT$ be a rigidly-compactly generated tt-category. Suppose the graded ring $\RT\coloneqq \End_{\cat t}^\bullet(\unit)$ is coherent (e.g., noetherian). The graded comparison map $\rho:\SpcT\to\Spec(\RT)$ is a spectral quotient map.
\end{Thm}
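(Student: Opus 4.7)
The plan is to reduce to the unigenic case via the unitation, and then realize the comparison map of a unigenic category with coherent $\RT$ as arising from a faithful geometric tt-functor, to which Theorem~\ref{thm:intro-faithful} applies.

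Since $\unitation{\cT}\hookrightarrow \cT$ is fully faithful and preserves the unit, the graded endomorphism ring $\RT$ is unchanged on passage to the unitation. We therefore obtain a factorization of the graded comparison map analogous to~(\ref{eq:intro-factorization}):
\begin{equation*}
\begin{tikzcd}
\SpcT \ar[d,"\varphi"'] \ar[rd,bend left,"\rho"] & \\
\Spcunit{\cT} \ar[r,"\rho"'] & \Spec(\RT).
\end{tikzcd}
\end{equation*}
By Theorem~\ref{thm:intro-fully-faithful}, the left-hand vertical map $\varphi$ is a spectral quotient map. A straightforward check shows that spectral quotient maps are closed under composition, so it suffices to treat the case in which $\cT$ is unigenic.

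For unigenic $\cT$ with coherent $\RT$, the plan is to construct a faithful geometric tt-functor $f^* \colon \cT \to \cS$ realizing the comparison map, with $\cS$ a rigidly-compactly generated tt-category whose Balmer spectrum of compacts is $\Spec(\RT)$. The natural candidate is an appropriate derived category of graded $\RT$-modules, with $f^*$ sending $X$ to $\Hom_\cT^\bullet(\unit, X)$. The coherence of $\RT$ is essential here: it guarantees that the target is a well-behaved rigidly-compactly generated tt-category and that its Balmer spectrum is naturally identified with $\Spec(\RT)$. Faithfulness of $f^*$ then follows from a combination of rigidity (which trivializes the question on compacts via the adjunction $\Hom(X,Y) \cong \Hom(\unit, X^\vee \otimes Y)$) and unigenicity (which propagates faithfulness from $\cTc$ to the full category $\cT$). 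Theorem~\ref{thm:intro-faithful} then yields the desired spectral quotient property of $\rho$.

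The main obstacle is the rigorous construction of the target tt-category $\cS$ and the identification of the map it induces on Balmer spectra with Balmer's intrinsically-defined graded comparison map. The coherence hypothesis on $\RT$ plays a role analogous to that of connectivity in the ungraded setting (cf.~Theorem~\ref{thm:intro-local}, where weight structures do the work): it is the structural input that makes the ring-theoretic target category well-behaved enough to apply our faithful descent result.
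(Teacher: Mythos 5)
Your first reduction --- factoring $\rho$ through the unitation and invoking \cref{thm:faithful-are-quotient} (or \cref{thm:main-thmb}) to handle $\varphi:\SpcT\to\Spcunit{\cT}$ --- is exactly what the paper does. But the whole content of the theorem lies in the unigenic case, and there your plan has a genuine gap: there is in general \emph{no} geometric (exact, symmetric monoidal, coproduct-preserving) functor $f^*:\cT\to\cS$ realizing the graded comparison map with target a derived category of graded $\RT$-modules. The assignment $X\mapsto\Hom^\bullet_{\cT}(\unit,X)$ is merely homological; promoting it to a tt-functor would require formality of the endomorphism ring (spectrum) of $\unit$ over its graded homotopy, which neither coherence nor unigenicity provides, and the theorem carries no hypothesis of an underlying $\infty$-categorical model at all. (Compare the ungraded story: even with connectivity \emph{and} an enhancement, the paper has to work through weight structures, cf.~\cref{thm:tensor-weight}, \cref{prop:weight-truncation}, to produce such a functor.) You would also need to know that the Balmer spectrum of your target category is $\Spec(\RT)$ and that the induced map is Balmer's intrinsically defined $\rho$, neither of which is addressed; and your claim that unigenicity ``propagates faithfulness from $\cTc$ to $\cT$'' is unsupported --- the paper explicitly records that whether faithfulness on compacts implies faithfulness of $f^*$ is unknown in general (it is only noted for phantomless categories).

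The paper's treatment of the unigenic case avoids constructing any functor. \Cref{prop:monogeniccoherent} argues directly with supports: for $\cat K=\thick\langle\unit\rangle$ with $\RK$ coherent it proves $\rho_{\cat K}(\supp(x))=\SET{\frakp}{x_\frakp\neq 0}$, by passing to the algebraic localization at a homogeneous prime and using Balmer's nonvanishing criterion \cite[Proposition~7.5]{Balmer10b}; coherence enters precisely to guarantee that $\Hom^\bullet_{\cat K}(\unit,a)$ is a coherent graded $\RK$-module for every $a\in\thick\langle\unit\rangle$ (a thick-subcategory argument), so that the criterion applies. This yields that $\rho_{\cat K}$ is a closed, hence spectral, quotient map, and composing with $\varphi$ finishes the proof. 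If you want to salvage your approach you would have to either supply the missing formality/enhancement input or replace the functor-theoretic step by such a direct support-theoretic argument.
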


This strengthens \cite[Theorem~7.3]{Balmer10b} and generalizes results in \cite[Section~2]{Lau23}. It is proved in~\cref{thm:coherent-spectral}. Again, a stronger statement is obtained if $\cT$ is further assumed to be unigenic; see \cref{prop:monogeniccoherent}.

\subsection*{Twisted cohomology and the Picard group}

Step (b) of the above method --- namely, understanding $\Spc(\cTc) \to \Spcunit{\cT}$ --- can be further refined by considering increasingly larger concentrations:
	\[
		\Spc(\cTc) \to \cdots \to \Spcconc{\cT}{\cat G_2} \to \Spcconc{\cT}{\cat G_1} \to \Spcunit{\cT}.
	\]
A significant step in this process is the concentration at the Picard group $\Pic(\cat T)$. This is of interest because all of the twisted comparison maps $\rho_u$ associated to elements $u \in \Pic(\cat T)$ factor through the map
	\begin{equation}\label{eq:picard}
		\Spc(\cT^c)\to\Spcconc{\cT}{\Pic(\cat T)}.
	\end{equation}
	A result of interest in this context is~\cref{prop:directed-subgroups}.

\subsection*{Affinization}
\addtocontents{toc}{\protect\enlargethispage{\baselineskip}}

Regarding the Balmer spectrum as a locally ringed space, we may consider its \emph{affinization}, that is, the universal morphism to an affine scheme
	\[
		\alpha: \Spec(\cTc) \to \Spec(A_{\cat T}).
	\]
The ungraded comparison map $\rho:\Spec(\cTc)\to\Spec(\RT)$ factors through the affinization and, in general, this factorization is distinct from the factorization through the unitation \eqref{eq:intro-factorization}. For derived categories of schemes $\cT=\Der(X)$, the affinization and the comparison map coincide (\cref{prop:affinization}). More generally, we establish several sufficient criteria for $\alpha$ and $\rho$ to coincide; see \cref{prop:aff-comp-domain} and \cref{prop:mayer-vietoris}. This is the case, for example, if $\cat T$ is connective and $\RT$ is a domain. These results provide another approach to understanding the comparison map. They also provide a means for easily establishing that, for certain categories, the Balmer spectrum is not a scheme.

\subsection*{Locally unigenic categories}
As we will see, it is possible for a fully faithful geometric functor to induce a homeomorphism on Balmer spectra without being an equivalence. However, we demonstrate in \cref{prop:homeo} that this only occurs due to the failure of the categories involved to be \emph{locally} unigenic (\cref{def:locally-unigenic}). This leads to the study of the \emph{unigenic locus} of a given category (\cref{def:unigenic-locus}) which we investigate for each of the equivariant examples mentioned above. For example, although the derived category of permutation modules is never unigenic, it is locally unigenic for an elementary abelian $p$-group. In general, the unigenic locus can be a proper nonempty subset of the spectrum; see the examples in \cref{sec:locally-unigenic}.

Derived categories of schemes are a prominent class of locally unigenic categories. Those schemes with \emph{unigenic} derived categories are studied in \cref{sec:affinization} and shown to properly contain the class of quasi-affine schemes. Together with the connection between affinization and the comparison map, this provides unusual examples of unigenic categories that help demonstrate the limits of our theorems. 

Overall, throughout this work, we illustrate our results with various examples and explore the extent to which the statements of our theorems are the best possible. The author hopes that these results will be helpful in the future study of the Balmer spectrum. For example, they already provide useful information about the derived category of motives over $\bbR$; see \cref{exa:vishik}.

\subsection*{Outline of the document}
We start in \cref{sec:spectral} by introducing the notion of a spectral quotient map. Various properties of these maps are established, including a characterization which will be useful in the proof of our main theorems. We then briefly recall the notion of a geometric functor in \cref{sec:geometric} and fix some notation used throughout. \Cref{thm:intro-faithful} is proved in \cref{sec:faithful} and \cref{thm:intro-fully-faithful} is proved in \cref{sec:connected-fibers}. Since faithful and fully faithful geometric functors are the objects of study in this work, we give various characterizations of these properties in~\cref{sec:characterizations}. We augment this discussion in \cref{sec:full} with a consideration of full geometric functors. This concludes \cref{part:I}.

In \cref{sec:unitation} we introduce the notions of \emph{concentration} and \emph{unitation} and illustrate these concepts with various examples. We briefly illustrate our main theorems in \cref{sec:motivic} with some motivic examples. We then turn to studying the comparison map of a tt-category in \cref{sec:graded}. Here we establish our theorems for the graded comparison map. We discuss the theory of weight structures in \cref{sec:weight} and apply this in \cref{sec:ungraded} to prove our theorems about the ungraded comparison map of a connective category. In \cref{sec:equivariant} we study in detail the unitations of various categories arising in equivariant higher algebra. We then briefly discuss the twisted comparison maps and the Picard group in \cref{sec:twisted}. We study locally unigenic categories and introduce the unigenic locus in \cref{sec:locally-unigenic}. In particular, we study the unigenic locus of the equivariant categories considered earlier. In \cref{sec:alg-geom}, we discuss examples of (fully) faithful functors in algebraic geometry before studying unigenic derived categories of schemes in \cref{sec:affinization}. Here we also consider affinization and its relation to the comparison map, which concludes the paper.

\subsection*{Notation and conventions}
\begin{enumerate}
	\item We will write $x \specializesto y$ to indicate that $x$ specializes to $y$; that is, $y \in \overbar{\{x\}}$.
	\item A tt-category is \emph{rigid} if every object is dualizable.
	\item Our schemes will always be quasi-compact and quasi-separated. We will write $\Der(X)\coloneqq \Derqc(X)$ for the derived category of complexes of $\OX$-modules whose cohomology groups are quasi-coherent.
	\item Unless stated otherwise, $G$ denotes a finite group and $k$ denotes a field of positive characteristic $p$.
\end{enumerate}

\subsection*{Terminology.}
We will utilize standard terminology and notation used in tensor triangular geometry. Any undefined terms or unspecified notation can be found in~\cite{BarthelHeardSanders23a}, \cite{Balmer10b} or \cite{DickmannSchwartzTressl19}.

\subsection*{Acknowledgements.}
The germ of the results of this paper were obtained during the author's stay at the Hausdorff Research Institute for Mathematics for the trimester program \emph{Spectral Methods in Algebra, Geometry, and Topology} funded by the Deutsche Forschungsgemeinschaft (DFG, German Research Foundation) under Germany’s Excellence Strategy – EXC-2047/1 – 390685813. The author thanks the participants of the Oberwolfach conference \emph{Tensor-triangular geometry and Interactions} for sharing their thoughts on terminology. He would also like to thank Paul Balmer, Tobias Barthel, Ivo Dell'Ambrogio, Martin Gallauer and Drew Heard for interesting discussions and helpful comments.

\newpage
\addtocontents{toc}{\vspace{-0.5\normalbaselineskip}}
\part{The geometry of fully faithful functors}\label{part:I}

In this first part, we study the tensor triangular geometry of fully faithful geometric functors. Examples and applications will be deferred to \cref{part:II}.

\section{Spectral spaces and quotient maps}\label{sec:spectral}

We begin with a discussion of spectral quotient maps. We will take for granted familiarity with basic notions concerning spectral spaces as discussed in \cite{DickmannSchwartzTressl19}.

\begin{Ter}\label{ter:spectral-quotient}
	A surjective continuous map $\varphi:X \to Y$ of topological spaces is a \emph{topological quotient map} if it has the property that a subset $C \subseteq Y$ is open whenever $\varphi^{-1}(C)$ is open. A surjective spectral map $\varphi:X \to Y$ of spectral spaces is a \emph{spectral quotient map} if a subset $C \subseteq Y$ is quasi-compact open whenever $\varphi^{-1}(C)$ is quasi-compact open.
\end{Ter}

\begin{Rem}\label{rem:spec-quot-vs-top-quot}
	If a spectral map $\varphi:X \to Y$ of spectral spaces is a topological quotient map then it is a spectral quotient map. On the other hand, \cite[Example~6.4.18]{DickmannSchwartzTressl19} provides an example of a spectral quotient map which is not a topological quotient map. Moreover, in this example the target space $Y$ is noetherian (see \cite[Example~8.1.4(iii)]{DickmannSchwartzTressl19}). Nevertheless, the two notions coincide if either:
	\begin{enumerate}
		\item $X$ is noetherian; or
		\item $Y$ is finite.
	\end{enumerate}
	The first case is immediate since every subspace of a noetherian space is quasi-compact. On the other hand, if $Y$ is finite then every point is constructible, hence the constructible topology is discrete (see \cite[Corollary~8.1.17]{DickmannSchwartzTressl19}). It follows that for any subset $C \subseteq Y$, the preimage $\varphi^{-1}(C)$ is constructible, hence is quasi-compact open if and only if it is open.
\end{Rem}

\begin{Rem}
	A composite of two spectral quotient maps is again a spectral quotient map. Moreover, if $\varphi:X \to Y$ is a spectral quotient map, then for any quasi-compact open $U \subseteq Y$, the corestriction $\varphi^{-1}(U) \to U$ is again a spectral quotient map.
\end{Rem}

\begin{Rem}
	The notion of a spectral quotient map has many equivalent characterizations, spelled out in \cite[Theorem~6.4.9]{DickmannSchwartzTressl19}. In particular, a surjective spectral map $\varphi:X\to Y$ is a spectral quotient if and only if a \emph{constructible} set $C \subseteq Y$ is quasi-compact open whenever $\varphi^{-1}(C)$ is quasi-compact open. Moreover, since the complement of a constructible set is again constructible, this is equivalent to saying that $\varphi$ is a spectral quotient if and only if a constructible set $C \subseteq Y$ is Thomason closed whenever $\varphi^{-1}(C)$ is Thomason closed.
\end{Rem}

\begin{Rem}
	Recall that a \emph{basic constructible set} is a set of the form $U \cap V^\cc$ with~$U$ and $V$ quasi-compact open, and that every constructible set is a finite union of basic constructible sets; see \cite[Proposition~1.3.13 and Corollary~1.3.15]{DickmannSchwartzTressl19}. As we shall see in \cref{thm:faithful-are-quotient} below, the following notion will be relevant for our study of faithful functors in tt-geometry.
\end{Rem}

\begin{Def}\label{def:weak-spectral-quotient}
	A surjective spectral map $\varphi:X \to Y$ is a \emph{weak spectral quotient map} if for every \emph{basic} constructible set $B \subseteq Y$, we have that $B$ is Thomason closed whenever $\varphi^{-1}(B)$ is Thomason closed.
\end{Def}

\begin{Rem}\label{rem:not-universally-weak}
	Every spectral quotient map is a weak spectral quotient map since if~$B\subseteq Y$ is a basic constructible set then its complement $B^{\cc}=Y \setminus B$ is constructible. On the other hand,  the following diagram
	\begin{equation}\label{eq:weak-spectral-not-spectral}
		\begin{tikzcd}[column sep=tiny,row sep=small]
		\bullet \ar[dd,dash,thick] \ar[dr,dash,thick] & &&\bullet \ar[d,dash,thick] \\
													  &\bullet \ar[rr] &&\bullet \ar[d,dash,thick] \\
		\bullet  & && \bullet 
	\end{tikzcd}
	\end{equation}
	depicts a weak spectral quotient which is not a spectral quotient. Note, however, that the corestriction 
	\[\begin{tikzcd}[column sep=tiny,row sep=small]
		\times &&\times \\
		\bullet \ar[rr]&& \bullet \ar[d,dash,thick] \\
		\bullet && \bullet
	\end{tikzcd}
	\]
	is not a weak spectral quotient. This leads to:
\end{Rem}

\begin{Def}\label{def:locally-heritable}
	A surjective spectral map $\varphi:X \to Y$ is a \emph{heritable weak spectral quotient map} if for each quasi-compact open $U\subseteq Y$, the corestriction
	\[
		\varphi^{-1}(U) \to U
	\]
	is a weak spectral quotient.
\end{Def}

\begin{Exa}
	The map \eqref{eq:weak-spectral-not-spectral} is a weak spectral quotient that is not a heritable weak spectral quotient.
\end{Exa}

\begin{Rem}
	Our primary goal at present is to characterize spectral quotient maps as precisely the heritable weak spectral quotient maps; see \cref{prop:univ-weak-spec-is-spec} below. This will take some preparation.
\end{Rem}

\begin{Def}\label{def:weak-lifting}
	Let $\varphi:X\to Y$ be a spectral map. Given points $y, y' \in Y$ with $y \specializesto y'$, we say that $\varphi$ satisfies \emph{weak lifting} from $y'$ to $y$ if there exists a sequence of points $x_0, x_1,\ldots,x_n \in X$ such that $\varphi(x_0)=y'$ and $\varphi(x_n)=y$ and with the property that $x_i$ is a specialization of a point in the same fiber as $x_{i+1}$. Visually:
	\begin{figure}[H]
		\resizebox{\textwidth}{!}{
	\makeatletter
\DeclareRobustCommand{\rvdots}{%
  \vbox{
    \baselineskip4\p@\lineskiplimit\z@
    \kern-\p@
    \hbox{.}\hbox{.}\hbox{.}
  }}
\makeatother

\def\labeloffset{0.2}
\def\diayoffbig{1.0}
\def\diayoffset{0.6}
\def\diaxoffset{0.3}
\def\bignubradius{0.6*\diaxoffset}
\def\smallnubradius{0.4*\diaxoffset}
\def\diabulletsize{0.03cm}
\def\diatinybulletsize{0.005cm}
\def\dianumbranches{9}
\def\diabranchlen{4}
\def\diabranchlenptwo{6}
\def\diasmallratio{0.25}
\def\diabigratio{0.5}
\def\diasmallopac{nearly opaque}
\def\diabigopac{nearly transparent}
\def\bigfillop{25}
\def\smallfillop{25}
\def\smallcol{blue}
\def\bigcol{blue}
\begin{tikzpicture}[trim left, scale=.25, every node/.style={scale=0.25}]

	\coordinate (base) at (-2,0);


%
%
%

\def\diayoffset{-0.6}
	\coordinate (start) at (base);
	\draw[fill] (start) circle (\diabulletsize);
	\node [anchor=west] at ($(start)+(\labeloffset*\diaxoffset,0)$) {$\scriptstyle \varphi(x_0)=y'$};
	\def\diabranchlen{2}
	\foreach \n in {1,...,\diabranchlen} {
		\draw [fill] ($(start)+(0,\n*\diayoffset)$) circle (\diabulletsize);
		\node [anchor=west] at ($(start)+(0,\n*\diayoffset)+(\labeloffset*\diaxoffset,0)$) {$\scriptstyle \varphi(x_\n)$};
	}
	\draw [very thin] (start) -- ($(start)+(0,\diabranchlen*\diayoffset)$);
	\draw [very thin] ($(start)+(0,\diabranchlen*\diayoffset)$) -- ($(start)+(0,\diabranchlen*\diayoffset+0.5*\diayoffset)$);
	\node [anchor=center]  at ($(start)+(0,\diabranchlen*\diayoffset+1*\diayoffset)$) {$\rvdots$};
	\coordinate (start) at ($(base)+(0,\diabranchlen*\diayoffset+2*\diayoffset)$);
	\draw [fill] ($(start)+(0,0*\diayoffset)$) circle (\diabulletsize);
	\node [anchor=west] at ($(start)+(0,0*\diayoffset)+(\labeloffset*\diaxoffset,0)$) {$\scriptstyle \varphi(x_{n-1})$};
	\draw [fill] ($(start)+(0,1*\diayoffset)$) circle (\diabulletsize);
	\node [anchor=west] at ($(start)+(0,1*\diayoffset)+(\labeloffset*\diaxoffset,0)$) {$\scriptstyle \varphi(x_{n})=y$};
	\draw [very thin] ($(start)+(0,-0.5*\diayoffset)$) -- ($(start)+(0,1*\diayoffset)$);


	\coordinate (start) at ($(base)+(-2,0)$);

	\coordinate (fibstart) at (start);
	\foreach \n in {0,...,2} {
		\path [fill=\bigcol!\bigfillop,rounded corners=2pt, thin] ($(fibstart)+(\diaxoffset,\n*\diayoffset)$) circle (\bignubradius);
		\path [fill=\bigcol!\bigfillop,rounded corners=2pt, thin] ($(fibstart)+(-16*\diaxoffset,\n*\diayoffset)$) circle (\bignubradius);
		\path [fill=\bigcol!\bigfillop,rounded corners=2pt, thin] 
		($(fibstart)+(\diaxoffset,\n*\diayoffset)+(0,\bignubradius)$)
		-- ($(fibstart)+(-16.6*\diaxoffset,\n*\diayoffset)+(0,\bignubradius)$)
		-- ($(fibstart)+(-16.6*\diaxoffset,\n*\diayoffset)-(0,\bignubradius)$)
		-- ($(fibstart)+(\diaxoffset,\n*\diayoffset)-(0,\bignubradius)$);
	}

		\draw[fill] (start) circle (\diabulletsize);
			\node [anchor=east] at ($(start)+(-\labeloffset*\diaxoffset,0)$) {$\scriptstyle x_0$};
	\foreach \n in {1,...,2} {
	\draw [very thin] 
		($(start)+(-3.0*\n*\diaxoffset+2*\diaxoffset,\n*\diayoffset)$) -- ($(start)+(-3.0*\n*\diaxoffset+3*\diaxoffset,\n*\diayoffset-\diayoffset)$);
		\draw[fill] ($(start)+(-3.0*\n*\diaxoffset+2*\diaxoffset,\n*\diayoffset)$) circle (\diabulletsize);
		\draw[fill] ($(start)+(-3.0*\n*\diaxoffset,\n*\diayoffset)$) circle (\diabulletsize);
			\node [anchor=east] at ($(start)+(-3.0*\n*\diaxoffset,\n*\diayoffset)+(-\labeloffset*\diaxoffset,0)$) {$\scriptstyle x_\n$};
	}
	\draw [very thin] ($(start)+(-3.0*3*\diaxoffset+2.5*\diaxoffset,2.5*\diayoffset)$) -- ($(start)+(-3.0*3*\diaxoffset+3*\diaxoffset,3*\diayoffset-\diayoffset)$);
	\node [anchor=center]  at ($(start)+(-3.0*3*\diaxoffset+1.5*\diaxoffset,\diabranchlen*\diayoffset+1*\diayoffset)$) {$\rvdots$};
	\coordinate (fibstart) at ($(start)+(0,\diabranchlen*\diayoffset+2*\diayoffset)$);
	\coordinate (start) at ($(start)+(-3.0*3*\diaxoffset,\diabranchlen*\diayoffset+2*\diayoffset)$);
	\foreach \n in {0,...,1} {
		\path [fill=\bigcol!\bigfillop,rounded corners=2pt, thin] ($(fibstart)+(\diaxoffset,\n*\diayoffset)$) circle (\bignubradius);
		\path [fill=\bigcol!\bigfillop,rounded corners=2pt, thin] ($(fibstart)+(-16*\diaxoffset,\n*\diayoffset)$) circle (\bignubradius);
		\path [fill=\bigcol!\bigfillop,rounded corners=2pt, thin] 
		($(fibstart)+(\diaxoffset,\n*\diayoffset)+(0,\bignubradius)$)
		-- ($(fibstart)+(-16.6*\diaxoffset,\n*\diayoffset)+(0,\bignubradius)$)
		-- ($(fibstart)+(-16.6*\diaxoffset,\n*\diayoffset)-(0,\bignubradius)$)
		-- ($(fibstart)+(\diaxoffset,\n*\diayoffset)-(0,\bignubradius)$);
	}
		\def\n{0} 
		\draw [fill] ($(start)+(-3.0*\n*\diaxoffset,\n*\diayoffset)$) circle (\diabulletsize);
		\draw [very thin] ($(start)+(-3.0*\n*\diaxoffset,\n*\diayoffset)$) --
($(start)+(-3.0*\n*\diaxoffset+0.5*\diaxoffset,\n*\diayoffset-0.5*\diayoffset)$);
		\draw[fill] ($(start)+(-3.0*\n*\diaxoffset-2*\diaxoffset,\n*\diayoffset)$) circle (\diabulletsize);
		\node [anchor=east] at ($(start)+(-3.0*\n*\diaxoffset-2*\diaxoffset,\n*\diayoffset)+(-\labeloffset*\diaxoffset,0)$) {$\scriptstyle x_{n-1}$};
		\def\n{1} 
	\draw [very thin] ($(start)+(-3.0*\n*\diaxoffset,\n*\diayoffset)$) -- ($(start)+(-3.0*\n*\diaxoffset+1*\diaxoffset,\n*\diayoffset-\diayoffset)$);
		\draw[fill] ($(start)+(-3.0*\n*\diaxoffset,\n*\diayoffset)$) circle (\diabulletsize);
		\draw[fill] ($(start)+(-3.0*\n*\diaxoffset-2*\diaxoffset,\n*\diayoffset)$) circle (\diabulletsize);
			\node [anchor=east] at ($(start)+(-3.0*\n*\diaxoffset-2*\diaxoffset,\n*\diayoffset)+(-\labeloffset*\diaxoffset,0)$) {$\scriptstyle x_n$};

\end{tikzpicture}
	}
	\end{figure}
	\noindent
	We say that $\varphi$ satisfies the \emph{weak lifting property} if the above holds for all specializations $y \specializesto y'$ in $Y$.
\end{Def}

\begin{Rem}
	The weak lifting property generalizes the lifting property described in \cite[Proposition~6.4.13]{DickmannSchwartzTressl19} which in turn generalizes both the going-up and going-down properties.
\end{Rem}

\begin{Prop}\label{prop:weak-lifting-is-quotient}
	If a spectral map $\varphi:X \to Y$ has the weak lifting property, then it is a topological quotient.	
\end{Prop}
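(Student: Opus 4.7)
The plan is to verify the topological quotient property head-on: surjectivity is automatic (applying weak lifting to the trivial specialization $y \specializesto y$ produces a point of $\varphi^{-1}(y)$), and continuity is immediate from $\varphi$ being spectral, so it suffices to show that whenever $\varphi^{-1}(C) \subseteq X$ is open, the complement $D \coloneqq Y \setminus C$---whose preimage $\varphi^{-1}(D)$ is then closed in $X$---is closed in $Y$. The structural fact I would rely on throughout is the standard characterization (cf.\ \cite{DickmannSchwartzTressl19}) that a subset of a spectral space is closed if and only if it is both \emph{specialization-closed} and \emph{proconstructible}. The argument then splits naturally into two independent pieces, one for each condition.

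For specialization-closedness, let $y \in D$ and $y \specializesto y'$. I would invoke the weak lifting property to produce a sequence $x_0, x_1, \ldots, x_n \in X$ with $\varphi(x_0) = y'$, $\varphi(x_n) = y$, together with auxiliary points $x_{i+1}' \in \varphi^{-1}(\varphi(x_{i+1}))$ satisfying $x_{i+1}' \specializesto x_i$. I would then run a downward induction along the zigzag: since $\varphi(x_n) = y \in D$, we have $x_n \in \varphi^{-1}(D)$; at each step $x_{i+1}'$ lies in the same fibre as $x_{i+1}$ and so inherits membership in $\varphi^{-1}(D)$, while the subsequent specialization $x_{i+1}' \specializesto x_i$, combined with the closedness of $\varphi^{-1}(D)$, forces $x_i \in \varphi^{-1}(D)$. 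At the end $x_0 \in \varphi^{-1}(D)$, so $y' = \varphi(x_0) \in D$.

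For proconstructibility, I would appeal to the fact that a spectral map is continuous with respect to the constructible topologies, which make the underlying spectral spaces into compact Hausdorff spaces. Hence $\varphi : X^{\mathrm{con}} \to Y^{\mathrm{con}}$ is a continuous surjection of compact Hausdorff spaces, and is therefore automatically closed---in particular a topological quotient map. Since $\varphi^{-1}(D)$ is spectrally closed it is closed in $X^{\mathrm{con}}$, and the quotient property transports this to $D$ being closed in $Y^{\mathrm{con}}$, i.e., proconstructible. Combining the two properties gives $D$ closed in $Y$, completing the argument. I expect the main obstacle to be the zigzag bookkeeping in the specialization-closed step---one must keep careful track of which moves are ``same-fibre'' (trivially preserving membership in $\varphi^{-1}(D)$) and which are ``specialization in $X$'' (preserving it because $\varphi^{-1}(D)$ is closed); the proconstructibility step, by contrast, is essentially formal once the compact-Hausdorff nature of the patch topology is invoked.
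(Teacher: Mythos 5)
Your proposal is correct and follows essentially the same route as the paper: the zigzag induction showing that any subset whose preimage is closed must be specialization-closed is identical, and both arguments then conclude via the standard fact that a specialization-closed proconstructible subset of a spectral space is closed. The only cosmetic difference is in the proconstructibility step, where the paper cites that spectral images of proconstructible sets are proconstructible, while you re-derive this by viewing $\varphi$ as a closed surjection of compact Hausdorff spaces in the constructible topology.
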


\begin{proof}
	This is proved in a manner similar to \cite[Proposition~6.4.13]{DickmannSchwartzTressl19}. First note that the weak lifting property implies that $\varphi$ is surjective. Next let $Z \subseteq Y$ be an arbitrary subset and suppose $\varphi^{-1}(Z)$ is closed. Then in particular $\varphi^{-1}(Z)$ is closed under specialization. We claim that~$Z$ is similarly closed under specialization. To this end, let $y \in Z$ and let $y'$ be a specialization of~$y$. By the weak lifting property, there exist $x_0,x_1,\ldots,x_n \in X$ as in \cref{def:weak-lifting}. Note that $x_n \in \varphi^{-1}(Z)$ since $\varphi(x_n) = y \in Z$. Moreover, $x_{n-1}$ is a specialization of a point $w\in X$ with $\varphi(w) = \varphi(x_n) \in Z$. Thus $w \in \varphi^{-1}(Z)$ and hence $x_{n-1} \in \varphi^{-1}(Z)$ since the latter is closed under specialization. Continuing in this fashion we establish that each of the~$x_i$ is contained in $\varphi^{-1}(Z)$. In particular, $x_0 \in \varphi^{-1}(Z)$ so that $y'=\varphi(x_0)\in Z$. This establishes that $Z$ is specialization closed. It follows that $Z$ is closed since $Z=\varphi(\varphi^{-1}(Z))$ is proconstructible, being the image of a proconstructible set under a spectral map (\cite[Corollary~1.3.23]{DickmannSchwartzTressl19}). We have established that if $\varphi^{-1}(Z)$ is closed then $Z$ is closed. 
\end{proof}

\begin{Lem}\label{lem:spec-to-finite}
	A heritable weak spectral quotient $\varphi:X \to Y$ with $Y$ \emph{finite} is a topological quotient.
\end{Lem}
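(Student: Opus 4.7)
The plan is to proceed by induction on $|Y|$. The base case $|Y|=1$ is trivial. For the inductive step, pick a closed point $y_*\in Y$ (which exists because $Y$ is a finite spectral space) and set $U\coloneqq Y\setminus\{y_*\}$, a quasi-compact open subset with $|U|<|Y|$. Heritability clearly propagates to open subspaces, so the corestriction $\varphi^{-1}(U)\to U$ is itself a heritable weak spectral quotient and hence, by the inductive hypothesis, a topological quotient.

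Let $Z\subseteq Y$ with $\varphi^{-1}(Z)$ closed, and write $Z_U\coloneqq Z\cap U$. Then $\varphi^{-1}(Z_U)=\varphi^{-1}(Z)\cap \varphi^{-1}(U)$ is closed in $\varphi^{-1}(U)$, so $Z_U$ is closed in $U$ by induction. If $y_*\in Z$, the closure in $Y$ satisfies $\overline{Z}^{\,Y}=\overline{Z_U}^{\,Y}\cup\{y_*\}\subseteq Z_U\cup\{y_*\}=Z$, since the only specialization of $Z_U$ lying outside $U$ is $y_*$ itself; thus $Z$ is closed.

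The essential case is $y_*\notin Z$, handled by contradiction: suppose some $y\in Z$ satisfies $y\specializesto y_*$. By subdividing any specialization chain from $y$ to $y_*$, produce a chain $y=y^{(0)}\specializesto y^{(1)}\specializesto\cdots\specializesto y^{(N)}=y_*$ in which every step is \emph{immediate} in $Y$; since $Y$ is finite, the refinement terminates. Iterating the closedness of $Z_U=Z$ in $U$ yields $y^{(i)}\in Z$ for all $i<N$, so in particular $y'\coloneqq y^{(N-1)}\in Z$ satisfies $y'\specializesto y_*$ immediately. Now consider the basic constructible set $B\coloneqq \overline{\{y'\}}\cap U=\overline{\{y'\}}\setminus\{y_*\}$; it is \emph{not} closed in $Y$, because $y'\in B$ specializes to $y_*\notin B$. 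Since $\varphi$ is itself a weak spectral quotient (the $U=Y$ case of heritability), it follows that $\varphi^{-1}(B)$ is not closed in $X$, and as it is proconstructible, this failure occurs at the level of specialization: there exist $x\specializesto x'$ in $X$ with $x\in\varphi^{-1}(B)$ and $x'\notin \varphi^{-1}(B)$.

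The contradiction is extracted as follows. Since $\varphi^{-1}(\overline{\{y'\}})$ is closed, $\varphi(x')\in\overline{\{y'\}}$, and $\varphi(x')\notin B$ forces $\varphi(x')=y_*$. Then the chain $y'\specializesto \varphi(x)\specializesto \varphi(x')=y_*$ combined with the immediacy of $y'\specializesto y_*$ pins $\varphi(x)\in\{y',y_*\}$, and $\varphi(x)\ne y_*$ since $\varphi(x)\in B$; hence $\varphi(x)=y'$. Therefore $x\in\varphi^{-1}(y')\subseteq\varphi^{-1}(Z)$ specializes to $x'$ with $\varphi(x')=y_*\notin Z$, contradicting closedness of $\varphi^{-1}(Z)$. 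The main obstacle is to engineer a basic constructible subset of $Y$ whose non-closedness can be leveraged through the weak spectral quotient property; the choice $\overline{\{y'\}}\setminus\{y_*\}$ succeeds precisely because the immediacy of $y'\specializesto y_*$ tightly constrains the images in $Y$ of specializations available in $X$.
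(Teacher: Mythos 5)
Your proof is correct, and it takes a genuinely different route from the paper's. The paper first uses heritability to corestrict to $\gen(y_0)$ for each $y_0\in Y$, reducing to a local target, and then verifies the \emph{weak lifting property} of \cref{def:weak-lifting} by a minimal-counterexample argument: it applies the weak-quotient condition to the singletons $\{y\}$ (basic constructible because $Y$ is finite) to show the set of points reachable from $y_0$ by chains of fiberwise specializations is all of $Y$, and then invokes \cref{prop:weak-lifting-is-quotient}. You instead induct on $|Y|$ by deleting a closed point $y_*$, using heritability only to pass to the quasi-compact open $U=Y\setminus\{y_*\}$, and your key step applies the weak-quotient condition to the basic constructible set $\overline{\{y'\}}\setminus\{y_*\}$ attached to an immediate specialization $y'\rightsquigarrow y_*$; together with the fact that a proconstructible set which is specialization closed is closed, this produces an honest one-step lift $x\rightsquigarrow x'$ with $\varphi(x)=y'$, $\varphi(x')=y_*$, which contradicts closedness of $\varphi^{-1}(Z)$ directly. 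What each approach buys: the paper's argument feeds into the general machinery it has already set up (weak lifting, \cref{prop:weak-lifting-is-quotient}, later reused in \cref{prop:strong-topological-equiv}), and only ever needs the weaker chain-of-fibers lifting; your argument is self-contained (no appeal to weak lifting) and in fact proves a stronger intermediate statement, namely that immediate specializations into closed points lift on the nose, which via heritability and your induction essentially recovers condition (b) of \cref{prop:strong-topological-equiv} for finite targets. Two tiny presentational ellipses, neither a gap: the inference ``$B$ not closed $\Rightarrow\varphi^{-1}(B)$ not closed'' implicitly uses that a closed constructible set is Thomason closed (the same fact you use a line later), and in the case $y_*\notin Z$ you should say explicitly that, since $\overline{Z}\subseteq Z\cup\{y_*\}$ as in your first case, ruling out $y\in Z$ with $y\rightsquigarrow y_*$ is exactly what is needed to conclude that $Z$ is closed.
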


\begin{proof}
	We will establish that $\varphi$ satisfies the weak lifting property (\cref{def:weak-lifting}). Let $y_0 \in Y$. By considering the corestriction $\varphi^{-1}(\gen(y_0))\to\gen(y_0)$ we may assume that $Y$ is local with unique closed point $y_0$. Consider the collection $S \subseteq Y$ of all points $y$ to which we have weak lifting from $y_0$. We claim that $S=Y$. If not, then since $Y$ is finite, we can choose a point $y \in Y$ which is minimal among all points in $Y\setminus S$; that is, $y \not\in S$ but $\overbar{\{y\}} \setminus \{y\}$ is contained in $S$. Since $Y$ is finite, the singleton $\{y\}$ is basic constructible and hence the preimage $\varphi^{-1}(\{y\})$ is also basic constructible. Recall that a constructible set is specialization closed if and only if it is Thomason closed. Thus, since $\varphi$ is a weak spectral quotient and $\{y\}$ is not specialization closed (since $y_0$ is a specialization of $y$) we conclude that $\varphi^{-1}(\{y\})$ is not specialization closed. Hence there exists $x \in \varphi^{-1}(\{y\})$ and a specialization $x \rightsquigarrow x'$ with $x' \not\in\varphi^{-1}(\{y\})$. Thus $y=\varphi(x) \rightsquigarrow \varphi(x')$ is a proper specialization and hence $\varphi(x') \in S$. Hence we have weak lifting from the closed point $y_0$ to $\varphi(x')$. Since $x'$ is a specialization of $x$, we have established that $y_0$ has weak lifting to $\varphi(x)=y\not\in S$, which is a contradiction. We conclude that $S=Y$. This establishes that the original map has the weak lifting property and we can invoke \cref{prop:weak-lifting-is-quotient}.
\end{proof}

\begin{Prop}\label{prop:univ-weak-spec-is-spec}
	A surjective spectral map $\varphi:X \to Y$ is a spectral quotient if and only if it is a heritable weak spectral quotient.
\end{Prop}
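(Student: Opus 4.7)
The plan has two directions, with the forward direction largely immediate. By \cref{rem:not-universally-weak} every spectral quotient is a weak spectral quotient, and since corestrictions of spectral quotients to quasi-compact opens remain spectral quotients (as noted earlier in the section), every spectral quotient is heritable.

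For the converse, I would assume $\varphi$ is a heritable weak spectral quotient and exploit the characterization recalled in the excerpt: it suffices to show that every constructible $C \subseteq Y$ with $\varphi^{-1}(C)$ Thomason closed is itself Thomason closed. Writing $C = \bigcup_{i=1}^n (U_i \cap V_i^\cc)$ as a union of basic constructibles, I would form the (finite) Boolean subalgebra $\mathcal{A}$ of $\mathrm{Const}(Y)$ generated by $U_1,\dots,U_n,V_1,\dots,V_n$; its nonempty atoms partition $Y$ into finitely many constructible classes. Let $Y_0$ be this quotient set, topologised by declaring $O \subseteq Y_0$ to be quasi-compact open exactly when its preimage in $Y$ is quasi-compact open. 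Then $Y_0$ is a finite $T_0$ space (hence automatically spectral), the quotient map $\pi : Y \to Y_0$ is a surjective spectral map, and by construction $\pi$ is itself a spectral quotient. Moreover $C = \pi^{-1}(C_0)$ for the descended subset $C_0 \subseteq Y_0$.

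The central step is verifying that the composite $\psi \coloneqq \pi \circ \varphi : X \to Y_0$ is again a heritable weak spectral quotient. Given a quasi-compact open $U_0 \subseteq Y_0$ and a basic constructible $B = O_1 \cap O_2^\cc \subseteq U_0$, the preimage $\pi^{-1}(B) = \pi^{-1}(O_1) \cap \pi^{-1}(O_2)^\cc$ is a basic constructible inside the quasi-compact open $\pi^{-1}(U_0)$. Heritability of $\varphi$ gives that $\varphi^{-1}(\pi^{-1}(U_0)) \to \pi^{-1}(U_0)$ is a weak spectral quotient, so the hypothesis that $\psi^{-1}(B) = \varphi^{-1}(\pi^{-1}(B))$ is Thomason closed forces $\pi^{-1}(B)$ to be Thomason closed in $\pi^{-1}(U_0)$. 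Hence $\pi^{-1}(U_0 \setminus B)$ is quasi-compact open in $Y$, and the spectral quotient property of $\pi$ descends this to the statement that $U_0 \setminus B$ is quasi-compact open in $Y_0$, so $B$ is Thomason closed in $U_0$.

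Once $\psi$ is known to be a heritable weak spectral quotient onto the finite space $Y_0$, \cref{lem:spec-to-finite} upgrades it to a topological quotient; hence $\psi^{-1}(C_0) = \varphi^{-1}(C)$ being Thomason closed (and therefore closed) forces $C_0$ to be closed in $Y_0$, and so $C = \pi^{-1}(C_0)$ is closed in $Y$. Since $C$ was already constructible, its complement is a constructible open subset of $Y$, hence quasi-compact open, so $C$ is Thomason closed as desired. The main obstacle I anticipate is the careful construction of the finite quotient $Y_0$ — in particular, choosing the right topology so that $\pi$ is itself a spectral quotient; once that is in hand, the composition/corestriction manipulations needed to promote $\psi$ to a heritable weak spectral quotient, and the invocation of \cref{lem:spec-to-finite}, are routine.
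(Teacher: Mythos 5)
Your argument is correct, and it shares the paper's key engine---reduction to a finite target followed by \cref{lem:spec-to-finite}---but the way you reach that reduction is genuinely different. The paper's proof of the $(\Leftarrow)$ direction outsources it: a surjective spectral map is an epimorphism of spectral spaces \cite[Theorem~5.2.5]{DickmannSchwartzTressl19}, so by \cite[Proposition~6.4.12(vi)]{DickmannSchwartzTressl19} it is a spectral quotient as soon as its composite with every spectral quotient onto a \emph{finite} space is one; those composites are heritable weak spectral quotients (being composites of two such), and \cref{lem:spec-to-finite} finishes. You instead fix a constructible $C\subseteq Y$ with Thomason closed preimage, build by hand a finite spectral quotient $\pi\colon Y\to Y_0$ whose fibers are the atoms of the Boolean algebra generated by the finitely many quasi-compact opens defining $C$, verify directly that $\pi\circ\varphi$ is a heritable weak spectral quotient (your verification is precisely the composition argument the paper invokes, specialized to $\pi$), apply \cref{lem:spec-to-finite}, and descend closedness of $C_0$ along $\pi$; since $C$ is constructible and closed, its complement is open constructible, hence quasi-compact open, so $C$ is Thomason closed. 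What your route buys is self-containedness: no epimorphism theorem and no quotient criterion from \cite{DickmannSchwartzTressl19}, only elementary facts about constructible sets---in effect you re-prove the needed instance of \cite[Proposition~6.4.12(vi)]{DickmannSchwartzTressl19} on the fly. The price is the explicit construction, whose asserted-but-unproved points should be recorded (all are easy and correct): the declared family of subsets of $Y_0$ is a topology because $Y_0$ is finite; $Y_0$ is $T_0$, hence spectral, because two distinct atoms are separated by one of the generating quasi-compact opens, which is $\pi$-saturated; and $C$ is a union of atoms, so indeed $C=\pi^{-1}(C_0)$.
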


\begin{proof}
	The ($\Rightarrow$) direction follows from the fact that spectral quotients are weak spectral quotients together with the readily checked fact that the corestriction of a spectral quotient to any quasi-compact open subset is again a spectral quotient. The main part is the ($\Leftarrow$) direction. The surjective spectral map $\varphi:X\to Y$ is an epimorphism in the category of spectral spaces (\cite[Theorem~5.2.5]{DickmannSchwartzTressl19}), hence it is a spectral quotient if (and only if) $\psi\circ \varphi:X \to Z$ is a spectral quotient for each spectral quotient $\psi:Y\to Z$ to a finite $Z$ by \cite[Proposition~6.4.12(vi)]{DickmannSchwartzTressl19}. Such a composite $\psi\circ \varphi$ is a heritable weak spectral quotient (as the composite of two such, invoking the $(\Rightarrow)$ direction) and hence is a topological (and hence spectral) quotient by \cref{lem:spec-to-finite}.
\end{proof}

Our strongest results will require the following strengthened versions:

\begin{Def}\label{def:strongly-heritable}
	A spectral map $\varphi:X \to Y$ is a \emph{strong spectral quotient map} if the corestriction $\varphi^{-1}(U) \to U$ is a spectral quotient map for every $U\subseteq Y$ which is the complement of a Thomason subset. Similarly, a surjective spectral map $\varphi:X \to Y$ is a \emph{strongly heritable weak spectral quotient map} if the corestriction $\varphi^{-1}(U) \to U$ is a weak spectral quotient for each $U \subseteq Y$ which is the complement of a Thomason subset.
\end{Def}

\begin{Cor}\label{cor:univ-weak-spec-is-strong-spec}
	A surjective spectral map $\varphi:X \to Y$ is a strong spectral quotient if and only if it is a strongly heritable weak spectral quotient.
\end{Cor}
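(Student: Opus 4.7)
The plan is to reduce directly to \cref{prop:univ-weak-spec-is-spec} by observing that the class of complements of Thomason subsets interacts well with passing to quasi-compact open subsets.

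The forward direction $(\Rightarrow)$ is immediate: every spectral quotient is a weak spectral quotient (\cref{rem:not-universally-weak}), so the corestriction to the complement of any Thomason subset, being a spectral quotient by hypothesis, is in particular a weak spectral quotient.

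For the converse $(\Leftarrow)$, suppose $\varphi$ is a strongly heritable weak spectral quotient and fix $U\subseteq Y$ the complement of a Thomason subset. I want to show that the corestriction $\varphi^{-1}(U)\to U$ is a spectral quotient. By \cref{prop:univ-weak-spec-is-spec} applied to this corestriction, it suffices to exhibit it as a heritable weak spectral quotient, i.e.\ to show that for every quasi-compact open $V\subseteq U$, the further corestriction $\varphi^{-1}(V)\to V$ is a weak spectral quotient. The key observation is that any such $V$ is itself the complement of a Thomason subset of $Y$: since $V$ is quasi-compact open in the subspace topology on $U$, one has $V=U\cap W$ for some open $W\subseteq Y$, and covering $W$ by quasi-compact opens and extracting a finite subcover of $V$ lets us arrange $W$ itself to be quasi-compact open in $Y$. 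Then $W$ is the complement of the Thomason closed set $Y\setminus W$, and since Thomason subsets are closed under finite unions, $V=U\cap W$ is again the complement of a Thomason subset of~$Y$. The strongly heritable weak spectral quotient hypothesis on $\varphi$ then directly gives that $\varphi^{-1}(V)\to V$ is a weak spectral quotient, as required.

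The only subtle point is the observation above about quasi-compact open subsets of the subspace $U$, but this is a routine feature of the subspace topology combined with basic properties of spectral spaces; no serious obstacle is expected. The heart of the argument is simply that \cref{prop:univ-weak-spec-is-spec} already does all the work, and one just needs to verify that the ``strong'' versions of the two notions match up under this equivalence.
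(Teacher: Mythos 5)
Your proof is correct and follows the same route as the paper, which simply notes that the statement ``readily follows from \cref{prop:univ-weak-spec-is-spec} and the definitions''; your argument spells out exactly the details behind that remark. In particular, your key observation — that a quasi-compact open subset of the complement of a Thomason subset of $Y$ is again the complement of a Thomason subset of $Y$ (write it as $U\cap W'$ with $W'$ quasi-compact open and use that Thomason subsets are closed under unions) — is precisely the point the paper leaves implicit, and you verify it correctly.
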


\begin{proof}
	This readily follows from \cref{prop:univ-weak-spec-is-spec} and the definitions.
\end{proof}

\begin{Def}
	Similarly, we say that $\varphi:X \to Y$ is a \emph{strong topological quotient map} if the corestriction $\varphi^{-1}(U) \to U$ is a topological quotient map for every $U\subseteq Y$ which is the complement of a Thomason subset.
\end{Def}

\begin{Prop}\label{prop:strong-topological-equiv}
	Let $\varphi: X \to Y$ be a spectral map with $Y$ noetherian. The following are equivalent:
	\begin{enumerate}
		\item $\varphi$ is a strong topological quotient map.
		\item Any immediate specialization $y \specializesto y'$ in $Y$ lifts to a specialization $x \specializesto x'$ in~$X$.
		\item $\varphi$ satisfies the weak lifting property.
	\end{enumerate}
\end{Prop}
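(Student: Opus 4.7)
The plan is to run a cycle (a) $\Rightarrow$ (b) $\Rightarrow$ (c) $\Rightarrow$ (a).

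For (a) $\Rightarrow$ (b): given an immediate specialization $y \specializesto y'$, set $U = \gen(y')$, the set of generalizations of $y'$ in $Y$. Its complement $\{z \in Y : z \not\specializesto y'\}$ is specialization-closed by transitivity, hence Thomason since $Y$ is noetherian. By (a) the corestriction $\varphi^{-1}(U) \to U$ is a topological quotient. Inside $U$ we have $\overline{\{y\}} \cap U = \{y, y'\}$ by immediacy, so $\{y\}$ is not closed in $U$; therefore $\varphi^{-1}(y)$ is not closed in $\varphi^{-1}(U)$. Choose $x \in \varphi^{-1}(y)$ admitting a specialization $x' \in \varphi^{-1}(U) \setminus \varphi^{-1}(y)$. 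Then $\varphi(x') \in \overline{\{y\}} \cap U \setminus \{y\} = \{y'\}$, yielding the desired lift.

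For (b) $\Rightarrow$ (c): since $Y$ is noetherian, strictly descending specialization chains in $\overline{\{y\}}$ terminate by DCC on closed subsets, so any specialization $y \specializesto y'$ decomposes into a finite saturated chain of immediate specializations $y = y^{(0)} \specializesto y^{(1)} \specializesto \cdots \specializesto y^{(n)} = y'$. Lifting each immediate step via (b) to $a_i \specializesto b_i$ with $\varphi(a_i) = y^{(i)}$ and $\varphi(b_i) = y^{(i+1)}$, the interleaved sequence $x_0 = b_{n-1}$, $x_1 = b_{n-2}$, $\ldots$, $x_{n-1} = b_0$, $x_n = a_0$ witnesses the weak lifting property: for each $0 \leq i < n$, the point $a_{n-1-i}$ lies in the fiber over $\varphi(x_{i+1}) = y^{(n-1-i)}$ and specializes to $x_i = b_{n-1-i}$.

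For (c) $\Rightarrow$ (a): \cref{prop:weak-lifting-is-quotient} yields that $\varphi$ itself is a topological quotient. To upgrade to strongness, let $U \subseteq Y$ be the complement of a Thomason subset --- equivalently, since $Y$ is noetherian, $U$ is generalization-closed. For any specialization $y \specializesto y'$ in $U$, the weak lifting chain furnished by (c) remains in $\varphi^{-1}(U)$: unpacking the definition shows $\varphi(x_{i+1}) \specializesto \varphi(x_i)$, so iterating gives $\varphi(x_i) \specializesto y'$, exhibiting each $\varphi(x_i)$ (and analogously each linking point) as a generalization of $y' \in U$, hence as a point of $U$. Thus the corestriction $\varphi^{-1}(U) \to U$ inherits the weak lifting property and is a topological quotient.

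The crucial technical ingredient is the decomposition of an arbitrary specialization into a finite chain of immediate specializations used in (b) $\Rightarrow$ (c), where noetherianity of $Y$ enters essentially; the remaining steps are direct manipulations of the definitions together with \cref{prop:weak-lifting-is-quotient}.
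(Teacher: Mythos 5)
Your proposal follows the same cycle and essentially the same arguments as the paper: (a)$\Rightarrow$(b) by corestricting to $\gen(y')$ and playing the quotient property off against the fiber over $y$, (b)$\Rightarrow$(c) by decomposing a specialization into immediate ones, and (c)$\Rightarrow$(a) via the fact that weak lifting passes to corestrictions over complements of Thomason subsets together with \cref{prop:weak-lifting-is-quotient}. Two points in your write-up need attention. The minor one is in (a)$\Rightarrow$(b): you pass from ``$\varphi^{-1}(y)$ is not closed in $\varphi^{-1}(U)$'' directly to ``some $x$ in the fiber admits a specialization out of the fiber.'' In a general space, not closed does not imply not specialization-closed; here you need that the fiber is proconstructible, so that closed and specialization-closed coincide for it (the paper invokes \cite[Theorem~1.5.4]{DickmannSchwartzTressl19} at exactly this point).

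The more serious issue is your justification of the decomposition step in (b)$\Rightarrow$(c). Termination of strictly descending specialization chains (DCC on closed subsets) does give you, for any $a \specializesto b$ with $a \neq b$, an intermediate point $c$ with $c \specializesto b$ immediate: minimize $\overline{\{c\}}$ among closures of intermediate points distinct from $b$. But iterating this from $y'$ upward produces a strictly \emph{ascending} chain of point closures inside $\overline{\{y\}}$, and noetherianity imposes no ascending chain condition on closed sets, so nothing in your argument forces the process to terminate at $y$. Indeed, the inference ``descending specialization chains terminate, hence any specialization refines to a finite saturated chain of immediate specializations'' already fails for abstract posets with DCC (a bottom element, an increasing $\omega$-chain above it, and a top element). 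So the existence of the finite saturated chain is a genuine theorem about noetherian spectral spaces rather than a formal consequence of DCC; it is precisely what the paper cites \cite[Theorem~8.1.11]{DickmannSchwartzTressl19} for, and as written your proof leaves this step unestablished. Everything else, including your more detailed check in (c)$\Rightarrow$(a) that the lifting chain stays over the generalization-closed set $U$, is correct and matches the paper's argument.
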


\begin{proof}
	$(a) \Rightarrow (b)$: Let $y\specializesto y'$ be an immediate specialization. Since $\varphi$ is a \emph{strong} topological quotient, the corestriction $\varphi^{-1}(\gen(y'))\to \gen(y')$ is a topological quotient. We may thus assume that $\varphi$ is a topological quotient to a local space~$Y$ and show that we can lift any immediate specialization $y \specializesto y'$ to the unique closed point $y' \in Y$. If there were no lift then $\varphi^{-1}(\{y\})$ would be specialization closed. Moreover, since the fiber $\varphi^{-1}(\{y\})$ is proconstructible, this would imply that it is closed (\cite[Theorem~1.5.4]{DickmannSchwartzTressl19}). Since $\varphi$ is a topological quotient, this would imply that $\{y\}$ is closed, which is false.

	$(b) \Rightarrow (c)$: Since $Y$ is noetherian, every specialization $y\specializesto y'$ is realized as a finite succession of immediate specializations (\cite[Theorem~8.1.11]{DickmannSchwartzTressl19}). Thus, the lifting of each immediate specialization shows that every specialization satisfies weak lifting according to~\cref{def:weak-lifting}.

	$(c) \Rightarrow (a)$: Since the weak lifting property is strongly heritable (because the complement of a Thomason subset is closed under generalization), this follows from~\cref{prop:weak-lifting-is-quotient}.
\end{proof}

\begin{Rem}
	The implication $(c) \Rightarrow (a)$ in \cref{prop:strong-topological-equiv} does not require the noetherian assumption.
\end{Rem}

\begin{Exa}\label{exa:closed-quotient-is-strong}
	If a spectral map $\varphi:X \to Y$ is a closed quotient map then it satisfies the going-up property and thus trivially satisfies the weak lifting property. Hence it is a strong topological quotient map.
\end{Exa}

\begin{Rem}
	We will use the above characterization of (strong) spectral quotients in the proof of \cref{thm:faithful-are-quotient} below. For the remainder of this section we will collect a few more facts about spectral quotients, especially pertaining to spectral quotients whose fibers are connected, with a view toward \cref{sec:connected-fibers}.
\end{Rem}

\begin{Ter}\label{ter:saturated}
	Let $\varphi:X \to Y$ be a function. A subset $S \subseteq X$ is \emph{saturated} (with respect to $\varphi$) if $S =\varphi^{-1}(\varphi(S))$.
\end{Ter}

\begin{Lem}\label{lem:spec-saturated}
	For a surjective spectral map $\varphi:X \to Y$, the following are equivalent:
	\begin{enumerate}
		\item $\varphi$ is a spectral quotient map;
		\item A saturated subset $S \subseteq X$ is quasi-compact open if and only if $\varphi(S)$ is quasi-compact open.
	\end{enumerate}
\end{Lem}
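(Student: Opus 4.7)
The plan is a direct unpacking of the definitions, using the two basic observations that preimages $\varphi^{-1}(C)$ are automatically saturated and that spectral maps pull quasi-compact open sets back to quasi-compact open sets. The argument splits into the two implications, and each is short.

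For $(a) \Rightarrow (b)$, let $S \subseteq X$ be saturated, so that $S = \varphi^{-1}(\varphi(S))$. If $\varphi(S)$ is quasi-compact open, then $S$ is quasi-compact open because $\varphi$ is a spectral map. Conversely, if $S$ is quasi-compact open, then the set $\varphi^{-1}(\varphi(S)) = S$ is quasi-compact open, so the spectral quotient property, applied to the subset $C = \varphi(S) \subseteq Y$, yields that $\varphi(S)$ is quasi-compact open.

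For $(b) \Rightarrow (a)$, let $C \subseteq Y$ be any subset whose preimage $\varphi^{-1}(C)$ is quasi-compact open. Then $\varphi^{-1}(C)$ is saturated, and by the surjectivity of $\varphi$ we have $\varphi(\varphi^{-1}(C)) = C$. Applying (b) to $S = \varphi^{-1}(C)$ therefore gives that $C$ is quasi-compact open, which is exactly the defining property of a spectral quotient.

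There is no real obstacle in this proof; it is purely formal once one has aligned the two definitions. The only points to remain mindful of are that the direction $(a) \Rightarrow (b)$ requires $\varphi$ to be a spectral map (to get the pullback of quasi-compact opens), and that $(b) \Rightarrow (a)$ relies on surjectivity (so that $\varphi \circ \varphi^{-1}$ is the identity on subsets of $Y$). Both hypotheses are already built into the statement.
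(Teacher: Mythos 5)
Your proof is correct and is exactly the routine definition-unpacking that the paper has in mind (its own proof is simply ``This is routine from the definitions''). Both directions, including the observations that preimages are saturated and that surjectivity gives $\varphi(\varphi^{-1}(C))=C$, are handled properly.
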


\begin{proof}
	This is routine from the definitions.
\end{proof}

\begin{Cor}\label{cor:spectral-homeo}
	A spectral quotient map $\varphi:X \to Y$ is a homeomorphism if and only if it is injective.
\end{Cor}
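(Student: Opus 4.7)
The forward implication is trivial. For the converse, suppose $\varphi:X\to Y$ is an injective spectral quotient map. Since spectral quotients are surjective by definition, $\varphi$ is a continuous bijection, so the only remaining task is to show that $\varphi$ is an open map (equivalently, that $\varphi^{-1}$ is continuous).

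The plan is to apply \cref{lem:spec-saturated} and exploit the fact that when $\varphi$ is injective, \emph{every} subset $S \subseteq X$ is automatically saturated, since $\varphi^{-1}(\varphi(S))=S$. The lemma then yields: a subset $U \subseteq X$ is quasi-compact open if and only if $\varphi(U) \subseteq Y$ is quasi-compact open. In particular, $\varphi$ sends every quasi-compact open to a quasi-compact open.

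To conclude, recall that every spectral space admits a basis of quasi-compact open sets. Hence any open subset of $X$ is a union of quasi-compact opens, and its image under $\varphi$ is a union of quasi-compact opens in $Y$, hence open. Therefore $\varphi$ is an open continuous bijection, i.e., a homeomorphism.

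There is no real obstacle here: the argument is a direct consequence of \cref{lem:spec-saturated} combined with the observation that injectivity forces every subset to be saturated. The only point to be mindful of is that one must use the quasi-compact open basis of a spectral space to promote the quasi-compact open case to arbitrary opens.
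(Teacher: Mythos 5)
Your argument is correct and is essentially the paper's own proof: injectivity makes every subset saturated, \cref{lem:spec-saturated} then shows quasi-compact opens map to quasi-compact opens, and the quasi-compact open basis of a spectral space upgrades this to openness of $\varphi$.
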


\begin{proof}
	If $\varphi$ is injective then every subset $S \subseteq X$ is saturated. Hence by \cref{lem:spec-saturated}, if $S$ is quasi-compact open then $\varphi(S)$ is quasi-compact open. Since the quasi-compact opens form a basis for the topology of $X$, it follows that $\varphi$ is an open map, and hence is a homeomorphism.
\end{proof}

\begin{Lem}\label{lem:clopen-saturated}
	Let $\varphi:X \to Y$ be a continuous map whose non-empty fibers are connected. If $S\subseteq X$ is closed and open then $S$ is saturated: $S=\varphi^{-1}(\varphi(S))$.
\end{Lem}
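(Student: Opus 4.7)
The inclusion $S \subseteq \varphi^{-1}(\varphi(S))$ is automatic, so the entire content of the lemma is the reverse inclusion. My plan is to prove this pointwise by fixing $x \in \varphi^{-1}(\varphi(S))$ and showing $x \in S$ via a connectedness argument on the fiber through $x$.

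Concretely, I would pick such an $x$ and let $s \in S$ be a point with $\varphi(s) = \varphi(x)$, so that both $s$ and $x$ lie in the single fiber $F \coloneqq \varphi^{-1}(\{\varphi(s)\})$. By hypothesis $F$ is connected (and non-empty, since it contains $s$). Since $S$ is both open and closed in $X$, its complement $S^{\cc}$ is also open and closed in $X$, and hence the two sets $S \cap F$ and $S^{\cc} \cap F$ are both open (equivalently both closed) in the subspace $F$, forming a partition of $F$ into two disjoint open pieces.

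Since $s \in S \cap F$, the piece $S \cap F$ is non-empty. Connectedness of $F$ then forces $S^{\cc} \cap F = \emptyset$, so $F \subseteq S$, and in particular $x \in S$. This establishes $\varphi^{-1}(\varphi(S)) \subseteq S$, completing the proof.

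The argument is short and essentially immediate from the definitions; I do not anticipate any genuine obstacle. The only small subtlety is remembering to appeal to the fact that \emph{both} $S$ and its complement are open, so that each intersects the fiber in a relatively open set and the clopen partition argument goes through. No spectral hypotheses on $\varphi$ or $X, Y$ are needed — merely continuity, which is exactly what the statement says.
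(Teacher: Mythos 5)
Your proof is correct, and since the paper dismisses this lemma with ``routine from the definitions,'' your clopen-partition-of-the-fiber argument is exactly the intended (and essentially the only) route. No issues.
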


\begin{proof}
	This is routine from the definitions.
\end{proof}

\begin{Prop}\label{prop:preimage-connected}
	Let $\varphi :X\to Y$ be a spectral quotient map whose fibers are connected. If $Y$ is connected then $X$ is connected.
\end{Prop}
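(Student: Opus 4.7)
The plan is to prove the contrapositive: assume $X$ is disconnected and produce a disconnection of $Y$.

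First I would write $X = S \sqcup S'$ where $S$ and $S'$ are nonempty clopen subsets of $X$. Since the fibers of $\varphi$ are connected, \cref{lem:clopen-saturated} applies to both pieces, giving $S = \varphi^{-1}(\varphi(S))$ and $S' = \varphi^{-1}(\varphi(S'))$; that is, both pieces are saturated with respect to $\varphi$.

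Next I would upgrade ``clopen'' to ``quasi-compact open.'' Since $X$ is a spectral space and $S$ is closed in $X$, $S$ is quasi-compact; being also open, $S$ is quasi-compact open, and likewise for $S'$. Now the spectral quotient property enters via \cref{lem:spec-saturated}: a saturated subset is quasi-compact open if and only if its image is. Therefore $\varphi(S)$ and $\varphi(S')$ are both quasi-compact open in $Y$.

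Finally I would verify that these images form a disconnection of $Y$. Surjectivity of $\varphi$ (built into the definition of a spectral quotient map) gives $\varphi(S) \cup \varphi(S') = Y$; saturation together with $S \cap S' = \emptyset$ gives $\varphi(S) \cap \varphi(S') = \emptyset$, since any $y \in \varphi(S) \cap \varphi(S')$ would force $\varphi^{-1}(\{y\}) \subseteq S \cap S' = \emptyset$, contradicting $y \in \varphi(S)$. Both images are nonempty because $S$ and $S'$ are. Thus $Y = \varphi(S) \sqcup \varphi(S')$ is a disjoint union of two nonempty open subsets, contradicting the connectedness of $Y$. No step looks like a serious obstacle: the only thing to be careful about is remembering that clopen subsets of a spectral space are automatically quasi-compact open, so that \cref{lem:spec-saturated} can be applied.
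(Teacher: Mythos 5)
Your proposal is correct and follows essentially the same route as the paper: both use \cref{lem:clopen-saturated} (connected fibers make clopen sets saturated), the observation that clopen subsets of a spectral space are quasi-compact open, and \cref{lem:spec-saturated} to push the pieces forward to a disjoint open cover of $Y$. The only difference is cosmetic — you argue by contrapositive while the paper argues directly that one piece of the decomposition must be all of $X$.
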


\begin{proof}
	Suppose $X=U_1 \sqcup U_2$ is a disjoint union of open subsets. In particular, each~$U_i$ is both closed and open. Hence by \cref{lem:clopen-saturated}, each $U_i$ is saturated. Moreover, since closed subsets of quasi-compact spaces are quasi-compact, each $U_i$ is quasi-compact open. Hence \cref{lem:spec-saturated} implies that $\varphi(U_i)$ is quasi-compact open in $Y$. Moreover, note that $\varphi^{-1}(\varphi(U_1)\cap \varphi(U_2)) = \varphi^{-1}(\varphi(U_1)) \cap \varphi^{-1}(\varphi(U_2))=U_1 \cap U_2=\emptyset$ so that $\varphi(U_1)\cap \varphi(U_2) = \emptyset$. Thus $Y=\varphi(X)=\varphi(U_1)\cup \varphi(U_2)$ is a disjoint union of open sets. Hence, since $Y$ is connected, we have $Y=\varphi(U_i)$ for some $i$ so that $X=\varphi^{-1}(\varphi(U_i))=U_i$.
\end{proof}

\begin{Cor}\label{cor:preimage-connected}
	Let $\varphi:X \to Y$ be a spectral quotient map whose fibers are connected. Then $\varphi^{-1}(C)$ is connected for any connected subset $C \subseteq Y$ which is either quasi-compact open or Thomason closed.
\end{Cor}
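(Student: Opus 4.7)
The plan is to reduce to \cref{prop:preimage-connected} by showing that the corestriction $\varphi|_{\varphi^{-1}(C)} : \varphi^{-1}(C) \to C$ is itself a spectral quotient map with connected fibers; then since $C$ is connected, \cref{prop:preimage-connected} gives that $\varphi^{-1}(C)$ is connected. The fibers of the corestriction are exactly the fibers of $\varphi$ over the points of $C$, so they are connected by hypothesis. The content therefore lies in verifying that the corestriction is a spectral quotient in each of the two cases.

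For $C \subseteq Y$ quasi-compact open, this is immediate from the remark (recorded early in \cref{sec:spectral}) that the corestriction of a spectral quotient to a quasi-compact open of the target is again a spectral quotient. Note also that $\varphi^{-1}(C)$ is quasi-compact open in $X$ since $\varphi$ is spectral, so the statement makes sense as written.

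For $C \subseteq Y$ Thomason closed, write $C = Y \setminus W$ with $W \subseteq Y$ quasi-compact open. Then $\varphi^{-1}(C) = X \setminus \varphi^{-1}(W)$ is Thomason closed in $X$. To check that the corestriction is a spectral quotient, suppose $D \subseteq C$ is such that $\varphi^{-1}(D)$ is quasi-compact open in $\varphi^{-1}(C)$ (the latter endowed with the subspace/spectral topology). By the standard description of quasi-compact opens of a Thomason closed subspace, there is a quasi-compact open $V \subseteq X$ with $\varphi^{-1}(D) = V \cap \varphi^{-1}(C)$. Consider
\[
V' \coloneqq V \cup \varphi^{-1}(W) \;=\; \varphi^{-1}(D \cup W),
\]
which is quasi-compact open in $X$ as the union of two quasi-compact opens. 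Since $\varphi$ is a spectral quotient, $D \cup W$ is quasi-compact open in $Y$, and hence $D = (D \cup W) \cap C$ is quasi-compact open in $C$. This shows the corestriction is a spectral quotient, and \cref{prop:preimage-connected} then finishes the proof.

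The main (minor) obstacle is the Thomason closed case, since corestrictions to Thomason closed subsets were not addressed in the preceding remarks; the argument above handles it by enlarging the given quasi-compact open by $\varphi^{-1}(W)$ to apply the spectral quotient hypothesis on $Y$. Everything else is a direct invocation of \cref{prop:preimage-connected}.
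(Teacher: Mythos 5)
Your argument is correct and follows essentially the same route as the paper: corestrict $\varphi$ to $C$, observe that the corestriction is again a spectral quotient map with connected fibers, and invoke \cref{prop:preimage-connected}. The only difference is that the paper asserts the Thomason-closed case of this corestriction fact in one line, whereas you verify it explicitly (and correctly) by enlarging the quasi-compact open by $\varphi^{-1}(W)$.
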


\begin{proof}
	If $C \subseteq Y$ is quasi-compact open or the complement of a quasi-compact open then the corestriction $\varphi^{-1}(C) \to C$ is itself a spectral quotient map (with connected fibers). The result thus follows from \cref{prop:preimage-connected}.
\end{proof}

\begin{Prop}\label{prop:strong-spec-topo}
	Let $\varphi:X \to Y$ be a strong spectral quotient map whose fibers are connected. If $Y$ is noetherian then $\varphi$ is a strong topological quotient map.
\end{Prop}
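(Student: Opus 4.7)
My plan is to verify condition (b) of \cref{prop:strong-topological-equiv} -- namely, that every immediate specialization $y \specializesto y'$ in $Y$ lifts to a specialization $x \specializesto x'$ in $X$ -- which, since $Y$ is noetherian, will suffice to conclude that $\varphi$ is a strong topological quotient map.

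To verify (b), fix an immediate specialization $y \specializesto y'$ in $Y$. I will first localize at $y'$ by restricting to $U := \gen(y')$. The complement $Y \smallsetminus U$ is specialization-closed, and in a noetherian spectral space every specialization-closed subset is Thomason (it is the union of the closures of its points, each of which is closed with quasi-compact open complement). So $U$ is the complement of a Thomason subset, and the strong spectral quotient hypothesis gives that the corestriction $\varphi|_U : \varphi^{-1}(U) \to U$ is itself a spectral quotient, with connected fibers. The crucial geometric observation is that inside $U$ the closure of $y$ collapses to a two-point set:
\[
	\overline{\{y\}} \cap U = \{z \in \overline{\{y\}} : z \specializesto y'\} = \{y, y'\},
\]
the second equality being the very content of immediateness. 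This two-point set is closed (hence Thomason closed) in the noetherian space $U$, and is irreducible with generic point $y$, hence connected.

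Next I will apply \cref{cor:preimage-connected} to $\varphi|_U$, which yields that $\varphi^{-1}(\{y,y'\})$ is connected. Then I consider the partition $\varphi^{-1}(\{y, y'\}) = \varphi^{-1}(y) \sqcup \varphi^{-1}(y')$. Since $\{y\}$ is open and $\{y'\}$ is closed in the irreducible two-point space $\{y, y'\}$, the subset $\varphi^{-1}(y)$ is constructible and open, while $\varphi^{-1}(y')$ is closed in $\varphi^{-1}(\{y, y'\})$; both are nonempty by surjectivity. If $\varphi^{-1}(y)$ were also closed, the decomposition would display $\varphi^{-1}(\{y, y'\})$ as a disjoint union of two nonempty clopen subsets, contradicting connectedness. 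Hence the constructible set $\varphi^{-1}(y)$ fails to be specialization-closed in $\varphi^{-1}(\{y, y'\})$, which produces $x \in \varphi^{-1}(y)$ and $x' \in \varphi^{-1}(y')$ with $x \specializesto x'$ in $X$ -- the required lift.

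The main obstacle, and the only place where all three hypotheses are jointly needed, is the opening reduction to $\gen(y')$: restricting only to quasi-compact open neighborhoods of $y'$ would in general not trim $\overline{\{y\}}$ down to $\{y, y'\}$, and it is precisely the \emph{strong} spectral quotient hypothesis -- corestriction to complements of arbitrary Thomason subsets -- that permits passing to the pro-open neighborhood $\gen(y')$ and thereby applying the connected-fiber argument on the clean two-point closure.
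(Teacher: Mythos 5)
Your proof is correct and follows essentially the same route as the paper: reduce to lifting immediate specializations via \cref{prop:strong-topological-equiv}, use the \emph{strong} hypothesis to corestrict to $\gen(y')$, apply \cref{cor:preimage-connected} to the Thomason closed connected set $\{y,y'\}$, and extract the lift from connectedness together with the fact that the (pro)constructible fiber $\varphi^{-1}(\{y\})$ is closed iff specialization closed. The only cosmetic difference is that you phrase the last step as an impossible clopen decomposition, whereas the paper writes $\varphi^{-1}(\{y,y'\})=\overbar{\varphi^{-1}(\{y\})}\cup\varphi^{-1}(\{y'\})$ and argues the two closed pieces must meet — the same mechanism.
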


\begin{proof}
	By \cref{prop:weak-lifting-is-quotient}, it suffices to establish that any immediate specialization $y \specializesto y'$ in $Y$ lifts to a specialization $x \specializesto x'$ in $X$. Since $\varphi$ is a \emph{strong} spectral quotient map, the corestriction $\varphi^{-1}(\gen(y')) \to \gen(y')$ is a spectral quotient map. Thus, it suffices to assume $\varphi$ is a spectral quotient map to a local space $Y$ and prove that we can lift every immediate specialization $y \specializesto y'$ to the unique closed point $y' \in Y$. (This is the same reduction used in the proof of $(a) \Rightarrow (b)$ in \cref{prop:strong-topological-equiv}.) Note that $\{y,y'\} = \overbar{\{y\}}$ is a Thomason closed subset. Hence $\varphi^{-1}(\{y,y'\})$ is connected by \cref{cor:preimage-connected}. Note that $\varphi^{-1}(\{y,y'\}) = \overbar{\varphi^{-1}(\{y\})} \cup \varphi^{-1}(\{y'\})$ is a union of nonempty closed sets, and hence has a nontrivial intersection. Since the fiber $\varphi^{-1}(\{y\})$ is proconstructible, its closure is the same as its specialization closure. We conclude that there exists a specialization $x \specializesto x'$ which lifts $y \specializesto y'$. This completes the proof.
\end{proof}

\begin{Lem}\label{lem:get-rid-of-weak}
	Let $X$ be a spectral space and let $\{S_i\}_{i \in I}$ be a set of Thomason closed subsets with the property that any finite intersection of the $S_i$ is connected. Then the whole intersection $\bigcap_{i \in I} S_i$ is connected.
\end{Lem}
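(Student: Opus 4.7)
The plan is to argue by contradiction, leveraging the compactness of the constructible topology to reduce a hypothetical separation of the intersection to a separation of some finite intersection. Set $T = \bigcap_{i \in I} S_i$. Since each $S_i$ is Thomason closed (hence clopen in the constructible topology), $T$ is proconstructible in $X$ and thus, in its own right, a spectral space in the induced topology. Assume for contradiction that $T$ is disconnected and write $T = U_1 \sqcup U_2$ with $U_1,U_2$ nonempty and clopen in $T$. Because $T$ is spectral, the $U_j$ are quasi-compact open in $T$, and the standard fact that quasi-compact opens on a proconstructible subspace lift to quasi-compact opens on the ambient spectral space produces qc-opens $V_1,V_2\subseteq X$ with $U_j = T \cap V_j$.

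Disjointness and covering translate into
\[
	T \cap V_1 \cap V_2 = \emptyset \quad\text{and}\quad T \cap (V_1 \cup V_2)^{\cc} = \emptyset.
\]
Now I would invoke the compactness argument. In the constructible topology on $X$ (which is compact Hausdorff), each $S_i$ is closed, and both $V_1 \cap V_2$ (qc-open) and $(V_1 \cup V_2)^{\cc}$ (Thomason closed) are closed. Thus the two families $\{S_i \cap V_1 \cap V_2\}_{i \in I}$ and $\{S_i \cap (V_1 \cup V_2)^{\cc}\}_{i \in I}$ consist of constructibly-closed sets with empty overall intersection, so compactness gives finite subfamilies with empty intersection. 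Combining the two finite index sets, there is a single finite $J \subseteq I$ such that, setting $S \coloneqq \bigcap_{j \in J} S_j$, one has
\[
	S \cap V_1 \cap V_2 = \emptyset \quad\text{and}\quad S \subseteq V_1 \cup V_2.
\]

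Finally I would apply the hypothesis: $S$ is a finite intersection of the $S_i$ and is therefore connected. But $S \cap V_1$ and $S \cap V_2$ are relatively open subsets of $S$ that are disjoint and cover $S$, so one of them must be empty. If $S \cap V_1 = \emptyset$ then $T \subseteq S \subseteq V_1^{\cc}$, contradicting $U_1 = T \cap V_1 \neq \emptyset$; symmetrically if $S \cap V_2 = \emptyset$. Either way we obtain a contradiction, so $T$ is connected.

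The only non-routine ingredient is the realization of the clopen pieces $U_j$ as traces of qc-opens $V_j \subseteq X$; this uses that any qc-open of a proconstructible subspace of a spectral space is the restriction of a qc-open of the ambient space (which follows by expressing it as a finite union of traces of basic qc-opens, using quasi-compactness in the induced topology). Once this is in hand, the rest is a clean compactness-plus-connectedness argument, so I do not anticipate any serious obstacle.
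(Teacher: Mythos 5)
Your proof is correct and follows essentially the same route as the paper: assume a separation of $\bigcap_{i\in I}S_i$, use quasi-compactness to replace the full family by a finite subfamily witnessing the same separation, and then contradict the hypothesis that finite intersections are connected. The only difference is packaging — you lift the clopen pieces of the proconstructible subspace to quasi-compact opens $V_1,V_2$ of $X$ and argue with compactness of the constructible topology, whereas the paper takes the separating sets to be Thomason closed (via \cite[Proposition~6.6.1]{DickmannSchwartzTressl19}) and extracts the finite subfamilies from open covers of $Z_1^\cc\cap Z_2^\cc$ and of $X$; up to taking complements these are the same argument.
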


\begin{proof}
	Suppose $Z_1$ and $Z_2$ are Thomason closed subsets of $X$ such that
	\[
		\bigcap_{i \in I} S_i \subseteq Z_1 \cup Z_2 \quad\text{ and }\quad \bigcap_{i \in I} S_i \cap Z_1 \cap Z_2 = \emptyset.
	\]
	We need to show that $\bigcap_{i \in I} S_i \subseteq Z_k$ for some $k=1,2$. It suffices to only consider \emph{Thomason} closed subsets $Z_i$ since $\bigcap_{i \in I} S_i$ is closed and hence proconstructible; see \cite[Proposition~6.6.1]{DickmannSchwartzTressl19}. The first hypothesis can be rewritten as $Z_1^\cc \cap Z_2^\cc \subseteq \bigcup_{i \in I} S_i^\cc$ which is an open covering of the \emph{quasi-compact} open $Z_1^\cc \cap Z_2^\cc$. Hence there exists a finite subset $J_1 \subseteq I$ such that $\bigcap_{j \in J_1} S_j \subseteq Z_1 \cap Z_2$. On the other hand, the second hypothesis can be rewritten as $X=Z_1^\cc \cup Z_2^\cc \cup \bigcup_{i \in I} S_i^\cc$. Thus, the quasi-compactness of $X$ implies that there is a finite $J_2 \subseteq I$ such that ${\bigcap_{j \in J_2} S_j \cap Z_1 \cap Z_2 = \emptyset}$. Taking $J\coloneqq J_1 \cup J_2$, we obtain a finite intersection $\bigcap_{j \in J} S_j$ satisfying both $\bigcap_{j \in J} S_j \subseteq Z_1 \cup Z_2$ and $\bigcap_{j \in J} S_j \cap Z_1 \cap Z_j = \emptyset$. By hypothesis this finite intersection is connected. Hence there is a $1 \le k \le 2$ such that $\bigcap_{j \in J} S_j \subseteq Z_k$. In particular $\bigcap_{i \in I} S_i \subseteq Z_k$ and the proof is complete.
\end{proof}

\begin{Prop}\label{prop:cofiltered}
	Let $(X_i,\varphi_{ij})$ be a cofiltered diagram in the category of spectral spaces and let $X\coloneqq \lim_{i \in I} X_i$.
	\begin{enumerate}
		\item If the transition maps $\varphi_{ij}$ are surjective then each map $f_i :X\to X_i$ is surjective.
		\item If the transition maps $\varphi_{ij}$ are spectral quotient maps then each map $f_i:X \to X_i$ is a spectral quotient map.
	\end{enumerate}
\end{Prop}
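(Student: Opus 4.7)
For part (a), my plan is to pass to the constructible topologies. Each $X_i$ becomes a compact Hausdorff space in its constructible topology, spectral maps are continuous for these topologies, and the cofiltered limit $X$ in spectral spaces underlies the cofiltered limit of the $X_i^{\mathrm{con}}$ in compact Hausdorff spaces. A standard compactness argument then applies: given $y \in X_i$, the fibers $\varphi_{ji}^{-1}(y) \subseteq X_j$, indexed over the cofinal subsystem of objects $j$ equipped with a morphism to $i$, form a cofiltered system of nonempty closed subsets of the compact Hausdorff spaces $X_j^{\mathrm{con}}$ with surjective transitions. This system has nonempty limit and any point in it provides the desired lift of $y$ to $X$.

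For part (b), surjectivity is already provided by (a), so the remaining task is to verify the defining property of \cref{ter:spectral-quotient}. Suppose $C \subseteq X_i$ is constructible with $f_i^{-1}(C) \subseteq X$ quasi-compact open. I plan to invoke two standard facts about cofiltered limits of spectral spaces, both traceable via Stone duality to the corresponding statements for filtered colimits of bounded distributive lattices: first, that every quasi-compact open subset of $X$ has the form $f_j^{-1}(U)$ for some index $j$ admitting a morphism to $i$ and some quasi-compact open $U \subseteq X_j$; second, that if two constructible subsets of $X_j$ have equal preimages in $X$, then they already become equal after pulling back along some transition $\varphi_{j'j}$.

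Applying the first fact yields $f_i^{-1}(C) = f_j^{-1}(U)$; using $f_i = \varphi_{ji} \circ f_j$, this rewrites as $f_j^{-1}(\varphi_{ji}^{-1}(C)) = f_j^{-1}(U)$. The second fact then produces some $j' \to j$ with $\varphi_{j'i}^{-1}(C) = \varphi_{j'j}^{-1}(U)$, which is quasi-compact open in $X_{j'}$. As noted earlier in the paper, spectral quotient maps are closed under composition, so $\varphi_{j'i}$ is itself a spectral quotient map; hence $C$ is quasi-compact open in $X_i$, as required.

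The main obstacle I anticipate is ensuring that the two standard facts on cofiltered limits of spectral spaces are cleanly available as references in \cite{DickmannSchwartzTressl19}; the core arguments are routine manipulations with compactness and Stone duality, but one must quote them with the correct hypotheses.
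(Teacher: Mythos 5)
Your proposal is correct, and part (a) is essentially the paper's own argument: both pass to the constructible topology and apply the nonemptiness of a cofiltered limit of nonempty quasi-compact Hausdorff spaces to a system of fibers (the paper builds auxiliary sets $A_j$ over \emph{all} indices rather than restricting to the initial subsystem over $i$, but this is cosmetic). For part (b) you take a genuinely different route. The paper only quotes the single fact that quasi-compact opens of the limit are pulled back from a finite stage, and then exploits the surjectivity of $f_k$ established in (a): from $f_k^{-1}(\varphi_{ki}^{-1}(C))=f_k^{-1}(\varphi_{kj}^{-1}(U))$ it concludes $\varphi_{ki}^{-1}(C)=\varphi_{kj}^{-1}(U)$ directly, which works for an \emph{arbitrary} subset $C$ and so verifies the definition in \cref{ter:spectral-quotient} on the nose. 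You instead invoke a second limit lemma (equality of constructible sets is detected at a finite stage), which forces you to restrict to constructible $C$ at the outset; that restriction is legitimate, but only because of the equivalent characterization of spectral quotient maps recorded earlier in the paper (via \cite[Theorem~6.4.9]{DickmannSchwartzTressl19}), and you should cite that reduction explicitly since the defining property concerns arbitrary subsets. Both of your quoted facts are indeed standard (they follow from Stone duality/compactness and appear in \cite{DickmannSchwartzTressl19} and in the Stacks Project), so your argument is sound; the trade-off is that the paper's version is more self-contained, needing one limit lemma plus surjectivity from (a), while yours needs two limit lemmas plus the constructible reduction but avoids the little "surjectivity implies preimage-equality descends" observation.
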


\begin{proof}
	First recall that the forgetful functor from the category of spectral spaces to the category of topological spaces creates limits; see \cite[Section~11.1]{DickmannSchwartzTressl19}. Also note that we can assume without loss of generality that our diagram is indexed on a cofiltered \emph{set} (as our notation above suggests); see \cite{AndrekaNemeti82} or \cite[Section~1.A]{AdamekRosicky94}.

	$(a)$: Because we can equip each space with its constructible topology, we may assume without loss of generality that the spaces are quasi-compact Hausdorff spaces. It is well-known that a cofiltered limit of non-empty quasi-compact Hausdorff spaces is non-empty; see, e.g., \cite[Chapter VIII, Theorem 3.6, page 217]{EilenbergSteenrod52} or \cite[\href{https://stacks.math.columbia.edu/tag/0A2R}{Lemma 0A2R}]{stacks-project}. With this in hand, let $i \in I$ and suppose $x_i \in X_i$. For each $j \in I$ we define a subset $A_j \subseteq X_j$ as follows. If $i \le j$ then take the singleton set $A_j\coloneqq \{\varphi_{ij}(x_i)\}$. (In particular, $A_i = \{x_i\}$.) On the other hand, if $i \not\le j$ then choose any $k\in I$ such that $k \le i$ and $k\le j$ and take $A_j\coloneqq \varphi_{kj}(\varphi^{-1}_{ki}(\{x_i\}))$. A straightforward exercise, using the hypothesis that the transition maps are surjective, establishes that this definition does not depend on the choice of $k$. (For example, if $j \le i$ then $A_j = \varphi^{-1}_{ji}(\{x_i\})$.) From the definitions, one can check that for each $j \le j'$, the transition map $\varphi_{jj'}:X_j \to X_{j'}$ restricts to a map $A_j \to A_{j'}$. Thus the~$A_j$ form a cofiltered subdiagram of the the $X_j$. Note that these subsets $A_j$ are quasi-compact (as closed subsets in the constructible topology on $X_j$) and non-empty since the transition maps are surjective by hypothesis. Thus $\lim_i A_i \subseteq \lim_i X_i$ is non-empty. Any element of $\lim_i A_i$ provides an element in $X=\lim_i X_i$ which maps to $x_i$ under~$f_i$. This proves~$(a)$.

	$(b)$: Let $C \subseteq X_i$ be a subset and suppose $f_i^{-1}(C)$ is quasi-compact open in $X$. Then  \cite[Lemma~0A2P]{stacks-project} implies that there is a $j\in I$ and an open $U \subseteq X_j$ such that $f_i^{-1}(C) = f_j^{-1}(U)$. Moreover, since $f_j$ is surjective by part $(a)$, the open $U$ is quasi-compact. Since the diagram is cofiltered, there exists a $k$ such that $k\le i$ and $k\le j$. In other words $f_j$ and $f_i$ both factor through $f_k:X\to X_k$. Since $f_k$ is surjective by $(a)$, it follows that $\varphi_{ki}^{-1}(C) = \varphi_{kj}^{-1}(U)$. Thus $\varphi_{ki}^{-1}(C)$ is a quasi-compact open set. Since $\varphi_{ki}$ is a spectral quotient map, this implies $C$ is quasi-compact open. This establishes that $f_i$ is a spectral quotient map.
\end{proof}

\section{Geometric functors}\label{sec:geometric}

We take for granted familiarity with common notation and terminology in tensor triangular geometry as discussed, for example, in \cite[Sections~1-2]{BarthelHeardSanders23a}.

\begin{Hyp}\label{hyp:geometric-functor}
	In this paper, $f^*:\cT\to \cS$ will always denote a \emph{geometric functor} (\aka coproduct-preserving tensor-triangulated functor) between \emph{rigidly-compactly generated} tensor-triangulated categories. It restricts to a tensor-triangulated functor $f^*:\cTc \to \cSc$ between the compact(=dualizable) objects and hence induces a spectral map $\varphi\coloneqq \Spc(f^*):\SpcS\to \SpcT$ of spectral spaces.
\end{Hyp}

\begin{Rem}\label{rem:corestriction-is-fully-faithful}
	For any Thomason subset $Y \subseteq \SpcT$ with $V\coloneqq Y^\cc$, we have an induced geometric functor $\cT|_V \to \cS|_{\varphi^{-1}(V)}$ which realizes the corestriction $\smash{\varphi^{-1}(V)} \to V$ on spectra; see, e.g. \cite[Proposition~1.30]{BarthelHeardSanders23a} or \cite[Remark~5.10]{Sanders22}. Note that  \emph{both} squares in 
	\[\begin{tikzcd}
		\cT \ar[r,"f^*"] \ar[d,shift right=0.75ex] & \cS \ar[d,shift right=0.75ex] \\
		\cT|_V \ar[r] \ar[u,hook,shift right=0.75ex] & \cS|_{\varphi^{-1}(V)} \ar[u,hook,shift right=0.75ex]
	\end{tikzcd}\]
	commute. Hence, if $f^*$ is (fully) faithful then the induced functor is also (fully) faithful. In particular, for any prime $\cat P \in \SpcT$, we have an induced geometric functor
	\[
		\cT_{\cat P} = \cT|_{\gen(\cat P)} \to \cS|_{\varphi^{-1}(\gen(\cat P))}
	\]
	on the local category at $\cat P$ which is (fully) faithful if $f^*$ is (fully) faithful. See \cite[Terminology~1.11, Remark~1.21 and Definition~1.25]{BarthelHeardSanders23a} for further discussion.
\end{Rem}

\begin{Exa}
	If $\cat C$ is a rigidly-compactly generated tt-$\infty$-category (in the sense of \cite[Section~5]{BCHNPS_descent}) and $A \in \CAlg(\cat C)$ then $A\text{-Mod}_{\cat C}$ is also rigidly-compactly generated. The base-change functor $\Ho(\cat C)\to \Ho(A\text{-Mod}_{\cat C})$ is a geometric functor.
\end{Exa}

\begin{Exa}
	If $\cat K=\Ho(\cat B)$ is an essentially small idempotent-complete rigid \mbox{tt-category} which has an underlying model then $\cT \coloneqq \Ho(\Ind(\cat B))$ is a rigidly-compactly generated tt-category with $\cTc = \cat K$. Moreover, any symmetric monoidal exact functor $\cat A \to \cat B$ of underlying models extends (essentially uniquely) to a geometric functor $\Ho(\Ind(\cat A))\to\Ho(\Ind(\cat B))$. See \cite[Section~2.2]{BarthelCastellanaHeardValenzuela18} and \cite[Section~5]{BCHNPS_descent} for further details.
\end{Exa}

\begin{Not}
	We will write $\cT=\Ho(\cat C)$ on occasions when we are further assuming that $\cT$ is the homotopy category of a rigidly-compactly generated tt-$\infty$-category~$\cat C$. See \cite[Section~2]{Zou25pp} for further discussion concerning this terminology. Also, although the Balmer spectrum is an invariant of the homotopy category, we will occasionally write $\Spc(\cat C^c) \coloneqq \Spc(\Ho(\cat C^c))=\SpcT$ when notationally convenient.
\end{Not}

\section{Faithful functors are quotient maps}\label{sec:faithful}

\begin{Thm}\label{thm:faithful-are-quotient}
	Let $f^*:\cT \to \cS$ be a faithful geometric functor. Then the induced map $\varphi:\SpcS\to\SpcT$ is a strong spectral quotient map.
\end{Thm}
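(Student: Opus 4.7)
The strategy is to apply \cref{cor:univ-weak-spec-is-strong-spec}, which identifies strong spectral quotients with strongly heritable weak spectral quotient maps, and then to invoke \cref{rem:corestriction-is-fully-faithful}: every corestriction of $\varphi$ along the inclusion of the complement of a Thomason subset is itself induced by a faithful geometric functor between the corresponding local tt-categories. It therefore suffices to prove the non-heritable statement that any faithful $f^*$ yields a weak spectral quotient $\varphi$. Concretely this requires (i) the surjectivity of $\varphi$, and (ii) for every basic constructible $B \subseteq \SpcT$, the Thomason closedness of $B$ whenever $\varphi^{-1}(B)$ is Thomason closed.

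Surjectivity is the easy half: faithfulness forces $f^*$ to be conservative on compacts ($f^*(x)=0 \Rightarrow x=0$), and this standard criterion in tt-geometry produces a tt-prime of $\cSc$ contracting to any prescribed tt-prime of $\cTc$. For (ii), write a basic constructible set in the form $B = \supp(a) \cap \supp(b)^{\cc}$ with $a, b \in \cTc$, so that $\varphi^{-1}(B) = \supp(f^*(a)) \cap \supp(f^*(b))^{\cc}$. Since a constructible subset of a spectral space is Thomason closed precisely when it is specialization closed, the task reduces to transporting specialization closure of $\varphi^{-1}(B)$ back to $B$. Given $P \in B$ and a specialization $P \specializesto P'$ in $\SpcT$, surjectivity supplies some $\cat Q \in \varphi^{-1}(P) \subseteq \varphi^{-1}(B)$, and lifting the specialization to some $\cat Q \specializesto \cat Q'$ in $\SpcS$ with $\varphi(\cat Q') = P'$ would force $P' = \varphi(\cat Q') \in B$ by specialization closure. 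Only the case $a \in P'$ needs to be addressed here, since $b \in P \subseteq P'$ automatically places $b$ in $P'$.

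The main obstacle, and the technical heart of the argument, is the specialization lifting above. In tt-prime language, given $\cat Q \in \SpcS$ with $(f^*)^{-1}(\cat Q) \subseteq P'$, one must exhibit a prime $\cat Q'$ of $\cSc$ with $\cat Q \subseteq \cat Q'$ and $(f^*)^{-1}(\cat Q') = P'$. The natural attack is to show that the tt-ideal of $\cSc$ generated by $\cat Q$ together with $f^*(P')$ is proper, from which Zorn's lemma (equivalently, Balmer's prime existence theorem) produces the required $\cat Q'$. Passing to the local tt-category at $P'$ via \cref{rem:corestriction-is-fully-faithful} — where the induced functor remains faithful — should obstruct the appearance of $\unit_{\cS}$ in this enlargement: such an identity would, through the faithfulness of the local functor, force $\unit_{\cT}$ into the image of $P'$, contradicting the primality of $P'$. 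Discharging this local faithfulness argument rigorously, while respecting the precise combinatorics of tt-ideal generation in a triangulated setting, is where the principal technical work will lie.
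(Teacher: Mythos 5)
Your reduction is the same as the paper's: corestrict along complements of Thomason subsets (\cref{rem:corestriction-is-fully-faithful}), so that by \cref{cor:univ-weak-spec-is-strong-spec} it suffices to show that a faithful functor induces a weak spectral quotient, and surjectivity does indeed follow from faithfulness via the standard conservativity criterion. The gap is in your ``technical heart'': you try to transport specialization-closedness of $\varphi^{-1}(B)$ down to $B$ by \emph{lifting} a specialization $\cat P \specializesto \cat P'$ of $\SpcT$ to a specialization $\cat Q \specializesto \cat Q'$ in $\SpcS$ starting from a chosen $\cat Q$ in the fiber over $\cat P$. That is precisely the going-up property, and it genuinely fails for faithful geometric functors: the paper's \cref{exa:brenner} (Brenner's split monomorphism $A\to B$ of noetherian rings) gives a faithful $\Der(A)\to\Der(B)$ whose induced map on spectra has neither going-up nor going-down, and \cref{rem:faithful-not-closed} flags exactly this. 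Even the weaker statement that \emph{some} pair in the two fibers realizes the specialization is the strong lifting property of \cite[Proposition~6.4.13]{DickmannSchwartzTressl19}, which is strictly stronger than being a (weak) spectral quotient and is not available here; this is the reason the paper introduces the zigzag ``weak lifting property'' at all, and its proof of the present theorem avoids lifting arguments entirely. Your proposed mechanism for the lift also does not cohere: properness of the tt-ideal generated by $\cat Q\cup f^*(\cat P')$ would at best give a prime \emph{containing} it, with no control on its contraction being exactly $\cat P'$, and the claimed contradiction (``faithfulness forces $\unit_{\cT}$ into $\cat P'$'') misuses faithfulness, which is a statement about morphisms, not about membership of objects in generated ideals.

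The paper's actual argument applies faithfulness at exactly the morphism level where it has traction: by \cite[Proposition~2.29]{PatchkoriaSandersWimmer22}, for quasi-compact opens $U,V\subseteq\SpcT$ with $Y=U^\cc$ and $x\in\cTc$ with $\supp(x)=V^\cc$, the set $U\cap V^\cc$ is Thomason if and only if the map $\fY\otimes x\to\Sigma\eY\otimes x$ vanishes. Since $f^*$ carries $\eY,\fY$ to the idempotents for $\varphi^{-1}(Y)$ and $\supp(f^*x)=\varphi^{-1}(V)^\cc$, vanishing of the corresponding map in $\cS$ (i.e.\ Thomasonness of $\varphi^{-1}(U\cap V^\cc)$) forces vanishing in $\cT$ by faithfulness, hence Thomasonness of $U\cap V^\cc$. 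If you want to salvage your write-up, replacing the lifting step by this criterion is the missing idea. (A minor additional point: you use the convention that specialization corresponds to inclusion of primes, whereas in the Balmer spectrum $\cat P\specializesto\cat P'$ means $\cat P'\subseteq\cat P$; this swaps which of the two membership conditions in $B$ is automatic, though it does not change the structure of the difficulty.)
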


\begin{proof}
	Since $f^*$ is faithful, the surjectivity of $\varphi$ follows from \cite[Theorem~1.3]{Balmer18} or \cite[Theorem~1.4]{BarthelCastellanaHeardSanders24}. Suppose $U$ and $V$ are two quasi-compact open subsets of $\SpcT$. We can consider the exact triangle 
	\[
		\eY \to \unit \to \fY \to \Sigma \eY
	\]
	in $\cT$ associated to the Thomason closed set $Y \coloneqq U^\cc$ and we can consider an object~$x \in \cTc$ with $\supp(x)=V^\cc$. Then \cite[Proposition~2.29]{PatchkoriaSandersWimmer22} asserts that the morphism
	\[
		\fY \otimes x \to \Sigma \eY \otimes x
	\]
	vanishes if and only if $U \cap V^\cc$ is Thomason. If we apply the geometric functor $f^*$, we obtain the morphism
	\[
		f^*(\fY) \otimes f^*(x) \to \Sigma f^*(\eY) \otimes f^*(x)
	\]
	and note that $f^*(\eY) = \altmathbb{e}_{\varphi^{-1}(Y)}$ and $f^*(\fY)=\altmathbb{f}_{\varphi^{-1}(Y)}$ by \cite[Theorem~6.3]{BalmerFavi11} or \cite[Proposition~5.11]{BalmerSanders17}. Moreover, $\supp(f^*(x)) = \varphi^{-1}(\supp(x)) = \varphi^{-1}(V)^\cc$. Thus, if $f^*:\cT \to \cS$ is faithful, then $U \cap V^\cc$ is Thomason whenever $\varphi^{-1}(U) \cap \varphi^{-1}(V)^\cc$ is Thomason. In other words, a basic constructible set $C \subseteq \SpcT$ is Thomason whenever $\varphi^{-1}(C)$ is Thomason. Moreover, recall that a constructible set is Thomason if and only if it is Thomason closed. Thus, if $f^*$ is faithful then for any basic constructible set $C \subseteq \SpcT$, we have that $C$ is Thomason closed whenever $\varphi^{-1}(C)$ is Thomason closed. That is, $\varphi$ is a weak spectral quotient (\cref{def:weak-spectral-quotient}).

	Now, if $W \subseteq \SpcT$ is any complement of a Thomason subset, we can consider the corestriction $\cT|_W \to \cS|_{\varphi^{-1}(W)}$ which is again faithful by \cref{rem:corestriction-is-fully-faithful}. Hence, the above argument --- applied to these corestrictions --- shows that the map induced by a faithful functor is a \emph{strongly heritable} weak spectral quotient (\cref{def:strongly-heritable}) and we can invoke \cref{cor:univ-weak-spec-is-strong-spec}.
\end{proof}

\begin{Rem}\label{rem:faithful-not-closed}
	The spectral quotient map induced by a faithful functor need not satisfy the going-up property nor need it satisfy the going-down property (even if the spaces involved are noetherian); see \cref{exa:brenner} below. In particular, it need not be a closed map nor an open map; see \cite[Section~5.3]{DickmannSchwartzTressl19}.
\end{Rem}

\begin{Rem}
	It is possible for $\varphi$ to be a strong spectral quotient without $f^*$ being faithful. Thus, the converse of \cref{thm:faithful-are-quotient} is false. For example, if $f^*:\cT\to \cS$ is a descendable finite {\'e}tale morphism in the sense of~\cite{Sanders22} which has finite degree in the sense of \cite{Balmer14} then $\varphi$ is a closed quotient map by \cite[Theorem~1.5]{Balmer16b}; cf.~\cite[Remark~13.26]{BarthelCastellanaHeardSanders23app}. However, $f^*$ need not be faithful. For an explicit example, let $G$ be a finite $p$-group and let $k$ be a field of characteristic $p$. Restriction to elementary abelian subgroups provides a descendable finite {\'e}tale morphism $\StMod(kG) \to \prod_{E \le G} \StMod(kE)$ of finite degree which is only faithful in the trivial case where $G$ itself is elementary abelian; see \cite[Page~911]{Balmer16b}. For further examples of non-faithful quotient maps, see \cref{rem:faithful-not-all-quotients} or \cref{exa:frobenius}.
\end{Rem}

\section{Fully faithful functors have connected fibers}\label{sec:connected-fibers}

We next prove that the map on spectra induced by a fully faithful functor has connected fibers.

\begin{Lem}\label{lem:local-ring}
	Suppose $\cT$ is a local rigidly-compactly generated category. For any non-empty Thomason closed subset $Z \subseteq \SpcT$, the endomorphism ring $\End_{\cT}(\eZ)$ is a local ring.
\end{Lem}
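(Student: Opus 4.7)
The plan is to reduce the statement to the classical fact that a rigid essentially small tt-category with local Balmer spectrum has a local endomorphism ring of the $\otimes$-unit.

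First I would observe that the smashing subcategory $\Loc\langle Z\rangle \subseteq \cT$ is itself a rigidly-compactly generated tt-category, with $\eZ$ serving as its $\otimes$-unit. The tensor of $\cT$ restricts to $\Loc\langle Z\rangle = \ker(-\otimes\fZ)$, which is a $\otimes$-ideal, and the idempotency $\eZ\otimes\eZ\simeq\eZ$ exhibits $\eZ$ as a unit. Compactness of $\eZ$ inside $\Loc\langle Z\rangle$ follows from Bousfield orthogonality (namely $\Hom_\cT(\fZ,Y)=0$ for $Y\in\Loc\langle Z\rangle$): applying $\Hom_\cT(-,\bigoplus_i Y_i)$ to the defining triangle $\eZ\to\unit\to\fZ$ reduces compactness of $\eZ$ to that of $\unit$. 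The rigid compact generators of $\Loc\langle Z\rangle$ are then the objects $x\otimes\eZ$ for $x\in\cTc$, with $\otimes$-duals $x^\vee\otimes\eZ$.

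Since the inclusion $\Loc\langle Z\rangle\hookrightarrow\cT$ is fully faithful, $\End_\cT(\eZ)=\End_{(\Loc\langle Z\rangle)^c}(\eZ)$, and the latter is the endomorphism ring of the $\otimes$-unit of the essentially small rigid tt-category $(\Loc\langle Z\rangle)^c$. I would then identify $\Spc((\Loc\langle Z\rangle)^c)=Z$, a standard feature of the smashing colocalization attached to a Thomason closed subset. Because $\cT$ is local with unique closed point $\cat P$ and $Z$ is a non-empty closed subset of $\SpcT$, the point $\cat P$ lies in $Z$; since every point of $\SpcT$ specializes to $\cat P$, the latter is automatically the unique closed point of $Z$ in the subspace topology. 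Thus $Z$ is a local spectral space, and Balmer's theorem yields that $\End_\cT(\eZ)$ is a local ring.

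The main obstacle is the structural verification that $\Loc\langle Z\rangle$ is rigidly compactly generated with unit $\eZ$ and spectrum $Z$; once these facts are secured the conclusion is formal. An alternative direct route is to argue that a morphism $f:\eZ\to\eZ$ is invertible precisely when $\cat P\notin\supp(\cone(f))$, and to show that the locus of such $f$ is stable under addition with non-invertible elements, but this repackages the same input (locality of the residue category at the closed point) and seems less economical than the reduction above.
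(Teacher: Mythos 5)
There is a genuine gap: the reduction hinges on the claim that $\eZ$ is compact in $\Loc\langle\cTc_Z\rangle=\Loco{\eZ}$, so that this subcategory becomes a rigidly-compactly generated tt-category with compact unit $\eZ$ and one can quote the ``local spectrum $\Rightarrow$ local $\End(\unit)$'' fact. That claim is false in general, and the orthogonality you invoke to prove it is backwards. For the finite localization attached to a Thomason closed set the vanishing is $\Hom_{\cT}(a,\fZ\otimes t)=0$ for $a\in\Loco{\eZ}$, not $\Hom_{\cT}(\fZ,Y)=0$ for $Y\in\Loco{\eZ}$. Concretely, take $\cT=\Der(\bbZ_{(p)})$ (which is local) and $Z=\supp(\bbZ/p)$; then $\eZ\simeq\Sigma^{-1}\bbZ/p^{\infty}$ and $\fZ\simeq\bbQ$, and $\Hom_{\cT}(\fZ,\Sigma\eZ)\cong\Hom_{\bbZ}(\bbQ,\bbZ/p^{\infty})\neq0$, so your triangle argument does not give compactness of $\eZ$. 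Indeed $\eZ$ is genuinely not compact in $\Loco{\eZ}$: the compact objects of $\Loco{\eZ}$ are exactly $\thick\langle \bbZ/p\rangle$ (Neeman/Miller finite localization theory), and $\Sigma^{-1}\bbZ/p^{\infty}$ does not lie there. In general $(\Loco{\eZ})^c$ is the non-unital tt-ideal of compacts of $\cT$ supported in $Z$; it contains a unit only when $Z$ is also open, so the classical statement about the endomorphism ring of the unit of a local rigid tt-category cannot be applied to $\End_{\cT}(\eZ)$ in this way.

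This is precisely the difficulty the paper's proof is engineered to avoid: it records that one cannot invoke the compact/rigid result directly because $\eZ$ need not be compact, shows instead that every endomorphism $f$ of the idempotent $\eZ$ is tensor-balanced, hence satisfies $f^{\otimes2}\otimes\cone(f)=0$, and then tensors with a compact $x$ with $\supp(x)=Z$ so that $\cone(f)\otimes x$ is compact; locality of $\cT$ (a statement about compact objects) then shows the nonunits are closed under addition. Your sketched alternative (``$f$ is invertible iff $\cat P\notin\supp(\cone(f))$'') runs into the same issue, since $\cone(f)$ is not compact and the additivity of nonunits is exactly where the nilpotence/tensor-balanced trick is needed; so some version of the paper's argument (or a separate proof that $\eZ\otimes x$-type reductions suffice) is unavoidable.
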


\begin{proof}
	Since $\eZ\in \cT$ is an idempotent coring, the switch map $\tau :\eZ \otimes \eZ \xrightarrow{\sim} \eZ \otimes \eZ$ coincides with the identity map; cf.~\cite[Example~2.41]{Sanders22}. This implies that every endomorphism of $\eZ$ is ``tensor-balanced'' in the terminology of \cite[Definition~3.1]{Sanders13}. By \cite[Lemma~3.3]{Sanders13}, every such endomorphism $f:\eZ\to \eZ$ has the property that $f^{\otimes 2}\otimes \cone(f)=0$. However, we cannot invoke \cite[Proposition~3.5]{Sanders13} directly since the assumption that $\cT$ is local only asserts that the tensor product of two nonzero \emph{compact} objects is nonzero, while $\eZ$ is not necessarily compact. Nevertheless, we can proceed as follows.

	Recall that for any object $t \in \cT$ we have $t \in \Loco{\eZ}$ if and only if $t \simeq \eZ \otimes t$. In particular, since $\cone(f) \in \Loco{\eZ}$ we have $\cone(f) \simeq \cone(f) \otimes \eZ$. Since $Z\subseteq \SpcT$ is Thomason closed, there exists a compact object $x \in \cTc$ with $\supp(x)=Z$. Then $\Loco{x}=\Loco{\eZ}$ and it readily follows (for example, from \cite[Lemma~3.6]{BarthelHeardSanders23a}) that 
	\[
		\Loco{\cone(f)\otimes x}= \Loco{\cone(f)\otimes \eZ}= \Loco{\cone(f)}.
	\]
	Note that the object $\cone(f)\otimes x$ is compact as it is the cone of the endomorphism
	\[
		\eZ \otimes x \xrightarrow{f\otimes 1} \eZ \otimes x
	\]
	of the compact object $\eZ \otimes x \simeq x \in \cTc$. In summary, the endomorphism $f$ is a nonunit if and only if the compact object $\cone(f)\otimes x$ is nonzero.

	We conclude that if $f,g \in \End_{\cT}(\eZ)$ are two nonunits then $\cone(f)\otimes x\neq 0$ and $\cone(g)\otimes x\neq 0$. Since these objects are compact and $\cT$ is assumed to be local, it follows that $\cone(f)\otimes x \otimes \cone(g)\otimes x \neq 0$ and hence that 
	\begin{equation}\label{eq:conefconeg}
		\cone(f)\otimes\cone(g)\otimes x \neq 0.
	\end{equation}
	This implies that $f+g$ is also a nonunit. Indeed, it follows from $f^{\otimes 2}\otimes \cone(f)=0$ and $g^{\otimes 2}\otimes \cone(g)=0$ that $(f+g)^{\otimes n} \otimes \cone(f)\otimes \cone(g) = 0$ for $n \ge 3$ as in the proof of \cite[Proposition~3.5]{Sanders13}. Thus, if $f+g$ were a unit, then $(f+g)^{\otimes 3}\otimes \cone(f)\otimes\cone(g)$ is an endomorphism of $\eZ^{\otimes 3}\otimes \cone(f)\otimes \cone(g)$ which is both an isomorphism and the zero morphism. It follows that $\eZ \otimes \cone(f)\otimes\cone(g)\simeq \eZ^{\otimes 3}\otimes\cone(f)\otimes\cone(g)=0$. Tensoring with $x$, we obtain $x \otimes \cone(f)\otimes\cone(g)=0$, which contradicts \eqref{eq:conefconeg}. We have thus established that the sum of two nonunits is a nonunit; that is, the nonzero ring $\End_{\cT}(\eZ)$ is local.
\end{proof}

\begin{Prop}\label{prop:connected-preimage}
	Let $f^*:\cT \to \cS$ be a fully faithful geometric functor. If $\cT$ is local then the preimage $\varphi^{-1}(Z)$ of any nonempty specialization closed subset $Z \subseteq \SpcT$ is connected.
\end{Prop}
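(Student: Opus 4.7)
The plan is to proceed in two stages: reduce to the case where $Z$ is Thomason closed via \cref{lem:get-rid-of-weak}, and then handle that case via an idempotent argument exploiting \cref{lem:local-ring} and the full faithfulness of $f^*$. For the reduction, I would express $Z$ as the intersection of the family $\mathcal{F}$ of all Thomason closed subsets of $\SpcT$ that contain $Z$. This family is closed under finite intersections, since $\supp(x) \cap \supp(y) = \supp(x \otimes y)$ realizes the intersection of two Thomason closed subsets as Thomason closed. The pullback family $\{\varphi^{-1}(S)\}_{S \in \mathcal{F}}$ then consists of Thomason closed subsets of $\SpcS$ and is closed under finite intersections, so \cref{lem:get-rid-of-weak} reduces the problem to showing $\varphi^{-1}(S)$ is connected for every nonempty Thomason closed $S \in \mathcal{F}$.

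For the Thomason closed case, set $W \coloneqq \varphi^{-1}(S)$. By \cref{lem:local-ring} the endomorphism ring $\End_{\cT}(\altmathbb{e}_S)$ is local, and full faithfulness of $f^*$ identifies it with $\End_{\cS}(f^*(\altmathbb{e}_S)) = \End_{\cS}(\altmathbb{e}_W)$, so the latter ring has no nontrivial idempotents. Suppose for contradiction that $W = W_1 \sqcup W_2$ with both $W_i$ nonempty and clopen in the subspace topology on $W$. The Thomason closed subspace $W \subseteq \SpcS$ is itself a spectral space, so each $W_i$ is quasi-compact open in $W$ and hence of the form $W_i = W \cap U_i$ for some quasi-compact open $U_i \subseteq \SpcS$, with $U_i = \supp(y_i)^c$ for some $y_i \in \cSc$. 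Writing $T_i \coloneqq \supp(y_i)$, the decomposition $W = W_1 \sqcup W_2$ forces both $W \subseteq T_1 \cup T_2$ and $W \cap T_1 \cap T_2 = \emptyset$, from which one readily computes $W_i = W \cap T_{3-i}$. In particular, each $W_i$ is the intersection of two Thomason closed subsets of $\SpcS$ and is therefore itself Thomason closed in $\SpcS$.

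Finally I would extract the contradicting idempotent. Since $W_1, W_2$ are disjoint Thomason closed subsets of $\SpcS$ with union $W$, the object $\altmathbb{e}_{W_1} \oplus \altmathbb{e}_{W_2}$ is an idempotent algebra. Tensoring its canonical map to $\unit$ with $\altmathbb{e}_{W_i}$ and using $\altmathbb{e}_{W_1} \otimes \altmathbb{e}_{W_2} = \altmathbb{e}_{W_1 \cap W_2} = 0$ shows that the cofiber $C$ satisfies $C \otimes \altmathbb{e}_{W_i} = 0$ for each $i$. Since $\Loc\langle \altmathbb{e}_W\rangle$ coincides with $\Loc\langle \altmathbb{e}_{W_1}, \altmathbb{e}_{W_2}\rangle$ (as compact objects supported on $W$ are generated by those supported on $W_1$ or $W_2$), this forces $C \otimes \altmathbb{e}_W = 0$. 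Tensoring the triangle $\altmathbb{e}_{W_1} \oplus \altmathbb{e}_{W_2} \to \unit \to C$ with $\altmathbb{f}_W$ then yields $C \simeq \altmathbb{f}_W$, so comparison with the defining triangle of $\altmathbb{e}_W$ gives $\altmathbb{e}_W \simeq \altmathbb{e}_{W_1} \oplus \altmathbb{e}_{W_2}$ with both summands nonzero. The projection onto $\altmathbb{e}_{W_1}$ then supplies the promised nontrivial idempotent in $\End_{\cS}(\altmathbb{e}_W)$, contradicting locality. The main obstacle I anticipate is precisely this final decomposition: one must genuinely \emph{split} $\altmathbb{e}_W$ as a direct sum (rather than merely match supports), which requires the triangulated manipulations above to correctly identify the cofiber $C$ with $\altmathbb{f}_W$.
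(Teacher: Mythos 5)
Your handling of the Thomason closed case is correct and is essentially the paper's argument: you use \cref{lem:local-ring} plus full faithfulness to see that $\End_{\cS}(\altmathbb{e}_{\varphi^{-1}(S)})$ is local, and then rule out a nontrivial clopen decomposition $\varphi^{-1}(S)=W_1\sqcup W_2$ by producing an idempotent. Where you reconstruct the splitting $\altmathbb{e}_{\varphi^{-1}(S)}\simeq\altmathbb{e}_{W_1}\oplus\altmathbb{e}_{W_2}$ by hand (identifying the cofiber with $\altmathbb{f}_{\varphi^{-1}(S)}$), one can simply invoke the Mayer--Vietoris triangle $\altmathbb{e}_{Y_1\cap Y_2}\to\altmathbb{e}_{Y_1}\oplus\altmathbb{e}_{Y_2}\to\altmathbb{e}_{Y_1\cup Y_2}$ of \cite[Theorem~5.18]{BalmerFavi11} with $Y_1\cap Y_2=\emptyset$, but your route works. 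The passage from Thomason closed to closed subsets via \cref{lem:get-rid-of-weak} is also fine, since every finite intersection of Thomason closed sets containing $Z$ still contains $Z$ and is therefore nonempty.

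The genuine gap is in your opening reduction. The proposition is about an arbitrary nonempty \emph{specialization closed} subset $Z$, and such a set is in general not closed; consequently it is \emph{not} the intersection of the Thomason closed subsets containing it --- that intersection is the closure $\overline{Z}$, which can be strictly larger (for instance, the set of closed points of $\Spec(\bbZ)$ is specialization closed with closure all of $\Spec(\bbZ)$). So your argument only establishes connectedness of $\varphi^{-1}(\overline{Z})$, i.e.\ it proves the statement for closed $Z$ but not for general specialization closed $Z$. The missing step uses the locality of $\cT$ once more: write $Z=\bigcup_{i\in I} Z_i$ as a union of closed subsets (e.g.\ the closures of its points). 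Each $Z_i$ is nonempty and closed, hence contains the unique closed point $\frakm\in\SpcT$, so each $\varphi^{-1}(Z_i)$ contains the fiber $\varphi^{-1}(\{\frakm\})$, which is nonempty because $\varphi$ is surjective (faithfulness of $f^*$). By the closed case every $\varphi^{-1}(Z_i)$ is connected, and a union of connected sets sharing a common point is connected, whence $\varphi^{-1}(Z)=\bigcup_i\varphi^{-1}(Z_i)$ is connected. With this third step added, your proof matches the paper's.
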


\begin{proof}
	We first prove the result for a Thomason closed subset $Z \subseteq \SpcT$. The endomorphism ring $\End_{\cT}(\eZ)$ is local by \cref{lem:local-ring}. Since $f^*$ is fully faithful, $\End_{\cS}(f^*(\eZ))$ is local. Moreover, note that $f^*(\eZ)=\altmathbb{e}_{\varphi^{-1}(Z)}$ is the left idempotent for the Thomason closed subset~$\varphi^{-1}(Z)$; see, e.g., \cite[Remark~13.7]{BarthelCastellanaHeardSanders23app}. Recall that for any two Thomason subsets $Y_1$ and $Y_2$, we have a Mayer--Vietoris exact triangle
	\[
		\altmathbb{e}_{Y_1 \cap Y_2} \to \altmathbb{e}_{Y_1} \oplus \altmathbb{e}_{Y_2} \to \altmathbb{e}_{Y_1 \cup Y_2} \to \Sigma \altmathbb{e}_{Y_1 \cap Y_2}
	\]
	by \cite[Theorem~5.18]{BalmerFavi11}. Suppose $\varphi^{-1}(Z) = Z_1 \sqcup Z_2$ is a disjoint union of closed sets. Note that the $Z_i$ are necessarily Thomason closed sets. For example, $Z_1^\cc = (\varphi^{-1}(Z))^\cc \cup Z_2$ is a union of two quasi-compact subsets and hence is quasi-compact. Thus we would obtain $f^*(\eZ) = \altmathbb{e}_{Z_1} \oplus \altmathbb{e}_{Z_2}$ from the Mayer--Vietoris exact triangle. But since the endomorphism ring is local, the object $f^*(\eZ)$ has no nontrivial idempotent endomorphisms. Hence $\altmathbb{e}_{Z_1}=0$ or $\altmathbb{e}_{Z_2} = 0$. That is, $Z_1=\emptyset$ or $Z_2=\emptyset$. In summary, the preimage $\varphi^{-1}(Z)$ is connected.

	We have proved the result in the case of a Thomason closed subset $Z$. Now suppose that $Z$ is an arbitrary (nonempty) closed subset. We may write $Z=\bigcap_{i \in I} Z_i$ as an intersection of nonempty Thomason closed sets. Then $\varphi^{-1}(Z) = \bigcap_{i \in I} \varphi^{-1}(Z_i)$. Each of the Thomason closed sets $S_i \coloneqq \varphi^{-1}(Z_i)$ is connected. Moreover, any finite intersection of the $S_i$ is the preimage of a finite intersection of the $Z_i$; but any finite intersection of the $Z_i$ is Thomason closed (and nonempty since it contains the unique closed point) and hence its preimage is connected by what we have already proved. Therefore, \cref{lem:get-rid-of-weak} implies that the arbitrary intersection $\varphi^{-1}(Z)=\bigcap_{i \in I}\varphi^{-1}(Z_i)$ is connected.

	Finally, suppose $Z$ is an arbitrary (nonempty) specialization closed subset. We may write $Z=\bigcup_{i \in I} Z_i$ as a union of closed subsets. Then $\varphi^{-1}(Z) = \bigcup_{i \in I} \varphi^{-1}(Z_i)$ is a union of connected sets which contain a common point (since each $Z_i$ contains the unique closed point). Hence $\varphi^{-1}(Z)$ is also connected.
\end{proof}

\begin{Thm}\label{thm:connected-fibersb}
	Let $f^*:\cT \to \cS$ be a fully faithful geometric functor. The fiber $\varphi^{-1}(\{\cat P\})$ over any point $\cat P \in \SpcT$ is connected.
\end{Thm}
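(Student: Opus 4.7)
The plan is to reduce the statement to \cref{prop:connected-preimage} by passing to the local category at $\cat P$. The subset $\gen(\cat P) \subseteq \SpcT$ is the complement of a Thomason subset, so \cref{rem:corestriction-is-fully-faithful} provides a fully faithful geometric functor
\[
	\cT_{\cat P} = \cT|_{\gen(\cat P)} \to \cS|_{\varphi^{-1}(\gen(\cat P))}
\]
whose induced map on Balmer spectra realizes the corestriction $\varphi^{-1}(\gen(\cat P)) \to \gen(\cat P)$ of the original map $\varphi$.

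Now, the spectrum $\Spc(\cT_{\cat P}^c) = \gen(\cat P)$ is a local spectral space whose unique closed point is $\cat P$ itself. Hence the singleton $\{\cat P\}$ is a nonempty closed subset of $\gen(\cat P)$ and, in particular, is specialization closed. Applying \cref{prop:connected-preimage} to the corestricted fully faithful functor above immediately yields that the preimage of $\{\cat P\}$ is connected. Since this preimage agrees with the fiber $\varphi^{-1}(\{\cat P\})$ of the original map (as $\{\cat P\} \subseteq \gen(\cat P)$), the conclusion follows.

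I do not anticipate any serious obstacle here: the technical content has already been carried out in \cref{prop:connected-preimage}, and the present theorem is the special case in which the specialization closed subset in the local category is the singleton at the maximal ideal. The only substantive observation is that \cref{rem:corestriction-is-fully-faithful} permits the reduction to the local case while preserving full faithfulness, which is precisely the hypothesis needed to invoke \cref{prop:connected-preimage}.
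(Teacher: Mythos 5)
Your proposal is correct and is essentially identical to the paper's own proof: corestrict to $\varphi^{-1}(\gen(\cat P)) \to \gen(\cat P)$ using \cref{rem:corestriction-is-fully-faithful} to reduce to the case of a local category, and then apply \cref{prop:connected-preimage} to the singleton at the unique closed point. No gaps.
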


\begin{proof}
	Performing corestriction to $\varphi^{-1}(\gen(\cat P)) \to \gen(\cat P)$ we obtain another fully faithful functor $\cT_{\cat P} \to \cS|_{\varphi^{-1}(\gen(\cat P))}$ as in \cref{rem:corestriction-is-fully-faithful}. Thus, we can assume without loss of generality that $\cT$ is local and prove that the fiber over the unique closed point $\frakm \in \SpcT$ is connected. We may then invoke \cref{prop:connected-preimage}.
\end{proof}

\begin{Thm}\label{thm:main-thmb}
	Let $f^*:\cT \to \cS$ be a fully faithful geometric functor. The induced map $\varphi:\SpcS \to \SpcT$ is a strong spectral quotient map with connected fibers.
\end{Thm}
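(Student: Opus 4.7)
The plan is to simply combine the two main inputs already established in the preceding sections. Since a fully faithful functor is in particular faithful, \cref{thm:faithful-are-quotient} immediately tells us that $\varphi:\SpcS\to\SpcT$ is a strong spectral quotient map. It therefore suffices to note that the fibers are connected, which is exactly the content of \cref{thm:connected-fibersb}. Both properties together yield the desired statement, so no new ingredients are required.

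In more detail, I would write: ``Since every fully faithful functor is faithful, \cref{thm:faithful-are-quotient} implies that $\varphi$ is a strong spectral quotient map. The connectedness of the fibers is \cref{thm:connected-fibersb}.'' There is essentially no obstacle here, since all of the hard work has been done: \cref{thm:faithful-are-quotient} used the Mayer--Vietoris/support argument via \cite[Proposition~2.29]{PatchkoriaSandersWimmer22} together with the characterization of spectral quotients as strongly heritable weak spectral quotients (\cref{cor:univ-weak-spec-is-strong-spec}), and \cref{thm:connected-fibersb} reduced to the local case and deduced connectedness from the fact that $\End_{\cT}(\eZ)$ is local (\cref{lem:local-ring}) together with the Mayer--Vietoris triangle for idempotents.

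The only minor consideration is to make explicit that ``fully faithful'' entails ``faithful'' in the geometric-functor sense used in \cref{hyp:geometric-functor}; but this is immediate from the definitions and requires no argument. Hence the proof is a one-line assembly of the two preceding theorems.
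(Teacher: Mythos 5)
Your proposal is correct and is exactly the paper's own argument: the proof of \cref{thm:main-thmb} is simply the combination of \cref{thm:faithful-are-quotient} and \cref{thm:connected-fibersb}. Nothing further is needed.
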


\begin{proof}
	This just puts together \cref{thm:connected-fibersb} and \cref{thm:faithful-are-quotient}.
\end{proof}

\begin{Rem}\label{rem:spectral-is-strong-ff}
	The strong spectral quotient $\varphi:\SpcS\to \SpcT$ in \cref{thm:main-thmb} is a strong topological quotient if either of the spaces is noetherian. This follows from \cref{prop:strong-spec-topo}. In our later discussion, we'll often use this stronger consequence of the theorem without further comment.
\end{Rem}

\begin{Rem}
	If we only assume that $f^*$ is faithful (rather than \emph{fully} faithful) then the fibers of $\varphi$ need not be connected. For example, if $k$ is a field then the ring homomorphism $k \to k\times k$ induces a faithful geometric functor $\Der(k)\to \Der(k \times k)$ which on spectra is the projection of two disconnected points to a single point. For another example, see \cref{rem:inflation-in-SHG}.
\end{Rem}

\section{Characterizations of (fully) faithful functors}\label{sec:characterizations}

Since the results of the last two sections concern (fully) faithful geometric functors, it is worth including some remarks about when this holds.

\begin{Rem}
	Recall from \cite{BalmerDellAmbrogioSanders16} that the geometric functor $f^*:\cT \to \cS$ (\cref{hyp:geometric-functor}) automatically has a right adjoint $f_*$ which itself has a right adjoint~$f^!$:
	\[
		f^*\dashv f_*\dashv f^!
	\]
	The projection formulas of \cite[(2.16) and (2.18)]{BalmerDellAmbrogioSanders16} imply that we have natural isomorphisms
	\[
		f_*f^* \simeq f_*\unitS \otimes - \qquad\text{ and }\qquad
		f_*f^! \simeq \ihom{f_*\unitS,-}.
	\]
	Note that $f_*\unitS$ is a commutative ring object in $\cT$ whose unit map $u:\unitT \to f_*\unitS$ is given by the unit $\eta$ of the $f^*\dashv f_*$ adjunction: $\unitT \to f_*f^*\unitT \simeq f_*\unitS$.
\end{Rem}

\begin{Lem}\label{lem:unit-counit}
	For any $t \in \cT$, the following two diagrams commute:
	\[\begin{tikzcd}[column sep=large]
		t \ar[r,"\eta_t"] \ar[d,"\cong"'] & f_*f^*t \ar[d,"\cong"] \\
		\unitT \otimes t \ar[r,"u\otimes 1"] & f_*\unitS \otimes t
		\end{tikzcd}\quad \text{ and }\quad\begin{tikzcd}[column sep=large]
		f_*f^!t\ar[d,"\cong"'] \ar[r,"\epsilon_t"] & t \ar[d,"\cong"]\\
		\ihom{f_*\unitS,t} \ar[r,"\ihom{u,1}"] & \ihom{\unitT,t}.
	\end{tikzcd}\]
\end{Lem}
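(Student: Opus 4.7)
The plan is to check each diagram by passing to an appropriate adjunction.

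For the first diagram, I would take $f^*\dashv f_*$-adjuncts of both routes $t\rightrightarrows f_*f^*t$. The upper route is $\eta_t$, whose adjunct is $\id_{f^*t}$ by a triangle identity for $f^*\dashv f_*$. For the lower route, I would first recall that the projection formula isomorphism $\pi_t : f_*\unitS\otimes t\xrightarrow{\sim} f_*f^*t$ is the unique natural isomorphism whose $f^*$-adjunct is the composite
\[
f^*(f_*\unitS\otimes t)\xrightarrow{\sim} f^*f_*\unitS\otimes f^*t\xrightarrow{\epsilon_{\unitS}\otimes 1}\unitS\otimes f^*t\xrightarrow{\sim} f^*t,
\]
built from the strong monoidal structure of $f^*$ and the counit of $f^*\dashv f_*$. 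Tracing the lower route $t\cong\unitT\otimes t\xrightarrow{u\otimes 1} f_*\unitS\otimes t\xrightarrow{\pi_t}f_*f^*t$ and using $u=\eta_{\unitT}$ together with the monoidality of $f^*$, its $f^*$-adjunct reduces (modulo the canonical isomorphism $f^*\unitT\cong\unitS$ and the unitors) to $(\epsilon_{\unitS}\circ f^*\eta_{\unitT})\otimes \id_{f^*t}$. A second application of the triangle identity, namely $\epsilon_{\unitS}\circ f^*\eta_{\unitT}=\id$, then collapses this to $\id_{f^*t}$, which matches the adjunct of $\eta_t$.

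For the second diagram, I would invoke Yoneda: apply $\Hom_{\cT}(s,-)$ for an arbitrary $s\in\cT$ and use adjunctions to identify every Hom-group with a Hom-group out of an object of $\cT$. The chain $f^*\dashv f_*\dashv f^!$ yields $\Hom_{\cT}(s, f_*f^!t)\cong \Hom_{\cT}(f_*f^*s, t)$, and tensor--hom yields $\Hom_{\cT}(s,\ihom{f_*\unitS, t})\cong \Hom_{\cT}(f_*\unitS\otimes s, t)$ and $\Hom_{\cT}(s,\ihom{\unitT, t})\cong \Hom_{\cT}(s, t)$. Since the second projection formula is, by uniqueness of right adjoints, the mate of the first, the left vertical iso corresponds under these identifications to precomposition with $\pi_s$; similarly, $\ihom{u,1}_*$ corresponds to precomposition with $u\otimes 1_s$, and $\epsilon_t\circ(-)$ corresponds to precomposition with $\eta_s$ (another triangle-identity calculation). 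Thus commutativity of the second diagram for all $s$ reduces to the equality $\pi_s\circ(u\otimes 1_s)=\eta_s$ modulo unitors, which is exactly the first diagram applied to $s$.

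The main obstacle will be bookkeeping the coherence data --- the unitors, the strong monoidal structure on $f^*$, the lax monoidal structure on $f_*$, and the tensor-hom adjunction --- and carefully identifying the second projection formula as the right adjoint of the first. Once these identifications are set up, the content of the argument is just the triangle identity applied in two different guises.
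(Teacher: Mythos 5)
Your proof is correct, and for the first square it is essentially the paper's own argument: the paper disposes of that square as ``routine from the definition of the projection formula,'' which is exactly your adjunct computation with the triangle identity. For the second square, however, you take a genuinely different route. The paper works directly: it recalls that the isomorphism $f_*f^!t\simeq\ihom{f_*\unitS,t}$ is, by construction, adjoint to the composite $f_*f^!t\otimes f_*\unitS\xrightarrow{\mathrm{lax}}f_*(f^!t\otimes\unitS)\simeq f_*f^!t\xrightarrow{\epsilon_t}t$, and then chases the square using dinaturality of coevaluation and a triangle identity. You instead apply $\Hom_{\cT}(s,-)$, translate all four maps through the adjunctions $f^*\dashv f_*\dashv f^!$ and tensor--hom, and reduce commutativity of the second square to the identity $\pi_s\circ(u\otimes 1_s)=\eta_s$, i.e.\ to the first square at $s$; this exhibits the second diagram as the mate of the first, which is conceptually pleasant. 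The only caution is your phrase ``by uniqueness of right adjoints'': uniqueness gives \emph{a} canonical isomorphism $f_*f^!\simeq\ihom{f_*\unitS,-}$, but it does not by itself guarantee that the specific isomorphism of \cite[(2.18)]{BalmerDellAmbrogioSanders16} used in the lemma is that canonical mate (an arbitrary natural isomorphism between two right adjoints need not respect the units and counits). You must still check, from the explicit adjoint description of (2.18) and the fact that the projection map is $\mathrm{lax}\circ(1\otimes\eta)$, that the two agree --- a short lax-naturality plus triangle-identity computation that is essentially the calculation the paper performs. You flag exactly this as the remaining bookkeeping, so the gap is one of emphasis rather than substance; the two approaches carry comparable weight, with yours trading the explicit coevaluation chase for the mate formalism.
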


\begin{proof}
	The first diagram follows in a routine manner from the definition of the projection formula in \cite[(2.16)]{BalmerDellAmbrogioSanders16}. We guide the reader through the second diagram as it is perhaps a bit more curious. Note that it relates the counit~$\epsilon$ of the $f_*\dashv f^!$ adjunction with $u$ which is related to the unit of the $f^*\dashv f_*$ adjunction. The isomorphism $f_*f^!t \simeq \ihom{f_*\unitS,t}$ arises from the $s=\unitS$ case of the adjunction isomorphism $f_*\ihom{s,f^!t}\simeq \ihom{f_*s,t}$ of \cite[(2.18)]{BalmerDellAmbrogioSanders16} as $f_*f^!t \simeq f_*\ihom{\unitS,f^!t} \simeq \ihom{f_*\unitS,t}$. By definition, the adjunction isomorphism is the map adjoint to the following composite:
	\[
		f_*\ihom{s,f^!t}\otimes f_*s \xrightarrow{\text{lax}} f_*(\ihom{s,f^!t}\otimes s) \xrightarrow{f_*(\text{ev})} f_*f^!t \xrightarrow{\epsilon_t}t.
	\]
	Since the evaluation map $\ihom{\unitS,f^!t}\otimes \unitS \to f^!t$ coincides with the composite $\ihom{\unitS,f^!t}\otimes \unitS \simeq f^!t\otimes \unitS \simeq f^!t$, one readily checks that the isomorphism $f_*f^!t \simeq \ihom{f_*\unitS,t}$ is adjoint to the map
	\begin{equation}\label{eq:lax}
		f_*f^!t \otimes f_*\unitS \xrightarrow{\text{lax}} f_*(f^!t \otimes \unitS) \simeq f_*f^!t \xrightarrow{\epsilon_t} t.
	\end{equation}
	Dinaturality of coevaluation provides
	\[\begin{tikzcd}[column sep=large]
		f_*f^!t \ar[r,"\text{coev}"] \ar[d,"\text{coev}"'] & \ihom{f_*\unitS,f_*f^!t \otimes f_*\unitS} \ar[d,"\ihom{u,1}"] \\
		\ihom{\unitT,f_*f^!t \otimes \unitT} \ar[r,"\ihom{1,1\otimes u}"] & \ihom{\unitT,f_*f^!t \otimes f_*\unitS}.
	\end{tikzcd}\]
	Armed with the above, one readily checks that the composite
	\[
		f_*f^!t \simeq \ihom{f_*\unitS,t} \xrightarrow{\ihom{u,1}}\ihom{\unitT,t}\simeq t
	\]
	coincides with
	\[
		f_*f^!t \simeq f_*f^!t \otimes \unitT \xrightarrow{1\otimes u} f_*f^!\unitT \otimes f_*\unitS \xrightarrow{\text{lax}} f_*(f^!t \otimes \unitS) \simeq f_*f^!\unitT \xrightarrow{\epsilon_t} t
	\]
	and that the composite $f_*f^!t \to f_*f^!t$ coincides with the identity map.
\end{proof}

\begin{Lem}\label{lem:invertible-ring}
	Let $R$ be a ring object in a symmetric monoidal category $\cat C$. If $R$ is an invertible object then the unit map $u:\unit \to R$ is an isomorphism.
\end{Lem}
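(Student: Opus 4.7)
The plan is to construct an explicit two-sided inverse $v: R \to \unit$ of $u$ using the ring multiplication $\mu: R \otimes R \to R$ together with the duality data for the invertible object $R$. Since $R$ is invertible it is dualizable, with dual $R^{-1}$, and both the evaluation $\epsilon: R^{-1} \otimes R \to \unit$ and the coevaluation $\eta: \unit \to R \otimes R^{-1}$ are isomorphisms. I will set
\[
v \coloneqq \Bigl( R \xrightarrow{R \otimes \eta} R \otimes R \otimes R^{-1} \xrightarrow{\mu \otimes R^{-1}} R \otimes R^{-1} \xrightarrow{\eta^{-1}} \unit \Bigr),
\]
i.e.\ the morphism corresponding to $\mu \in \Hom(R \otimes R, R)$ under the adjunction isomorphism $\Hom(R \otimes R, R) \cong \Hom(R, R \otimes R^{-1}) \cong \Hom(R, \unit)$ provided by invertibility.

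First I will verify $v \circ u = 1_\unit$ by direct diagrammatic computation. Precomposing with $u$ inserts $u$ into the first tensor slot of $R \otimes R \otimes R^{-1}$, whereupon the unit axiom $\mu \circ (u \otimes R) = 1_R$ collapses the middle map $\mu \otimes R^{-1}$ to the identity on $R \otimes R^{-1}$, and then the outer composite $\eta^{-1} \circ \eta$ is the identity on $\unit$. This is a routine bookkeeping calculation with the coherence isomorphisms.

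For the reverse composite $u \circ v = 1_R$, I will invoke cyclicity of trace: in any symmetric monoidal category, for morphisms $f: A \to B$ and $g: B \to A$ between dualizable objects one has $\mathrm{tr}_A(g \circ f) = \mathrm{tr}_B(f \circ g)$. Applied with $f = u$ and $g = v$, the left-hand side is $\mathrm{tr}_\unit(v \circ u) = v \circ u = 1_\unit$, while the right-hand side is $\mathrm{tr}_R(u \circ v)$. For the invertible object $R$, the trace map $\mathrm{tr}_R: \End(R) \to \End(\unit)$ is a ring isomorphism sending $1_R$ to $\dim(R) = 1_\unit$, so the equation $\mathrm{tr}_R(u \circ v) = 1_\unit = \mathrm{tr}_R(1_R)$ forces $u \circ v = 1_R$ by injectivity of the trace.

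The only real obstacle is the diagrammatic bookkeeping needed to define $v$ and verify $v \circ u = 1_\unit$. The two ingredients used at the end --- cyclicity of trace and the fact that trace induces a ring isomorphism $\End(R) \cong \End(\unit)$ for an invertible object --- are standard formal consequences of the symmetric monoidal axioms and require no input beyond what is already implicit in the paper.
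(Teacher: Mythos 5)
Your construction of $v$ and the verification that $v \circ u = 1_\unit$ are fine (your $v$ is the same morphism as the paper's $\theta$, obtained from $\mu$ via the duality data rather than via fullness of the equivalence $R \otimes -$). The gap is in the second half. The claim that for an invertible object the trace map $\mathrm{tr}_R \colon \End(R) \to \End(\unit)$ is a \emph{ring} isomorphism with $\mathrm{tr}_R(1_R) = \dim(R) = 1_\unit$ is not a formal consequence of invertibility: for a general invertible object $\dim(R)$ is merely a unit of $\End(\unit)$, not $1$ (e.g.\ $\Sigma\unit$ in any tensor triangulated category has $\dim(\Sigma\unit) = -1$), and the trace differs from the canonical ring isomorphism $\phi\colon \End(R) \xrightarrow{\sim} \End(\unit)$ (transport along $-\otimes R^{-1}$) by the factor $\dim(R)$; in particular it fails unitality and multiplicativity unless $\dim(R)=1$. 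With what you have actually established, injectivity of the trace plus $\mathrm{tr}_R(u\circ v) = \mathrm{tr}_\unit(v\circ u) = 1_\unit$ only yields $\phi(u \circ v) = \dim(R)^{-1}$, not $u \circ v = 1_R$. Since $\dim(R)=1$ is (given the rest of your argument) equivalent to the identity $u\circ v=1_R$ you are trying to prove, asserting it as a standard fact is where the proof breaks.

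The argument is repairable along your lines: because $v \circ u = 1_\unit$, the endomorphism $e \coloneqq u \circ v$ is idempotent, so $\phi(e)$ is an idempotent of the commutative monoid $\End(\unit)$; your trace computation shows $\phi(e) = \dim(R)^{-1}$ is invertible, and an invertible idempotent is the identity, which gives both $\dim(R) = 1$ and $u \circ v = 1_R$. Alternatively you can dispense with traces altogether, as the paper does: write $\mu$ as $R \otimes \theta$ (up to unitor) using that $R \otimes -$ is full, deduce $\theta \circ u = \id_\unit$ from one unit axiom and faithfulness, and deduce $u \circ \theta = \id_R$ from the other unit axiom together with a naturality diagram for the unitors. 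Either way, some use of the second unit axiom (or an idempotent argument substituting for it) is needed; cyclicity of the trace alone cannot close the loop.
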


\begin{proof}
	Since $R$ is invertible, the endofunctor $R\otimes -:\cat C \to \cat C$ is an equivalence of categories. Thus, there exists a morphism $\theta:R\to\unit$ such that 
	\[
		R\otimes R \xrightarrow{1\otimes\theta} R\otimes\unit \simeq R
	\]
	coincides with the multiplication map $m:R \otimes R \to R$. It follows that $R\otimes \unit \xrightarrow{1\otimes u} R\otimes R \xrightarrow{1\otimes \theta} R\otimes \unit$ coincides with $\id_{R\otimes \unit} = R\otimes \id_{\unit}$. Hence $\theta\circ u = \id_\unit$. On the other hand, the commutative diagram
	\[\begin{tikzcd}
		R \ar[r,"\theta"] \ar[d,"\simeq"] & \unit \ar[d,"\simeq"] \ar[dr,bend left,"\id"] & \\
		R\otimes \unit \ar[r,"\theta\otimes 1"] \ar[d,"1\otimes u"] & \unit \otimes \unit \ar[d,"1\otimes u"] \ar[r,"\simeq"] & \unit \ar[d,"u"] \\
		R\otimes R \ar[r,"\theta\otimes 1"] & \unit \otimes R \ar[r,"\simeq"] & R
	\end{tikzcd}\]
	shows that $u\circ \theta =\id_R$.
\end{proof}

\begin{Prop}\label{prop:fully-faithful-chars}
	The following are equivalent:
	\begin{enumerate}
		\item $f^*$ is fully faithful.
		\item The unit map $u:\unitT \to f_*\unitS$ is an isomorphism.
		\item There exists an isomorphism $\unitT \simeq f_*\unitS$.
		\item $f_*\unitS$ is invertible.
		\item $f^!$ is fully faithful.
	\end{enumerate}
\end{Prop}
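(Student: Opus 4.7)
The plan is to establish the cycle (a) $\Leftrightarrow$ (b), (b) $\Rightarrow$ (c) $\Rightarrow$ (d) $\Rightarrow$ (b), and (b) $\Leftrightarrow$ (e), using the two diagrams of \cref{lem:unit-counit} to convert statements about the (co)units of the adjunctions into statements about the ring map $u:\unitT\to f_*\unitS$, and closing the loop among (b), (c), (d) via \cref{lem:invertible-ring}.

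For (a) $\Leftrightarrow$ (b), recall that $f^*$ is fully faithful if and only if the unit $\eta:\id\to f_*f^*$ of the $f^*\dashv f_*$ adjunction is a natural isomorphism. The first diagram of \cref{lem:unit-counit} identifies $\eta_t$ with $u\otimes 1_t:\unitT\otimes t\to f_*\unitS\otimes t$. If $u$ is an isomorphism then so is $u\otimes 1_t$ for every $t$; conversely, setting $t=\unitT$ recovers $u$ as an isomorphism.

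The implications (b) $\Rightarrow$ (c) $\Rightarrow$ (d) are formal: (b) supplies the required isomorphism, and any object isomorphic to the invertible object $\unitT$ is invertible. The closing implication (d) $\Rightarrow$ (b) is precisely the content of \cref{lem:invertible-ring} applied to the commutative ring object $R=f_*\unitS$ with unit map $u$; this is the main technical step of the argument, since the other implications are essentially formal manipulations of adjunctions.

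For (b) $\Leftrightarrow$ (e), the second diagram of \cref{lem:unit-counit} identifies the counit $\epsilon_t:f_*f^!t\to t$ with $\ihom{u,1_t}:\ihom{f_*\unitS,t}\to\ihom{\unitT,t}\simeq t$, so $f^!$ is fully faithful if and only if $\ihom{u,1_t}$ is an isomorphism for every $t\in\cT$. This is immediate from (b). Conversely, if $\ihom{f_*\unitS,-}$ is naturally isomorphic to $\id_\cT$ then its left adjoint $f_*\unitS\otimes-$ is also naturally isomorphic to $\id_\cT$, so $f_*\unitS$ is invertible; this reduces (e) to (d) and completes the cycle.
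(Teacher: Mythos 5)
Your proposal is correct and follows essentially the same route as the paper: (a)$\Leftrightarrow$(b) via the first diagram of \cref{lem:unit-counit}, the formal cycle (b)$\Rightarrow$(c)$\Rightarrow$(d) closed by \cref{lem:invertible-ring}, and the identification of the counit of $f_*\dashv f^!$ with $\ihom{u,1}$ via the second diagram. The only (harmless) difference is at the end of the (e) direction: the paper applies $\cT(\unit,-)$ and Yoneda to deduce (e)$\Rightarrow$(b) directly, whereas you pass to the left adjoint of $\ihom{f_*\unitS,-}\simeq\id_\cT$ to get $f_*\unitS\otimes-\simeq\id_\cT$, hence (c)/(d); both arguments are valid.
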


\begin{proof}
	The left adjoint $f^*$ is fully faithful if and only if the unit $\eta_t : t \to f_*f^*t$ is a natural isomorphism. By \cref{lem:unit-counit}, this is equivalent to $u:\unitT\to f_*\unitS$ being an isomorphism. That is, $(a) \Leftrightarrow (b)$. The implications $(b)\Rightarrow(c)\Rightarrow(d)$ are immediate and \cref{lem:invertible-ring} provides $(d)\Rightarrow(b)$. It remains to deal with $(e)$. The right adjoint~$f^!$ is fully faithful if and only if the counit $\epsilon_t:f_*f^!t \to t$ is a natural isomorphism. By \cref{lem:unit-counit}, this is equivalent to 
	\begin{equation}\label{eq:homu}
	\ihom{u,1}:\ihom{f_*\unitS,t}\to\ihom{\unitT,t}
	\end{equation}
	being an isomorphism for all $t \in \cT$. This is certainly implied by $(b)$. On the other hand, applying $\cT(\unit,-)$ to \eqref{eq:homu}, we see that if $f^!$ fully faithful then
	\[
		\cT(f_*\unit,t) \xrightarrow{-\circ u} \cT(\unit,t)
	\]
	is a bijection for all $t \in \cT$. This implies $u$ is an isomorphism by (co)Yoneda.
\end{proof}

\begin{Prop}\label{prop:faithful-chars}
	The following are equivalent:
	\begin{enumerate}
		\item $f^*$ is faithful.
		\item The unit map $u:\unitT \to f_*\unitS$ is split monic.
		\item $\unitT$ is a direct summand of $f_*\unitS$.
		\item $f_*\unitS \otimes -$ is faithful.
		\item $\ihom{f_*\unitS,-}$ is faithful.
		\item $f^!$ is faithful.
	\end{enumerate}
\end{Prop}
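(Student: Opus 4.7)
The plan is to mirror the structure of the proof of \cref{prop:fully-faithful-chars}, using \cref{lem:unit-counit} to translate conditions on the unit and counit of the adjunctions $f^* \dashv f_* \dashv f^!$ into conditions on the ring map $u \colon \unitT \to f_*\unitS$. The essential new input is the elementary fact that in a triangulated category every monomorphism (resp.\ epimorphism) is split, which plays here the role that \cref{lem:invertible-ring} played in the previous proposition.

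First I would establish $(a) \Leftrightarrow (b)$. Since $f^*$ is a left adjoint, it is faithful if and only if the unit $\eta_t \colon t \to f_*f^*t$ is pointwise monic, and in a triangulated category this is equivalent to $\eta_t$ being pointwise split monic. By \cref{lem:unit-counit}, $\eta_t$ identifies with $u \otimes 1_t$. If $u$ admits a retraction $r$, then $r \otimes 1_t$ retracts $u \otimes 1_t$ for every~$t$; conversely, specializing to $t = \unitT$ forces $u$ itself to be split monic.

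The implications $(b) \Rightarrow (c) \Rightarrow (d) \Rightarrow (a)$ are then straightforward. Here $(b) \Rightarrow (c)$ is tautological; for $(c) \Rightarrow (d)$, a decomposition $f_*\unitS \cong \unitT \oplus R'$ yields an isomorphism of functors $f_*\unitS \otimes - \cong \id_{\cT} \oplus (R' \otimes -)$, which is visibly faithful; and $(d) \Rightarrow (a)$ follows from $f_*f^* \cong f_*\unitS \otimes -$ (projection formula), since $f^*\alpha = 0$ then implies $f_*f^*\alpha = 0$ and hence $\alpha = 0$.

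To incorporate $(e)$ and $(f)$, I would close the loop via $(b) \Rightarrow (e) \Rightarrow (f) \Rightarrow (b)$. If $u$ admits a retraction $r$, then $\ihom{r, 1_t}$ provides a section of $\ihom{u, 1_t}$ for every $t$, and naturality of $\ihom{u, 1}$ together with these sections shows that $\ihom{f_*\unitS, -}$ reflects zero morphisms, hence is faithful. The implication $(e) \Rightarrow (f)$ follows from the isomorphism $f_*f^! \cong \ihom{f_*\unitS, -}$: faithfulness of the latter yields faithfulness of $f_*f^!$ and hence of $f^!$. For $(f) \Rightarrow (b)$, faithfulness of the right adjoint $f^!$ means the counit $\epsilon$ is pointwise epic, hence pointwise split epic; in particular the $t = \unitT$ case furnishes a section of $\ihom{u, 1_{\unitT}} \colon \ihom{f_*\unitS, \unitT} \to \unitT$, which transposes under the tensor-hom adjunction to a morphism $f_*\unitS \to \unitT$ that one readily checks retracts $u$. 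I expect no substantive obstacle; the only step requiring a brief computation is this last adjunction transposition, which is a routine unwinding of the definitions.
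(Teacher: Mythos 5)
Your argument is correct, and it closes the loops differently from the paper. The paper's proof runs everything through the exact triangle $W \xrightarrow{\xi} \unitT \xrightarrow{u} f_*\unitS \to \Sigma W$: faithfulness of $f^*$ forces $\xi=0$ (giving (a)$\Rightarrow$(b) and also (a)$\Rightarrow$(e) via a three-term diagram involving $\ihom{W,-}$), and the cycle is closed by (f)$\Rightarrow$(a) using the isomorphism $f^!\ihom{a,t}\simeq\ihom{f^*a,f^!t}$ together with a further triangle argument. You instead use the general facts that a left (resp.\ right) adjoint is faithful iff its unit is pointwise monic (resp.\ counit pointwise epic), the triangulated-category fact that monos and epis split, and \cref{lem:unit-counit} to identify $\eta_t$ with $u\otimes 1_t$ and $\epsilon_t$ with $\ihom{u,1_t}$; this gives (a)$\Leftrightarrow$(b) directly, and you close the second loop as (b)$\Rightarrow$(e)$\Rightarrow$(f)$\Rightarrow$(b), with (f)$\Rightarrow$(b) obtained by transposing a section of $\ihom{u,1_{\unitT}}$ under the $\otimes$-hom adjunction to a retraction of $u$ (the evaluation computation you flag does go through, using the standard characterization of $\ihom{u,1}$ against evaluation maps). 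The shared portion is the easy chain (b)$\Rightarrow$(c)$\Rightarrow$(d)$\Rightarrow$(a) via the projection formula. What each route buys: the paper stays within explicit triangle manipulations already set up for \cref{prop:fully-faithful-chars} and leans on the ambidexterity isomorphism from Balmer--Dell'Ambrogio--Sanders, while your version is more symmetric between the two adjunctions, avoids that isomorphism, and makes transparent that (b)--(f) are all reformulations of ``$u$ is split monic''; the price is the appeal to the general adjunction-faithfulness criterion and one small coevaluation/evaluation check.
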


\begin{proof}
	Consider the exact triangle
	\begin{equation}\label{eq:std-triangle}
		W \xrightarrow{\xi} \unitT \xrightarrow{u} f_*\unitS \rightarrow \Sigma W.
	\end{equation}
	associated with $u$. By the unit-counit equations, $f^*(u)$ is split monic; that is, $f^*(\xi)=0$. Thus, $f^*$ faithful implies $\xi = 0$, which implies $u$ is split monic. Thus $(a) \Rightarrow (b)$. The implications $(b)\Rightarrow(c)\Rightarrow(d)$ are immediate. Recall that $f_*\unitS \otimes -\simeq f_*f^*(-)$. Hence $(d) \Rightarrow (a)$.

	$(a) \Rightarrow (e)$: Since $f^*$ is faithful, we know the map $\xi$ in \eqref{eq:std-triangle} is zero. Suppose that~${\alpha:a\to b}$ is a morphism in $\cT$. Consider the following commutative diagram
	\[\begin{tikzcd}[column sep=large]
		\ihom{f_*\unitS,a} \ar[d,"\ihom{1,\alpha}"] \ar[r,"\ihom{u,1}"] & \ihom{\unitT,a} \ar[d,"\ihom{1,\alpha}"] \ar[r,"0"] & \ihom{W,a}\hphantom{.} \ar[d,"\ihom{1,\alpha}"] \\
		\ihom{f_*\unitS,b} \ar[r,"\ihom{u,1}"] & \ihom{\unitT,b} \ar[r,"0"] & \ihom{W,b}.
	\end{tikzcd}\]
	If $\ihom{f_*\unitS,\alpha}=0$ then the morphism $\alpha\simeq \ihom{\unitT,\alpha}$ factors through the zero morphism $0:a\simeq \ihom{\unit,a} \to \ihom{W,a}$.

	$(e) \Rightarrow (f)$: This follows from the isomorphism $f_*f^!(-) \simeq \ihom{f_*\unitS,-}$. Finally, we prove $(f)\Rightarrow (a)$, namely that $f^!$ faithful implies $f^*$ faithful. We have a natural isomorphism of two variables $f^!\ihom{a,t}\simeq \ihom{f^*a,f^!t}$ from \mbox{\cite[(2.19)]{BalmerDellAmbrogioSanders16}.} Let~$\alpha:a\to b$ be a morphism in $\cT$ such that $f^*(\alpha)=0$. Then for any $t \in \cT$, we have $f^!\ihom{\alpha,t}=0$. Since $f^!$ is faithful, this implies that $\ihom{\alpha,t}=0$ for all $t \in \cT$. In particular, $\ihom{\alpha,b}=0$. Consider an exact triangle on $\alpha$:
	\[
		a \xrightarrow{\alpha} b \xrightarrow{\beta} c \xrightarrow{\gamma} \Sigma a
	\]
	Applying $\ihom{-,b}$ we obtain an exact triangle
	\[
		\ihom{c,b} \to \ihom{b,b}\xrightarrow{\ihom{\alpha,b}=0} \ihom{a,b} \to \Sigma\ihom{c,b}.
	\]
	Applying $\cT(\unitT,-)$ we obtain an exact sequence
	\[
		\cT(c,b) \to \cT(b,b) \xrightarrow{0} \cT(a,b).
	\]
	Hence, there exists a morphism $\theta:c \to b$ such that $\theta\circ\beta = \id_b$. In other words, $\beta$ is split monic; hence $\alpha=0$.
\end{proof}

\begin{Rem}\label{rem:faithful-is-descendable}
	It follows from \cref{prop:faithful-chars} that a faithful geometric functor is a \emph{descendable} geometric functor in the sense that $\unit_{\cT} \in \thickt{f_*(\unit_{\cS})}$. Hence:
\end{Rem}

\begin{Prop}
	Let $f^*:\cT\to \cS$ be a faithful geometric functor. Then $f^*$ reflects compact objects.
\end{Prop}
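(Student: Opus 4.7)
The plan is to verify compactness of $x$ directly: given any family $\{y_i\}_{i \in I}$ in $\cT$ and a morphism $g \colon x \to \coprod_{i \in I} y_i$, I will show that $g$ factors through a finite sub-coproduct. First I would apply $f^*$ to pass the problem to $\cS$. Since $f^*$ is a geometric functor it preserves coproducts, and by hypothesis $f^*(x) \in \cSc$. Thus $f^*(g) \colon f^*(x) \to \coprod_{i} f^*(y_i)$ factors through $\coprod_{j \in J} f^*(y_j)$ for some finite subset $J \subseteq I$. Setting $K \coloneqq I \setminus J$ and letting $\pi_K \colon \coprod_{i \in I} y_i \to \coprod_{k \in K} y_k$ denote the canonical projection in $\cT$, this factorization is equivalent to $f^*(\pi_K \circ g) = 0$, so faithfulness of $f^*$ yields $\pi_K \circ g = 0$ in $\cT$.

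Second, I would exploit the biproduct structure in the additive category $\cT$: the partition $I = J \sqcup K$ yields a canonical decomposition $\coprod_{i \in I} y_i \cong (\coprod_{j \in J} y_j) \oplus (\coprod_{k \in K} y_k)$, with inclusions $\iota_J, \iota_K$ and projections $\pi_J, \pi_K$ satisfying the biproduct relations, in particular $\iota_J \pi_J + \iota_K \pi_K = \id$. Since $\pi_K \circ g = 0$, this forces $g = \iota_J \circ (\pi_J \circ g)$, so $g$ factors through the finite sub-coproduct $\coprod_{j \in J} y_j$. Because finite coproducts coincide with finite biproducts, $\pi_J \circ g$ decomposes as a finite sum of morphisms $x \to y_j$, exhibiting $g$ in the image of the canonical injection $\bigoplus_i \cT(x, y_i) \hookrightarrow \cT(x, \coprod_i y_i)$. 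Hence this map is bijective, so $x \in \cTc$.

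I do not foresee a serious obstacle. The argument uses only faithfulness of $f^*$, preservation of coproducts, and compactness of $f^*(x)$, together with the standard biproduct decomposition of an infinite coproduct in an additive category along a finite/cofinite partition. In particular, it does not need to invoke the descendability characterization of \cref{rem:faithful-is-descendable}, and the only point that really deserves careful bookkeeping is the biproduct decomposition used to split off the complementary factor $\coprod_K y_k$.
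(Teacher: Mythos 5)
Your argument is correct, and it takes a genuinely more elementary route than the paper. The only bookkeeping worth making explicit is the identification $f^*(\pi_K)=\pi_K$ under the canonical isomorphism $f^*\bigl(\coprod_{i\in I} y_i\bigr)\cong\coprod_{i\in I} f^*(y_i)$: both maps agree after composition with every coproduct inclusion, hence agree by the universal property of the coproduct; with that in place, the finite factorization of $f^*(g)$ is indeed equivalent to $\pi_K\circ f^*(g)=f^*(\pi_K\circ g)=0$, and faithfulness pulls the vanishing back to $\cT$. (Injectivity of the canonical map $\bigoplus_i\cT(x,y_i)\to\cT(x,\coprod_i y_i)$ is the usual retraction argument, so your surjectivity conclusion does yield compactness.) The paper argues differently: by \cref{prop:faithful-chars} the unit $\unitT$ is a direct summand of $f_*\unitS$, so $f^*$ is descendable (\cref{rem:faithful-is-descendable}), and one then quotes the fact that descendable geometric functors reflect compact objects, following \cite[Proposition~3.28]{Mathew16b} and \cite[Proposition~7.16]{BCHNPS_descent}. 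The descendability route has wider scope --- it applies to functors for which $\unitT$ merely lies in $\thickt{f_*(\unitS)}$ without being a retract, i.e.\ functors that need not be faithful --- and it fits the descent formalism used elsewhere in the paper. Your route buys generality of a different kind: it uses no monoidal structure, no adjoints, no rigidity, indeed not even the triangulation, so it shows that any faithful coproduct-preserving additive functor reflects compact objects; its one limitation is that faithfulness is used essentially (to conclude $\pi_K\circ g=0$), so it does not extend to the descendable-but-not-faithful case.
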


\begin{proof}
	This follows from the proof of \cite[Proposition~3.28]{Mathew16b} using \cref{rem:faithful-is-descendable}; see \cite[Proposition~7.16]{BCHNPS_descent}.
\end{proof}

\begin{Rem}
	Another consequence of \cref{rem:faithful-is-descendable} is that homological stratification in the sense of \cite{BarthelHeardSandersZou24pp} descends along a faithful geometric functor $f^*:\cT \to \cS$. Hence, stratification in the sense of \cite{BarthelHeardSanders23a} descends from $\cS$ to $\cT$ provided these categories satisfy the Nerves of Steel conjecture.
\end{Rem}

\begin{Prop}\label{prop:ff-on-compact}
	Let $f^*:\cT \to \cS$ be a geometric functor. If $f^*|_{\cTc}:\cTc \to \cSc$ is fully faithful then $f^*:\cT\to\cS$ is fully faithful.
\end{Prop}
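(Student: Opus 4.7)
By \cref{prop:fully-faithful-chars}, it suffices to show that the unit map $u:\unitT \to f_*\unitS$ is an isomorphism. The plan is to prove this by using that $f^*$ is fully faithful on compacts and testing with morphisms out of compact objects.

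First, complete $u$ to an exact triangle
\[
	W \too \unitT \xra{u} f_*\unitS \too \Sigma W
\]
in $\cT$. Since $\cT$ is compactly generated, it is enough to show that $\Hom_{\cT}(c,\Sigma^n W)=0$ for every compact $c \in \cTc$ and every $n \in \bbZ$. By rotating the triangle and inspecting the associated long exact sequence, this in turn reduces to showing that the map
\[
	u_* : \Hom_{\cT}(c,\unitT) \too \Hom_{\cT}(c,f_*\unitS)
\]
is a bijection for every compact $c \in \cTc$.

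Now apply the $f^*\dashv f_*$ adjunction to rewrite the target as $\Hom_{\cS}(f^*c,\unitS)=\Hom_{\cS}(f^*c,f^*\unitT)$. The key verification is that, under this adjunction isomorphism, the postcomposition map $u_*$ coincides with the comparison map
\[
	\Hom_{\cT}(c,\unitT) \too \Hom_{\cS}(f^*c,f^*\unitT)
\]
induced by the functor $f^*$. This is a standard triangle identity: the adjunction isomorphism sends a morphism $g:c\to \unitT$ to $f^*g$ followed by $\epsilon_{\unitS}:f^*f_*\unitS\to\unitS$ applied in the appropriate guise, while $u_*(g)= u\circ g$ corresponds under the adjunction to the composite $f^*c \xra{f^*g} f^*\unitT \xra{f^*u} f^*f_*\unitS \xra{\epsilon} \unitS$, and this last composite equals $f^*g$ by the unit-counit equation $\epsilon_{f^*\unitT}\circ f^*u = \id_{f^*\unitT}$ (using that $u=\eta_{\unitT}$, as recorded in \cref{lem:unit-counit}). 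Since $\unitT \in \cTc$ is compact (as $\cT$ is rigidly-compactly generated) and $c\in\cTc$, the hypothesis that $f^*|_{\cTc}$ is fully faithful makes this comparison map a bijection.

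The only step requiring care is the identification of $u_*$ with the $f^*$-induced map under the adjunction; this is a brief diagram chase using the triangle identities and the isomorphism $f_*f^*\simeq f_*\unitS\otimes(-)$ of \cref{lem:unit-counit}. Everything else is formal.
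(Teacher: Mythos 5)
Your argument is correct and is essentially the paper's proof: both reduce, via \cref{prop:fully-faithful-chars}, to showing that $u:\unitT\to f_*\unitS$ becomes an isomorphism after applying $\Hom_{\cT}(c,-)$ for all compact $c$, and then identify that map (through the $f^*\dashv f_*$ adjunction and the triangle identity, using $u=\eta_{\unitT}$ up to $f^*\unitT\simeq\unitS$) with the map induced by $f^*$ on $\Hom$-sets between compacts. Your detour through the cone $W$ just spells out the standard compact-generation criterion that the paper invokes in one line, so the two proofs coincide in substance.
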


\begin{proof}
	According to \cref{prop:fully-faithful-chars}, $f^*$ is fully faithful if (and only if) $u:\unitT\to f_*\unitS$ is an isomorphism. Since $\cT$ is compactly generated, this is the case if $\cT(c,\unitT)\to \cT(c,f_*\unitS)$ is an isomorphism for each $c \in \cTc$. This map is readily checked to coincide with $\cT(c,\unitT)\to\cS(f^*(c),f^*(\unitT))\cong \cS(f^*(c),\unitS)\cong \cT(c,f_*\unitS)$.
\end{proof}

\begin{Rem}\label{rem:equiv-cons}
	A fully faithful geometric functor $f^*:\cT \to \cS$ is an equivalence if and only if its right adjoint $f_*:\cS \to \cT$ is conservative. Moreover, the right adjoint $f_*$ of a geometric functor $f^*:\cT \to \cS$  is conservative if and only if $\thick\langle f^*(\cTc)\rangle = \cSc$. It then follows from \cref{prop:ff-on-compact} that if $f^*|_{\cTc}:\cTc \to \cSc$ is an equivalence then $f^*:\cT \to \cS$ is an equivalence.
\end{Rem}

\begin{Rem}\label{rem:conservative-on-rings}
	Although the right adjoint $f_*$ of a nontrivial fully faithful functor $f^*:\cT \to \cS$ is not conservative, we do have the following:
\end{Rem}

\begin{Prop}
	Let $f^*:\cT\to\cS$ be a a fully faithful geometric functor. The right adjoint $f_*$ is conservative on weak rings.
\end{Prop}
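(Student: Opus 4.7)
The plan is to exploit the defining feature of a weak ring, namely that a weak ring $R$ in $\cS$ (an object equipped with $\eta:\unit_{\cS}\to R$ and $\mu:R\otimes R\to R$ with $\mu\circ(\eta\otimes R)=\id_R$) is zero as soon as its unit $\eta$ is zero: if $\eta=0$ then $\eta\otimes R=0$, hence $\id_R=\mu\circ(\eta\otimes R)=0$, so $R=0$. Thus it suffices to show that if $f_*(R)=0$ then the unit map $\eta:\unit_{\cS}\to R$ is already zero in $\cS$.

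The key step will be to run the unit $\eta$ through the $f^*\dashv f_*$ adjunction and use full faithfulness. Concretely, because $f^*$ is fully faithful, \cref{prop:fully-faithful-chars} gives $f_*\unit_{\cS}\cong\unit_{\cT}$ (equivalently, $f^*\unit_{\cT}\cong\unit_{\cS}$, which is automatic from $f^*$ being a tensor functor). The adjunction then yields a natural isomorphism
\[
\cS(\unit_{\cS},R)\;\cong\;\cS(f^*\unit_{\cT},R)\;\cong\;\cT(\unit_{\cT},f_*R).
\]
If $f_*R=0$, the right-hand side vanishes, so every morphism $\unit_{\cS}\to R$ is zero. Applied to $\eta$, this gives $\eta=0$, and the weak ring axiom then forces $R=0$, as desired.

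There is essentially no obstacle: the argument is a one-line use of the adjunction, combined with the normalization $f_*\unit_{\cS}\cong\unit_{\cT}$ coming from full faithfulness (which has already been established in \cref{prop:fully-faithful-chars}), together with the elementary observation about weak rings recalled above. The only thing to be mildly careful about is that the definition of weak ring invoked is the one requiring $\eta\otimes R$ to be split monic via some $\mu$; under any of the standard variants of this notion the implication ``$\eta=0\Rightarrow R=0$'' holds, so the argument is insensitive to the exact convention.
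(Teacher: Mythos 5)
Your proof is correct, and it shares the paper's overall strategy: reduce to showing that the weak-ring unit $\unitS\to A$ vanishes, then conclude $A=0$ from left unitality. But the mechanism is genuinely different in one respect. The paper argues via the naturality square of the counit $\epsilon\colon f^*f_*\Rightarrow \id$ applied to the unit map of $A$, and it is precisely the invertibility of $\epsilon_{\unitS}$ that uses full faithfulness. Your argument instead runs the unit through the adjunction isomorphism $\cS(\unitS,A)\cong\cS(f^*\unitT,A)\cong\cT(\unitT,f_*A)$, and the only input beyond the adjunction is $f^*\unitT\cong\unitS$, which holds for \emph{any} geometric functor by strong monoidality. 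In particular, your appeal to \cref{prop:fully-faithful-chars} (to get $f_*\unitS\cong\unitT$) is not actually used anywhere in the chain of isomorphisms; as you note parenthetically, the isomorphism you need is automatic. So your argument proves the stronger statement that the right adjoint of an arbitrary geometric functor is conservative on weak rings, which the paper's formulation and proof do not claim; the paper's version buys nothing extra here except that its counit-square phrasing sits naturally alongside the surrounding discussion of full faithfulness in \cref{prop:fully-faithful-chars}.
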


\begin{proof}
	Let $A$ be a weak ring in $\cS$ and let $\unitS\to A$ be its unit. We have a commutative diagram 
	\[\begin{tikzcd}
		f^*f_*(\unitS)\ar[d,"\epsilon"] \ar[r]& f^*f_*(A)\ar[d,"\epsilon"] \\
		   \unitS\ar[r] & A
	\end{tikzcd}\]
	and the counit $f^*f_*(\unitS)\to \unitS$ is an isomorphism since $f^*$ is fully faithful. Thus, if $f_*(A)=0$ then the unit $\unitS\to A$ vanishes and hence $A=0$.
\end{proof}

\begin{Rem}\label{rem:bousfield}
	According to \cref{prop:fully-faithful-chars}, $f^*$ fully faithful is equivalent to $f^!$ fully faithful, which in turn is equivalent to $f_*:\cS \to \cT$ being a Bousfield localization. Note however that the localizing subcategory of acyclic objects $\Ker(f_*)\subseteq \cS$ need not be a tensor-ideal. An explicit counter-example is given in \cref{exa:Z2-graded-product}. (Bousfield localizations on tensor-triangulated categories are sometimes assumed to have a tensor-ideal of acyclic objects. This is the case, for example, in \cite{HoveyPalmieriStrickland97}.)
\end{Rem}

\begin{Rem}
	The author does not know if $f^*|_{\cTc}$ faithful implies $f^*$ faithful. We just remark in passing that this holds if $\cT$ is phantomless. Indeed, if $f^*|_{\cTc}$ is faithful then the map $\xi$ in \eqref{eq:std-triangle} is a phantom map.
\end{Rem}

\section{Full sometimes implies faithful}\label{sec:full}

It is well-known that a full exact functor between triangulated categories is faithful if and only if it is conservative; see the proof of \cite[Lemma~2.1]{CanonacoOrlovStellari13} for example. In the context of big tt-categories, we can augment this as follows:

\begin{Prop}
	For a full geometric functor $f^*:\cT \to \cS$, the following are equivalent:
	\begin{enumerate}
		\item $f^*$ is faithful.
		\item $f^*$ is conservative.
		\item $\varphi$ is surjective.
		\item $f^*$ reflects invertibility of endomorphisms of $\unit$; that is, the ring homomorphism $\End_{\cT}(\unit)\to \End_{\cS}(\unit)$ reflects units.
	\end{enumerate}
\end{Prop}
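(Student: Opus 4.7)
The plan is to prove the cycle $(a)\Rightarrow(b)\Rightarrow(c)\Rightarrow(d)\Rightarrow(a)$; only the last implication requires the fullness hypothesis. For $(a)\Rightarrow(b)$, I would use the standard fact that a faithful exact functor between triangulated categories is conservative: if $f^*(\alpha)$ is invertible then $f^*(\cone\alpha)=0$, so $f^*(\id_{\cone\alpha})=0$, so faithfulness forces $\id_{\cone\alpha}=0$, and hence $\alpha$ is an isomorphism. For $(b)\Rightarrow(c)$, conservativity of $f^*$ on the big category in particular forces $f^*(x)\neq 0$ for every nonzero compact $x$, which by Balmer's surjectivity theorem \cite[Theorem~1.3]{Balmer18} is equivalent to $\varphi$ being surjective.

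For $(c)\Rightarrow(d)$, let $g\in \End_{\cat T}(\unit)$ be such that $f^*(g)$ is a unit. Then $\cone(g)\in\cTc$ satisfies $f^*(\cone(g))\simeq \cone(f^*(g))=0$, and functoriality of support gives $\varphi^{-1}(\supp(\cone(g)))=\supp(f^*(\cone(g)))=\emptyset$. Surjectivity of $\varphi$ then forces $\supp(\cone(g))=\emptyset$, so $\cone(g)=0$ and $g$ is a unit. This step does not require fullness.

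The key step, and the only place where fullness enters essentially, is $(d)\Rightarrow(a)$. My plan is to show that the unit map $u:\unitT\to f_*\unitS$ is split monic, whence $f^*$ is faithful by \cref{prop:faithful-chars}. Since $f^*$ is full, the counit $\epsilon:f^*f_*\unitS\to\unitS$ lifts to a morphism $r:f_*\unitS\to\unitT$ in $\cat T$ with $f^*(r)=\epsilon$. The composite $ru\in\End_{\cat T}(\unit)$ then satisfies $f^*(ru)=\epsilon\circ f^*(u)=\id_{\unitS}$ by the triangle identity for $f^*\dashv f_*$. Since $\id_{\unitS}$ is a unit of $\End_{\cat S}(\unit)$, hypothesis $(d)$ forces $ru$ to be a unit in $\End_{\cat T}(\unit)$ with some inverse $v$. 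Then $vr:f_*\unitS\to\unitT$ is a retraction of $u$, so $u$ is split monic and \cref{prop:faithful-chars} concludes the argument.
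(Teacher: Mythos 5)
Your steps $(a)\Rightarrow(b)$, $(c)\Rightarrow(d)$ and $(d)\Rightarrow(a)$ are correct, and the last two coincide with the paper's own arguments (the lift $r$ of the counit provided by fullness is exactly the paper's $\theta$, and $f^*(r\circ u)=\id_{\unitS}$ is the same triangle-identity computation). The genuine gap is in your justification of $(b)\Rightarrow(c)$. The equivalence you attribute to \cite{Balmer18} --- that ``$f^*(x)\neq 0$ for every nonzero compact $x$'' is \emph{equivalent} to surjectivity of $\varphi$ --- is false. Surjectivity of $\varphi$ does imply conservativity on compact objects (via $\supp(f^*x)=\varphi^{-1}(\supp x)$), but not conversely: for the geometric functor $\Der(\bbZ_{(p)})\to\Der(\Fp)$ given by derived base change, no nonzero perfect complex over the local ring $\bbZ_{(p)}$ is killed (derived Nakayama: the Thomason support of a nonzero perfect complex is a nonempty closed subset, hence contains the closed point), and yet $\Spec(\Fp)\to\Spec(\bbZ_{(p)})$ misses the generic point. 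Balmer's surjectivity criterion is phrased in terms of detecting $\otimes$-nilpotence of \emph{morphisms} (nil-faithfulness), which is strictly stronger than not killing nonzero compact objects; by weakening conservativity of $f^*$ on all of $\cT$ to conservativity on $\cTc$ you discard exactly the strength the implication needs, and no appeal to fullness is made to recover it.

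The implication $(b)\Rightarrow(c)$ is nevertheless true and easy to repair in either of two ways. One can quote the big-category result that a \emph{conservative} geometric functor induces a surjection on spectra, i.e.\ \cite[Theorem~1.4]{BarthelCastellanaHeardSanders24}, applied directly to the conservativity of $f^*$ on $\cT$ --- this is precisely what the paper does, citing that theorem for the whole chain $(a)\Rightarrow(b)\Rightarrow(c)$. Alternatively, one can use the fullness hypothesis already at this step: a full exact functor which is conservative is faithful (the classical fact recalled at the start of this section of the paper), and a faithful functor is nil-faithful, so $\varphi$ is surjective by \cite{Balmer18} or \cite{BarthelCastellanaHeardSanders24}. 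With $(b)\Rightarrow(c)$ repaired in one of these ways, your cycle $(a)\Rightarrow(b)\Rightarrow(c)\Rightarrow(d)\Rightarrow(a)$, with the final step resting on \cref{prop:faithful-chars}, matches the paper's proof.
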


\begin{proof}
	We have $(a)\Rightarrow(b)\Rightarrow(c)$ by \cite[Theorem~1.4]{BarthelCastellanaHeardSanders24}.

	$(c) \Rightarrow (d)$: Let $\alpha:\unitT\to\unitT$ be a morphism in $\cT$. Since $\varphi$ is surjective,
	\[
		\supp(\cone(f^*(\alpha))) = \varphi^{-1}(\supp(\cone(\alpha))) = \emptyset
	\]
	implies $\supp(\cone(\alpha))=\emptyset$. In other words, if $f^*(\alpha)$ is an isomorphism then $\alpha$ is an isomorphism.

	$(d)\Rightarrow (a)$: We claim that $f^*$ is faithful, which by \cref{prop:faithful-chars} is equivalent to the canonical map $\unitT \to f_*(\unitS)$ being split monic. Since $f^*$ is assumed to be full, there exists a morphism $\theta:f_*f^*\unitT \to \unitT$ such that $f^*(\theta)$ is the morphism
	\[
		f^*f_*f^*\unitT \xrightarrow{\epsilon_{f^*\unitT}} f^*\unitT.
	\]
	Thus, the composite $\unitT \xrightarrow{\eta} f_*f^*\unitT \xrightarrow{\theta} \unitT$ becomes the identity morphism $f^*\unitT \to f^*\unitT$ after we apply $f^*$; that is, $f^*(\theta\circ\eta) = \id_{f^*\unitT}$. Hypothesis $(d)$ then implies that $\theta\circ \eta$ is an isomorphism; hence $\eta$ is split monic.
\end{proof}

\begin{Prop}
	For a geometric functor $f^*:\cT \to \cS$, the following are equivalent:
	\begin{enumerate}
		\item $f^*$ is full.
		\item The unit map $u:\unitT \to f_*\unitS$ is split epi.
	\end{enumerate}
\end{Prop}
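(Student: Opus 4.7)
My plan is to argue in direct parallel with \cref{prop:faithful-chars}, using \cref{lem:unit-counit} to transport the condition on the ring unit~$u$ to the unit $\eta$ of the $f^*\dashv f_*$ adjunction and then running standard triangle-identity manipulations. The relevant consequence of \cref{lem:unit-counit} is that $u$ is identified with $\eta_{\unitT}:\unitT\to f_*f^*\unitT\simeq f_*\unitS$, and more generally $\eta_b:b\to f_*f^*b$ is identified with $u\otimes 1_b$ for each $b\in\cT$ (via the projection formula).

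For $(a)\Rightarrow(b)$, I will exploit fullness to lift the counit $\epsilon_{\unitS}:f^*f_*\unitS\to\unitS=f^*\unitT$ to some morphism $s:f_*\unitS\to\unitT$ with $f^*(s)=\epsilon_{\unitS}$, and then argue that $s$ is a section of~$u$. This is immediate from the $f^*\dashv f_*$ adjunction bijection $\Hom_{\cT}(f_*\unitS,f_*\unitS)\cong \Hom_{\cS}(f^*f_*\unitS,\unitS)$, $h\mapsto \epsilon_{\unitS}\circ f^*(h)$: both $\id_{f_*\unitS}$ and $u\circ s$ land on $\epsilon_{\unitS}$, the former tautologically and the latter because the triangle identity yields $\epsilon_{\unitS}\circ f^*(u)=\epsilon_{\unitS}\circ f^*(\eta_{\unitT})=\id_{\unitS}$, so $\epsilon_{\unitS}\circ f^*(u\circ s)=f^*(s)=\epsilon_{\unitS}$. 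Injectivity of the adjunction bijection then gives $u\circ s=\id_{f_*\unitS}$.

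For $(b)\Rightarrow(a)$, I start with a section $s:f_*\unitS\to\unitT$ of~$u$ and observe that tensoring with any $b\in\cT$ and using \cref{lem:unit-counit} produces a canonical section $\sigma_b:f_*f^*b\to b$ of~$\eta_b$. Applying $f^*$ to $\eta_b\circ\sigma_b=\id_{f_*f^*b}$ and combining with the triangle identity $\epsilon_{f^*b}\circ f^*(\eta_b)=\id_{f^*b}$ forces $f^*(\sigma_b)=\epsilon_{f^*b}$. Then for any morphism $g:f^*a\to f^*b$ in~$\cS$ with adjunct $g^\flat:a\to f_*f^*b$, the morphism $h\coloneqq \sigma_b\circ g^\flat:a\to b$ satisfies $f^*(h)=f^*(\sigma_b)\circ f^*(g^\flat)=\epsilon_{f^*b}\circ f^*(g^\flat)=g$, which is the desired lift of~$g$, establishing fullness.

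No substantive obstacle is anticipated: the whole argument is a diagram chase with the $f^*\dashv f_*$ adjunction. The only place care is required is in unwinding the natural isomorphisms from \cref{lem:unit-counit} and keeping track of which direction morphisms run under adjunction; everything else follows from the triangle identities already used in the proof of \cref{prop:faithful-chars}, appropriately dualised.
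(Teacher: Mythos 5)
Your proof is correct and follows essentially the same route as the paper: both hinge on the identification of $\eta_b$ with $u\otimes 1_b$ (you via \cref{lem:unit-counit}, the paper via an explicit projection-formula factorization) to reduce fullness to a statement about post-composition with $u\otimes 1$, and your lift of $\epsilon_{\unitS}$ in $(a)\Rightarrow(b)$ is exactly the preimage of $\id_{f_*\unitS}$ that the paper extracts by taking $a=f_*\unitS$, $b=\unitT$. The differences are only in bookkeeping (triangle identities versus the paper's explicit hom-set composite), not in substance.
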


\begin{proof}
	Using the definition of the projection formula \cite[(2.16)]{BalmerDellAmbrogioSanders16}, one readily checks that for any $t \in \cT$ and $s \in \cS$, the identity map on $f_*(s) \otimes t$ factors as
	\[
		f_*(s) \otimes t \xrightarrow{1\otimes \eta} f_*(s) \otimes f_*f^*(t) \xrightarrow{\mathrm{lax}} f_*(s\otimes f^*(t)) \simeq f_*(s) \otimes t.
	\]
	With this in hand, one then checks that the composite
	\[
		\cT(a,b) \xrightarrow{f^*} \cS(f^*a,f^*b) \simeq \cT(a,f_*f^*b)\simeq \cT(a,f_*\unitS\otimes b)
	\]
	sends $\alpha:a\to b$ to $(u\otimes 1)\circ \alpha$ where $u\otimes 1$ is shorthand for $b \simeq \unitT \otimes b \xrightarrow{u\otimes 1} f_*\unitS \otimes b$. Thus, $f^*$ is full if and only if, for every $a,b \in \cT$, the map
	\begin{equation}\label{eq:full}
		\cT(a,b) \xrightarrow{(u\otimes 1)\circ -}\cT(a,f_*\unitS\otimes b)
	\end{equation}
	is surjective. If $u$ is split epi then so is $u\otimes 1$ and it follows that \eqref{eq:full} is surjective. On the other hand, taking $a=f_*\unitS$ and $b=\unitT$, the surjectivity of \eqref{eq:full} implies that there exists $\alpha:f_*\unitS\to \unitT$ such that $u\circ \alpha=\id_{f_*\unitS}$, so that $u$ is split epi.
\end{proof}

\begin{Exa}
	Let $A \to B$ be a map of commutative rings. The induced functor $\Der(A) \to \Der(B)$ is full if and only if $A \to B$ is a split epimorphism in the category of $A$-modules if and only if $B=A/Ae$ for an idempotent $e\in A$ if and only if $\Spec(B)\to \Spec(A)$ is a closed immersion that is also an open immersion. These are standard exercises in commutative algebra.
\end{Exa}

\begin{Rem}
	In light of \cref{prop:ff-on-compact}, one may wonder whether $f^*|_{\cTc}:\cTc \to \cSc$ full implies that $f^*:\cT \to \cS$ is full. This is false. In \cite[Section~5]{CanonacoOrlovStellari13}, the authors consider the quotient $A\to A/\mathfrak m$ of a certain non-noetherian commutative ring by a certain maximal ideal. They prove that $\Der(A)^c\to \Der(A/\mathfrak m)^c$ is full. However, since this~$\mathfrak m$ is not finitely generated, $A/\mathfrak m$ is not a finitely presented $A$-module and hence is not a direct summand of $A$. Thus, $\Der(A) \to \Der(A/\mathfrak m)$ is not full.
\end{Rem}

\newpage
\addtocontents{toc}{\vspace{-0.5\normalbaselineskip}}
\part{Examples and applications}\label{part:II}

\section{Concentration and unitation}\label{sec:unitation}

Let $\cT$ be a rigidly-compactly generated tt-category.

\begin{Def}
	We say that $\cT$ is \emph{unigenic} if it is generated by the unit object: 
	\[
		\cT=\Loc\langle\unit\rangle.
	\]
\end{Def}

\begin{Rem}
	The term ``monogenic'' is used in \cite{HoveyPalmieriStrickland97}. However, it is slightly misleading. For example, the derived category of a quasi-compact and \mbox{quasi-separated} scheme is always generated by a single compact object by \cite[Theorem~3.1.1]{BondalVandenbergh03} and yet it is rare for it to be generated by the unit object; see \cref{sec:affinization} below.
\end{Rem}

\begin{Def}\label{def:concentration}
	Let $\cat G \subseteq \cTc$ be a set of compact objects such that $\thick\langle \cat G \rangle \subseteq \cTc$ is a rigid tensor-subcategory. The localizing subcategory $\Loc\langle \cat G \rangle$ is then a tensor-triangulated subcategory of $\cT$ which is rigidly-compactly generated by $\cat G$ and the inclusion $\Loc\langle \cat G \rangle \hookrightarrow \cT$ is a fully faithful geometric functor. We call $\Loc\langle \cat G \rangle \subseteq \cT$ the \emph{concentration of $\cT$ at $\cat G$} and denote it by $\Tconc{\cat G}$.
\end{Def}

\begin{Exa}
	If $\cat G \subseteq \cTc$ is a set of objects which contains $\unit$ and is closed under the $\otimes$-product and taking duals then $\thick\langle \cat G\rangle$ is a rigid tensor-subcategory. This is the most natural case of \cref{def:concentration}, but there are also examples of interest where $\cat G$ is not closed under the $\otimes$-product and yet $\cat G \otimes \cat G \subseteq \thick\langle \cat G\rangle$. Hence we allow more flexibility in the definition, as above.
\end{Exa}

\begin{Def}\label{def:unitation}
	In particular, we can consider the concentration at $\cat G=\{\unit\}$, which we call the \emph{unitation} of $\cT$. We also call $\unitation{\cT}\coloneqq \Loc\langle\unit\rangle \subseteq\cT$ the \emph{unigenic core} (or \emph{unicore}) of $\cT$.\footnote{In ancient mythology, a \emph{manticore} is a man-eating beast with the head of a human, the body of a lion, and the tail of a scorpion \cite{Robinson65,Nichols11}. The author hopes that the unicore of a tt-category will have more in common with a unicorn than a manticore.}
\end{Def}

\begin{Exa}\label{exa:picard}
	More generally, we can consider the concentration at any subgroup $\cat G$ of the Picard group $\Pic(\cT)$.
\end{Exa}

Various examples of ``cellular objects'' arise in this way.

\begin{Exa}
	The motivic stable homotopy category $\SH(\mathbb{C})$ over the complex numbers is rigidly-compactly generated. The category of \emph{cellular motivic spectra} $\SHcell(\mathbb{C})$ is $\Loc\langle S^{m,n} : m,n\in\mathbb{Z}\rangle \subset \SH(\mathbb{C})$. In the above terminology, it is the concentration of $\SH(\mathbb{C})$ at the motivic spheres.
\end{Exa}

\begin{Exa}\label{exa:DM}
	Let $k$ be a field and let $R$ be a commutative ring in which the exponential characteristic of $k$ is invertible. The derived category of motives $\DM(k;R)$ is rigidly-compactly generated; see \cite[Example~5.17]{Sanders22} and the references cited therein. The derived category $\DTM(k;R)$ of \emph{Tate motives} is $\Loc\langle R(j) : j\in \bbZ\rangle \subset \DM(k;R)$. In other words, it is the concentration of $\DM(k;R)$ at the Tate twists.
\end{Exa}

\begin{Exa}\label{exa:artin-motives}
	The derived category $\DAM(k;R)$ of \emph{Artin motives} is the concentration of $\DM(k;R)$ at the motives of all finite separable extensions $L/k$. Note that these generating motives are self-dual dualizable objects of $\DM(k;R)$ and so are trivially closed under taking duals.
\end{Exa}

\begin{Exa}\label{exa:DATM}
	Combining the above two examples, the category $\DATM(k;R)$ of \emph{Artin--Tate motives} is the concentration of $\DM(k;R)$ at the Tate twists of the motives of finite separable extensions of $k$.
\end{Exa}

\begin{Exa}
	Let $G$ be a finite group. The $G$-equivariant stable homotopy category $\SHG=\Ho(\Sp_G)$ is rigidly-compactly generated by the orbits $G/H_+$ associated to the subgroups $H \le G$. These generators are self-dual (since we are assuming $G$ is finite). Let $\cat F$ be a collection of subgroups of $G$ which is closed under conjugation and consider 
	\begin{equation}\label{eq:F-G/H}
		\thick\langle G/H_+ : H \in \cat F\rangle \subseteq \SHG^c.
	\end{equation}
	It follows from the Mackey formula that $\cat F$ is closed under passage to subgroups if and only if \eqref{eq:F-G/H} is a thick \emph{tensor-ideal} of $\SHG^c$. In contrast, \eqref{eq:F-G/H} is a (necessarily rigid) tt-subcategory of $\SHG^c$ if and only if $\cat F$ contains $G \le G$ and is closed under intersection of subgroups. Thus, in the latter case we can consider the concentration of $\SHG$ at $\cat G \coloneqq \SET{G/H_+}{H\in \cat F}$.
\end{Exa}

\begin{Exa}\label{exa:SHG-inflation}
	Let $N \lenormal G$ be a normal subgroup and consider the collection of subgroups $\cat F[{{\supseteq}N}] \coloneqq \SET{H \le G}{H \supseteq N}$. Inflation provides a geometric functor $\SHGN \to (\SHG)_{\langle \cat F[\supseteq N[\rangle}$. In particular, we have a geometric functor $\SH \to \unitation{(\SHG)}$. These functors are far from being equivalences in general, as is clear just by considering the endomorphisms of the unit. We will study the unitation $\unitation{(\SHG)}$ in more detail in \cref{sec:equivariant}.
\end{Exa}

\begin{Exa}
	Let $\Gamma$ be a profinite group and let $R$ be a commutative ring. The derived category of permutation modules $\DPerm(\Gamma;R)$ defined in \cite{BalmerGallauer23b} is rigidly-compactly generated by the (transitive) permutation modules $R(\Gamma/H)$ for each open subgroup $H\le \Gamma$. It again follows from the Mackey formula (see \cite[Remark~2.12]{BalmerGallauer23b}) that we can consider the concentration at $\cat G = \SET{\Gamma/H}{H \in \cat F}$ for any collection of open subgroups $\cat F$ which contains $\Gamma \le \Gamma$ and is closed under finite intersection.
\end{Exa}

\begin{Exa}\label{exa:DPerm-inflation}
	Let $N \lenormal\, \Gamma$ be a closed (but not necessarily open) subgroup and consider the concentration of $\DPerm(\Gamma;R)$ at $\cat F[{\supseteq}N] = \SET{R(\Gamma/H)}{H \supseteq N}$. Inflation induces an equivalence $\DPerm(\Gamma/N;R) \cong \DPerm(\Gamma;R)_{\langle \cat F[{\supseteq}N]\rangle}$; see \cite[Lemma~3.15]{BalmerGallauer23b}. In particular, the unitation $\unitation{\DPerm(\Gamma;R)} \cong \Der(R)$ is just the derived category of the coefficient ring $R$. Note that this behaviour differs qualitatively from the situation in \cref{exa:SHG-inflation}.
\end{Exa}

\begin{Rem}
	If $\Gamma$ is the absolute Galois group of a field $k$ then there is a \mbox{tt-equivalence} $\DPerm(\Gamma;R) \simeq \DAM(k;R)$ with the derived category of Artin motives from \cref{exa:artin-motives}. This is explained in \cite[Sections 5--7]{BalmerGallauer23b}. Note that they define $\DAM(k;R)$ as the analogous localizing subcategory in the derived category of \emph{effective} motives $\DMeffkR$ rather than in $\DM(k;R)$. However, it follows from Voevodsky's Cancellation Theorem \cite{Voevodsky10} together with \cite[Section 8]{CisinskiDeglise15} that the canonical functor $\DMeffkR \to \DM(k;R)$ is fully faithful. Thus 
	\[
		\Loc_{\DMeff}\langle L \mid L/k \text{ finite separable}\rangle \to \Loc_{\DM}\langle L \mid L/k \text{ finite separable} \rangle
	\]
	is an equivalence.
\end{Rem}

\begin{Exa}
	Let $G$ be a finite group and let $R$ be a commutative ring. The category of derived representations $\DRep(G;R)=\Ho(\Rep(G;R))$ considered in \cite{barthel2021rep1} is rigidly-compactly generated. If $R=k$ is a field of characteristic $p$ then $\DRep(G;k)\cong K(\Inj(kG))$ and we have $\DRep(G;k)^c \cong \Dbmod{kG}$. More generally, if $R$ is noetherian then $\DRep(G;k)^c$ is equivalent to $\Der^b_{R\text{-perf}}(\text{mod}(RG))$ where the latter is the subcategory of $\smash{\Dbmod{RG}}$ consisting of complexes whose underlying nonequivariant complex is $R$-perfect.
\end{Exa}

\begin{Exa}\label{exa:DRepR}
	Let $R$ be a noetherian ring. We can consider the concentration of $\DRep(G;R)$ at the (transitive) permutation modules $\cat G=\SET{R(G/H)}{H \le G}$:
	\begin{equation}\label{eq:conc-perm}
		\DRep(G;R)_{\langle\cat G\rangle} \subseteq \DRep(G;R).
	\end{equation}
	This category is a finite localization of $\DPerm(G;R)$, as established in \cite{BalmerGallauer22b}:
	\begin{equation}\label{eq:conc-perm2}
		\DPerm(G;R) \twoheadrightarrow \DRep(G;R)_{\langle\cat G\rangle} \hookrightarrow \DRep(G;R).
	\end{equation}
	If $R$ is regular (e.g., $R=k$ a field) then Mathew \cite[Theorem~A.4]{Treumann15pp} and Balmer--Gallauer \cite{BalmerGallauer23a,BalmerGallauer22b} prove that $\DRep(G;R)$ is generated by permutation modules, i.e., that \eqref{eq:conc-perm} is an equality.\footnote{This also holds if $|G|$ is invertible in $R$.} In this case \eqref{eq:conc-perm2} reduces to a localization
	\begin{equation}\label{eq:conc-perm3}
		\DPerm(G;R) \twoheadrightarrow \DRep(G;R).
	\end{equation}
	However, \eqref{eq:conc-perm} need not be an equivalence in general; see \cite[Example~A.3]{Treumann15pp}. In general, the map induced on Balmer spectra by $\DPerm(G;R)\to\DRep(G;R)$ factors as 
	\begin{equation}\label{eq:conc-perm4}
		\Spc(\DRep(G;R)^c) \twoheadrightarrow \Spc(\concentration{\DRep(G;R)}{\cat G}^c) \hookrightarrow \Spc(\DPerm(G;R)^c)
	\end{equation}
	where the first map is a quotient map with connected fibers (\cref{thm:main-thmb}) and the second map is an embedding.
\end{Exa}

\begin{Rem}\label{rem:faithful-restriction}
	Let $\cT(G;R)$ denote either $\DPerm(G;R)$ or $\DRep(G;R)$. If $H \le G$ is a subgroup whose index $[G:H]$ is invertible in $R$ then the restriction functor ${\res^G_H:\cT(G;R) \to \cT(H;R)}$ is faithful. Indeed, this holds for any $R$-linear cohomological Green 2-functor; see \cite{BalmerDellAmbrogio24,DellAmbrogio22}. In contrast, the restriction functor $\res^G_H:\SH_G \to \SH_H$ is not faithful for any proper subgroup $H \lneq G$.
\end{Rem}

\begin{Exa}
	The category of spectra $\SH=\Ho(\Sp)$ is unigenic. More generally, the derived category $\Der(\bbE)\coloneqq\Ho(\bbE\text{-Mod}_{\Sp})$ of any commutative ring spectrum $\bbE \in \CAlg(\Sp)$ is unigenic. For example, the derived category $\Der(R)\cong \Der(\HR)$ of any commutative ring $R$ is unigenic.
\end{Exa}

\begin{Rem}\label{rem:spectra}
	Suppose $\cT=\Ho(\cat C)$ is the homotopy category of an underlying presentably symmetric monoidal stable $\infty$-category. We can consider the essentially unique symmetric monoidal and colimit-preserving functor $i^*:\Sp \to \cat C$ from the \mbox{$\infty$-category} of spectra. We have an induced ring spectrum
	\[
		\eend_{\cat C}(\unit) \coloneqq i_*(\unit) \in \CAlg(\Sp)
	\]
	and a canonical geometric functor $\eend_{\cat C}(\unit)\text{-Mod}_{\Sp} \to \cat C$. See \cite[Section~5.3]{MathewNaumannNoel17}, for example. This induces an equivalence
	\[
		\eend_{\cat C}(\unit)\text{-Mod}_{\Sp} \xrightarrow{\simeq} \unitation{\cat C}
	\]
	of symmetric monoidal stable $\infty$-categories. In particular, we have a tt-equivalence
	\[
		\unitation{\cT}=\Ho(\unitation{\cat C}) \cong \Ho(\eend_{\cat C}(\unit)\text{-Mod}_{\Sp}) \eqqcolon \Der(\eend_{\cat C}(\unit)).
	\]
	For example, if $\cT=\Ho(\cat C)$ is unigenic then it is the derived category of a highly structured commutative ring spectrum. This derived Morita theory has its homotopical origins in \cite{SchwedeShipley03}.
\end{Rem}

\begin{Not}\label{not:triv}
	Let $G$ be a finite group and let $\Sp_G$ denote the $\infty$-category of \mbox{$G$-spectra}. We will denote the right adjoint of the canonical functor ${\triv_G:\Sp\to\Sp_G}$ of \cref{rem:spectra} by $\lambda^G:\Sp_G \to \Sp$. We will also write $\Sphere_G \coloneqq \triv_G(\Sphere)$ for the \mbox{$G$-equivariant} sphere spectrum.
\end{Not}

\begin{Rem}\label{rem:equiv-monadic}
	Let $\mathbb{E} \in \CAlg(\Sp_G)$ and consider $\cat C\coloneqq \mathbb{E}\text{-Mod}_{\Sp_G}$. We have a commutative diagram
	\[\begin{tikzcd}
		\Sp \ar[r,"\triv_G"] \ar[d] & \Sp_G \ar[d] \\
		 \unitation{(\mathbb{E}\text{-Mod}_{\Sp_G})} \ar[r,hook] & \mathbb{E}\text{-Mod}_{\Sp_G}.
	\end{tikzcd}\]
	Since the bottom functor is fully faithful, it follows that $\eend_{\unitation{\cat C}}\hspace{-0.3ex}(\unit) \cong \lambda^G(\mathbb{E})$. Hence we have an equivalence
	\[
		\unitation{(\mathbb{E}\text{-Mod}_{\Sp_G})} \cong \lambda^G(\mathbb{E})\text{-Mod}_{\Sp}
	\]
	by \cref{rem:spectra}.
\end{Rem}

\begin{Exa}
	Let $\mathbb{E} \in \CAlg(\Sp)$. We may construct its associated Borel equivariant spectrum $b_G(\mathbb{E}) \coloneqq F(EG_+,\triv_G(\mathbb{E}))\in \CAlg(\Sp_G)$ and consider $b_G(\mathbb{E})\text{-Mod}_{\Sp_G}$. We have
	\[
		(b_G(\mathbb{E})\text{-Mod}_{\Sp_G})_{\langle \unit \rangle} \cong F(BG_+,\mathbb{E})\text{-Mod}_{\Sp}
	\]
	where the ring structure on $\lambda^G(b_G(\mathbb{E})) \simeq F(BG_+,\mathbb{E})$ arises from the ring structure on $\mathbb{E}$ together with the coring structure on $BG_+$ provided by the diagonal map.
\end{Exa}

\begin{Exa}\label{exa:DRep-monadic}
	If $R$ is a regular noetherian ring of finite Krull dimension then there is a symmetric monoidal equivalence $\Rep(G;R) \cong b_G(\HR)\text{-Mod}_{\Sp_G}$ explained in \cite[Theorem~3.7]{barthel2021rep1} which depends on \cite[Theorem~A.4]{Treumann15pp}. Hence
	\[
		\unitation{\DRep(G,R)} \cong \Ho(F(BG_+,\HR)\text{-Mod}_{\Sp}).
	\]
\end{Exa}

\begin{Exa}\label{exa:HRG}
	Let $G$ be a finite group and let $R$ be a commutative ring. We define $\HRG \coloneqq \triv_G(\HR) \in \CAlg(\Sp_G)$ so that $\Der(\HRG) \coloneqq \Ho(\HRG\text{-Mod}_{\Sp_G})$ is the category of derived $G$-Mackey functors with $R$-linear coefficients; see \cite{PatchkoriaSandersWimmer22} and \cite[Part~V]{BarthelHeardSanders23a}. It is rigidly-compactly generated by the images of the generators~$G/H_+$ of $\SHG$. We have an equivalence
	\[
		\unitation{\Der(\HRG)} \cong \Ho(\HR \wedge \lambda^G(\Sphere_G)\text{-Mod}_{\Sp}).
	\]
\end{Exa}

\begin{Rem}
	We will study these equivariant examples in more detail in \cref{sec:equivariant} and \cref{sec:locally-unigenic}.
\end{Rem}

\begin{Exa}\label{exa:noncommutative-motives}
	Let $k$ be a commutative ring and let $\cat T' \coloneqq \Mot_k^a$ and $\cat S'\coloneqq \Mot_k^\ell$ denote the tt-categories of noncommutative motives over $k$ for additive invariants and localizing invariants, respectively. Although $\cat T'$ is compactly generated, the category $\cat S'$ is not known to be compactly generated. Nevertheless, in either case, the localizing subcategory generated by the dualizable objects is a rigidly-compactly generated tt-category: $\cat T\coloneqq \concentration{\cat T'}{\cat T'^d} = \Loc(\cat T'^d)$ and $\cat S\coloneqq\concentration{\cat S'}{\cat S'^d} = \Loc(\cat S'^d)$. For further details, see \cite{CisinskiTabuada12,DellAmbrogioTabuada12,Tabuada08}.
\end{Exa}

\begin{Exa}
	Let $X$ be a quasi-compact and quasi-separated scheme. The derived category $\Der(X)\coloneqq \Derqc(X)$ of quasi-coherent complexes of sheaves (i.e.~complexes of $\OX$-modules whose cohomology sheaves are quasi-coherent) is rigidly-compactly generated. We'll study these examples in \cref{sec:alg-geom} and \cref{sec:affinization}.
\end{Exa}

\begin{Rem}
	We will also use the ``concentration'' and ``unitation'' terminology in the context of a small rigid tt-category $\cat K$, namely, if $\cat G \subseteq \cat K$ is a set of objects such that $\thick\langle \cat G\rangle$ is a rigid tensor-subcategory, we will write $\concentration{\cat K}{\cat G} \coloneqq \thick\langle \cat G \rangle$ and call it the \emph{concentration} of $\cat K$ at $\cat G$. Note that if $\cat K=\cTc$ then $\concentration{\cat K}{\cat G} = (\concentration{\cT}{\cat G})^c$.
\end{Rem}

\begin{Rem}
	Suppose $F:\cat K \to \cat L$ is a tensor-triangulated functor. If $\thick\langle \cat G \rangle \subseteq \cat K$ is a rigid tt-subcategory then $\thick\langle F(\cat G)\rangle$ is a rigid tt-subcategory of $\cat L$. Showing that it is a tt-subcategory of $\cat L^d$ is a routine thick subcategory argument. Moreover,  $\cat G^{\vee} \subseteq \thick\langle \cat G\rangle \subseteq F^{-1}(\thick\langle F(\cat G)\rangle)$ implies that $F(\cat G)^{\vee} = F(\cat G^\vee) \subseteq \thick\langle F(\cat G)\rangle$ so that $\thick\langle F(\cat G)\rangle$ is rigid.
\end{Rem}

\begin{Rem}
	It follows that a geometric functor $f^*:\cT \to \cS$ (\cref{hyp:geometric-functor}) induces a geometric functor $\concentration{\cT}{\cat G} \to \concentration{\cS}{f^*(\cat G)}$ of concentrations whose right adjoint is conservative (\cref{rem:equiv-cons}). For example, the right adjoint of $\smash{\unitation{\cT} \to \unitation{\cS}}$ is conservative.
\end{Rem}

\begin{Exa}
	If $\cT$ is unigenic then any geometric functor $f^*:\cT\to\cS$ factors as 
	\[
		\cT \to \unitation{\cS} \hookrightarrow \cS
	\]
	and the right adjoint of the first functor is conservative. In particular, $\cS$ is unigenic if and only if $f_*:\cS \to \cT$ is conservative.
\end{Exa}

\begin{Exa}\label{exa:finite-loc-is-unigenic}
	Any finite localization $\cT|_U$ of a unigenic category $\cT$ is unigenic.
\end{Exa}

\begin{Exa}\label{exa:unigenic-fully-faithful}
	If $\cT$ is unigenic then any fully faithful geometric functor $\cT\to \cS$ induces an equivalence $\cT \xrightarrow{\sim} \unitation{\cS}$ by \cref{rem:equiv-cons}.
\end{Exa}

\begin{Rem}\label{rem:geom-between-unigenic}
	A geometric functor $f^*:\cT \to \cS$ between unigenic categories is an equivalence if and only if the canonical map $\unitT\to f_*\unitS$ is an isomorphism if and only if $\End_{\cT}^*(\unitT) \to \End_{\cS}^*(\unitS)$ is an isomorphism. The first equivalence follows from \cref{exa:unigenic-fully-faithful} and \cref{prop:fully-faithful-chars}. On the other hand, since $\cT$ is unigenic, the map $u:\unitT\to f_*\unitS$ is an isomorphism if and only if post-composition by $u$ gives a bijection $\cT(\Sigma^n \unitT,\unitT) \to \cT(\Sigma^n \unitT,f_*\unitS)$ for all $n \in \bbZ$ and one readily checks that this map coincides with $\cT(\Sigma^n\unitT,\unitT) \to \cS(f^*(\Sigma^n\unitT),f^*(\unitT)) \cong \cS(\Sigma^n\unitS,\unitS)$.
\end{Rem}

\begin{Rem}\label{rem:unitation-localization}
	If $U\subseteq \Spcunit{\cT}$ is the complement of a Thomason subset then we have a commutative diagram
	\[\begin{tikzcd}
		\unitation{\cT} \ar[r,hook] \ar[d] & \cT \ar[d] \\
		(\unitation{\cT})|_U \ar[r,hook] & \cT|_{\varphi^{-1}(U)}
	\end{tikzcd}\]
	by \cref{rem:corestriction-is-fully-faithful}. Since the bottom-left category is unigenic (\cref{exa:finite-loc-is-unigenic}) the fully faithful bottom functor induces an equivalence: $(\unitation{\cT})|_U \xrightarrow{\sim} \unitation{(\cT|_{\varphi^{-1}(U)})}$.
\end{Rem}

\begin{Exa}\label{exa:finite-product}
	Suppose $\cT = \cT_1 \times \cdots \times \cT_n$ is a finite product of rigidly-compactly generated tt-categories with the pointwise tensor-triangulated structure. Then $\cT$ is itself rigidly-compactly generated. Moreover, $\cT$ is unigenic if and only if each~$\cT_i$ is unigenic. Indeed, the projection $\pi_i^*:\cT \to \cT_i$ is a finite localization whose (fully faithful) right adjoint $(\pi_i)_*$ sends $x \in \cT_i$ to $(0,\ldots,x,\ldots,0)$. Hence $\cT$ unigenic implies that each $\cT_i$ is unigenic. On the other hand, observe that for every $t \in \cT$ we have $t = \oplus_{i=1}^n (\pi_i)_*\pi_i^*(t) = \oplus_{i=1}^n \unit_i \otimes t$ where $\unit_i = (0,\ldots,\unit,\ldots,0)$. Note that $(\pi_i)_*(t) \in \Loc\langle\unit\rangle$ in $\cT_i$ implies $\unit_i \otimes t \in \Loc\langle\unit_i\rangle$ in $\cT$ and $\unit_i$ is a direct summand of the unit $\unit$ of $\cT$. Thus, $t \in \Loc\langle\unit\rangle$ in $\cT$ and we conclude that $\cT$ is unigenic.
\end{Exa}

\begin{Rem}
	An infinite product of rigidly-compactly generated tt-categories need not be rigidly-compactly generated; see \cite[Example~2.6]{Gomez24a} and \cite[Remark~1.6]{BarthelCastellanaHeardSanders24}.
\end{Rem}

\begin{Exa}\label{exa:Z2-graded-product}
	Let $\cT$ be a nonzero rigidly-compactly generated tt-category and let $\cT \times_{\bbZ/2} \cT$ be the product triangulated category with the $\bbZ/2$-graded $\otimes$-structure discussed in \cite[Example~4.23]{Sanders22}. It is rigidly-compactly generated and the inclusion $x\mapsto (x,0)$ into the first factor provides a fully faithful geometric functor $\cT \hookrightarrow \cT \times_{\bbZ/2} \cT$ whose right adjoint is not conservative. This category $\cT \times_{\bbZ/2} \cT$ is never unigenic. Indeed, the unit $(\unit,0)$ only generates objects concentrated in the first factor. Note that if $\cat T$ is unigenic, then the unitation $\unitation{(\cT \times_{\bbZ/2} \cT)}$ is $\cT$ sitting inside as the first factor.
\end{Exa}

\begin{Exa}
	Let $\cat F$ be any tt-field in the sense of \cite[Definition~1.1]{BalmerKrauseStevenson19}. Then $\cat F \times_{\bbZ/2} \cat F$ is again a tt-field. This shows that a tt-field need not be unigenic.
\end{Exa}

\begin{Rem}
	Any concentration $\Tconc{\cat G}$ of a tt-field $\cT$ is itself a tt-field by \cite[Proposition~5.22]{BalmerKrauseStevenson19}. Also, every geometric functor defined on a tt-field is faithful; see \cite[Corollary~5.6]{BalmerKrauseStevenson19}.
\end{Rem}

\section{Motivic examples}\label{sec:motivic}

We will now briefly illustrate our results for some motivic categories of interest.

\begin{Exa}
	Gallauer \cite{Gallauer19} has given a beautiful computation of the spectrum of the category $\DTM(\overbar{\bbQ},\bbZ)$ of Tate motives over $\overbar{\bbQ}$. This space is noetherian and can be regarded as a two-dimensional version of $\Spec(\bbZ)$ in which the layer of closed points corresponds to mod-$p$ motivic cohomology, the middle layer corresponds to mod-$p$ \'{e}tale cohomology and the generic point corresponds to rational motivic cohomology. We have the fully faithful inclusion $\DTM(\overbar{\bbQ},\bbZ) \hookrightarrow \DM(\overbar{\bbQ},\bbZ)$ into the derived category of all motives over $\overbar{\bbQ}$. By \cref{thm:main-thmb}, the map
	\[
		\Spc(\DM(\overbar{\bbQ},\bbZ)^c) \to \Spc(\DTM(\overbar{\bbQ},\bbZ)^c)
	\]
	is a strong topological quotient map with connected fibers. The problem of determining the spectrum of the category of all motives over $\overbar{\bbQ}$ boils down to understanding the connected fibers of this map.
`\end{Exa}

\begin{Exa}
	The spectrum of the category $\DATM(\bbR;\bbZ)$ of Artin--Tate motives over the real numbers (and other real-closed fields) has been studied in \cite{BalmerGallauer22a}. We may observe that the two maps
	\[\begin{tikzcd}[row sep=small,column sep=small]
		\Spc(\DATM(\bbR;\bbZ/2)^c)\ar[d]\ar[r] &\Spc(\DTM(\bbR;\bbZ/2)^c)\\
		\Spc(\DAM(\bbR;\bbZ/2)^c) & 
	\end{tikzcd}\]
	described in \cite[Remark~12.5]{BalmerGallauer22b} have connected fibers as \cref{thm:main-thmb} predicts. They look as follows:
	\[\begin{tikzcd}[row sep=tiny,column sep=small]
		{\textcolor{teal}\bullet} && {\textcolor{teal}\bullet}&&&{\textcolor{teal}\bullet}\\
		{\textcolor{red}\bullet}\ar[u,-] &{\textcolor{teal}\bullet}\ar[ul,-]\ar[ur,-]&{\textcolor{blue}\bullet} \ar[u,-]\ar[rr,shorten=2mm]&&{\textcolor{red}\bullet}\ar[ur,-]&&{\textcolor{blue}\bullet}\ar[ul,-] \\
										 &{\textcolor{brown}\bullet}\ar[d]\ar[ul,-] \ar[u,-] \ar[ur,-]  &&&& {\textcolor{brown}\bullet}\ar[ul,-]\ar[ur,-] \\
		\bullet & \hphantom{\bullet}& \bullet \\
				& \bullet \ar[ur,-]\ar[ul,-] &
	\end{tikzcd}\]
\end{Exa}

\begin{Exa}\label{exa:vishik}
	We can also consider the map induced by the fully faithful inclusion $\DATM(\bbR;\bbZ/2) \hookrightarrow \DM(\bbR;\bbZ/2)$ into the category of all motives over $\bbR$:
	\[\begin{tikzcd}[row sep=tiny,column sep=small]
		&& {\textcolor{teal}\bullet} && {\textcolor{teal}\bullet}\\
		\Spc(\DM(\bbR;\bbZ/2)^c)
		\ar[rr,shorten=2mm,"\varphi"]
		&& {\textcolor{red}\bullet}\ar[u,-] &{\textcolor{teal}\bullet}\ar[ul,-]\ar[ur,-]&{\textcolor{blue}\bullet} \ar[u,-]& \\
		&&&{\textcolor{brown}\bullet}\ar[ul,-] \ar[u,-] \ar[ur,-]  &
	\end{tikzcd}\]
	In a recent breakthrough, Vishik \cite{Vishik24} has increased our knowledge of the mysterious left-hand space and shown that it is considerably more complicated than the right-hand side. This result builds on \cite{Vishik19,Vishik22,Vishik23} and ultimately depends on proving that --- over so-called ``flexible fields'' --- isotropic Chow groups coincide with Chow groups for numerical equivalence. This leads to the construction of a family of ``isotropic'' primes which provide $2^{2^{\aleph_0}}$ distinct points in $\Spc(\DM(\bbR;\bbZ/2)^c)$. These isotropic points all lie over the top two green points in the figure above. Since~$\varphi$ is surjective, we know that there are more primes that have yet to be discovered. Moreover, by \cref{thm:main-thmb}, the above map is a strong topological quotient map whose fibers are connected. In fact, Vishik proved that there are no inclusions among the isotropic primes. There is tension between this fact and the fact that the fibers are connected, and this tension provides nontrivial information about the space. For example, since the fiber of over the top-left green point is connected and yet the specialization order among the known points of the fiber is trivial, either (a) there exist additional points in the fiber which make it connected, or (b) the constructible topology plays a nontrivial role in the fiber's spectral topology.
\end{Exa}

\begin{Exa}
	The unitation of the motivic stable homotopy category $\SH(\bbC)$ is equivalent to the classical stable homotopy category $\SH$. This follows from a theorem of Levine \cite[Theorem~1]{Levine14} which establishes that the canonical tt-functor $\SH \to \SH(\bbC)$ is fully faithful. Hence it induces an equivalence $\SH \to \unitation{\SH(\bbC)}$ by \cref{exa:unigenic-fully-faithful}. Therefore, the two maps
	\[
		\Spc(\SH(\bbC)^c) \to \Spc(\SHcell(\bbC)^c) \to \Spc(\SH^c)
	\]
	have connected fibers.
\end{Exa}

\begin{Rem}
	Further motivic consequences of our results will be given in \cref{exa:motivic-comp,exa:motivic-comp2,exa:noncommutative-comp} below.
\end{Rem}

\section{Applications to the graded comparison map}\label{sec:graded}

A fruitful approach to studying the Balmer spectrum of a given category $\cat K$ is via the (graded or ungraded) comparison map
\[
	\rho_{\cat K}: \SpcK \to \SpecRK
\]
constructed in \cite{Balmer10b} from the Balmer spectrum to the Zariski spectrum of the (graded or ungraded) endomorphism ring of the unit. This map factors through the unitation:
\begin{equation}\label{eq:factorization}
	\begin{aligned}
	\begin{tikzcd}[column sep=large]
		\Spc(\cat K) \ar[rr,bend right=15,"\rho_{\cat K}"'] \ar[r,"\varphi"] & \Spc(\cat K_{\langle \unit \rangle}) \ar[r,"\rho_{{\cat K_{\hspace{-0.3ex}\langle \unit \rangle}}}"] & \SpecRK. 
	\end{tikzcd}
	\end{aligned}
\end{equation}
In a number of interesting examples, the (graded or ungraded) comparison map is a homeomorphism. For such categories, we immediately obtain:

\begin{Cor}\label{cor:injective-comp}
	Let $\cT$ be a rigidly-compactly generated tt-category. If the graded or ungraded comparison map $\rho:\Spc(\cTc)\to \Spc(R_{\cT})$ is injective, then unitation $\unitation{\cT} \hookrightarrow \cT$ induces a homeomorphism $\Spc(\cTc) \xrightarrow{\simeq} \Spc(\unitation{\cT}^c)$.
\end{Cor}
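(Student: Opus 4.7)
The plan is to exploit the factorization diagram displayed in \eqref{eq:factorization} together with \cref{thm:main-thmb} and \cref{cor:spectral-homeo}. The key observation is that since the inclusion $\unitation{\cT} \hookrightarrow \cT$ is fully faithful, the (graded or ungraded) endomorphism ring of the unit is the same for $\cT$ and for $\unitation{\cT}$, so we have the commutative triangle
\[
	\begin{tikzcd}[column sep=large]
		\SpcT \ar[rr,bend right=15,"\rho_{\cT}"'] \ar[r,"\varphi"] & \Spcunit{\cT} \ar[r,"\rho_{\unitation{\cT}}"] & \Spec(\RT).
	\end{tikzcd}
\]
Here the first map $\varphi$ is the spectral map induced by the fully faithful geometric functor $\unitation{\cT} \hookrightarrow \cT$.

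The argument is then immediate. First, the hypothesis that $\rho_\cT$ is injective forces $\varphi$ to be injective, since it is the first factor of an injective composite. Second, by \cref{thm:main-thmb} applied to the fully faithful geometric inclusion $\unitation{\cT}\hookrightarrow \cT$, the map $\varphi$ is a spectral quotient map. Finally, \cref{cor:spectral-homeo} tells us that any injective spectral quotient map is a homeomorphism, so $\varphi$ is a homeomorphism as claimed.

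There is no real obstacle here: the result is a formal consequence of the factorization together with the two preceding results (\cref{thm:main-thmb} and \cref{cor:spectral-homeo}). The only minor point worth flagging is that the same argument applies uniformly to both the graded and ungraded versions of the comparison map, because both versions factor through the unitation in the same way (the endomorphism ring of the unit, graded or ungraded, is preserved by the fully faithful inclusion $\unitation{\cT} \hookrightarrow \cT$).
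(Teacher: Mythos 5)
Your argument is correct and is essentially the paper's own proof: injectivity of $\rho_{\cT}$ forces injectivity of $\varphi$ via the factorization \eqref{eq:factorization}, and then an injective spectral quotient map is a homeomorphism by \cref{cor:spectral-homeo}. The only cosmetic difference is that the paper cites \cref{thm:faithful-are-quotient} (faithfulness suffices for the quotient property) where you cite \cref{thm:main-thmb}, which is immaterial.
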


\begin{proof}
	If either comparison map is injective, then it follows from the commutative diagram \eqref{eq:factorization} that the map $\varphi$ induced by unitation is injective and hence a homeomorphism by \cref{thm:faithful-are-quotient} and \cref{cor:spectral-homeo}.
\end{proof}

\begin{Rem}
	We will see some explicit examples of this phenomenon in~\cref{sec:equivariant}. For the remainder of this section, we will go the other way around and use information about the unitation to gain information about the comparison map. More precisely, we are able to strengthen the surjectivity results of Balmer \cite[Section~7]{Balmer10b} and of Lau \cite[Section~2]{Lau23}.
\end{Rem}

\begin{Rem}\label{rem:algebraic}
	Recall that for each prime ideal $\mathfrak p \in \Spec(R)$ of the graded or ungraded endomorphism ring, we have the associated algebraic localization $(-)_{\mathfrak p}:\cT \to \cT_{\mathfrak p}$. It is the finite localization associated to the Thomason subset $\rho^{-1}(\gen(\frakp)^\cc) = \bigcup_{s \not\in \mathfrak p} \supp(\cone(s))$; see \cite[Example~1.34]{BarthelHeardSanders23a} and \cite[Section~5]{Balmer10b}. It has the effect of localizing the (graded) $R$-module of (graded) morphisms
	\[
		\cT_{\frakp}(x_{\frakp},\Sigma^{(\bullet)}y_{\frakp}) \cong \cT(x,\Sigma^{(\bullet)}y)_{\frakp}
	\]
	for any compact $x \in \cTc$ and arbitrary $y \in \cT$; see \cite[Theorem~3.3.7]{HoveyPalmieriStrickland97} or \cite[Proposition~6.5]{Zou23bpp}.
\end{Rem}

\begin{Rem}\label{rem:algebraic-commute}
	Consider the map $\varphi:\SpcT\to\Spcunit{\cat T}$ given by the unitation and let $\frakp\in\Spec(R)$. It follows from the factorization~\eqref{eq:factorization} that 
	\[
		\varphi^{-1}(\rho_{\unitation{\cT}}^{-1}(\gen(\frakp)^\cc)) = \rho_{\cT}^{-1}(\gen(\frakp)^\cc).
	\]
	It then follows from \cref{rem:unitation-localization} that algebraic localization and unitation commute:
	\[\begin{tikzcd}[column sep=small]
		\unitation{\cT}\ar[d] \ar[rr,hook] && \cT\hphantom{.} \ar[d,shift right=0.3ex]\\
		(\unitation{\cT})_{\frakp} \ar[r,"\sim"] &\unitation{(\cT_{\frak p})} \ar[r,hook] & \cT_{\frakp}.
	\end{tikzcd}\]
\end{Rem}

\begin{Prop}\label{prop:monogeniccoherent}
	Let $\cat K$ be an essentially small rigid tt-category. Suppose that $\cat K=\thick\langle \unit\rangle$ and that the graded ring $\RK\coloneqq \End_{\cat K}^\bullet(\unit)$ is coherent. Then the graded comparison map $\rho_{\cat K} : \Spc(\cat K) \to \Spec(\RK)$ is a closed quotient map. Moreover, 
	\begin{equation}\label{eq:VX}
		\rho_{\cat K}(\supp(x)) = \SET{\mathfrak p \in \Spec(\RK)}{x_{\mathfrak p} \neq 0 \text{ in } \cat K_{\mathfrak p}}
	\end{equation}
	for each $x \in \cat K$.
\end{Prop}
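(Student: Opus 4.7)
The plan is to establish the support formula \eqref{eq:VX} first, and then deduce the closed quotient map assertion from it. The key intermediate device is, for each $x \in \cat K$, the graded $R$-module $M(x) \coloneqq \cat K(\unit, \Sigma^\bullet x)$. I would first prove by thick-subcategory induction on $x \in \cat K = \thick\langle\unit\rangle$ that $M(x)$ is finitely presented over $R$: the base case $M(\unit)=R$ is immediate, and the inductive step uses the long exact Hom-sequences associated to triangles, combined with the hypothesis that $R$ is coherent (so finitely presented graded $R$-modules form an abelian subcategory closed under extensions and retracts).

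Next I would prove \eqref{eq:VX}. Using the algebraic localization formula from \cref{rem:algebraic}, namely $M(x)_{\frakp} \cong \cat K_{\frakp}(\unit, \Sigma^\bullet x_{\frakp})$, combined with the fact that the finite localization $\cat K_{\frakp}$ is again unigenic (so $y = 0$ iff $\cat K_{\frakp}(\unit, \Sigma^\bullet y) = 0$), one gets that $\{\frakp : x_{\frakp} \neq 0\}$ coincides with $\supp_R(M(x)) = V(\mathrm{ann}_R(M(x)))$, which is Thomason closed in $\Spec(R)$ because $M(x)$ is finitely presented. The forward inclusion $\rho(\supp(x)) \subseteq \{\frakp : x_{\frakp} \neq 0\}$ is direct: if $\cat P \in \supp(x)$ and $\frakp = \rho(\cat P)$, the defining property of $\rho$ gives $\langle \cone(f) : f \notin \frakp \rangle \subseteq \cat P$, so $x_{\frakp} = 0$ would force $x \in \cat P$, contradicting $\cat P \in \supp(x)$. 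For the reverse inclusion, given $\frakp$ with $x_{\frakp} \neq 0$, one first uses the embedding $\Spc(\cat K_{\frakp}) \hookrightarrow \Spc(\cat K)$ to find some $\cat P_0 \in \supp(x)$ with $\rho(\cat P_0) \subseteq \frakp$; then a Zorn's-Lemma-style going-up argument in the spirit of \cite[Theorem~7.3]{Balmer10b}, adapted to the coherent setting as in \cite[Section~2]{Lau23}, produces a specialization $\cat P \subseteq \cat P_0$ still in $\supp(x)$ with $\rho(\cat P) = \frakp$.

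With \eqref{eq:VX} in hand, the closed quotient map assertion follows. Surjectivity of $\rho$ is already available from \cref{thm:coherent-spectral}. For closedness, every Thomason closed subset of $\Spc(\cat K)$ is of the form $\supp(x)$ for a single $x$ (using $\supp(x) \cap \supp(y) = \supp(x \otimes y)$ to absorb finite intersections), and by \eqref{eq:VX} its $\rho$-image is the Thomason closed set $V(\mathrm{ann}_R(M(x)))$. An arbitrary closed subset $C \subseteq \Spc(\cat K)$ is the cofiltered intersection of the Thomason closed subsets containing it; since cofiltered intersections of proconstructible subsets are preserved by spectral maps (a patch-topology compactness argument), $\rho(C)$ is a cofiltered intersection of closed subsets of $\Spec(R)$, hence closed. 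A closed continuous surjection is automatically a topological quotient map.

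The principal obstacle is the reverse inclusion in \eqref{eq:VX}: producing a prime $\cat P \in \supp(x)$ with $\rho(\cat P) = \frakp$, rather than merely some prime over $\frakp$. The Balmer--Lau surjectivity theorems provide primes over $\frakp$ but without the support constraint. Overcoming this requires refining Balmer's Zorn construction to simultaneously control the image under $\rho$ and avoid $x$, and the unigenic hypothesis plays an essential role here: for a general tt-category, one may have $\cat P \notin \supp(x)$ even though $x_{\rho(\cat P)} \neq 0$, so $\supp(x) \neq \rho^{-1}(\rho(\supp(x)))$ and the support-preserving going-up needed here is strictly stronger than plain surjectivity.
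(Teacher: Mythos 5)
Your overall architecture matches the paper's (establish \eqref{eq:VX}, then deduce the closed quotient statement), and several pieces are sound: the thick-subcategory induction showing that $M(a)=\Hom^\bullet_{\cat K}(\unit,a)$ is coherent/finitely presented for every $a\in\thick\langle\unit\rangle$ when $\RK$ is coherent, the identification of the right-hand side of \eqref{eq:VX} with $\supp_{\RK}(M(x))$ via unigenicity of $\cat K_\frakp$ and the localization formula, the forward inclusion, and the deduction of closedness from \eqref{eq:VX} (your route via cofiltered intersections works; the paper argues more directly that $\rho(\supp(x))$ is proconstructible and specialization closed, hence closed). But the heart of the proposition --- the reverse inclusion of \eqref{eq:VX} --- is not actually proved. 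You defer it to ``a Zorn's-Lemma-style going-up argument in the spirit of Balmer, adapted to the coherent setting,'' and your final paragraph concedes this is the principal obstacle without resolving it. That adaptation is precisely the new content of the proposition, so as written there is a genuine gap at the crux.

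What is missing is the bridge between your coherence result and the Zorn-type existence of a prime: following Lau's proof of the analogous statement, after algebraically localizing one may assume $\RK$ is local with homogeneous maximal ideal $\frakm=\frakp$, and it suffices to show $x^{\otimes n}\otimes\cone(f_1)\otimes\cdots\otimes\cone(f_r)\neq 0$ for all $n\ge 0$ and homogeneous $f_i\in\frakm$; rigidity reduces this to $n\le 1$, i.e.\ to the claim that $a\neq 0$ implies $a\otimes\cone(f)\neq 0$ for homogeneous $f\in\frakm$. This is exactly where your two ingredients must be combined: unigenicity gives that $a\neq 0$ forces $M(a)\neq 0$, and then Balmer's Proposition~7.5 (a graded Nakayama-type statement) gives that $M(a)$ nonzero and \emph{coherent} implies $M(a\otimes\cone(f))\neq 0$, hence $a\otimes\cone(f)\neq 0$; coherence of $M(a)$ for all $a$ is your step~1. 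Without some such non-vanishing argument, ``refining Balmer's Zorn construction to simultaneously control $\rho$ and avoid $x$'' is only a restatement of the goal. A further, smaller issue: you invoke \cref{thm:coherent-spectral} for surjectivity of $\rho$, but in the paper that theorem is deduced \emph{from} this proposition, so the citation is circular; surjectivity should instead come from the $x=\unit$ case of \eqref{eq:VX} (as in the paper) or from Balmer's surjectivity theorem once coherence of all the modules $M(a)$ has been established.
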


\begin{proof}
	First we explain how the equality \eqref{eq:VX} will imply that $\rho_{\cat K}$ is a closed quotient map. The right-hand side of \eqref{eq:VX} is denoted $V(x)$ in \cite[Section~2]{Lau23} and we will use this notation in the proof. Note that $V(x)$ is always specialization closed, since algebraic localizations nest. Since $\supp(x)$ is proconstructible and $\rho_{\cat K}$ is a spectral map, the image $\rho_{\cat K}(\supp(x))$ is proconstructible (\cite[Corollary~1.3.23]{DickmannSchwartzTressl19}). Hence, if \eqref{eq:VX} holds then $\rho_{\cat K}(\supp(x))$ is specialization closed and hence closed (\cite[Theorem~1.5.4]{DickmannSchwartzTressl19}). In summary, if the equality \eqref{eq:VX} holds for all $x \in \cat K$, then $\rho_{\cat K}$ sends Thomason closed subsets to closed subsets. This implies $\rho_{\cat K}$ is a closed map by \cite[Theorem~5.3.3]{DickmannSchwartzTressl19}. Finally, $V(\unit)=\Spec(\RK)$, so the $x=\unit$ case of \eqref{eq:VX} implies that $\rho_{\cat K}$ is surjective. Thus, $\rho_{\cat K}$ is a closed surjective map, and hence is a closed quotient map by \cite[Corollary~6.4.14]{DickmannSchwartzTressl19}.

	With the above in hand, it remains to establish the equality \eqref{eq:VX}. The inclusion $\rho_{\cat K}(\supp(x)) \subseteq V(x)$ always holds, without any hypotheses on $\cat K$, by \cite[Proposition~2.7]{Lau23}. We claim that under our hypotheses, this is an equality. To this end, consider $\mathfrak p \in V(x)$. We will show that there exists $\cat P \in \supp(x)$ such that $\rho_{\cat K}(\cat P)=\mathfrak p$. As explained in the proof of \cite[Proposition~2.7]{Lau23}, we can use algebraic localization to reduce to the case where $\RK$ is local and $\mathfrak p=\mathfrak m$ is the unique homogeneous maximal ideal. Moreover, by that argument, it suffices to prove that 
	\[
		x^{\otimes n} \otimes \cone(f_1) \otimes \cdots \otimes \cone(f_r) \neq 0
	\]
	for any $n \ge 0$ and homogeneous elements $f_1,\ldots,f_r \in \mathfrak m$. Since $\cat K$ is rigid, it is enough to prove this for $n=0$ and $n=1$. Thus, it suffices to prove that for any $a \in \cat K$ and homogeneous $f \in \mathfrak m$, if $a \neq 0$ then $a \otimes \cone(f) \neq 0$. By \cite[Proposition~7.5]{Balmer10b}, if $\Hom_{\cat K}^\bullet(\unit,a)$ is nonzero and coherent as a graded $\RK$-module then $\Hom_{\cat K}^\bullet(\unit,a\otimes\cone(f))$ is nonzero, and hence $a\otimes \cone(f)\neq 0$. Since $\cat K$ is unigenic by hypothesis, $a \neq 0$ implies $\Hom_{\cat K}^\bullet(\unit,a) \neq 0$. In general, for a tt-category~$\cat K$, the collection of objects $a \in \cat K$ such that the graded $\RK$-module $\Hom_{\cat K}^\bullet(\unit,a)$ is coherent forms a thick subcategory. Thus, if the graded ring $\RK$ is coherent, then $\Hom_{\cat K}^\bullet(\unit,a)$ is a coherent $\RK$-module for every  $a \in \thick\langle \unit\rangle = \cat K$. This observation completes the proof.
\end{proof}

\begin{Thm}\label{thm:coherent-spectral}
	Let $\cT$ be a rigidly-compactly generated tt-category. Suppose $\RT\coloneqq \End_{\cT}^\bullet(\unit)$ is coherent (e.g., noetherian). Then the graded comparison map
	\begin{equation}\label{eq:graded-comparison}
		\rho : \Spc(\cTc) \to \Spec(\RT)
	\end{equation}
	is a strong spectral quotient map. In particular, $\rho$ is a homeomorphism if and only if it is injective.
\end{Thm}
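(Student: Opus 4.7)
The plan is to obtain the theorem from the factorization
\begin{equation*}
	\Spc(\cTc) \xrightarrow{\ \varphi\ } \Spc(\unitation{\cT}^c) \xrightarrow{\ \rho_{\unitation{\cT}}\ } \Spec(\RT)
\end{equation*}
of $\rho$ recorded in~\eqref{eq:factorization}, by showing that each factor is a strong spectral quotient map and that the class of strong spectral quotients is closed under composition.

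For the left map, the unitation $\unitation{\cT} \hookrightarrow \cT$ is a fully faithful geometric functor, so \cref{thm:main-thmb} applies and $\varphi$ is a strong spectral quotient map (with connected fibers). For the right map, I would invoke \cref{prop:monogeniccoherent} applied to the essentially small rigid tt-category $\cat K \coloneqq \unitation{\cT}^c = \thick\langle\unit\rangle$; this category is unigenic by construction, is rigid because $\unit$ is rigid, and its graded endomorphism ring of the unit agrees with $\RT$ since $\unitation{\cT} \hookrightarrow \cT$ is fully faithful. Hence, under the coherence hypothesis on $\RT$, the map $\rho_{\unitation{\cT}}$ is a closed quotient map. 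By \cref{exa:closed-quotient-is-strong} every closed quotient map is a strong topological quotient map, and in particular a strong spectral quotient map.

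The next step is to verify that the composite of two strong spectral quotient maps is a strong spectral quotient map. This follows from the fact already noted that composites of spectral quotient maps are spectral quotient maps, together with the observation that if $U \subseteq Z$ is the complement of a Thomason subset then its preimage under a spectral map $g : Y \to Z$ is again the complement of a Thomason subset in~$Y$; thus, for $f : X \to Y$ and $g : Y \to Z$ strong spectral quotients, the corestriction $(gf)^{-1}(U) = f^{-1}(g^{-1}(U)) \to U$ is the composite of the spectral quotients $f^{-1}(g^{-1}(U)) \to g^{-1}(U)$ and $g^{-1}(U) \to U$. Applying this to our two factors yields that $\rho$ is a strong spectral quotient map.

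The ``in particular'' clause is then immediate from \cref{cor:spectral-homeo}: an injective spectral quotient map is automatically a homeomorphism. The only mildly delicate point in the whole argument is the verification that \cref{prop:monogeniccoherent} can genuinely be applied to the unitation (rather than to $\cT$ itself), but this is essentially formal once one notes that unitation preserves both rigidity of $\unit$ and the graded endomorphism ring of~$\unit$.
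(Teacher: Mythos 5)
Your proposal is correct and follows essentially the same route as the paper: factor $\rho$ through the unitation, apply \cref{prop:monogeniccoherent} to $\unitation{\cT}^c$ (using that full faithfulness preserves the graded endomorphism ring) to get a closed quotient map, use \cref{thm:faithful-are-quotient}/\cref{thm:main-thmb} for $\varphi$, and combine via \cref{exa:closed-quotient-is-strong} and \cref{cor:spectral-homeo}. Your explicit check that strong spectral quotients compose (via preimages of complements of Thomason subsets) is a detail the paper leaves implicit, but it is exactly the right verification.
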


\begin{proof}
	Consider $\unitation{\cT}^c= \thick\langle \unit \rangle \subseteq \cTc$. We have the factorization
	\begin{equation}\label{eq:graded-factorization}\begin{tikzcd}
		\Spc(\cTc) \ar[rr,bend left=20,"\rho_{\cT}"] \ar[r,"\varphi"'] & \Spc(\unitation{\cT}^c) \ar[r,"\rho_{\cT_{\hspace{-0.35ex}\langle\unit\rangle}}"'] & \Spec(\RT).
	\end{tikzcd}\end{equation}
	By \cref{prop:monogeniccoherent}, $\rho_{\cT_{\hspace{-0.3ex}\langle\unit\rangle}}$ is a closed quotient map and, by \cref{thm:faithful-are-quotient}, $\varphi$ is a strong spectral quotient map. Since every closed quotient map is a strong spectral quotient map (\cref{exa:closed-quotient-is-strong}), it follows that the composite $\rho_{\cT}$ is also a strong spectral quotient. Finally, an injective spectral quotient map is a homeomorphism by \cref{cor:spectral-homeo}.
\end{proof}

\begin{Rem}
	Except for the fact that we work with a big tt-category $\cT$, \cref{thm:coherent-spectral} enhances the surjectivity theorem of Balmer~ \cite[Theorem~7.3]{Balmer10b}. It also generalizes results in \cite[Section~2]{Lau23} which consider the case of an End-finite (\aka noetherian) category. By passing through the unitation, we are able to show that one doesn't need to assume the category is End-finite, and that it is enough just to make a hypothesis on the endomorphism ring.
\end{Rem}

\begin{Rem}\label{rem:graded-can-be-disconnected}
	The map \eqref{eq:graded-comparison} in \cref{thm:coherent-spectral} need not have connected fibers (even when both spaces are finite). A counter-example is provided in \cref{exa:graded-noetherian-not-connected} below. Thus, the theorem should be contrasted with \cref{thm:main-thmb}.
\end{Rem}

\section{Connective categories and weight structures}\label{sec:weight}

Our next goal is to prove a result about the ungraded comparison map of a connective tt-category in the spirit of \cref{thm:coherent-spectral} but whose conclusions are considerably stronger; see \cref{cor:ungraded-comparison} below. For this, we will have need of Bondarko's theory of weight structures.

\begin{Def}\label{def:connective}
	We say that a tt-category $\cat K$ is \emph{connective} if $\pi_i(\unit) \coloneqq \End_{\cat K}(\unit,\Sigma^{-i}\unit)$ is zero for all $i<0$.
\end{Def}

\begin{Exa}
	The homotopy category of spectra $\SH=\Ho(\Sp)$ is connective.
\end{Exa}

\begin{Not}
	For a collection $\cat E$ of objects in a triangulated category, we write $\add(\cat E)$ for the full additive subcategory consisting of finite direct sums of objects in~$\cat E$ and write $\smd(\cat E)$ for the full subcategory of direct summands of objects in~$\add(\cat E)$.
\end{Not}

\begin{Thm}[Bondarko]
	Let $\cT$ be a triangulated category. If $\cat G \subset \cT$ is a collection of objects which is connective in the sense that ${\Hom_{\cT}(a,\Sigma^n b) = 0}$ for all $a,b\in \cat G$ and $n >0$, then there is a unique (bounded) weight structure on $\cat K \coloneqq \thick\langle\cat G\rangle$ whose heart contains~$\cat G$. Moreover, $\cat K^{\heartsuit} = \smd(\cat G)$.
\end{Thm}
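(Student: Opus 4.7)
The plan is to construct the weight structure explicitly from $\cat G$, verify the axioms directly, and identify the heart. Define $\cat K_{w\le 0}$ to be the smallest full subcategory of $\cat K$ containing $\Sigma^{n}g$ for all $g \in \cat G$ and $n \ge 0$ that is closed under finite direct sums, extensions, and direct summands in $\cat K$; define $\cat K_{w\ge 0}$ analogously using the shifts $\Sigma^{-n}g$ for $n \ge 0$. By construction both classes are retract-closed and satisfy the required shift-stability $\cat K_{w\le 0}[1]\subseteq \cat K_{w\le 0}$ and $\cat K_{w\ge 0}[-1]\subseteq \cat K_{w\ge 0}$.

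First I would verify the orthogonality axiom: $\Hom(X,Y)=0$ whenever $X \in \cat K_{w\ge 0}[-1]$ and $Y \in \cat K_{w\le 0}$. On generating shifts this reads $\Hom(\Sigma^{-n-1}g,\,\Sigma^{m}g') = \Hom(g,\,\Sigma^{m+n+1}g')$ for $m,n\ge 0$, which vanishes by the connectivity hypothesis since $m+n+1\ge 1$. Because $\Hom(-,Y)=0$ and $\Hom(X,-)=0$ are each stable under extensions and passage to direct summands, the vanishing propagates to all of $\cat K_{w\ge 0}[-1]$ and $\cat K_{w\le 0}$.

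The main obstacle is the existence of weight decompositions: every $X \in \cat K$ must fit into a triangle
\[
	X_{\ge 1} \to X \to X_{\le 0} \to \Sigma X_{\ge 1}
\]
with $X_{\ge 1} \in \cat K_{w\ge 0}[-1]$ and $X_{\le 0} \in \cat K_{w\le 0}$. The strategy is to let $\cat D \subseteq \cat K$ denote the class of objects admitting such a decomposition and show that $\cat D = \cat K$. Trivially $\cat G \subseteq \cat D$ via $0 \to g \to g$, and $\cat D$ is evidently $\Sigma^{\pm 1}$-stable and closed under direct summands. The critical step is closure under extensions: given $A \to B \to C \to \Sigma A$ with $A,C \in \cat D$, one assembles a weight decomposition of $B$ from those of $A$ and $C$ via the octahedral axiom applied to the composite $A \to B \to C \to C_{\le 0}$, using the orthogonality already established to lift the connecting map $C_{\ge 1} \to \Sigma A$ through $\Sigma A_{\ge 1} \to \Sigma A$. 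This yields $\cat D \supseteq \thick\langle \cat G\rangle = \cat K$, and boundedness of the resulting weight structure is immediate from the fact that $\cat G$ generates $\cat K$ as a thick subcategory.

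Finally, the heart $\cat K^\heartsuit \coloneqq \cat K_{w\le 0}\cap \cat K_{w\ge 0}$ contains $\cat G$ and is retract-closed, hence contains $\smd(\cat G)$. For the converse inclusion, given $X \in \cat K^\heartsuit$, one uses the construction of $\cat K_{w\le 0}$ to present $X$ as a retract of an iterated extension of objects $\Sigma^{n_i}g_i$ with $n_i \ge 0$; since $X$ also lies in $\cat K_{w\ge 0}$, the orthogonality axiom forces the strictly positive shifts to split off, exhibiting $X$ as a summand of a finite sum of objects of $\cat G$. Uniqueness among bounded weight structures with heart containing $\cat G$ is then automatic: any such structure must contain our candidate classes, and the orthogonality axiom together with boundedness force equality.
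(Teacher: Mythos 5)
The paper itself does not prove this statement: it is quoted from Bondarko--Sosnilo (Corollary~2.1.2), cf.\ Bondarko's Theorem~4.3.2, so what you are proposing is in effect a re-proof of the cited result, and your outline does follow the standard construction: generate $\cat K_{w\le 0}$ and $\cat K_{w\ge 0}$ from the two half-lines of shifts of $\cat G$, deduce orthogonality from the connectivity hypothesis (the propagation argument via extension- and retract-stability of $\Hom$-vanishing is fine), build weight decompositions by induction using the octahedral axiom, and identify the heart by splitting off the positively shifted pieces against an object of $\cat K_{w\ge 0}$. Those parts are correct in outline.

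The genuine gap is the sentence claiming that $\cat D$, the class of objects admitting a weight decomposition, is ``evidently $\Sigma^{\pm 1}$-stable and closed under direct summands.'' Neither claim is evident, and the second one is exactly the hard point of the theorem. The shift issue is minor: shifting a triangle $X_{\ge 1}\to X\to X_{\le 0}$ gives a decomposition at a \emph{shifted} weight boundary, not again at the $0/1$ boundary, since each class is closed under only one direction of shift; this is repairable by seeding the induction with all shifts $\Sigma^i g$ (each of which decomposes trivially as $0\to\Sigma^i g\to\Sigma^i g$ or $\Sigma^i g\to\Sigma^i g\to 0$) and using only extension-closure. But closure under direct summands is a real problem: if $X\oplus Y$ admits a weight decomposition, there is no formal way to ``restrict'' it to $X$, because weight decompositions are neither unique nor functorial, and the orthogonality axiom does not let you project one onto a summand. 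Your induction therefore only produces a weight structure on the triangulated subcategory generated by $\cat G$ (closed under shifts and cones, but not retracts), not on $\cat K=\thick\langle\cat G\rangle$. Extending a weight structure across the retract closure is precisely the content of the Bondarko--Sosnilo paper the author cites (``extending them to idempotent completions''), and the known proofs require genuinely more input (weight Postnikov towers / weight-complex-style arguments) than the closure properties you invoke. To complete your argument you must either quote that extension result or supply an actual proof of summand-closure; as written, this step is missing. The uniqueness and heart computations are fine as sketches once existence on all of $\cat K$ is secured.
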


\begin{proof}
	This follows from \cite[Corollary~2.1.2]{BondarkoSosnilo18}; cf.~\cite[Theorem~4.3.2]{Bondarko10}.
\end{proof}

In the presence of an underlying model, a weight structure provides a well-behaved weight complex functor:

\begin{Thm}[Bondarko--Sosnilo--Aoki]\label{thm:tensor-weight}
	Let $\cat K$ be a tensor-triangulated category equipped with a bounded weight structure such that $\cat K_{w \ge 0}$ and $\cat K_{w \le 0}$ are both closed under the $\otimes$-product. If $\cat K$ is the homotopy category of an underlying symmetric monoidal stable $\infty$-category then there is a conservative tensor-triangulated functor
	\begin{equation}\label{eq:weight}
		W_{\cat K} : \cat K \to K^b(\cat K^{\heartsuit})
	\end{equation}
	which is the identity on the heart. It is the unique weight-exact tensor-triangulated functor (up to isomorphism) which arises from an exact symmetric monoidal functor of underlying $\infty$-categories whose restriction to the heart is equivalent to the identity.
\end{Thm}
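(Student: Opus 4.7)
The plan is to invoke the $\infty$-categorical enhancements of the weight complex functor due to Sosnilo (in the bare stable case) and Aoki (in the symmetric monoidal case), and then descend to homotopy categories. Concretely, let $\cat C$ denote a symmetric monoidal stable $\infty$-category whose homotopy category is $\cat K$. Sosnilo's lifting theorem supplies a canonical exact $\infty$-functor $\widetilde{W} : \cat C \to \cat D^b(\cat K^{\heartsuit})$ to the bounded derived $\infty$-category of the additive category $\cat K^{\heartsuit}$, which realizes the identity on the heart and whose homotopy category is $K^b(\cat K^{\heartsuit})$. The tensor-compatibility hypothesis that $\cat K_{w \ge 0}$ and $\cat K_{w \le 0}$ are each closed under $\otimes$ is precisely what is required to promote $\widetilde{W}$ to a symmetric monoidal exact functor via Aoki's enhancement. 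Setting $W_{\cat K} \coloneqq \Ho(\widetilde{W})$ then yields the desired tensor-triangulated functor.

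For conservativity I would exploit the boundedness of the weight structure. If $W_{\cat K}(x) = 0$ for some $x \in \cat K$, then weight decomposition triangles express $x$ as an iterated extension of objects of $\cat K^{\heartsuit}$, and applying the weight-exact functor $W_{\cat K}$ to these triangles reduces the problem to the trivial observation that an object of $\cat K^{\heartsuit}$ which vanishes in $K^b(\cat K^{\heartsuit})$ vanishes in $\cat K^{\heartsuit}$, since $W_{\cat K}$ restricts to the identity on the heart. A straightforward induction on the weight amplitude then forces $x = 0$.

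For uniqueness, observe that the bounded weight structure gives $\cat K = \thick\langle \cat K^{\heartsuit}\rangle$, so a tensor-triangulated functor out of $\cat K$ is determined up to isomorphism by its restriction to the heart once one has enough coherence. At the triangulated level alone this would only pin things down up to non-canonical choices, which is why the theorem is stated in terms of enhancements arising from symmetric monoidal exact $\infty$-functors. The precise statement one uses is the universal property of $\cat D^b(\cat K^{\heartsuit})$ as the bounded derived $\infty$-category of the additive symmetric monoidal category $\cat K^{\heartsuit}$, which is exactly what Sosnilo and Aoki establish; given another candidate, the equivalence of restrictions to $\cat K^{\heartsuit}$ extends uniquely to a symmetric monoidal equivalence of $\infty$-functors, and descending yields an isomorphism of tt-functors on homotopy categories. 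The genuinely substantive input — the main obstacle to giving a self-contained proof — is this coherent $\infty$-categorical rigidification, which resides in the cited references; the arguments at the triangulated level by themselves are not sufficient to control the symmetric monoidal structure strictly.
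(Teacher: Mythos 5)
Your overall route for existence and monoidality is the same as the paper's: invoke Sosnilo's theorem that an additive functor on the heart extends to an essentially unique weight-exact functor, use Aoki's result (under the hypothesis that $\cat K_{w\ge 0}$ and $\cat K_{w\le 0}$ are $\otimes$-closed) to promote it to a symmetric monoidal functor, and pass to homotopy categories. Two small remarks: the target should be the dg-nerve $\dgNerve(\Ch^b(\cat K^{\heartsuit}))$ — for a merely additive $\cat K^{\heartsuit}$ there are no quasi-isomorphisms to invert, so ``bounded derived $\infty$-category'' is a misnomer, though you do say its homotopy category is $K^b(\cat K^{\heartsuit})$, which is the right object; and your ``universal property of the target'' is really Sosnilo's uniqueness of weight-exact extensions from the heart together with the small check (made explicit in the paper) that a functor $g:\cat C^{\heartsuit}\to \Nerve(\cat K^{\heartsuit})$ with $\Ho(g)\simeq \id$ is equivalent to the unit of the $\Ho\dashv\Nerve$ adjunction.

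The genuine gap is your proof of conservativity. If $x$ has weights in $[0,n]$ and $h\to x\to y\to \Sigma h$ is a weight decomposition with $h\in\cat K^{\heartsuit}$ and $y$ of weights in $[1,n]$, then applying $W_{\cat K}$ and using $W_{\cat K}(x)=0$ gives only an isomorphism $W_{\cat K}(y)\cong \Sigma W_{\cat K}(h)=\Sigma h$ in $K^b(\cat K^{\heartsuit})$. This neither forces $h=0$ nor gives $W_{\cat K}(y)=0$, so there is no lower-amplitude object to which an inductive hypothesis of the form ``$W_{\cat K}=0$ implies $=0$'' applies: the weight truncations of $x$ are not killed by $W_{\cat K}$ just because $x$ is, and the ``straightforward induction on weight amplitude'' does not close. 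Conservativity of the weight complex functor for a bounded weight structure is a genuinely nontrivial theorem — Bondarko's Theorem~3.3.1, revisited $\infty$-categorically by Sosnilo — whose proof goes through weight Postnikov towers (equivalently, the weight spectral sequence), lifting the contracting homotopy of the weight complex rather than chasing exact triangles. The paper does not reprove it; it cites it. So you should either cite conservativity along with the existence and uniqueness statements, or reproduce the Postnikov-tower argument; the triangle-chase induction as written is not a proof.
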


\begin{proof}
	This is mostly established by \cite[Theorem~3.3.1]{Bondarko10}, \cite[Corollary~3.5]{Sosnilo19} and \cite[Corollary~4.5]{Aoki20}. We briefly recall the construction for the convenience of the reader and to explain the uniqueness statement. Let $\cat C$ and $\cat D$ be stable $\infty$-categories equipped with bounded weight structures and let $g:\cat C^{\heartsuit}\to \cat D^{\heartsuit}$ be an additive functor of additive $\infty$-categories. Sosnilo's uniqueness theorem \cite[Corollary~3.5]{Sosnilo19} establishes that there is an essentially unique weight-exact functor $G:\cat C \to \cat D$ which extends $g$. Now suppose that $\cat C$ and~$\cat D$ have symmetric monoidal structures $\cat C^{\otimes}$ and $\cat D^{\otimes}$ which are compatible with the weight structure. If~$g$ has a symmetric monoidal refinement $g^{\otimes}:(\cat C^{\heartsuit})^{\otimes} \to (\cat D^{\heartsuit})^{\otimes}$ then Aoki \cite[Corollary~4.5]{Aoki20} constructs a symmetric monoidal refinement $G^{\otimes}:\cat C^{\otimes} \to \cat D^{\otimes}$ of the corresponding weight-exact functor which restricts to $g^{\otimes}$. Working carefully through the construction, one sees that $G^{\otimes}$ is in fact the essentially unique refinement of $G$ which restricts to~$g^{\otimes}$.

	Now let $\cat K\coloneqq\Ho(\cat C)$. For the additive category $\cat A\coloneqq \cat K^{\heartsuit}=\Ho(\cat C^{\heartsuit})$, we have the dg-nerve $\cat D\coloneqq \dgNerve(\Ch^b(\cat A))$ with $\Ho(\cat D)=K^b(\cat A)$ and $\cat D^{\heartsuit} = \Nerve(\cat A)$. It follows from the above discussion that there is an essentially unique weight-exact symmetric monoidal functor 
	\begin{equation}\label{eq:weight-nerve}
		\cat C\to \dgNerve(\Ch^b(\cat K^{\heartsuit}))
	\end{equation}
	whose restriction to the hearts is the unit 
	\[
		\eta : \cat C^{\heartsuit} \to \Nerve(\Ho(\cat C^{\heartsuit}))=\Nerve(\cat K^\heartsuit)
	\]
	of the $\Ho \dashv \Nerve$ adjunction. By definition, the weight complex \eqref{eq:weight} is the functor that \eqref{eq:weight-nerve} induces on homotopy categories: ${\cat K=\Ho(\cat C)\to \Ho(\cat D)=K^b(\cat K^{\heartsuit})}$. It is conservative by \cite[Corollary~3.5]{Sosnilo19} and \cite[Theorem~3.3.1]{Bondarko10}. Finally, one can readily check that if $g:\cat C^{\heartsuit} \to \Nerve(\Ho(\cat C^{\heartsuit}))$ is a functor such that $\Ho(g)\simeq \Ho(\eta) =\id$ then $g \simeq \eta$.
\end{proof}

\begin{Exa}\label{exa:weight-R}
	Let $R$ be a commutative ring. The derived category of perfect complexes $\Der(R)^c$ has a bounded weight structure whose associated weight complex functor $W:\Der(R)^c \to K^b(\proj(R))\cong \Der(R)^c$ is equivalent to the identity.
\end{Exa}

\begin{Exa}\label{exa:SHG-weight}
	Let $G$ be a finite group and consider the equivariant stable homotopy category $\SHG$. The generators $\SET{G/H_+}{H \le G}$ form a connective collection of objects since $\pi_{n}^H(G/K_+) = 0$ for all $n <0$ and $H,K \le G$. Hence $\SHG^c$ has a unique weight structure whose heart is $\smd(G/H_+ \mid H \le G)$. This weight structure has been studied in \cite{Bondarko21,Bondarko24} and will be utilized in the proof of \cref{prop:DHZG-key-prop}.
\end{Exa}

\begin{Exa}\label{exa:weight-connective}
	Let $\cat K$ be an essentially small and idempotent-complete tt-category with $R_{\cat K} \coloneqq \End_{\cat K}(\unit)$. If $\cat K=\thick\langle\unit\rangle$ is unigenic and connective then $\cat K$ admits a bounded weight structure whose heart $\cat K^{\heartsuit}=\smd(\unit)$ is equivalent as an additive tensor category to the category $\proj(R_{\cat K})$ of finitely generated projective $R_{\cat K}$-modules. Moreover, since $\cat K_{w\ge 0}$ and $\cat K_{w\le 0}$ are the subcategories of $\cat K$ generated by $\unit$ using direct summands, extensions and either positive or negative shifts, it follows that $\cat K_{w \ge 0}$ and $\cat K_{w \le 0}$ are each closed under the $\otimes$-product, i.e., the weight structure is compatible with the $\otimes$-product in the sense of \cref{thm:tensor-weight}. Thus, if $\cat K$ is the homotopy category of an underlying symmetric monoidal stable $\infty$-category then the weight complex provides a conservative tensor triangulated functor 
	\begin{equation}\label{eq:weight-complex}
		W:\cat K \to K^b(\cat K^{\heartsuit})\cong \Der(R_{\cat K})^c
	\end{equation}
	which is the identity on the heart. Moreover, if we have two such categories $\cat K$ and~$\cat L$ then any tensor triangulated functor $F:\cat K\to \cat L$ is automatically weight-exact. Assuming this functor arises from a symmetric monoidal exact functor of underlying $\infty$-categories, we have a commutative diagram
	\begin{equation}\label{eq:weight-exact}
	\begin{tikzcd}
		\cat K\ar[d,"F"'] \ar[r,"W_{\cat K}"] & K^b(\cat K^{\heartsuit})\hphantom{.}\ar[d,"K^b(F^{\heartsuit})"] \\
		\cat L \ar[r,"W_{\cat L}"] & K^b(\cat L^{\heartsuit}).
	\end{tikzcd}
	\end{equation}
\end{Exa}

\begin{Rem}\label{rem:heart-base-change}
	The right-hand functor in \eqref{eq:weight-exact} is induced by the additive functor $F^{\heartsuit} \coloneqq F|_{\cat K^{\heartsuit}}:\cat K^{\heartsuit} \to \cat L^{\heartsuit}$. Under the identifications 
	\[
		\cat K^{\heartsuit} \cong \proj(\RK) \qquad \text{and} \qquad \cat L^{\heartsuit} \cong \proj(R_{\cat L})
	\]
	this is the functor given by base-change along the ring homomorphism ${\RK \to R_{\cat L}}$ induced by $F$. Applying \cref{thm:tensor-weight} to $K^b(\cat K^{\heartsuit})$, we conclude that the functor ${K^b(\cat K^{\heartsuit})\to K^b(\cat L^{\heartsuit})}$ coincides with the base-change functor ${\Der(\RK)^c \to \Der(R_{\cat L})^c}$.
\end{Rem}

\begin{Prop}\label{prop:R-local-K-local}
	Let $\cat K$ be an essentially small idempotent-complete tt-category which is the homotopy category of a symmetric monoidal stable $\infty$-category. Suppose that $\cat K$ is connective and unigenic. If $R_{\cat K}$ is a local ring then $\cat K$ is a local tt-category.
\end{Prop}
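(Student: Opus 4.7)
The plan is to exploit the weight complex functor constructed in \cref{exa:weight-connective}, which under the stated hypotheses provides a conservative tensor-triangulated functor
\[
	W : \cat K \to K^b(\cat K^{\heartsuit}) \cong \Der(R_{\cat K})^c
\]
that is the identity on hearts. All the hypotheses needed to invoke \cref{exa:weight-connective} are in place: $\cat K$ is essentially small and idempotent-complete, unigenic and connective, and it arises as the homotopy category of a symmetric monoidal stable $\infty$-category so that the tensor-compatibility of the weight structure allows \cref{thm:tensor-weight} to produce a tensor-triangulated (not just triangulated) weight complex.

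Next I would reduce the statement that $\cat K$ is local to the familiar statement on the algebraic side. Recall that $\cat K$ being local means that its zero ideal is prime, i.e.\ $a \otimes b \simeq 0$ with $a,b \in \cat K$ forces $a \simeq 0$ or $b \simeq 0$, equivalently that $\Spc(\cat K)$ is a local spectral space. Since $R_{\cat K}$ is a local ring, the spectrum $\Spc(\Der(R_{\cat K})^c) = \Spec(R_{\cat K})$ has a unique closed point, so $\Der(R_{\cat K})^c$ is itself a local tt-category.

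The proof then proceeds in one step: suppose $a, b \in \cat K$ satisfy $a \otimes b \simeq 0$. Applying the tensor functor $W$ yields $W(a) \otimes W(b) \simeq W(a \otimes b) \simeq 0$ in $\Der(R_{\cat K})^c$. Since the latter is local, $W(a) \simeq 0$ or $W(b) \simeq 0$, and conservativity of $W$ then gives $a \simeq 0$ or $b \simeq 0$. Thus $\cat K$ is local.

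There is no serious obstacle here beyond verifying that the setup of \cref{exa:weight-connective} applies and that locality of the target $\Der(R_{\cat K})^c$ really follows from locality of $R_{\cat K}$; both are routine. The only point worth underlining is that the whole argument is powered by the existence of the \emph{tensor-compatible} weight complex, which in turn hinges on the $\infty$-categorical lift hypothesis (via \cref{thm:tensor-weight}). Without that lift one would still have a triangulated weight complex in the sense of Bondarko, but not a priori a monoidal one, and the key step $W(a\otimes b) \simeq W(a) \otimes W(b)$ would be unavailable.
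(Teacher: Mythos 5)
Your argument is correct and coincides with the paper's own proof: both rest on the conservative tensor-triangulated weight complex functor $W:\cat K \to \Der(R_{\cat K})^c$ of \cref{exa:weight-connective}, together with the observation that $\Der(R_{\cat K})^c$ is a local tt-category when $R_{\cat K}$ is local. You simply make explicit the final step (a conservative tt-functor into a local tt-category forces the source to be local via $a\otimes b\simeq 0 \Rightarrow W(a)\otimes W(b)\simeq 0$), which the paper leaves as "it follows."
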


\begin{proof}
	Since $\cat K$ is connective and unigenic, it admits the bounded weight structure of \cref{exa:weight-connective}. The associated weight complex $W:\cat K\to\Der(\RK)^c$ is a conservative tt-functor to a local tt-category and it follows that $\cat K$ itself is local.
\end{proof}

\begin{Rem}
	There are plenty of examples of non-local tt-categories $\cat K$ whose endomorphism ring $R_{\cat K}$ is local. A connective example is the derived category of the projective line $\Der(\Pone)$ and a unigenic example is described in \cref{exa:3-point-scheme}. Thus \cref{prop:R-local-K-local} is false if either of these hypotheses on $\cat K$ is removed.
\end{Rem}

We end this section by providing an alternative perspective on the weight complexes of \cref{exa:weight-connective}.

\begin{Rem}
	Recall that the $\infty$-category of spectra $\Sp$ is endowed with a standard t-structure defined in terms of the vanishing of homotopy groups. The truncation \[\tau_{\le 0} : \Sp_{\ge 0} \to \Sp^{\heartsuit}\cong \NAb\] is a symmetric monoidal localization whose right adjoint sends an abelian group $A$ to its Eilenberg--MacLane spectrum $\rmH \hspace{-0.2em}A$. Thus, for each connective ring spectrum $\mathbb{E} \in \CAlg(\Sp_{\ge 0})$, we have a homomorphism $\mathbb{E} \to \tau_{\le 0} \mathbb{E} = \rmH(\pi_0(\mathbb{E}))$ in $\CAlg(\Sp)$ and hence a geometric functor
	\begin{equation}\label{eq:E-base-change}
		\Der(\mathbb{E})\to\Der(\rmH(\pi_0(\mathbb{E})))\cong\Der(\pi_0(\mathbb{E}))
	\end{equation}
	given by base-change.
\end{Rem}

\begin{Exa}\label{exa:connective-weight}
	If $\cT=\Ho(\cat C)$ is connective and unigenic then $\eend_{\cat C}(\unit) \in \Sp_{\ge 0}$ is a connective ring spectrum and we have a geometric functor
	\begin{equation*}\label{eq:connective-weight}
		\cT\cong \Der(\eend_{\cat C}(\unit))\to \Der(R_{\cT})
	\end{equation*}
	given by base-change along
	\begin{equation*}\label{eq:connective-base-change}
		\eend_{\cat C}(\unit) \to \tau_{\le 0}(\eend_{\cat C}(\unit))=\rmH(\pi_0(\eend_{\cat C}(\unit)))=\rmH(R_{\cT})
	\end{equation*}
	and \cref{rem:spectra}.
\end{Exa}

\begin{Prop}\label{prop:weight-truncation}
	If $\cT=\Ho(\cat C)$ is both connective and unigenic, then the weight complex functor $W:\cTc\to \Der(R_{\cT})^c$ of \cref{exa:weight-connective} coincides as a tt-functor with the restriction to compact objects of the geometric functor $\cT\cong\Der(\eend_{\cat C}(\unit))\to \Der(R_{\cT})$ of~\cref{exa:connective-weight}.	
\end{Prop}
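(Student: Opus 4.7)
The plan is to invoke the naturality statement of~\cref{rem:heart-base-change} (the commutative square~\eqref{eq:weight-exact}), together with the observation that $\Der(R_\cT)$ is itself a connective unigenic tt-category whose own weight complex functor is equivalent to the identity. Since $\rmH(R_\cT)$ is a connective ring spectrum with $\pi_0=R_\cT$, the category $\Der(R_\cT)\simeq \Der(\rmH R_\cT)=\Ho(\rmH(R_\cT)\text{-Mod}_{\Sp})$ falls under the scope of~\cref{exa:weight-connective}, and by~\cref{exa:weight-R} its own weight complex functor $W_{\Der(R_\cT)^c}$ is equivalent to the identity on $\Der(R_\cT)^c$.

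Let $F:\cT\to \Der(R_\cT)$ denote the geometric functor of~\cref{exa:connective-weight}. It arises from base-change along the truncation $\eend_{\cat C}(\unit)\to \tau_{\le 0}\eend_{\cat C}(\unit)=\rmH(R_\cT)$, which is a symmetric monoidal exact functor of underlying $\infty$-categories and which induces the identity on $\pi_0=R_\cT$. Under the identifications $\cTc^{\heartsuit}\cong \proj(R_\cT)\cong \Der(R_\cT)^{c,\heartsuit}$ supplied by~\cref{rem:heart-base-change}, the additive functor $F^{\heartsuit}$ is therefore equivalent to the identity.

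Applying~\eqref{eq:weight-exact} to the tt-functor $F|_{\cTc}:\cTc\to \Der(R_\cT)^c$ then yields a commutative square
\begin{equation*}
\begin{tikzcd}
\cTc \ar[d,"F|_{\cTc}"'] \ar[r,"W_{\cTc}"] & K^b(\cTc^{\heartsuit}) \ar[d,"K^b(F^{\heartsuit})"] \\
\Der(R_\cT)^c \ar[r,"W_{\Der(R_\cT)^c}"'] & K^b(\Der(R_\cT)^{c,\heartsuit})
\end{tikzcd}
\end{equation*}
in which the bottom horizontal arrow and the right vertical arrow are both equivalent to the identity by the previous two paragraphs. It follows that $F|_{\cTc}\simeq W_{\cTc}$ as tt-functors, which is the desired identification. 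The main obstacle is bookkeeping the various identifications of hearts with $\proj(R_\cT)$; this reduces to the fact that all of them are effected by the representable functor $\Hom_{\cT}(\unit,-)$ sending $\unit$ to the free module of rank one, so that the endomorphism-ring calculation performed in the second paragraph suffices to pin down $F^{\heartsuit}$ up to equivalence.
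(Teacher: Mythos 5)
Your argument is correct, but it is packaged differently from the paper's. The paper's proof is a direct application of the uniqueness clause of \cref{thm:tensor-weight}: one observes that the base-change functor $\cat C\simeq \eend_{\cat C}(\unit)\text{-Mod}_{\Sp}\to \tau_{\le 0}(\eend_{\cat C}(\unit))\text{-Mod}_{\Sp}$ is weight-exact and that its restriction to the heart is the unit of the $\Ho\dashv\Nerve$ adjunction, so it \emph{is} (up to equivalence) the weight complex functor by the uniqueness statement --- a one-step verification of the characterizing property. You instead feed the functor $F$ into the naturality square \eqref{eq:weight-exact} of \cref{exa:weight-connective}/\cref{rem:heart-base-change}, with target $\Der(R_{\cT})^c$, and then use two auxiliary identifications: that $W_{\Der(R_{\cT})^c}$ is the identity (\cref{exa:weight-R}) and that $F^{\heartsuit}$ is the identity of $\proj(R_{\cT})$ because the truncation induces an isomorphism on $\pi_0$. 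All the inputs you use are stated in the paper, and the heart-bookkeeping you flag does reduce, as you say, to the identifications via $\Hom_{\cT}(\unit,-)$, so the deduction goes through. What your route buys is that it isolates the formal naturality of weight complexes and makes the proposition a corollary of the square \eqref{eq:weight-exact}; what it costs is an extra layer, since that square (and \cref{exa:weight-R}) are themselves justified by the same Sosnilo--Aoki uniqueness that the paper invokes directly, so the paper's argument is shorter and closer to the source.
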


\begin{proof}
	The functor $\cat C \cong \eend_{\cat C}(\unit)\text{-Mod}_{\Sp} \to \tau_{\le 0}(\eend_{\cat C}(\unit))\text{-Mod}_{\Sp}$ is a weight-exact functor whose restriction to the heart is the unit $\cat C^{\heartsuit} \to \Nerve(\Ho(\cat C^{\heartsuit}))$ of the $\Ho \dashv \Nerve$ adjunction. Its restriction to compact objects is thus equivalent to the weight complex functor by \cref{thm:tensor-weight}.
\end{proof}

\begin{Exa}
	From this perspective, the conservativity of the weight complex functors of \cref{exa:weight-connective} amounts to the statement that for any connective ring spectrum $\bbE \in \CAlg(\Sp_{\ge 0})$, the induced functor $\Der(\bbE) \to \Der(\pi_0(\bbE))$ is conservative on compact objects. For $\bbE=\Sphere$ this is a straightforward consequence of the Hurewicz theorem.
\end{Exa}

\begin{Exa}
	Let $\mathbb{E} \in \CAlg(\Sp_{\ge 0})$ be a connective ring spectrum. The restriction of \eqref{eq:E-base-change} to compact objects
	\[
		\Der(\mathbb{E})^c \to \Der(\pi_0(\mathbb{E}))^c\cong K^b(\proj(\pi_0(\mathbb{E})))
	\]
	can be identified with the weight complex functor
	\[
		\Der(\mathbb{E})^c \to K^b(\Der(\mathbb{E})^{\heartsuit}) \cong K^b(\proj(\pi_0(\mathbb{E})))
	\]
	associated to the canonical bounded weight structure on $\Der(\mathbb{E})^c$.
\end{Exa}

\begin{Exa}
	We may apply the above to $\SH=\Ho(\Sp)$, i.e., to $\mathbb{E}=\Sphere$. The restriction of $\SH \to \Der(\HZ)$ to compact objects
	\[
		\SH^c \to \Der(\HZ)^c
	\]
	can be identified with the weight complex functor associated to the canonical bounded weight structure on $\SH^c$.
\end{Exa}

\section{Applications to the ungraded comparison map}\label{sec:ungraded}

We will now apply the above results to the ungraded comparison map of a connective category.

\begin{Ter}
	Recall that a topological space $X$ is \emph{local} if it is nonempty and any open cover is trivial: $X=\bigcup_{i \in I} U_i$ implies $X=U_i$ for some $i \in I$. Note that this is considerably stronger than being connected. A spectral space $X$ is local if and only if it has a unique closed point. (This follows from the fact that every closed subset of a spectral space contains a closed point, see \cite[Proposition~4.1.2]{DickmannSchwartzTressl19}). We'll say that a spectral subspace $Z \subseteq X$ has a \emph{unique relatively closed point} if~$Z$ has a unique closed point when regarded as a spectral space in its own right or, equivalently, if $Z$ is local in the induced topology.
\end{Ter}

We isolate the following purely topological observation:

\begin{Lem}\label{lem:local-algebraic}
	Let $g:X \to Y$ be a spectral map of spectral spaces and let $f:Y\to X$ be a function such that $g\circ f=\id_Y$. Suppose that
	\begin{enumerate}
		\item[(i)] $f$ preserves specializations, equivalently, the preimage $f^{-1}(Z)$ of a (specialization) closed set $Z \subseteq X$ is specialization closed; and
		\item[(ii)] $x \rightsquigarrow f(g(x))$ for each $x \in X$.
	\end{enumerate}
	Then the following hold:
	\begin{enumerate}
		\item $g$ is a closed quotient map.
		\item $f$ is a topological embedding.
		\item $g(Z) = f^{-1}(Z)$ for every specialization closed set $Z \subseteq X$.
	\end{enumerate}
\end{Lem}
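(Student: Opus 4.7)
The plan is to prove (c) first, then deduce (a) from (c), and finally leverage (c) and (a) to establish (b).

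For part (c), I would argue directly from the definitions. Given a specialization closed set $Z \subseteq X$, the inclusion $f^{-1}(Z) \subseteq g(Z)$ is immediate: if $f(y) \in Z$ then $y = g(f(y)) \in g(Z)$, using $g \circ f = \id_Y$. Conversely, suppose $y = g(x)$ with $x \in Z$. Hypothesis (ii) gives $x \rightsquigarrow f(g(x)) = f(y)$, and since $Z$ is specialization closed this forces $f(y) \in Z$, hence $y \in f^{-1}(Z)$.

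For part (a), surjectivity of $g$ is immediate from $g\circ f=\id_Y$, so it suffices to show $g$ is a closed map (then a closed surjective continuous map is automatically a topological quotient). Let $C \subseteq X$ be closed, hence in particular specialization closed. By (c), $g(C) = f^{-1}(C)$, and by hypothesis (i) this set is specialization closed. On the other hand, $C$ is proconstructible (being closed), so its image $g(C)$ under the spectral map $g$ is proconstructible by \cite[Corollary~1.3.23]{DickmannSchwartzTressl19}. A proconstructible specialization closed subset of a spectral space is closed by \cite[Theorem~1.5.4]{DickmannSchwartzTressl19}, so $g(C)$ is closed, as desired.

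For part (b), injectivity of $f$ follows from $g(f(y_1))=g(f(y_2))$ forcing $y_1=y_2$. To see $f$ is continuous, let $C \subseteq X$ be closed; then $f^{-1}(C) = g(C)$ by (c), and this set is closed by the argument just given for (a). Finally, $f$ is a homeomorphism onto its image because the corestriction $g|_{f(Y)} : f(Y) \to Y$ is a continuous two-sided inverse of $f : Y \to f(Y)$. There is no real obstacle here; the only subtlety worth flagging is the role of \cite[Theorem~1.5.4]{DickmannSchwartzTressl19}, which is what converts the specialization-closedness supplied by (i) into genuine topological closedness, and this is precisely where the spectrality hypothesis on $g$ (giving proconstructibility of $g(C)$) is used.
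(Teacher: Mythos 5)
Your proposal is correct and follows essentially the same route as the paper: the key point in both is the identity $g(Z)=f^{-1}(Z)$ for specialization closed $Z$, combined with the facts that spectral maps send proconstructible sets to proconstructible sets and that a proconstructible, specialization closed subset of a spectral space is closed, plus the standard observation that a split monomorphism in topological spaces is an embedding. The only (harmless) difference is organizational: the paper proves (a) first by a direct specialization chase on the sets $g(\overbar{\{x\}})$, whereas you prove (c) first and then get closedness of $g$ on arbitrary closed sets from (c) together with hypothesis (i), which works just as well.
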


\begin{proof}
	$(a)$: A surjective spectral map is a closed quotient map as soon as it is a closed map (\cite[Corollary~6.4.14]{DickmannSchwartzTressl19}). Moreover, to establish that $g$ is a closed map it suffices to establish that $g(\overbar{\{x\}})$ is closed for each $x \in X$ (\cite[Theorem~5.3.3]{DickmannSchwartzTressl19}). Finally, since $g(\overbar{\{x\}})$ is proconstructible, it suffices to prove that $g(\overbar{\{x\}})$ is closed under specialization. To this end, suppose $x \rightsquigarrow x'$ and $g(x') \rightsquigarrow y$. Then $f(g(x'))\rightsquigarrow f(y)$ by hypothesis $(i)$ and $x' \rightsquigarrow f(g(x'))$ by hypothesis $(ii)$. Thus $x \rightsquigarrow f(y)$ and hence $y=g(f(y)) \in g(\overbar{\{x\}})$ as desired. This proves part $(a)$.

	$(c)$: We always have $f^{-1}(Z) \subseteq g(Z)$ for any subset $Z \subseteq X$ simply from the fact that $g\circ f=\id_Y$. On the other hand, hypothesis $(ii)$ implies that $f^{-1}(Z) \subseteq g(Z)$ if~$Z$ is specialization closed.

	$(b)$: Note that $(a)$ and $(c)$ together imply that $f$ is continuous and it is standard that a split monomorphism in the category of topological spaces is an embedding.
\end{proof}

\begin{Rem}\label{rem:alt-local-hyp}
	The hypotheses of the previous lemma can be reformulated as follows: Let $g:X \to Y$ be a spectral map with the property that each fiber $g^{-1}(\{y\})$ is a local space (i.e.~contains a unique relatively closed point) and that, moreover, the map $y \mapsto f(y) \in g^{-1}(\{y\})$ which sends $y$ to the unique relatively closed point of the fiber $g^{-1}(\{y\})$ is specialization-preserving.
\end{Rem}

\begin{Prop}\label{prop:alg-local-is-local}
	Let $\cat K$ be an essentially small tt-category with the property that the algebraic localization $\cat K_{\frak p}$ is local for each $\frak p \in \Spec(R_{\cat K})$. Then
	\begin{enumerate}
		\item The comparison map $\rho_{\cat K} : \Spc(\cat K) \to \Spec(R_{\cat K})$ is a closed quotient map whose fibers are local.
		\item The map  $f:\Spec(\RK)\to\Spc(\cat K)$ given by $\frakp \mapsto \Ker(\cat K \to \cat K_{\frakp})$ is a well-defined continuous section to $\rho_{\cat K}$ and hence is a topological embedding.
		\item For each $a \in \cat K$, we have
			\[
				\rho_{\cat K}(\supp(a)) = f^{-1}(\supp(a))=\SET{\frak p \in \Spec(R)}{x_{\frak p} \neq 0 \text{ in } \cat K_{\frak p}}.
			\]
	\end{enumerate}
\end{Prop}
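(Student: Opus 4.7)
The plan is to deduce everything from \cref{lem:local-algebraic} applied with $g = \rho_{\cat K}$ and with $f$ as defined in the proposition. Before invoking the lemma, one first checks that $f(\frakp) = \Ker(\cat K \to \cat K_{\frakp})$ is a prime tt-ideal of $\cat K$: since $\cat K_{\frakp}$ is local by hypothesis, the zero ideal is a prime of $\cat K_{\frakp}$, and its preimage under the localization functor $\pi_{\frakp}:\cat K \to \cat K_{\frakp}$ is therefore automatically prime in $\cat K$. The identity $\rho_{\cat K}(f(\frakp)) = \frakp$ (so that $f$ is a set-theoretic section of $\rho_{\cat K}$) then follows from \cref{rem:algebraic}: for $r \in \RK$, one has $\cone(r) \in \Ker(\pi_{\frakp})$ iff $r$ becomes invertible in $\cat K_{\frakp}$ iff $r \notin \frakp$.

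The main work is to verify the two hypotheses of \cref{lem:local-algebraic}. For~(i), suppose $\frakp \specializesto \frakq$, i.e.\ $\frakp \subseteq \frakq$. Since $R \sminus \frakq \subseteq R \sminus \frakp$, the localization at $\frakp$ inverts everything that the localization at $\frakq$ does, so the algebraic localizations factor as $\cat K \to \cat K_{\frakq} \to \cat K_{\frakp}$. Hence $f(\frakq) \subseteq f(\frakp)$, which is precisely the specialization $f(\frakp) \specializesto f(\frakq)$ in $\Spc(\cat K)$. For~(ii), let $\cat P \in \Spc(\cat K)$ and set $\frakp = \rho_{\cat K}(\cat P)$. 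Since $\frakp \in \gen(\frakp)$, the point $\cat P$ lies in $\rho_{\cat K}^{-1}(\gen(\frakp))$, which is precisely the image of the finite-localization embedding $\Spc(\cat K_{\frakp}) \hookrightarrow \Spc(\cat K)$, whence $\cat P = \pi_{\frakp}^{-1}(\tilde{\cat P})$ for some prime $\tilde{\cat P}$ of $\cat K_{\frakp}$. Locality of $\cat K_{\frakp}$ now forces $\{0\} \subseteq \tilde{\cat P}$, so pulling back gives $f(\frakp) = \pi_{\frakp}^{-1}(\{0\}) \subseteq \pi_{\frakp}^{-1}(\tilde{\cat P}) = \cat P$, i.e.\ $\cat P \specializesto f(\frakp)$.

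With both hypotheses verified, \cref{lem:local-algebraic}(a)--(b) yield parts~(a) and~(b) directly, while the reformulation in \cref{rem:alt-local-hyp} shows that each fiber $\rho_{\cat K}^{-1}(\{\frakp\})$ is local with unique relatively closed point $f(\frakp)$. For~(c), applying \cref{lem:local-algebraic}(c) to the (specialization) closed set $\supp(a)$ yields $\rho_{\cat K}(\supp(a)) = f^{-1}(\supp(a))$; and by the definition of $f$, a prime $\frakp$ belongs to $f^{-1}(\supp(a))$ iff $a \notin \Ker(\pi_{\frakp})$ iff $a_{\frakp} \neq 0$ in $\cat K_{\frakp}$. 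The hypothesis that each $\cat K_{\frakp}$ is local is used decisively in two places: first, to make $f(\frakp)$ a prime ideal at all; and second, in verifying hypothesis~(ii), where it is exactly what forces $f(\frakp)$ to sit inside every prime of $\cat K$ lying over $\frakp$. The latter is the main conceptual content of the argument.
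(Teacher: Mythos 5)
Your proof is correct and follows essentially the same route as the paper: both reduce to \cref{lem:local-algebraic} with $g=\rho_{\cat K}$ and $f(\frakp)=\Ker(\cat K\to\cat K_{\frakp})$, checking hypothesis (i) via the nesting of algebraic localizations and then reading off (a)--(c). The only cosmetic difference is in hypothesis (ii): the paper uses the factorization $\cat K\to\cat K_{\rho_{\cat K}(\cat P)}\to\cat K/\cat P$, which is immediate from the definition of $\rho_{\cat K}$, whereas you pass through the identification $\Spc(\cat K_{\frakp})\cong\rho_{\cat K}^{-1}(\gen(\frakp))$ (Balmer's cartesian square) and invoke locality of $\cat K_{\frakp}$ there --- both are fine.
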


\begin{proof}
	Recall from \cite[Corollary~5.6]{Balmer10b} that an algebraic localization provides a cartesian square on Balmer spectra. From this, one sees that the hypothesis that $\cat K_{\mathfrak p}$ is local is equivalent to the assertion that the fiber $\rho_{\cat K}^{-1}(\{\mathfrak p\})$ has a unique minimum point for inclusion, namely the \emph{prime} tt-ideal $\Ker(\cat K\to \cat K_{\mathfrak p})$. The function sending~$\mathfrak p$ to this unique minimum point in $\rho_{\cat K}^{-1}(\{\mathfrak p\})$ is denoted $f$ in part~$(b)$. Algebraic localizations nest: If $\mathfrak p \rightsquigarrow \mathfrak q$ then the $\mathfrak p$-localization factors through the $\mathfrak q$-localization: $\cat K \to \cat K_{\mathfrak q} \to \cat K_{\mathfrak p}$, i.e., $f(\mathfrak p) \rightsquigarrow f(\mathfrak q)$. In other words, the function $f$ preserves specializations. Note also that it follows from the definition of $\rho_{\cat K}$ that the tt-localization $\cat K \to \cat K/\cat P$ factors through the algebraic localization: $\cat K \to \cat K_{\rho(\cat P)} \to \cat K/\cat P$. In other words, $\cat P \rightsquigarrow f(\rho(\cat P))$. Thus, we are in the situation of \cref{lem:local-algebraic} with $g\coloneqq\rho_{\cat K}$ and the conclusions follow.
\end{proof}

\begin{Thm}\label{thm:connective-unigenic}
	Let $\cat K$ be an essentially small idempotent-complete tt-category which is unigenic and connective and let $R_{\cat K} = \End_{\cat K}(\unit)$. Assume that $\cat K$ is the homotopy category of an underlying symmetric monoidal stable $\infty$-category. Let~$W_{\cat K}:\cat K \to \Der(\RK)^c$ denote the weight complex functor for the weight structure of \cref{exa:weight-connective} and let $\omega_{\cat K}$ denoted the induced spectral map
	\[
		\Spec(\RK) \cong \Spc(\Der(\RK)^c) \xrightarrow{\Spc(W_{\cat K})} \Spc(\cat K).
	\]
	Then the following statements hold:
	\begin{enumerate}
		\item Each fiber $\rho_{\cat K}^{-1}(\{\frakp\})$ is a local space whose unique relatively closed point is the prime tt-ideal $\Ker(\cat K \to \cat K_{\frakp}) \in \Spc(\cat K)$.
		\item The function $\omega_{\cat K} :\Spec(\RK) \to \Spc(\cat K)$ is given by $\frakp \mapsto \Ker(\cat K\to \cat K_{\frakp})$.
		\item For any $a \in \cat K$ we have,
			\[
				\rho_{\cat K}(\supp(a)) = \omega_{\cat K}^{-1}(\supp(a)) = \SET{\mathfrak p\in \Spec(R)}{x_{\mathfrak p} \neq 0 \text{ in } \cat K_{\mathfrak p}}.
			\]
		\item The comparison map $\rho_{\cat K} : \Spc(\cat K) \to \Spec(R_{\cat K})$ is a closed quotient map.
		\item The map $\omega_{\cat K}:\Spec(R_{\cat K})\to \Spc(\cat K)$ is a topological embedding whose image is the subspace of $\Spc(\cat K)$ consisting of the unique relatively closed points of the fibers of $\rho_{\cat K}$.
	\end{enumerate}
\end{Thm}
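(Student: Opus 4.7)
My plan is to deduce the bulk of the theorem from \cref{prop:alg-local-is-local}---which, for a tt-category whose algebraic localizations are local, delivers parts (a), (c), (d), together with a continuous section $f:\Spec(\RK)\to \Spc(\cat K)$ sending $\frakp$ to $\Ker(\cat K\to \cat K_\frakp)$---and then to identify the map $\omega_{\cat K}$ with this section.

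The first step is to verify the hypothesis of \cref{prop:alg-local-is-local}, namely that each $\cat K_\frakp$ is itself local. Being a finite localization, $\cat K_\frakp$ remains unigenic (\cref{exa:finite-loc-is-unigenic}) and arises from the underlying $\infty$-category. It is also connective, since $\End_{\cat K_\frakp}(\unit,\Sigma^{-i}\unit)\cong \End_{\cat K}(\unit,\Sigma^{-i}\unit)_\frakp$ by \cref{rem:algebraic}. Its endomorphism ring $(\RK)_\frakp$ is local, so \cref{prop:R-local-K-local} applies and gives that $\cat K_\frakp$ is a local tt-category. By \cite[Corollary~5.6]{Balmer10b}, the fiber $\rho_{\cat K}^{-1}(\{\frakp\})\cong \Spc(\cat K_\frakp)$ is therefore local with unique closed point $\Ker(\cat K\to \cat K_\frakp)$. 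Invoking \cref{prop:alg-local-is-local} now yields~(a), (c), (d), and the section $f$ as a topological embedding.

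It only remains to show that $\omega_{\cat K}$ coincides with $f$, which will establish~(b) and complete~(e). By \cref{prop:weight-truncation}, $W_{\cat K}$ is the restriction to compact objects of the base-change functor $\cat K\cong \Der(\eend_{\cat C}(\unit))\to \Der(\RK)$ induced by the canonical truncation $\eend_{\cat C}(\unit)\to \rmH(\RK)$; the same description applies to $W_{\cat K_\frakp}$ after identifying the endomorphism ring spectrum of the unit in $\cat C_\frakp$ with $\eend_{\cat C}(\unit)_\frakp$. Since base change of a module category commutes with $\frakp$-localization of its coefficients, one obtains a commutative square
\[
\begin{tikzcd}
\cat K \ar[r,"W_{\cat K}"] \ar[d,"(-)_\frakp"'] & \Der(\RK)^c \ar[d,"(-)_\frakp"] \\
\cat K_\frakp \ar[r,"W_{\cat K_\frakp}"'] & \Der((\RK)_\frakp)^c.
\end{tikzcd}
\]
The conservativity of $W_{\cat K_\frakp}$ (\cref{thm:tensor-weight}) then gives $W_{\cat K}(a)_\frakp = 0 \iff a_\frakp=0$, so
\[
\omega_{\cat K}(\frakp)= W_{\cat K}^{-1}\bigl(\{x : x_\frakp=0\}\bigr) = \{a\in \cat K : a_\frakp=0\} = \Ker(\cat K\to \cat K_\frakp) = f(\frakp).
\]

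I expect the main obstacle to be the commutativity of the square above, which encodes the compatibility of the weight complex with algebraic localization. Morally it is just the commutativity of two successive base changes, but one must be careful to argue it at the level of the underlying symmetric monoidal $\infty$-categories (rather than merely their homotopy categories) in order to apply \cref{prop:weight-truncation} on both sides.
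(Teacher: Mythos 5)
Your proof is correct and follows essentially the same route as the paper: both verify that each $\cat K_{\frakp}$ is local via \cref{prop:R-local-K-local}, feed this into \cref{prop:alg-local-is-local} to get (a), (c), (d) and the section, and then identify $\omega_{\cat K}$ with $\frakp\mapsto\Ker(\cat K\to\cat K_{\frakp})$ by combining a square expressing compatibility of the weight complex with algebraic localization with the conservativity of $W_{\cat K_{\frakp}}$. The only divergence is how that square is justified: you propose to derive it from \cref{prop:weight-truncation} and base change of ring spectra, whereas the paper obtains it directly from the automatic weight-exactness of $\cat K\to\cat K_{\frakp}$ together with the uniqueness statement recorded in \cref{exa:weight-connective} (diagram \eqref{eq:weight-exact}) and \cref{rem:heart-base-change}, which disposes of precisely the ``main obstacle'' you flag without any $\infty$-categorical base-change argument.
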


\begin{proof}
	For any $\frakp \in \Spec(\RK)$, the algebraic localization $\cat K_{\frakp}$ is again unigenic (\cref{exa:finite-loc-is-unigenic}) and connective (\cref{rem:algebraic}). Therefore \cref{prop:R-local-K-local} establishes that the tt-category $\cat K_{\frakp}$ is local. This implies $(a)$ and moreover that \cref{prop:alg-local-is-local} applies. Note that $(b)$ asserts that the map $\omega_{\cat K}$ coincides with the map denoted $f$ in \cref{prop:alg-local-is-local}. Thus, once we establish $(b)$ the rest follows from \cref{prop:alg-local-is-local}.

	Recall from \cref{exa:weight-connective} that the localization $\cat K \to \cat K_{\frakp}$ is automatically weight-exact and hence we have a commutative diagram
	\begin{equation}\label{eq:omega-rho}\begin{tikzcd}
		\Spec(\RK) \ar[r,"\omega_{\cat K}"]  & \Spc(\cat K) \ar[r,"\rho_{\cat K}"] & \Spec(\RK) \\
		\Spec((\RK)_{\frakp})\ar[u,hook] \ar[r,"\omega_{\cat K_{\frak p}}"] & \Spc(\cat K_{\frakp}) \ar[u,hook] \ar[r,"\rho_{\cat K_{\frakp}}"] & \Spec((\RK)_{\frakp}).\ar[u,hook]
	\end{tikzcd}\end{equation}
	The conservativity of the weight complex 
	implies that $\omega_{\cat K_{\frakp}}$ preserves the unique closed points. Moreover, the middle upwards arrow maps the unique closed point of $\Spc(\cat K_{\frakp})$ to $\Ker(\cat K \to \cat K_{\frakp}) \in \Spc(\cat K)$. The commutativity of the diagram then establishes $(b)$. This completes the proof. We note in passing that since $\rho_{\cat K}\circ \omega_{\cat K}$ is the identity map and the right-hand square in~\eqref{eq:omega-rho} is cartesian, it follows that the left-hand square is cartesian, too.
\end{proof}

\begin{Rem}\label{rem:linear-embedding}
	Thus, in the spectrum of a connective unigenic category $\cat K$ there is an embedded copy of $\Spec(R_{\cat K})$ which is provided by the weight complex functor for the canonical weight structure on $\cat K$. The stable homotopy category $\cat K=\SH^c$ provides an informative example of this phenomenon. Indeed, one of the motivations for \cite{PatchkoriaSandersWimmer22} was the observation that, on Balmer spectra, the canonical functor $\SH\to\Der(\HZ)\cong\Der(\bbZ)$ embeds a copy of $\Spec(\bbZ)$ into $\Spc(\SH^c)$ covering the top and bottom layers of points:

	\vbox{
	\begin{equation*}
	\xy
	{\ar@{^{(}->} (0,-5)*{};(8,-5)*{}};
	{\ar@{-} (-20,-15)*{};(-25,-5)*{}};
	{\ar@{-} (-20,-15)*{};(-22.5,-5)*{}};
	{\ar@{-} (-20,-15)*{};(-20,-5)*{}};
	{\ar@{-} (-20,-15)*{};(-17.5,-5)*{}};
	{\ar@{-} (-20,-15)*{};(-15,-5)*{}};
	{\ar@{-} (-20,-15)*{};(-12.5,-5)*{}};
	{\ar@{-} (-20,-15)*{};(-10,-5)*{}};
	(-25,-5)*{\color{cyan}\bullet};
	(-25,-5)*{\circ};
	(-22.5,-5)*{\color{cyan}\bullet};
	(-22.5,-5)*{\circ};
	(-20,-5)*{\color{cyan}\bullet};
	(-20,-5)*{\circ};
	(-17.5,-5)*{\color{cyan}\bullet};
	(-17.5,-5)*{\circ};
	(-15,-5)*{\color{cyan}\bullet};
	(-15,-5)*{\circ};
	(-12.5,-5)*{\color{cyan}\bullet};
	(-12.5,-5)*{\circ};
	(-10,-5)*{\color{cyan}\bullet};
	(-10,-5)*{\circ};
	(-6,-5)*{\hdots};
	(-20,-15)*{\color{cyan}\bullet};
	(-20,-15)*{\circ};
	{\ar@{-} (15,-15)*{};(12.5,-5)*{}};
	{\ar@{-} (15,-15)*{};(15,-5)*{}};
	{\ar@{-} (15,-15)*{};(17.5,-5)*{}};
	{\ar@{-} (15,-15)*{};(20,-5)*{}};
	{\ar@{-} (15,-15)*{};(22.5,-5)*{}};
	{\ar@{-} (15,-15)*{};(25,-5)*{}};
	{\ar@{-} (15,-15)*{};(27.5,-5)*{}};
	{\ar@{-} (20,-5)*{};(20,7.5)*{}};
	{\ar@{-} (12.5,-5)*{};(12.5,7.5)*{}};
	{\ar@{-} (15,-5)*{};(15,7.5)*{}};
	{\ar@{-} (17.5,-5)*{};(17.5,7.5)*{}};
	  (12.5,-5)*{\color{green}\bullet};
	  (12.5,-5)*{\circ};
	  (12.5,0)*{\color{green}\bullet};
	  (12.5,0)*{\circ};
	  (12.5,5)*{\color{green}\bullet};
	  (12.5,5)*{\circ};
	  (12.5,11)*{\vdots};
	  (12.5,13.5)*{\color{cyan}\bullet};
	  (12.5,13.5)*{\circ};
	%
	  (15,-5)*{\color{green}\bullet};
	  (15,-5)*{\circ};
	  (15,0)*{\color{green}\bullet};
	  (15,0)*{\circ};
	  (15,5)*{\color{green}\bullet};
	  (15,5)*{\circ};
	  (15,11)*{\vdots};
	  (15,13.5)*{\color{cyan}\bullet};
	  (15,13.5)*{\circ};
	%
	  (17.5,-5)*{\color{green}\bullet};
	  (17.5,-5)*{\circ};
	  (17.5,0)*{\color{green}\bullet};
	  (17.5,0)*{\circ};
	  (17.5,5)*{\color{green}\bullet};
	  (17.5,5)*{\circ};
	  (17.5,11)*{\vdots};
	  (17.5,13.5)*{\color{cyan}\bullet};
	  (17.5,13.5)*{\circ};
	%
	 (20,-5)*{\color{green}\bullet};
	 (20,-5)*{\circ};
	 (20,0)*{\color{green}\bullet};
	 (20,0)*{\circ};
	  (20,5)*{\color{green}\bullet};
	  (20,5)*{\circ};
	  (20,11)*{\vdots};
	  (20,13.5)*{\color{cyan}\bullet};
	  (20,13.5)*{\circ};
	%
	{\ar@{-} (22.5,-5)*{};(22.5,7.5)*{}};
	 (22.5,-5)*{\color{green}\bullet};
	 (22.5,-5)*{\circ};
	 (22.5,0)*{\color{green}\bullet};
	 (22.5,0)*{\circ};
	  (22.5,5)*{\color{green}\bullet};
	  (22.5,5)*{\circ};
	  (22.5,11)*{\vdots};
	  (22.5,13.5)*{\color{cyan}\bullet};
	  (22.5,13.5)*{\circ};
	%
	{\ar@{-} (25,-5)*{};(25,7.5)*{}};
		 (25,-5)*{\color{green}\bullet};
		 (25,-5)*{\circ};
		 (25,0)*{\color{green}\bullet};
		 (25,0)*{\circ};
	  (25,5)*{\color{green}\bullet};
	  (25,5)*{\circ};
	  (25,11)*{\vdots};
	  (25,13.5)*{\color{cyan}\bullet};
	  (25,13.5)*{\circ};
	%
	{\ar@{-} (27.5,-5)*{};(27.5,7.5)*{}};
	(27.5,-5)*{\color{green}\bullet};
	(27.5,-5)*{\circ};
	(27.5,0)*{\color{green}\bullet};
	(27.5,0)*{\circ};
	  (27.5,5)*{\color{green}\bullet};
	  (27.5,5)*{\circ};
	  (27.5,11)*{\vdots};
	  (27.5,13.5)*{\color{cyan}\bullet};
	  (27.5,13.5)*{\circ};
	%
	  (31.25,-5)*{\hdots};
	  (31.25,0)*{\hdots};
	  (31.25,5)*{\hdots};
	  (31.25,10)*{\hdots};
	%
	%
	%
	  (15,-15)*{\color{cyan}\bullet};
	  (15,-15)*{\circ};
	%
	\endxy
	\end{equation*}
	\[\hspace{-6em}\xymatrix@1{ \Spec(\bbZ) \cong \Spec(\Der(\bbZ)^c)\;\;\; \ar@{^{(}->}[r] & \,\;\Spec(\SH^c)}\]
}

	\smallskip
	\noindent Observe that it embeds $\Spec(\bbZ)$ precisely onto the subspace consisting of the unique relatively closed points of the fibers of the comparison map. \Cref{thm:connective-unigenic} provides a general explanation for this phenomenon.
\end{Rem}

\begin{Rem}
	Observe that the statement of \cref{thm:connective-unigenic} is considerably stronger than the result from \cref{prop:monogeniccoherent} for the graded comparison map of a unigenic category whose graded endomorphism ring is coherent. It is natural to ask whether the latter has an analogous strengthening. This is false. Consider the derived category of the projective line. The graded endomorphism ring is coherent and concentrated in degree zero where it is a single point. But the fiber, which is $\Pone$, does not have a unique closed point.
\end{Rem}

\begin{Cor}\label{cor:ungraded-comparison}
	Let $\cT$ be a connective rigidly-compactly generated tt-category which is the homotopy category of an underlying symmetric monoidal stable $\infty$-category. The ungraded comparison map
	\begin{equation}\label{eq:rho-ungraded-comparison}
		\rho : \SpcT \to \Spec(\End_{\cT}(\unit))
	\end{equation}
	is a strong spectral quotient map whose fibers are connected. In particular, $\rho$ is a homeomorphism if and only if it is injective.
\end{Cor}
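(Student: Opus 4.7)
The plan is to exploit the canonical factorization $\rho = \rho_{\unitation{\cT}} \circ \varphi$ of \eqref{eq:intro-factorization}, where $\varphi$ is the map induced by the fully faithful inclusion $\unitation{\cT} \hookrightarrow \cT$. Since full faithfulness forces $R_{\unitation{\cT}} = R_{\cT}$, and more generally preserves the negative homotopy groups of the unit, the unigenic category $\unitation{\cT}$ is itself connective. Moreover, it is modelled by the underlying $\infty$-subcategory $\unitation{\cat C} \subseteq \cat C$, so the hypotheses of \cref{thm:connective-unigenic} are met for $\unitation{\cT}$.

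For the strong spectral quotient part, I would combine two ingredients. The first is \cref{thm:main-thmb}, which says that the unitation map $\varphi$ is already a strong spectral quotient (in fact with connected fibers). The second is \cref{thm:connective-unigenic}, which gives that $\rho_{\unitation{\cT}}$ is a closed quotient map, and hence a strong spectral quotient by \cref{exa:closed-quotient-is-strong}. It then suffices to observe that the composition of two strong spectral quotients is a strong spectral quotient: if $U \subseteq \Spec(R_\cT)$ is the complement of a Thomason subset then so is $\rho_{\unitation{\cT}}^{-1}(U) \subseteq \Spcunit{\cT}$ (since spectral maps pull back Thomason subsets to Thomason subsets), which reduces the claim to composing two corestrictions that are spectral quotients in the ordinary sense.

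For the connectedness of the fibers, fix $\frakp \in \Spec(R_\cT)$. By \cref{rem:algebraic-commute}, algebraic localization at $\frakp$ commutes with unitation, and by \cref{rem:algebraic} it preserves connectivity; moreover, finite localizations of a symmetric monoidal stable $\infty$-category are again of the same form. After this reduction, we may assume that $R_\cT$ is local with maximal ideal $\frakm$, and it suffices to prove that $\rho^{-1}(\{\frakm\})$ is connected. Now \cref{prop:R-local-K-local}, applied to the unigenic connective tt-category $\unitation{\cT}$, guarantees that $\unitation{\cT}$ is itself a local tt-category. The subset $F \coloneqq \rho_{\unitation{\cT}}^{-1}(\{\frakm\}) \subseteq \Spc(\unitation{\cT}^c)$ is then nonempty (by surjectivity of $\rho_{\unitation{\cT}}$) and closed—in particular specialization closed—since $\{\frakm\}$ is closed in $\Spec(R_\cT)$. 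Hence \cref{prop:connected-preimage}, applied to the fully faithful inclusion $\unitation{\cT} \hookrightarrow \cT$ out of the local source $\unitation{\cT}$, yields that $\varphi^{-1}(F) = \rho^{-1}(\{\frakm\})$ is connected. The final ``homeomorphism iff injective'' assertion is then immediate from \cref{cor:spectral-homeo}.

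The main obstacle is conceptual rather than computational: one needs to recognize that the algebraic localization reduction turns the fiber of $\rho_{\unitation{\cT}}$ over $\frakp$—which is a priori only local in the subspace topology—into an honest closed subset of $\Spc(\unitation{\cT}^c)$ (after also making $\unitation{\cT}$ itself local via \cref{prop:R-local-K-local}), which is exactly the input required by \cref{prop:connected-preimage}. Once this reduction is in place, the argument is a routine chaining of \cref{thm:main-thmb}, \cref{thm:connective-unigenic}, \cref{prop:R-local-K-local}, and \cref{prop:connected-preimage}.
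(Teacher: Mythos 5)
Your proposal is correct and follows essentially the same route as the paper's own proof: factor $\rho$ through the unitation, combine \cref{thm:faithful-are-quotient} (equivalently \cref{thm:main-thmb}) with the closed quotient map of \cref{thm:connective-unigenic} and \cref{exa:closed-quotient-is-strong} for the strong spectral quotient statement, and obtain connected fibers by localizing at $\frakp$ via \cref{rem:algebraic-commute}, then applying \cref{prop:R-local-K-local} and \cref{prop:connected-preimage}, with the homeomorphism criterion from \cref{cor:spectral-homeo}. The only step the paper spells out more explicitly is the identification of the fiber over $\frakp$ with the fiber over the closed point after algebraic localization, justified by the cartesian square of \cite[Corollary~5.6]{Balmer10b}; your ``after this reduction'' glosses this, but it is exactly that standard fact.
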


\begin{proof}
	Let $\RT \coloneqq \End_{\cT}(\unit)$. As in the proof of \cref{thm:coherent-spectral}, we have a factorization of the ungraded comparison map as
	\begin{equation}\label{eq:ungraded-factorization}\begin{tikzcd}
		\Spc(\cTc) \ar[rr,bend left=20,"\rho"] \ar[r,"\varphi"'] & \Spc(\unitation{\cT}^c) \ar[r,"\rho"'] & \Spec(\RT).
	\end{tikzcd}\end{equation}
	The first map is a strong spectral quotient map (\cref{thm:faithful-are-quotient}) while the second is a closed quotient map (\cref{thm:connective-unigenic}). Since closed quotient maps are strong spectral quotient maps (\cref{exa:closed-quotient-is-strong}), it follows that the composite is a strong spectral quotient map. The last statement then follows from \cref{cor:spectral-homeo}.

	We now establish that the fibers are connected. Recall from \cref{rem:algebraic-commute} that algebraic localization ``commutes'' with unitation. It follows that we have a commutative diagram 
	\[\begin{tikzcd}
		\Spc(\cTc)  \ar[r,"\varphi"] & \Spc(\unitation{\cT}^c) \ar[r,"\rho"] & \Spec(\RT) \\
		\Spc(\cTc_{\frakp}) \ar[u,hook] \ar[r,"\varphi"] & \Spc(\unitation{(\cT_{\frakp})}^c) \ar[u,hook]  \ar[r,"\rho"]& \Spec((\RT)_{\frakp}).  \ar[u,hook]
	\end{tikzcd}\]
	Since the outer square is cartesian \cite[Corollary~5.6]{Balmer10b}, we are reduced to showing that $\varphi^{-1}(\rho^{-1}(\{\frakm\}))$ is connected where $\frakm \in \Spec((\RT)_{\frakp})$ is the unique closed point. Note that $\unitation{(\cT_{\frakp})}\cong (\unitation{\cT})_{\frakp}$ is an algebraic localization of a connective category and hence is still connective (\cref{rem:algebraic}). Thus, \cref{prop:R-local-K-local} establishes that~$\unitation{(\cT_{\frakp})}$ is a local tt-category. Since the bottom $\varphi$ is induced by the fully faithful functor $\smash{\unitation{(\cT_{\frakp})}} \hookrightarrow \cT_{\frakp}$, the fiber $\varphi^{-1}(\rho^{-1}(\{\frakm\}))$ is connected by \cref{prop:connected-preimage}.
\end{proof}

\begin{Rem}\label{rem:connective-noetherian}
	If $\Spec(\End_{\cT}(\unit))$ is noetherian then the strong spectral quotient of \cref{cor:ungraded-comparison} is a strong topological quotient map by \cref{prop:strong-spec-topo}.
\end{Rem}

\begin{Exa}\label{exa:motivic-comp}
	The derived category of motives $\DM(k;R)$ is connective, since the motivic cohomology groups $\End(\unit,\Sigma^n \unit) = H^{n,0}(k;R)$ vanish for $n>0$; see \cite[Theorem~3.6]{MazzaVoevodskyWeibel06}. Therefore, by \cref{cor:ungraded-comparison}, the comparison map ${\rho:\Spc(\DM(k;R)^c)\to \Spec(R)}$ is a strong spectral quotient map whose fibers are connected. Moreover, this statement is also true for the comparison map of each of the concentrations $\DTM(k;R)$, $\DAM(k;R)$ and $\DATM(k;R)$; see \mbox{\cref{exa:DM,exa:artin-motives,exa:DATM}}.
\end{Exa}

\begin{Exa}\label{exa:motivic-comp2}
	Let $k$ be a field of characteristic zero. The motivic stable homotopy category $\cat T=\SH(k)$ is connective (see \cite{Morel05}) and its ungraded endomorphism ring is isomorphic to the Grothendieck--Witt ring $\GW(k)$ by \cite[Theorem~6.2.1]{Morel04a}. Moreover, the Zariski spectrum of $\GW(k)$ is known; see \cite[Remark~10.2]{Balmer10b}. By \cref{cor:ungraded-comparison}, the comparison map $\rho:\Spc(\SH(k)^c) \to \Spec(\GW(k))$ is a strong spectral quotient map whose fibers are connected.
\end{Exa}

\begin{Exa}\label{exa:noncommutative-comp}
	Let $k$ be a commutative ring and let $\cT$ denote the concentration of~$\Mot_k^a$ at the dualizable noncommutative motives; see \cref{exa:noncommutative-motives}. The graded endomorphism ring of the unit is the algebraic $K$-theory of the ring $k$ by \cite[Theorem~15.10]{Tabuada08}. More precisely
	\[
		\pi_n(\unit) = \Hom_{\cT}(\Sigma^n\unit,\unit)=
		\begin{cases}
		K_n(k) & \text{for } n \ge 0\\
		0 & \text{for } n<0.
		\end{cases}
	\]
	In particular, the tt-category $\cT$ is connective. By \cref{cor:ungraded-comparison}, the comparison map $\rho:\Spc(\cT^c)\to \Spec(K_0(k))$ is a strong spectral quotient map with connected fibers. The unitation of $\cT$ has been studied in \cite{DellAmbrogioTabuada12}. They prove that if $k$ is a finite field or the closure of a finite field, then $\rho:\Spcunit{\cT}\to\Spec(K_0(k))=\Spec(\bbZ)$ is a bijection, and hence a homeomorphism by \cref{cor:spectral-homeo}. Moreover, for a field $k$ of characteristic zero, they also consider the concentration $\concentration{\cT}{\cat G}$ generated by $\unit$ and the noncommutative motive of $k[t]$; see \cite[Section~5]{DellAmbrogioTabuada12}.
\end{Exa}

\begin{Exa}
	Let $\cT=\SHG$ or $\cT=\DHZG$ for any finite group $G$. By \cref{cor:ungraded-comparison}, the comparison map $\rho:\Spc(\cTc)\to \Spec(A(G))$ is a topological quotient map whose fibers are connected. The fact that it is a quotient map (for $\cT=\DHZG$) was observed by direct methods in \cite[Corollary~2.42]{PatchkoriaSandersWimmer22}.
\end{Exa}

\begin{Rem}
	The stronger consequences of \cref{thm:connective-unigenic}, namely that the fibers are local (rather than just connected), need not be true if $\cT$ is not unigenic; see \cref{rem:exceptional} below.
\end{Rem}

\begin{Rem}
	The fibers of the ungraded and graded comparison maps can fail to be connected if the category is not connective, even if it is unigenic. See \cref{exa:graded-noetherian-not-connected}.
\end{Rem}

\begin{Rem}
	It remains an open question in general, whether the comparison map of a tt-category is a homeomorphism if and only if it is a bijection. This is known to be true under some hypotheses; see \cite[Corollary~2.5]{Lau23} and \cite{DellAmbrogioStanley16erratum}. Our \cref{thm:coherent-spectral} and \cref{cor:ungraded-comparison} give further sufficient conditions for the question to have an affirmative answer.
\end{Rem}

\section{Unitation in equivariant higher algebra}\label{sec:equivariant}

We now turn to explicit examples arising in equivariant higher algebra starting with the modular representation theory of finite groups.

\begin{Prop}\label{prop:p-group}
	Let $G$ be a finite group and let $k$ be a field of characteristic $p>0$. The derived category of representations $\DRep(G,k)\cong K(\Inj(kG))$ is unigenic if and only if $G$ is a $p$-group.
\end{Prop}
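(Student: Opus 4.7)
The plan is to establish both directions by using the identification $\DRep(G,k)^c \cong \Dbmod{kG}$ (Krause's theorem), combined with classical modular representation theory of finite groups. Recall that by Neeman's theorem for compactly generated categories, if $\DRep(G,k) = \Loc\langle k\rangle$ with $k$ compact, then its subcategory of compact objects coincides with $\thick\langle k\rangle$. Hence the unigenic property is equivalent to the statement that $\Dbmod{kG} = \thick\langle k\rangle$.

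For the ``if'' direction: suppose $G$ is a $p$-group. Then $kG$ is a local ring (its augmentation ideal is nilpotent and equal to the Jacobson radical), so the trivial module $k$ is the unique simple $kG$-module up to isomorphism. Consequently, every finite-dimensional $kG$-module $M$ admits a composition series all of whose factors are isomorphic to $k$. A straightforward induction on the length shows $M \in \thick\langle k\rangle$ inside $\Dbmod{kG}$, and hence $\Dbmod{kG} = \thick\langle k\rangle$. Taking localizing envelopes gives $\DRep(G,k) = \Loc\langle k\rangle$.

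For the ``only if'' direction: suppose $\DRep(G,k)$ is unigenic, so that $\Dbmod{kG} = \thick\langle k\rangle$. Consider any simple $kG$-module $S$. Since $S \in \thick\langle k\rangle$ inside $\Dbmod{kG}$, its class satisfies $[S] \in \mathbb{Z}\cdot [k]$ in $K_0(\Dbmod{kG}) \cong K_0(\mathrm{mod}(kG))$. However, since $\mathrm{mod}(kG)$ is a length category, $K_0(\mathrm{mod}(kG))$ is free abelian on the isomorphism classes of simple modules. Hence $[S]$ and $[k]$ are $\mathbb{Z}$-linearly independent unless $S \cong k$, forcing $S \cong k$. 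Thus $k$ is the only simple $kG$-module.

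To conclude that $G$ is a $p$-group, invoke the classical theorem (a consequence of Brauer's work on modular representations, or of the Wedderburn decomposition of $kG/\mathrm{rad}(kG)$) that the number of simple $kG$-modules equals the number of $p$-regular conjugacy classes of $G$. Having only one simple module forces $G$ to have a single $p$-regular class, namely $\{e\}$, so every element of $G$ has $p$-power order. By Cauchy's theorem this means $G$ is a $p$-group. The main subtlety in executing this plan is the $K_0$ step: one must be careful that the relevant Grothendieck group is of the length category $\mathrm{mod}(kG)$ rather than of the triangulated category $\thick\langle k\rangle$ itself, and the argument above sidesteps this by working directly in $K_0(\Dbmod{kG})$ after the inclusion $\thick\langle k\rangle \hookrightarrow \Dbmod{kG}$.
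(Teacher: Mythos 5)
Your reduction to $\Dbmod{kG}=\thick\langle k\rangle$ and your ``if'' direction are fine and essentially what the paper does. The genuine gap is in the ``only if'' direction, at the step asserting that $S\in\thick\langle k\rangle$ forces $[S]\in\mathbb{Z}\cdot[k]$ in $K_0(\Dbmod{kG})$. Membership in a thick subcategory does not constrain the $K_0$-class in this way: thick subcategories are closed under direct summands, and while an iterated cone on shifts of $k$ does have class in $\mathbb{Z}\cdot[k]$, a direct summand $Y$ of such an object only satisfies $[Y]=n[k]-[Z]$ for the complementary summand $Z$; since classes of \emph{complexes} in $K_0(\Dbmod{kG})\cong G_0(kG)=\bigoplus_S\mathbb{Z}[S]$ may have negative coordinates, the coordinates of $[Y]$ along simples other than $k$ can be cancelled by those of $[Z]$, so nothing forces them to vanish. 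The general principle ``$X\in\thick\langle x\rangle$ implies $[X]\in\mathbb{Z}[x]$'' is in fact false: for $kS_3$ in characteristic $2$, the two-dimensional module $k(S_3/A_3)$ is a direct summand of $kS_3$, hence lies in $\thick\langle kS_3\rangle$, yet its class $2[k]$ is not a multiple of $[kS_3]=2[k]+2[V]$. Moreover, under your standing hypothesis $\thick\langle k\rangle=\Dbmod{kG}$, the image of $K_0(\thick\langle k\rangle)$ in $K_0(\Dbmod{kG})$ is the whole group, so ``working directly in $K_0(\Dbmod{kG})$'' does not sidestep the issue; the assertion $[S]\in\mathbb{Z}[k]$ is essentially equivalent to the statement you are trying to prove (that $k$ is the only simple) and requires an independent argument.

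The paper closes exactly this hole at the level of subcategories rather than of $K_0$: the modules all of whose composition factors are isomorphic to $k$ form a subcategory $\cat A_{\{k\}}\subseteq\mathrm{mod}(kG)$ closed under subobjects, quotients and extensions (hence under summands), so the full subcategory $\Dbkmod{kG}$ of complexes with homology in $\cat A_{\{k\}}$ is a \emph{thick} subcategory of $\Dbmod{kG}$ containing $k$; it therefore contains $\thick\langle k\rangle=\Dbmod{kG}$, and applying this to a simple $S$ gives $S\cong k$. With that replacement your argument goes through. Your concluding step ``only one simple $\Rightarrow$ $G$ is a $p$-group'' is correct, but justify it by the Wedderburn/nilpotency route you mention in passing (a unique simple forces $kG/\mathrm{rad}(kG)\cong k$, so the augmentation ideal is nilpotent, so $G$ is a $p$-group, which is the paper's route) rather than by Brauer's count of $p$-regular classes, which as you state it requires $k$ to be a splitting field and fails over general $k$ (e.g.\ $\mathbb{F}_2[C_3]$ has two simples but three $2$-regular classes).
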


\begin{proof}
	Recall that $\DRep(G,k)^c \cong \Dbmod{kG}$. Note that finitely generated $kG$-modules are of finite length. Hence $\Dbmod{kG}$ is generated by the collection of simple $kG$-modules. More precisely, if $S$ is any set of simple $kG$-modules then the full subcategory $\cat A_S \subseteq \mathrm{mod}(kG)$ consisting of those modules whose composition factors lie in $S$ forms an abelian subcategory which is closed under extensions. Hence $\DbSmod{kG}\coloneqq \SET{ t \in \Dbmod{kG}}{H_i(t) \in \cat A_S \text{ for all } i \in \mathbb{Z}}$ is a thick subcategory. Thus, if $\Dbmod{kG}$ is unigenic then, taking $S=\{k\}$, we have that $\Dbmod{kG}=\thick\langle k\rangle \subseteq \Dbkmod{kG}$ which implies that $k$ is the only simple module. In summary, $\Dbmod{kG}$ is unigenic if and only if $k$ is the only simple module. Now recall that the Jacobson radical $J(G) \coloneqq \mathrm{rad}(kG)$ is the largest nilpotent ideal \cite[Proposition~3.2]{Weintraub03} and $J(G) \subseteq I(G) \coloneqq \Ker(kG \to k)$. Moreover, $J(G)$ is the intersection of the annihilators of all the simple modules \cite[Proposition~2.2]{Weintraub03}. Hence, the trivial representation $k$ is the unique simple module if and only if $J(G)=I(G)$ if and only if the augmentation ideal $I(G)$ is nilpotent if and only if $G$ is a finite $p$-group \cite[Chapter 3, Lemma 1.6, page~70]{Passman77}.
\end{proof}

\begin{Exa}\label{exa:modular-unitation}
	Let $G$ be a finite group and let $k$ be a field of positive characteristic. As explained in \cref{exa:DRep-monadic}, the unitation of $\DRep(G;k)\cong K(\Inj(kG))$ is equivalent as a tt-category to the derived category of the ring spectrum $F(BG_+,\Hk)\in \CAlg(\Sp)$. Recall that this is equivalent to the derived category of the dg-algebra $C^*(BG;k)$ of $k$-valued cochains on the classifying space $BG$:
	\[
		\DRep(G;k)_{\langle \unit \rangle} \cong \Der(F(BG_+,\Hk))\cong \Derdg(C^*(BG;k)).
	\]
	See \cite{BarthelCastellanaHeardValenzuela22} and \cite{BensonKrause08}. On Balmer spectra, the unitation of $\DRep(G,k)$ induces a homeomorphism
	\[
		\Spc(\Dbmod{kG}) \xrightarrow{\cong} \Spc(\unitation{\Dbmod{kG}}).
	\]
	This is an instance of \cref{cor:injective-comp} since the graded comparison map is a homeomorphism; see \cite[Proposition~8.5]{Balmer10b}.
\end{Exa}

\begin{Rem}\label{rem:not-p-group-ff}
	Note that if $G$ is not a $p$-group then the unitation of $\DRep(G,\Fp)$ is an example of a fully faithful geometric functor, which is not an equivalence, but which induces a homeomorphism on Balmer spectra.
\end{Rem}

\begin{Exa}\label{exa:stmod-unitation}
	Let $G$ be a finite group and let $k$ be a field of positive characteristic. The unitation of $\StMod(kG)$ induces a homeomorphism on Balmer spectra
	\[
		\Spc(\stmod(kG)) \xrightarrow{\cong} \Spc(\unitation{\stmod(kG)}).
	\]
	Again this is an instance of \cref{cor:injective-comp} since the graded comparison map is injective; see the proof of \cite[Proposition~8.5]{Balmer10b}.
\end{Exa}

\begin{Rem}\label{rem:unitation-perm}
	Let $G$ be a finite group and let $k$ be a field of positive characteristic. We have already observed in \cref{exa:DPerm-inflation} that the unitation of $\DPerm(G;k)$ is equivalent to $\Der(k)$. From a different perspective, 
	\[
		\DPerm(G;k) \cong \Ho(\Hkbar\text{-Mod}_{\Sp_G})
	\]
	where $\Hkbar$ denotes the Eilenberg--MacLane $G$-spectrum associated to the constant \mbox{$G$-Mackey} functor $\underline{k}$. By \cref{rem:equiv-monadic}, we have
	\[
		\unitation{(\Hkbar\text{-Mod}_{\Sp_G})} \cong \lambda^G( \Hkbar)\text{-Mod}_{\Sp}
	\]
	and it is immediate from the definitions that $\lambda^G(\Hkbar) \cong \Hk$.
\end{Rem}

\begin{Prop}\label{prop:DPerm-unigenic}
	Let $G$ be a finite group and let $k$ be a field of characteristic~$p>0$. The derived category of permutation modules $\DPerm(G;k)$ is unigenic if and only if~$G$ is the trivial group.
\end{Prop}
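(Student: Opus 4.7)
The ``if'' direction is immediate since $\DPerm(1;k)\simeq\Der(k)$ is manifestly unigenic. For the converse, my plan is to assume $G\neq 1$ together with $\DPerm(G;k)=\Loc\langle\unit\rangle$ and derive a contradiction. The starting observation is \cref{exa:DPerm-inflation}: the unitation inclusion $\unitation{\DPerm(G;k)}\hookrightarrow\DPerm(G;k)$ is realised by the inflation functor $\infl^G_1\colon\Der(k)\hookrightarrow\DPerm(G;k)$, so the unigenicity hypothesis says precisely that $\infl^G_1$ is an equivalence.

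First I would reduce to the case of a nontrivial $p$-group. The tt-localization $\DPerm(G;k)\twoheadrightarrow\DRep(G;k)$ of~\eqref{eq:conc-perm3} (available since $k$ is a field and hence regular) is coproduct-preserving, essentially surjective and unit-preserving, so it transports the unigenicity hypothesis to $\DRep(G;k)$. By \cref{prop:p-group} this forces $G$ to be a $p$-group, so I may assume that $G$ is a nontrivial $p$-group.

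In that case, my plan is to invoke the modular $G$-fixed-points tt-functor
\[
\Psi^G\colon\DPerm(G;k)\longrightarrow\DPerm(1;k)\cong\Der(k)
\]
constructed in Balmer--Gallauer~\cite{BalmerGallauer23b}. This is a coproduct-preserving tt-functor satisfying $\Psi^G(k)=k$ and $\Psi^G(k[G/H])=0$ for every proper subgroup $H\lneq G$. The composite $\Psi^G\circ\infl^G_1\colon\Der(k)\to\Der(k)$ is a coproduct-preserving tt-endofunctor sending $\unit$ to $\unit$, so since $\Der(k)$ is rigidly-compactly generated by $\unit$ it must be equivalent to the identity. Hence, if $\infl^G_1$ is an equivalence, then $\Psi^G$ is a quasi-inverse equivalence, and in particular conservative. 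But $\Psi^G(k[G/1])=0$ whereas $k[G/1]\not\simeq 0$ in $\DPerm(G;k)$, producing the desired contradiction.

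The main obstacle is the correct invocation of the modular fixed-points functor $\Psi^G$ with its vanishing on non-full orbits, which rests on the machinery developed in~\cite{BalmerGallauer23b}. Once that input is accepted, the remainder of the argument is a formal consequence of \cref{exa:DPerm-inflation}, \cref{prop:p-group} and the identity $\Psi^G\circ\infl^G_1\simeq\id$.
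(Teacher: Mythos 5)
Your argument is correct, but its second half takes a genuinely different route from the paper's. Both proofs begin the same way, using \cref{exa:DPerm-inflation} (inflation $\infl^G_1:\Der(k)\hookrightarrow\DPerm(G;k)$ realizes the unitation) and the localization \eqref{eq:conc-perm3} onto $\DRep(G;k)$. The paper finishes right there: since every exact functor out of $\Der(k)$ is faithful, unigenicity would force the localization $\Der(k)\twoheadrightarrow\DRep(G;k)$ to be an equivalence, which fails for every nontrivial $G$; no reduction to $p$-groups and no fixed-points machinery is needed. You instead use the localization only to transport unigenicity to $\DRep(G;k)$ and invoke \cref{prop:p-group} to reduce to a nontrivial $p$-group, and then bring in the Balmer--Gallauer modular fixed-points functor $\Psi^G$, whose conservativity (forced by unigenicity, because $\Psi^G\circ\infl^G_1$ is unital and hence conservative on $\Der(k)$) contradicts $\Psi^G(k[G/1])=0$ with $k[G/1]\neq 0$. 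This works, and the reduction to $p$-groups is genuinely needed for your endgame, since a unital $\Psi^G$ only exists when $G$ is a $p$-group; note however that $\Psi^G$ is constructed in Balmer--Gallauer's papers on modular fixed points and the spectrum of $\DPerm$, not really in the reference you cite for $\DPerm(\Gamma;R)$, so the citation should be adjusted. Also, your assertion that $\Psi^G\circ\infl^G_1$ ``must be equivalent to the identity'' is more than you need and slightly delicate as stated; all that is used is conservativity, which is immediate since any exact functor on $\Der(k)$ that is nonzero on $\unit$ is conservative. In terms of trade-offs: the paper's proof is shorter and essentially self-contained, while yours exhibits a concrete obstruction inside $\DPerm(G;k)$ itself (the free orbit is invisible to modular fixed points) at the cost of importing heavier external input.
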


\begin{proof}
	Recall that we have a localization $\DPerm(G;k) \twoheadrightarrow \DRep(G;k)$ and that $\Der(k) \cong \unitation{\DPerm(G;k)}$. Thus, if $\DPerm(G;k)$ is unigenic then we would have a localization $\Der(k) \twoheadrightarrow \DRep(G;k)$. Since any exact functor on $\Der(k)$ is faithful, we would thus have $\Der(k) \cong \DRep(G;k)$ which is false unless $G=1$ since any nontrivial finite group has nontrivial cohomology.
\end{proof}

\begin{Rem}
	More generally, consider $\DPerm(G;R)$ and $\DRep(G;R)$ for any noetherian ring $R$. Recall from \cref{exa:DRepR} that we have a factorization
	\[
		\DPerm(G;R) \twoheadrightarrow \concentration{\DRep(G;R)}{\cat G} \hookrightarrow \DRep(G;R)
	\]
	of the canonical functor $\DPerm(G;R) \to \DRep(G;R)$ and that the inclusion $\concentration{\DRep(G;R)}{\cat G} \hookrightarrow \DRep(G;R)$ can be strict if $R$ is non-regular. Since the graded comparison map for $\DRep(G;R)$ is a homeomorphism by \cite[Theorem~1.3]{Lau23}, it follows as above that the concentration $\concentration{\DRep(G;R)}{\cat G} \hookrightarrow \DRep(G;R)$ induces a homeomorphism on Balmer spectra. We conclude that although the canonical functor $\DPerm(G;R) \to \DRep(G;R)$ is not always a localization it nevertheless always induces an embedding
	\[
		\Spc(\DRep(G;R)^c) \hookrightarrow \Spc(\DPerm(G;R)^c).
	\]
\end{Rem}

\begin{Rem}
	We now turn to the equivariant stable homotopy category and related categories of spectral Mackey functors. We will take for granted some familiarity with the notation and results of \cite{BalmerSanders17} and \cite{PatchkoriaSandersWimmer22}. Our first goal is to prove that the unitation of the category of derived Mackey functors identifies, on Balmer spectra, with the spectrum of the Burnside ring. We will reduce this theorem to the following proposition:
\end{Rem}

\begin{Prop}\label{prop:DHZG-key-prop}
	Let $G$ be a finite $p$-group and let
	\[
		\varphi:\Spc(\DHZG^c) \to \Spc(\unitation{\DHZG}^c)
	\]
	be the map induced by unitation. Then $\varphi(\cat P(H,p)) = \varphi(\cat P(K,p))$ for all $H,K \le G$.
\end{Prop}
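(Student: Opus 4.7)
The plan is to combine \cref{thm:connective-unigenic} applied to the connective unigenic category $\unitation{\DHZG}$ with a classical structural fact about the Burnside ring of a $p$-group. I would first observe that $\unitation{\DHZG}$ is connective (since $\DHZG$ is) and is unigenic by construction, so \cref{thm:connective-unigenic} applies: each fiber of the comparison map $\rho:\Spc(\unitation{\DHZG}^c)\to \Spec(A(G))$ is local, with unique relatively closed point the image of the weight-complex embedding $\omega:\Spec(A(G))\hookrightarrow \Spc(\unitation{\DHZG}^c)$.

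The next input is the Dress criterion for prime ideals of the Burnside ring: $p(H,q)=p(K,q)$ if and only if $O^q(H)$ and $O^q(K)$ are $G$-conjugate. When $G$ is a $p$-group, every $H\le G$ is itself a $p$-group, so $O^p(H)=e$; hence $p(H,p)=p(K,p)=p(e,p)$ in $A(G)$ for all $H,K\le G$. Since the global comparison map $\Spc(\DHZG^c)\to\Spec(A(G))$ factors through $\varphi$ and sends $\cat P(H,p)$ to $p(H,p)$, all the points $\varphi(\cat P(H,p))$ live in the single local fiber $\rho^{-1}(\{p(e,p)\})\subseteq \Spc(\unitation{\DHZG}^c)$.

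To promote this to the equality $\varphi(\cat P(H,p))=\varphi(\cat P(K,p))$, I would use the Patchkoria--Sanders--Wimmer description $\cat P(H,p)=\Ker(\Phi^H_p)$, where $\Phi^H_p:\DHZG\to \Der(\Fp)$ is the modular $H$-fixed-point functor; the target is $\Der(\Fp)$ because $W_G H$ is a $p$-group and so $O_p(W_G H)=W_G H$. Restricting to the unitation gives geometric functors
\[
    \bar\Phi^H_p:\unitation{\DHZG}\longrightarrow \Der(\Fp)
\]
between unigenic categories. Using \cref{rem:spectra} to identify $\unitation{\DHZG}\simeq \Der(\bbE)$ for the connective commutative ring spectrum $\bbE=\eend(\unit)$, each $\bar\Phi^H_p$ corresponds to a map $\bbE\to \HFp$ in $\CAlg$. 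Since $\bbE$ is connective and $\HFp$ is $0$-truncated, such maps factor uniquely through $\tau_{\le 0}(\bbE)=\rmH\pi_0(\bbE)=\rmH A(G)$, so the mapping space $\Map_{\CAlg}(\bbE,\HFp)$ is discrete and equal to $\Hom_{\mathrm{ring}}(A(G),\Fp)$. On $\pi_0$, the induced map $A(G)\to \Fp$ is the mod-$p$ reduction of the mark map $\chi_H$, which depends only on the prime $p(H,p)$ in $A(G)$. By the previous paragraph this is independent of $H$, so $\bar\Phi^H_p\simeq \bar\Phi^K_p$ as geometric functors, and in particular they have the same kernel on compact objects. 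Hence $\varphi(\cat P(H,p))=\varphi(\cat P(K,p))$.

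The main obstacle is verifying that $\bar\Phi^H_p$ is indeed classified by a discrete ring map on $\pi_0$, which hinges on the connectivity of $\bbE$ together with the identification of $\pi_0$ with $A(G)$ and the identification of the induced map with the mod-$p$ mark map; both require careful bookkeeping with the modular fixed point formalism of Patchkoria--Sanders--Wimmer. As a fallback, a purely topological finish is available: $\cat P(H,p)$ is a closed (maximal) point of $\Spc(\DHZG^c)$, the map $\varphi$ is a strong topological quotient by \cref{thm:main-thmb} and \cref{rem:spectral-is-strong-ff} (since $\Spec(A(G))$ is noetherian), and $\varphi(\cat P(H,p))$ sits in a local fiber with unique closed point $\omega(p(e,p))$; a weak-lifting argument can then be used to force $\varphi(\cat P(H,p))=\omega(p(e,p))$.
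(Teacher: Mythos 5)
Your main argument is correct, and it is genuinely different from the paper's proof. The paper squeezes every $\varphi(\cat P(H,p))$ between $\varphi(\cat P(1,p))$ and $\varphi(\cat P(G,p))$ using the known poset of $\Spc(\DHZpG^c)$, and then shows $\varphi(\cat P(G,p))$ is the \emph{zero} ideal by proving that $\thick\langle\unit\rangle \hookrightarrow \DHZpG^c \xrightarrow{\Phi^G} \Der(\HZp)^c$ is conservative, via the weight complex functors and Dress's description of the closed point of $\Spec(A(G)_{(p)})$. You instead realize each $\cat P(H,p)$ as the kernel of a geometric functor to $\Der(\Fp)$ and use rigidity on the unitation: since $\unitation{\DHZG}\simeq\Der(\bbE)$ with $\bbE$ connective, geometric functors $\Der(\bbE)\to\Der(\Fp)$ are classified by $\Map_{\CAlg}(\bbE,\HFp)\simeq\Hom(A(G),\Fp)$, and for a $p$-group there is exactly one such ring homomorphism (all marks are congruent mod $p$), so the restricted functors agree and hence so do their kernels on compacts, which are precisely the $\varphi(\cat P(H,p))=\thick\langle\unit\rangle\cap\cat P(H,p)$. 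Both proofs ultimately run on connectivity --- by \cref{prop:weight-truncation} the paper's weight complex is base change along $\bbE\to\rmH\pi_0\bbE$ --- but your truncation argument handles all $H$ at once without the sandwich, whereas the paper's argument yields the stronger conclusion that the common image is $(0)$ and avoids invoking the $\infty$-categorical classification of monoidal functors out of module categories.

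Two caveats. First, the input you attribute to Patchkoria--Sanders--Wimmer is right in substance but your gloss on it is off: what you need (and what is available, the $H=G$ case being exactly what the paper uses) is that $\cat P(H,p)$ is the kernel of mod-$p$ geometric $H$-fixed points $\Fp\otimes_{\bbZ}\Phi^H(-)\colon \DHZG\to\Der(\Fp)$, which lands in $\Der(\bbZ)$ before reduction because $\Phi^H\circ\triv_G\simeq\id$ forces $\Phi^H(\triv_G\HZ)\simeq\HZ$; the clause ``the target is $\Der(\Fp)$ because $O_p(W_GH)=W_GH$'' is not the reason, and this identification should be stated and cited carefully since it carries the whole argument. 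Second, your fallback does not work as stated: for $H\neq 1$ the prime $\cat P(H,p)$ is not a closed point of $\Spc(\DHZG^c)$ (one has $\cat P(1,p)\subsetneq\cat P(H,p)$), and locality of the fiber of $\rho$ together with connectedness of the fibers of $\varphi$ does not by itself force the images to coincide; relatedly, your first paragraph (\cref{thm:connective-unigenic} applied to the unitation) is never actually used in the main argument.
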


\begin{proof}
	Because unitation commutes with algebraic localization (\cref{rem:algebraic-commute}), we are reduced to the analogous statement for $\DHZG_{(p)}\cong\DHZpG$. In the spectrum of the category of derived Mackey functors we have $\cat P(K,p) \subseteq \cat P(H,p)$ if and only if~$K$ is $G$-conjugate to a $p$-subnormal subgroup of $H$. Thus, in the case where $G$ is a $p$-group we have $\cat P(1,p) \subseteq \cat P(H,p) \subseteq \cat P(G,p)$ for any $H \le G$. Hence $\varphi(\cat P(1,p))\subseteq \varphi(\cat P(H,p))\subseteq \varphi(\cat P(G,p))$. It thus suffices to prove that $\varphi(\cat P(G,p))\subseteq\varphi(\cat P(1,p))$. Recall that the prime $\cat P(G,p)$ is the kernel of the geometric fixed point functor
	\[
		\Phi^G : \DHZpG^c \to \Der(\HZp)^c.
	\]
	We claim that $\varphi(\cat P(G,p)) = \thick\langle\unit\rangle \cap \Ker(\Phi^G)$ is the zero ideal. Recall from \cref{exa:SHG-weight} that $\SHG^c$ has a bounded weight structure generated by the orbits~$G/H_+$. In a similar fashion, $\DHZpG^c$ admits a weight structure generated by the (images of) the orbits. In particular, its unitation $\unitation{\DHZpG}^c=\thick\langle \unit\rangle$ admits a weight structure generated by $\unit = G/G_+$ as in \cref{exa:weight-connective}. The heart of this weight structure is the the additive tensor-category $\proj(A(G)_{(p)})$ of finitely generated projective $A(G)_{(p)}$-modules. Since the two functors 
	\begin{equation}\label{eq:weight-composite}
		\unitation{\DHZpG}^c \hookrightarrow \DHZpG^c \xrightarrow{\Phi^G} \Der(\HZp)^c
	\end{equation}
	are weight-exact, we have a commutative diagram
	\[\begin{tikzcd}
		\unitation{\DHZpG}^c \ar[r] \ar[d,"W"'] & \Der(\HZp)^c\ar[d,"W"]\\
		K^b(\proj(A(G)_{(p)})) \ar[r] & K^b(\proj(\mathbb{Z}_{(p)}))
	\end{tikzcd}\]
	where the vertical functors are the weight complex functors. By~\cref{rem:heart-base-change}, the bottom functor 
	\[
		\Der(A(G)_{(p)})^c\cong K^b(\proj(A(G)_{(p)})) \to K^b(\proj(\mathbb{Z}_{(p)}))\cong \Der(\mathbb{Z}_{(p)})^c
	\]
	may be identified with extension-of-scalars with respect to the ring homomorphism on endomorphism rings induced by $\Phi^G$. This is the $p$-localization of the ring homomorphism $A(G)\to \mathbb{Z}$ which sends a finite $G$-set $[X]$ to $|X^G|$; see \cite[Remark~3.8]{BalmerSanders17}. Since $G$ is a $p$-group, the $p$-local Burnside ring $A(G)_{(p)}$ has a unique closed point which is hit by the unique closed point of $\mathbb{Z}_{(p)}$; see \cite[Theorem~3.6]{BalmerSanders17} or \cite{Dress69}. It then follows, for example from \cite[Theorem~1.2]{Balmer18}, that the bottom functor is conservative. Since the weight complex functors are conservative (\cref{thm:tensor-weight}), it follows that the top functor~\eqref{eq:weight-composite} is conservative. That is, $\varphi(\cat P(G,p)) =  (0)$, which proves the result.
\end{proof}

\begin{Thm}\label{thm:DHZG-comp}
	Let $G$ be a finite group. The comparison map
	\[
		\rho:\Spcunit{\DHZG} \to \Spec(A(G))
	\]
	is a homeomorphism.
\end{Thm}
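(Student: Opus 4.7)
The plan is to apply \cref{thm:connective-unigenic} to $\unitation{\DHZG}$ and then reduce the remaining injectivity to \cref{prop:DHZG-key-prop} via algebraic localization and a functoriality argument.

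First I would verify the hypotheses of \cref{thm:connective-unigenic} for $\unitation{\DHZG}$. Unigenicity holds by construction, and for connectivity one computes $\eend(\unit) \simeq \lambda^G(\triv_G\HZ) \simeq \HZ \wedge (\Sphere_G)^G$ using the projection formula for the $\triv_G \dashv \lambda^G$ adjunction; the tom Dieck splitting $(\Sphere_G)^G \simeq \bigvee_{(H)} \Sigma^{\infty}_+ BW_G(H)$ exhibits this as a connective ring spectrum with $\pi_0 = A(G)$. By \cref{thm:connective-unigenic}, the comparison map $\rho$ is therefore a closed quotient map, and the weight complex supplies a section $\omega \colon \Spec(A(G)) \hookrightarrow \Spcunit{\DHZG}$ whose image is the subspace of unique relatively closed points of the fibers. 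Surjectivity of $\rho$ then follows from the surjectivity of the composite $\Spc(\DHZG^c) \to \Spec(A(G))$ (\cref{cor:ungraded-comparison}) together with the surjectivity of $\varphi \colon \Spc(\DHZG^c) \to \Spcunit{\DHZG}$ (\cref{thm:main-thmb}); so only injectivity remains.

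To prove injectivity it is equivalent to show that $\omega$ is surjective, \ie that each fiber of $\rho$ consists of a single point. Since unitation commutes with algebraic localization (\cref{rem:algebraic-commute}), the question reduces at each $\mathfrak p \in \Spec(A(G))$ to showing that $\Spc(\unitation{(\DHZG_{\mathfrak p})}^c)$ is a single point. Because $\unitation{(\DHZG_{\mathfrak p})}$ is unigenic and connective with local endomorphism ring $A(G)_{\mathfrak p}$, \cref{prop:R-local-K-local} implies that it is a local tt-category whose unique closed point is $\omega(\mathfrak p)$. Using the surjectivity of the localized $\varphi$, every other point of this local spectrum is of the form $\varphi(\cat P(H,p))$ for a prime $\cat P(H,p)$ of $\DHZG^c_{\mathfrak p}$, and writing $\mathfrak p$ as the prime determined by a $p$-perfect subgroup $N$ and the prime $p$, those lying over $\mathfrak p$ are exactly the primes $\cat P(H,p)$ with $O^p(H) \sim_G N$. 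The claim thus reduces to the identification $\varphi(\cat P(H,p)) = \varphi(\cat P(N,p))$ for each such $H$.

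The main obstacle is this final identification, which I plan to derive from \cref{prop:DHZG-key-prop} via a functoriality argument. Setting $N \coloneqq O^p(H)$, the quotient $H/N$ is a $p$-group sitting inside $N_G(N)/N$. I would construct a geometric functor $\DHZpG \to \Der(\rmH_{H/N,\bbZ_{(p)}})$ by composing restriction from $G$ to $H$ with the geometric fixed points at the normal subgroup $N \lhd H$ (whose Weyl quotient $N_H(N)/N$ is $H/N$). Under the induced map on spectra, the primes $\cat P_{H/N}(H/N,p)$ and $\cat P_{H/N}(1,p)$ pull back to $\cat P_G(H,p)$ and $\cat P_G(N,p)$ respectively, and by \cref{prop:DHZG-key-prop} applied to the $p$-group $H/N$ the images of these two primes under $\varphi_{H/N}$ agree. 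The commutative square expressing naturality of unitation under a geometric functor then transports this equality back to yield $\varphi_G(\cat P_G(H,p)) = \varphi_G(\cat P_G(N,p))$ in $\Spcunit{\DHZpG}$. The delicate step is arranging the geometric fixed point functor for Mackey functors so that it carries the listed primes precisely as claimed, since care is needed to track the Weyl action and the $p$-localization throughout.
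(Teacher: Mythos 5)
Your proposal is correct in substance and follows essentially the same route as the paper: apply \cref{thm:connective-unigenic} to see that $\rho$ is a closed quotient map, reduce injectivity to the single claim $\varphi(\cat P(H,p))=\varphi(\cat P(\Op(H),p))$, and deduce that claim from \cref{prop:DHZG-key-prop} by transporting along restriction to $H$ followed by geometric fixed points at $\Op(H)$ (the paper performs these two steps separately, citing the Patchkoria--Sanders--Wimmer lemmas for where the primes go, while you compose them into one functor --- the same argument). One caveat: your intermediate reformulation ``reduce to showing that $\Spc(\unitation{(\DHZG_{\frakp})}^c)$ is a single point'' is too strong and false as stated, since that spectrum is $\rho^{-1}(\gen(\frakp))$ and surjects onto $\Spec(A(G)_{\frakp})$, which generally has more than one point; what you need, and what your subsequent argument actually uses, is only that the fiber over the closed point is a singleton, i.e.\ that all $\varphi(\cat P(H,p))$ with $\Op(H)$ conjugate to $N$ coincide --- with that correction the localization/\cref{prop:R-local-K-local} detour is dispensable, and the paper obtains the same reduction directly from the explicit description of $\rho$ on $\Spc(\DHZG^c)$ together with the surjectivity of $\varphi$.
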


\begin{proof}
	Consider the commutative diagram
	\[\begin{tikzcd}
		\Spc(\DHZG^c) \ar[dr,bend left=20,"\rho"] \ar[d,"\varphi"']&\\
		\Spcunit{\DHZG} \ar[r,"\rho"'] & \Spec(A(G)).
	\end{tikzcd}\]
	It suffices to prove that the bottom comparison map is injective since \cref{thm:connective-unigenic} establishes that it is a closed quotient map; cf.~\cref{cor:ungraded-comparison}. Since $\varphi$ is surjective, it suffices to prove that for any $\cat P,\cat Q \in \Spc(\DHZG^c)$, if $\rho(\cat P)=\rho(\cat Q)$ then $\varphi(\cat P)=\varphi(\cat Q)$. By the explicit description of $\rho$ given in \cite[Corollary~2.42]{PatchkoriaSandersWimmer22}, this reduces to the following claim: For any prime $p$ and subgroup $H\le G$, we have $\varphi(\cat P(H,p))=\varphi(\cat P(O^p(H),p))$. Considering the restriction
	\[\begin{tikzcd}
		\DHZG \ar[r,"\res^G_H"] & \DHZH \\
		\DHZG_{\langle\unit\rangle}\ar[u,hook] \ar[r] & \DHZH_{\langle\unit\rangle}\ar[u,hook]
	\end{tikzcd}\]
	and recalling \cite[Lemma~2.12]{PatchkoriaSandersWimmer22}, we can assume without loss of generality that $H=G$. Similarly, for any normal subgroup $N \lenormal G$ we have
	\[\begin{tikzcd}
		\DHZG \ar[r,"\widetilde{\Phi}^N"] & \DHZGN \\
		\DHZG_{\langle \unit \rangle} \ar[u,hook] \ar[r] & \DHZGN_{\langle \unit \rangle}\ar[u,hook].
	\end{tikzcd}\]
	Taking $N=O^p(G)$ and recalling \cite[Lemma~2.15]{PatchkoriaSandersWimmer22} we are reduced to the claim that in the $p$-group $G/O^p(G)$ we have $\varphi(\cat P(1,p))=\varphi(\cat P(G/O^p(G),p))$ which is provided by \cref{prop:DHZG-key-prop}.
\end{proof}

\begin{Rem}\label{rem:AG-gluing}
	The quotient map
	\[\begin{tikzcd} \Spc(\DHZG^c) \ar[dr,bend right=15,"\rho"'] \ar[r,"\varphi"] & \Spc(\DHZG_{\langle\unit\rangle}) \ar[d,"\rho","\cong"'] \\
		& \Spec(A(G))
	\end{tikzcd}\]
	is explicitly described in \cite[Corollary~2.42]{PatchkoriaSandersWimmer22}. In particular, for any two primes $\cat P,\cat Q \in \Spc(\DHZG^c)$, we have $\rho(\cat P)=\rho(\cat Q)$ if and only if $\cat P=\cat P(H,p)$ and $\cat Q=\cat P(K,p)$ for some prime number $p$ and subgroups $H,K\le G$ such that $H \cap K$ is a $p$-subnormal subgroup of $H$ and $K$.
\end{Rem}

\begin{Rem}
	The above results show that for the category of derived Mackey functors unitation effects a nontrivial quotient on the Balmer spectrum. In contrast, we will next show that unitation does not change the Balmer spectrum at finite heights.
\end{Rem}

\begin{Def}\label{def:chromatic-truncation}
	Let $0\le n <\infty$. Then $Y_n \coloneqq \SET{\cat C_{p,h}}{h>n}\subseteq \Spc(\SH^c)$ is a Thomason subset with an associated idempotent triangle $\en \to \Sphere \to \fn \to \Sigma \en$. Consider the canonical functor $\triv_G:\SH\to\SHG$ from \cref{not:triv} and let $\varphi\coloneqq\Spc(\triv_G):\Spc(\SHG^c) \to \Spc(\SH^c)$. We have an induced finite localization on~$\SHG$ with idempotent triangle $\triv_G(\en)\to \Sphere_G \to \triv_G(\fn) \to \Sigma\triv_G(\en)$ which corresponds to the Thomason subset
	\[
		Y_{G,n}\coloneqq\varphi^{-1}(Y_n) = \SET{\cat P(H,p,h)}{h>n}\subseteq \Spc(\SHG^c).
	\]
	We call $\SH_{\le n} \coloneqq \SH\hspace{-.3ex}|_{Y_n^\cc}$ and $\SH_{G,\le n} \coloneqq \SH_G\hspace{-.3ex}|_{Y_{G,n}^\cc}$ the \emph{chromatic truncations} of $\SH$ and $\SHG$ below height $n$. A $p$-localized version of this truncation construction is discussed in \cite[Example 6.15 and Warning 6.16]{AroneBarthelHeardSanders24pp}.
\end{Def}

\begin{Thm}\label{thm:truncation}
	Let $G$ be a finite group and $0 \le n <\infty$. Let $\SH_{G,\le n}$ denote the truncation at chromatic height $\le n$. The unitation induces a homeomorphism
	\[
		\Spc(\SH_{G,\le n}^c) \xrightarrow{\cong} \Spcunit{(\SH_{G,\le n})}.
	\]
\end{Thm}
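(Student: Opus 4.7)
The plan is to reduce the theorem to injectivity of $\varphi_{G,n}$ and then to establish injectivity by exhibiting enough compact objects of $\unitation{(\SH_{G,\le n})}^c$ to separate the primes. By \cref{thm:main-thmb}, applied to the fully faithful inclusion $\unitation{(\SH_{G,\le n})}\hookrightarrow\SH_{G,\le n}$, the map $\varphi_{G,n}$ is a strong spectral quotient with connected fibers. Since $G$ is finite, $\Spc(\SHG^c)$ is noetherian by Balmer--Sanders, hence so is its open subspace $\Spc(\SH_{G,\le n}^c)$. \Cref{rem:spectral-is-strong-ff} then promotes $\varphi_{G,n}$ to a topological quotient, and \cref{cor:spectral-homeo} reduces the theorem to showing that $\varphi_{G,n}$ is injective.

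For injectivity, my plan is to use two families of compact objects of
\[
\unitation{(\SH_{G,\le n})}^c \;=\; \thick\langle \Sphere_G \otimes \triv_G(\fn)\rangle
\]
that jointly distinguish the primes of $\Spc(\SH_{G,\le n}^c)$. First, for each prime $p$ and each $1 \le h \le n$, choose a type-$h$ finite spectrum $V(h) \in \SH^c$. Then $\triv_G(V(h))$ is a compact object of $\thick\langle\Sphere_G\rangle\subseteq\unitation{(\SH_{G,\le n})}^c$, and since $\Phi^H(\triv_G(V(h)))\simeq V(h)$ for every $H\le G$ (the $G$-action being trivial), one checks that $\triv_G(V(h))\in\cat P(H,p,h')$ if and only if $h'\le h$. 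These objects therefore separate primes of distinct chromatic heights. Second, for each $p$, Dress's theorem decomposes the $p$-local Burnside ring $A(G)_{(p)} = \End_{(\SH_{G,\le n})_{(p)}}(\unit)$ as a product of local rings indexed by $G$-conjugacy classes of $p$-perfect subgroups, with corresponding primitive idempotents $e_K \in A(G)_{(p)}\subseteq\pi_0(\unit)_{(p)}$. These idempotents split off compact summands $e_K\cdot\Sphere_G$ in $(\SH_{G,\le n})_{(p)}$, which lie in the $p$-local unitation. A direct computation using the geometric-fixed-point ring map $\Phi^H\colon A(G)\to\bbZ$, $[G/K]\mapsto |(G/K)^H|$, shows these summands separate the primes $\cat P(H,p,h)$ whose associated subgroup class differs under Dress equivalence. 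Tensoring the two families together should produce compact objects of $\unitation{(\SH_{G,\le n})}^c$ whose supports individually pin down each prime $\cat P(H,p,h)$ with $h\le n$.

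The principal obstacle will be verifying that these two families of objects actually separate \emph{all} distinct primes of $\Spc(\SH_{G,\le n}^c)$, i.e., that no residual identifications persist under $\varphi_{G,n}$ beyond those already built into the Balmer--Sanders parametrization. Concretely, one must rule out gluings among finite-height primes $\cat P(H_1,p,h)$ and $\cat P(H_2,p,h)$ with $H_1$ and $H_2$ in the same Dress class but not already identified in $\Spc(\SHG^c)$, and must confirm that the connected fibers of $\varphi_{G,n}$ guaranteed by \cref{thm:main-thmb} are in fact singletons. This will use the absence of any height-$\infty$ stratum in $\SH_{G,\le n}$ (where the genuine gluings of \cref{thm:SHG} occur) together with the explicit noetherian structure of $\Spc(\SHG^c)$, so that a finite connected fiber separated by the compact objects above collapses to a point.
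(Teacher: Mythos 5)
Your reduction of the theorem to injectivity of $\varphi_{G,n}$ (via \cref{thm:main-thmb} and \cref{cor:spectral-homeo}) matches the paper, but your injectivity argument has a genuine gap, and it sits exactly at the point you flag as the ``principal obstacle.'' The two families of compact objects you propose cannot separate Dress-equivalent subgroups at the same prime and the same finite height, and that separation is precisely the nontrivial content of the theorem. Take $G=C_p$ at the prime $p$: the only $p$-perfect subgroup is trivial, so $A(C_p)_{(p)}$ is local and your idempotent family degenerates to $\unit$ alone; and any trivial-action complex $\triv_G(V(h))$ has $\Phi^{1}\simeq\Phi^{C_p}\simeq V(h)$, so it carries no subgroup information. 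Hence neither family, nor any tensor product of the two, distinguishes $\cat P(1,p,h)$ from $\cat P(C_p,p,h)$ --- the very pair that \emph{does} get glued at $h=\infty$ by \cref{thm:SHG}, so nothing formal can decide this. (Objects of the unitation that do separate them exist, e.g.\ cones on non-idempotent Burnside-ring elements such as $[G/1]\in A(C_p)$, but computing their supports at all finite heights is essentially the Balmer--Sanders blue-shift analysis, not something your two families provide.)

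Your fallback --- ``a finite connected fiber separated by the compact objects above collapses to a point'' --- is not yet an argument: a finite connected $T_0$ space need not be a singleton (Sierpi\'nski space), so confining the fiber to the finite set $\SET{\cat P(K,p,h)}{K \text{ Dress-equivalent to } H}$ does not finish the proof. The missing input is that this set is \emph{discrete}, i.e.\ that at a fixed finite chromatic height there are no inclusions among the primes $\cat P(K,p,h)$ for varying $K$; this is \cite[Proposition~8.1]{BalmerSanders17}. Once you have that, your compact-object apparatus becomes unnecessary: since $\SH_{\le n}$ is unigenic, $\triv_G$ factors through the unitation, giving $\psi:\Spcunit{(\SH_{G,\le n})}\to\Spc(\SH_{\le n}^c)$; the fiber of $\psi\circ\varphi$ over a nonequivariant prime $\cat C$ is the finite discrete set $\SET{\cat P(K,\cat C)}{K\le G}$, and connectedness of the $\varphi$-fibers (\cref{thm:main-thmb}) forces each $\varphi$-fiber to be a singleton --- which is exactly the paper's proof. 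Two smaller points: your claim that $\Spc(\SHG^c)$ is noetherian is false (already $\Spc(\SH^c)$ is not: $\{\cat C_{p,\infty}\}$ is closed but not Thomason, so its complement is a non-quasi-compact open), but this is harmless since \cref{cor:spectral-homeo} only needs the spectral quotient property; and your use of $p$-local idempotents would in any case require the routine but unaddressed compatibility of unitation with $p$-localization (\cref{rem:algebraic-commute}).
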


\begin{proof}
	The functor $\triv_G:\SH\to\SHG$ induces a corresponding functor on truncations $\SH_{\le n} \to \SH_{G,\le n}$ which, since $\SH_{\le n}$ is unigenic, factors as 
	\[
		\SH_{\le n} \to \unitation{(\SH_{G,\le n})} \hookrightarrow \SH_{G,\le n}.
	\]
	Consider the induced maps on Balmer spectra:
	\[
		\Spc(\SH_{G,\le n}^c) \xrightarrow{\varphi} \Spcunit{(\SH_{G,\le n})} \xrightarrow{\psi} \Spc(\SH_{\le n}^c).
	\]
	For any $\cat P \in \Spc(\SH^c_{G,\le n})$, we have the inclusion
	\[
		\varphi^{-1}(\{\varphi(\cat P)\}) \subseteq \varphi^{-1}(\psi^{-1}(\{\psi(\varphi(\cat P))\}).
	\]
	Any such prime $\cat P$ is of the form $\cat P=\cat P(H,\cat C)$ for a subgroup $H \le G$ and nonequivariant prime $\cat C \in \Spc(\SH_{\le n}^c)$. Moreover, $\psi(\varphi(\cat P(H,\cat C))) = \cat C$ by \cite[Corollary~4.6]{BalmerSanders17}. \Cref{thm:main-thmb} implies that the fiber $\varphi^{-1}(\{\varphi(\cat P(H,\cat C))\})$ is a \emph{connected} subset of
	\[
		\varphi^{-1}(\psi^{-1}(\{\psi(\varphi(\cat P(H,\cat C)))\})) = \varphi^{-1}(\psi^{-1}(\{\cat C\})) = \SET{\cat P(K,\cat C)}{K \le G}
	\]
	but the latter space is discrete by \cite[Proposition~8.1]{BalmerSanders17}. We thus conclude that $\varphi^{-1}(\{\varphi(\cat P(H,\cat C))\})=\{\cat P(H,\cat C)\}$. This establishes that $\varphi$ is injective and hence is a homeomorphism since it is a spectral quotient map (\cref{cor:spectral-homeo}).
\end{proof}

Putting \cref{thm:DHZG-comp} and \cref{thm:truncation} together, we obtain:

\begin{Thm}\label{thm:SHG}
	Let $G$ be a finite group. The unitation $(\SH_G)_{\langle \unit \rangle}\hookrightarrow \SH_G$ is, on Balmer spectra, a topological quotient 
	\begin{equation}\label{eq:SHG}
		\varphi:\Spc(\SH_G^c)\to \Spc((\SH_G)_{\langle \unit \rangle}^c)
	\end{equation}
	which identifies those points at chromatic height $\infty$ which become identified in the spectrum of the Burnside ring. More precisely, two points $\cat P$ and $\cat Q$ are identified if and only if $\cat P=\cat P(H,p,\infty)$ and $\cat Q = \cat P(K,p,\infty)$ for a prime $p$ and subgroups $H,K\le G$ such that $H \cap K$ is a $p$-subnormal subgroup of $H$ and $K$.
\end{Thm}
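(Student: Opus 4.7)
The plan is to combine \cref{thm:main-thmb}, \cref{thm:truncation}, and \cref{thm:DHZG-comp} exactly as the author's remark suggests. Since the unitation inclusion $\unitation{(\SH_G)} \hookrightarrow \SH_G$ is fully faithful, \cref{thm:main-thmb} gives that $\varphi$ is a strong spectral quotient map with connected fibers; as $\Spc(\SH_G^c)$ is noetherian, \cref{rem:spectral-is-strong-ff} then upgrades this to a topological quotient map. It remains to identify the non-singleton fibers of $\varphi$.

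First I would rule out any identifications among primes of finite chromatic height. By \cref{rem:unitation-localization}, the restriction of $\varphi$ to the open subspace of height-${\le}n$ primes coincides with the unitation map for the finite localization $\SH_{G,\le n}$, and the preimage under $\varphi$ of the corresponding open subspace of $\Spc(\unitation{(\SH_G)}^c)$ is exactly this open piece of $\Spc(\SH_G^c)$. \Cref{thm:truncation} asserts that this restricted map is a homeomorphism for each $n$, so no height-${\le}n$ prime can be identified with any other prime (of finite or infinite height) under $\varphi$. Letting $n$ vary, every finite-height prime sits in its own fiber.

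The remaining identifications therefore occur only among the height-$\infty$ primes $\cat P(H,p,\infty)$. For these I would use the linearization $\SH_G \to \DHZG$ together with the induced commutative square between its unitation and that of $\SH_G$. The resulting map $a:\Spc(\DHZG^c) \to \Spc(\SH_G^c)$ has image precisely the height-$\infty$ locus, sending $\cat P(H,p) \mapsto \cat P(H,p,\infty)$ (see \cite{PatchkoriaSandersWimmer22}), and gives a commutative relation $\varphi \circ a = b \circ \varphi_{\DHZG}$, where $\varphi_{\DHZG}$ is the unitation map for $\DHZG$. For the ($\Leftarrow$) direction: \cref{thm:DHZG-comp} combined with \cref{rem:AG-gluing} states that $\varphi_{\DHZG}$ identifies $\cat P(H,p)$ and $\cat P(K,p)$ precisely when $H \cap K$ is $p$-subnormal in both $H$ and $K$; commutativity then forces $\varphi(\cat P(H,p,\infty)) = \varphi(\cat P(K,p,\infty))$. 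For the ($\Rightarrow$) direction: if $\varphi$ identifies two height-$\infty$ primes, then so does the composite $\rho_{\SH_G} = \rho \circ \varphi$ with the comparison map, which on the height-$\infty$ locus factors through $\rho_{\DHZG}$; \cref{rem:AG-gluing} then imposes the $p$-subnormality condition.

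The main obstacle I anticipate is the careful diagram-chase at height $\infty$ — specifically, pinning down the precise description of the map $a$ arising from linearization and verifying its compatibility with the comparison maps into $\Spec(A(G))$. The topological and spectral-quotient machinery is already packaged in \cref{thm:main-thmb} and \cref{thm:truncation}, so the real content is showing that the identifications in $\Spec(A(G))$ transfer in both directions to $\varphi$ at height $\infty$, and that the finite-height locus is respected by the preimage operation used to apply \cref{thm:truncation}.
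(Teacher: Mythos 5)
Your fiber analysis is correct and follows the same route as the paper: \cref{thm:truncation} together with \cref{rem:unitation-localization} disposes of the finite-height primes, and the commutative square relating the unitations of $\SH_G$ and $\DHZG$, combined with \cref{thm:DHZG-comp} and \cref{rem:AG-gluing}, gives both directions at height $\infty$. Your handling of the finite-height locus is in fact a slight streamlining: the paper treats ``finite versus finite'' and ``finite versus infinite'' separately, the latter by a specialization argument using $\cat P(K,p,\infty)\subseteq\cat P(K,p,n)$ and \cite[Proposition~8.1]{BalmerSanders17}, whereas you note that the entire fiber through a finite-height prime lies in $\varphi^{-1}(V_n)$, where $V_n$ is the open set over which $\varphi$ corestricts to the unitation map of $\SH_{G,\le n}$, so the injectivity supplied by \cref{thm:truncation} kills both cases at once. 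Do spell out the fact this rests on: the Thomason subset defining the truncation is pulled back through $\unitation{(\SH_G)}$ because $\triv_G$ factors through the unitation ($\SH$ being unigenic); that is exactly what gives $\varphi^{-1}(V_n)=Y_{G,n}^{\cc}$ and lets you apply \cref{rem:unitation-localization}. Two harmless slips: the image of $a\colon\Spc(\DHZG^c)\to\Spc(\SH_G^c)$ is not ``precisely the height-$\infty$ locus'' (the characteristic-zero primes $\cat P(H,0)$ land on the rational primes), though you only use that each $\cat P(H,p,\infty)$ is $a(\cat P(H,p))$; and in the ($\Rightarrow$) direction you should quote $\rho(\cat P(H,p,\infty))=\frakp(H,p)$ (\cite[Proposition~6.7]{BalmerSanders17}, or naturality of $\rho$ along $a$) before invoking Dress's description of $\Spec(A(G))$.

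The one genuine problem is your justification of the ``topological quotient'' clause: $\Spc(\SH_G^c)$ is \emph{not} noetherian. Already nonequivariantly, the complement of the closed point $\cat C_{p,\infty}$ in $\Spc(\SH^c)$ is open but not quasi-compact (the opens removing the type-$\ge n$ primes cover it with no finite subcover), and the same chromatic chains sit inside $\Spc(\SH_G^c)$; the target is not noetherian either, since the images of the height-$\le n$ loci form an open cover, with no finite subcover, of the complement of the height-$\infty$ image points. So \cref{rem:spectral-is-strong-ff} (which needs one of the two spaces noetherian) is not available here. What \cref{thm:main-thmb} gives you unconditionally is a strong \emph{spectral} quotient with connected fibers --- which, together with your fiber computation, already determines the target as a spectral space --- but upgrading to a topological quotient for this particular $\varphi$ requires a separate argument, for instance a verification of the weak lifting property of \cref{def:weak-lifting} so that \cref{prop:weak-lifting-is-quotient} (which has no noetherian hypothesis) applies; the paper's own proof concentrates on the fiber description and does not hand you a noetherian shortcut for this step.
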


\begin{proof}
	Let $\cat P=\cat P(H,p,n)$ and $\cat Q=\cat P(K,q,m)$ be arbitrary points of $\Spc(\SH_G^c)$. First, suppose $n$ and $m$ are finite, say with $m \le n$. Consider the commutative diagrams
	\begin{equation}\label{eq:SHG-trunc}
	\begin{tikzcd}[column sep=small]
		\SHG \ar[r] & \SH_{G,\le n} \ar[r,shift right=1.8em,phantom,"\mapsto"]& \Spc(\SHG^c)\ar[d] & \Spc(\SH_{G,\le n}^c)\ar[l]\ar[d,"\cong"]\\
		\unitation{(\SHG)} \ar[u,hook] \ar[r] & \unitation{(\SH_{G,\le n})} \ar[u,hook] & 
		\Spcunit{(\SHG)} & \Spcunit{(\SH_{G,\le n})}\ar[l,hook']
	\end{tikzcd}
	\end{equation}
	Recall from \cref{def:chromatic-truncation} that the Thomason subset defining the truncation $\SHG \to \SH_{G,\le n}$ is pulled back from $\SH$ via the functor $\triv_G : \SH\to \SH_G$. Since $\SH$ is unigenic, this functor factors through the unitation: $\SH \to \unitation{(\SHG)} \hookrightarrow \SH_G$. Hence the Thomason subset defining the truncation is pulled back through $\unitation{(\SHG)}$. This implies (\cref{rem:unitation-localization}) that $\unitation{(\SH_{G,\le n})} \cong (\unitation{(\SH_G)})_{\le n}$ and the bottom functor of \eqref{eq:SHG-trunc} is a finite localization. In particular, the bottom tt-functor in \eqref{eq:SHG-trunc} induces an embedding on Balmer spectra, as displayed on the right-hand side. On the other hand, \cref{thm:truncation} establishes that the right vertical functor induces a homeomorphism on spectra. Thus, the composite map is injective. Since our two points $\cat P$ and $\cat Q$ both arise in the top-right corner, an equality $\varphi(\cat P)=\varphi(\cat Q)$ would imply that $\cat P=\cat Q$. This establishes that two distinct points of finite chromatic height are not identified by $\varphi$.

	We may similarly show that a point of finite chromatic height is not identified with a point of infinite chromatic height. Indeed, suppose that $\varphi(\cat P(H,p,n))=\varphi(\cat P(K,q,\infty))$. Since the comparison map $\rho:\Spc(\SH_G^c)\to\Spec(A(G))$ factors through $\varphi$ we may assume $p=q$ without loss of generality (see \cite[Proposition~6.7 and Theorem~3.6]{BalmerSanders17}). Then, since $\cat P(K,p,\infty) \subseteq \cat P(K,p,n)$ we would have $\varphi(\cat P(H,p,n)) \subseteq \varphi(\cat P(K,p,n))$ and we can again contemplate the diagram above. Note that the inclusion $\cat P(H,p,n)\subseteq \cat P(K,p,n)$ does \emph{not} hold in $\SH_{G,\le n}$ (see \cite[Proposition~8.1]{BalmerSanders17}). Hence, since the composite of the diagram is a homeomorphism followed by an embedding, we cannot have $\varphi(\cat P(H,p,n))\subseteq \varphi(\cat P(K,p,n))$. We thus have a contradiction.

	Finally, suppose $n=m=\infty$. Consider the diagram
	\[ \begin{tikzcd}
		\SH_G \ar[r] & \DHZG \\
		(\SH_G)_{\langle \unit \rangle} \ar[u,hook] \ar[r] & (\DHZG)_{\langle \unit \rangle} \ar[u,hook]
	\end{tikzcd}\]
	Applying the Balmer spectrum, we have 
	\[\begin{tikzcd}
		\Spc(\SH_G^c) \ar[d,"\varphi"] \ar[dd,start anchor=south west,end anchor=north west,bend right=35,"\rho"'] & \Spc(\DHZG^c) \ar[l] \ar[d] \\
		\Spc((\SH_G)_{\langle \unit \rangle}^c) \ar[d,"\rho"] & \Spc((\DHZG)_{\langle \unit \rangle}^c)\ar[l] \ar[dl,bend left=20,"\rho"',"\cong"]\\
		\Spec(A(G)).
	\end{tikzcd}\]
	where the bottom-right arrow is a homeomorphism by \cref{thm:DHZG-comp}. Since the primes $\cat P(H,p,\infty)$ and $\cat P(K,q,\infty)$ are the image of unique primes in the top-right (see \cite[Corollary~2.19 and Corollary~2.40]{PatchkoriaSandersWimmer22}), it follows from the diagram that $\varphi(\cat P(H,p,\infty))=\varphi(\cat P(K,q,\infty))$ if and only if $\rho(\cat P(H,p,\infty))=\rho(\cat P(K,q,\infty))$. By \cite[Proposition~6.7]{BalmerSanders17}, this is the case if and only if $\mathfrak p(H,p) = \mathfrak p(K,q)$ in $\Spec(A(G))$ and by Dress' Theorem (\cite{Dress69} and \cite[Theorem~3.6]{BalmerSanders17}) this is the case if and only if $p=q$ and $H\cap K$ is a $p$-subnormal subgroup of both $H$ and $K$; cf.~\cref{rem:AG-gluing}.
\end{proof}

\begin{Exa}
	Let $G=C_p$ be the cyclic group of order $p$. In this case, unitation glues together a single pair of points, namely the points at chromatic height $\infty$ at the prime~$p$ for the two subgroups. See \cref{fig:SHCp} on page \pageref{fig:SHCp}.
\end{Exa}

\begin{Exa}
	Let $G$ be a finite $p$-group and let $\SH_{G,(p)}$ be the $p$-local $G$-equivariant category. The unitation $\Spc(\SH_{G,(p)}^c) \to \Spc((\SH_{G,(p)})_{\langle \unit \rangle}^c)$ is the topological quotient which glues together all the points at chromatic height $\infty$.
\end{Exa}

\begin{Rem}
	We may illustrate the relation between the above results $p$-locally for $G=C_p$ as follows:
\[\begin{tikzcd}[row sep=2ex,column sep=1.0ex]
&[-1.5ex]{\tikzbullet{green}}\ar[rr,shift right=0.5ex,-] &[-0.75ex]&[-0.75ex]{\tikzbullet{red}}&[-1.5ex]&&&[-1.5ex]{} &[-1.0ex] |[yshift=-0.5ex]|{\smash{\tikztwocircle{green}{red}} } &[-1.0ex]{}&[-1.5ex]\\[-0.5ex]
&{} \ar[u,shift right=.820ex,"\raisebox{+0.75ex}{$\vdots$}",draw=none]  &&{}\ar[u,shift right=.820ex,"\raisebox{+1.0ex}{$\vdots$}",draw=none]&&&&{} \ar[ur,yshift=-0.85ex,xshift=1.0ex,shift right=.820ex,"\raisebox{+0.0ex}{$\iddots$}",draw=none]  &&{}\ar[ul,yshift=1.5ex,shift right=1.820ex,"\raisebox{0.0ex}{$\ddots$}",draw=none]  \\[-1.0ex]
&{\tikzbullet{teal}} \ar[u,start anchor=north,end anchor=real center,-]  &&{\tikzbullet{orange}}\ar[ull,start anchor=north west, end anchor=real east,-]\ar[u,start anchor=north,end anchor=real center,-]&&&&{\tikzbullet{teal}} \ar[u,start anchor=north,end anchor=real center,-]  &&{\tikzbullet{orange}} \ar[ull,start anchor=north west, end anchor=real east,-] \ar[u,start anchor=north,end anchor=real center,-]  \\
&{\tikzbullet{teal}} \ar[u,-]  &&{\tikzbullet{orange}}\ar[ull,start anchor=north west, end anchor=south east,-]\ar[u,-] &&&&{\tikzbullet{teal}} \ar[u,-]  &&{\tikzbullet{orange}}\ar[ull,start anchor=north west, end anchor=south east,-]\ar[u,-] \\
&{\tikzbullet{teal}} \ar[u,-]  &&{\tikzbullet{orange}}\ar[ull,start anchor=north west, end anchor=south east,-]\ar[u,-]&\ar[rr,two heads]& &{}&{\tikzbullet{teal}} \ar[u,-]  &&{\tikzbullet{orange}}\ar[ull,start anchor=north west, end anchor=south east,-]\ar[u,-] \\
	{\tikzbullet{blue}} \ar[ur,shift right=0.25ex,-]&  &{}&{}&{\tikzbullet{yellow}}\ar[ul,shift left=0.25ex,-] \ar[ulll,start anchor=west,shift left=0.20ex,-] &{}& {\tikzbullet{blue}} \ar[ur,shift right=0.25ex,-] &{}&{}&{}& {\tikzbullet{yellow}} \ar[ul,shift left=0.25ex,-] \ar[ulll,start anchor=west,shift left=0.20ex,-] \\
\\
													&                                                   & \ar[uu,hook] &&&&&&\ar[uu,twoheadleftarrow]\\[-2ex]
													&{\tikzbullet{green}} \ar[rr,shift right=0.50ex,-]  &&{\tikzbullet{red}} &{}\ar[rr,two heads,shift right=2.0ex]&{}&{}&{}&{\tikztwocircle{green}{red}} \\
{\tikzbullet{blue}} \ar[ur,shift right=0.25ex,-]&  &&&{\tikzbullet{yellow}}\ar[ul,shift left=0.25ex,-] \ar[ulll,start anchor=west,shift left=0.20ex,-] &&{\tikzbullet{blue}} \ar[urr,shift right=0.25ex,-] &&&& {\tikzbullet{yellow}}\ar[ull,shift left=0.25ex,-]
\end{tikzcd}\]
	In the bottom-left we have the spectrum of the $p$-localization of $\DHZCp$ and in the bottom-right we have the spectrum of the $p$-localization of $A(C_p)$. In the top-left we have the spectrum of the $p$-localization of $\SH_{C_p}$ and in the top-right we have the spectrum of its unitation. The two maps to the bottom-right are the comparison maps, the top map is unitation, and the left map is the embedding induced by the canonical functor $\SH_{C_p}\to\DHZCp$; cf.~\cref{rem:linear-embedding}. Note that the bottom map can also be regarded as the unitation of the $p$-localization of $\DHZCp$.
\end{Rem}

\begin{Rem}\label{rem:DHZG-not-burnside}
	In light of \cref{thm:DHZG-comp}, it is worth emphasizing that the unitation $\unitation{\DHZG}$ is not tt-equivalent to $\Der(A(G))$. Indeed,
	\[ 
		\End_{\DHZG}^{-\bullet}(\unit)= \pi_\bullet(\lambda^G(\triv_G(\HZ))) = \pi_\bullet(\HZ \wedge \lambda^G(\Sphere_G)) = \HZ_\bullet(\lambda^G(\Sphere_G)).
	\]
	By the tom Dieck splitting, $\lambda^G(\Sphere_G)$ contains a copy of the classifying space $BG$ and hence the homology is not concentrated in degree 0. Indeed, recall from \cref{exa:HRG} that we have a tt-equivalence
	\[
		\DHZG_{\langle \unit \rangle} \cong \Ho(\HZ \wedge \lambda^G(\Sphere_G))\text{-Mod}_{\Sp}) \eqqcolon \Der(\HZ\wedge \lambda^G(\Sphere_G)).
	\]
	On the other hand, we may also consider the derived category $\Der(\Mack(G;\mathbb{Z}))$ of the abelian category of $G$-Mackey functors. As established in \cite[Theorem~5.10]{PatchkoriaSandersWimmer22},
	\[
		\Der(\Mack(G;\mathbb{Z}))\cong \Ho(\HA_G\text{-Mod}_{\Sp_G})
	\]
	where $\HA_G$ is the Eilenberg--MacLane $G$-spectrum associated to the Burnside ring $G$-Mackey functor $\bbA_G$. Again, by \cref{rem:equiv-monadic}, its unitation $\Der(\Mack(G;Z))_{\langle \unit \rangle}$ is equivalent to modules over the ordinary Eilenberg--MacLane spectrum $\lambda^G(\HA_G)=\rmH \hspace{-0.2em}A(G)$ associated to the Burnside ring. That is, the unitation
	\[
		\unitation{\Der(\Mack(G;\mathbb{Z}))} \cong \Der(A(G))
	\]
	is equivalent to the derived category of the Burnside ring $A(G)$. Thus, we are in the curious situation where $\Der(\HZG)$ and $\Der(\HA_G)$ have inequivalent unitations, and yet their unitations have the same Balmer spectrum.
\end{Rem}

\begin{Rem}
	On the other hand, if we take rational coefficients then
	\[
		\unitation{\DHQG} \cong \unitation{\Der(\Mack(G;\mathbb{Q}))} \cong \Der(A(G)_{\mathbb{Q}}).
	\]
	Indeed, $\lambda^G(\triv_G(\HQ)) \to \lambda^G(\rmH(\bbA_G \otimes \mathbb{Q}))$ is an equivalence essentially because the cohomology of a finite group is torsion.
\end{Rem}

\begin{Rem}\label{rem:inflation-in-SHG}
	As we already mentioned in \cref{exa:SHG-inflation}, and as the above results also demonstrate, for a normal subgroup $N \lenormal G$, the inflation functor $\SHGN\to \SHG$ is not fully faithful and hence does not induce an equivalence $\SHGN \to (\SHG)_{\langle \cat F[{\supseteq}N]\rangle}$. The inflation functor is, however, faithful (since it is split by geometric fixed points, for example). Hence $\Spc(\SHG^c)\to \Spc(\SHGN^c)$ is a spectral quotient. For example, taking $N=G$, the map $\Spc(\SHG^c) \to \Spc(\SH^c)$ induced by $\triv_G:\SH\to\SHG$ is a spectral quotient map. Note that the fibers are disconnected.  Indeed, each fiber is a discrete space with one point for each conjugacy class of subgroups of $G$. This lies at the heart of our proof of \cref{thm:truncation}. In any case, this is another example showing that faithful functors need not have connected fibers.
\end{Rem}

\begin{Rem}\label{rem:unitation-different}
	Recall the sequence of tt-functors displayed in \eqref{eq:chain}. In summary, we have determined the unitation of $\SHG$ and seen that it sits properly between $\Spc(\SHG^c)$ and $\Spec(A(G)$. In contrast, we showed that the unitation of $\DHZG$ is homeomorphic to $\Spec(A(G))$ while the unitation of $\Der(\HA_G)$ is actually equivalent to $\Der(A(G))$ as tt-categories. On the other hand, the unitation of $\DPerm(G;k)$ is equivalent to $\Der(k)$ essentially because inflation in this setting is fully faithful. Finally $\DRep(G;k)$ coincides with its unitation (categorically for $p$-groups and on spectra in general) essentially because its comparison map is a homeomorphism. Note that each of these examples exhibits qualitatively different behaviour.
\end{Rem}

\section{Twisted cohomology and the Picard group}\label{sec:twisted}

\begin{Rem}
	The most general comparison map
	\[
		\rho_u : \SpcT \to \Spec(R_u)
	\]
	constructed by Balmer in \cite{Balmer10b} depends on the choice of an element $u \in \Pic(\cT)$ in the Picard group. The target is the homogeneous spectrum of the $\bbZ$-graded ring $R_u = \Hom_{\cT}(\unit,u^{\otimes \bullet})$. For $u\neq \unit$, this map $\rho_u$ does not (necessarily) factor through the unitation of $\cT$. Instead we may consider the concentration of $\cT$ at the subgroup of the Picard group generated by $u$: $\concentration{\cT}{\cat G} \hookrightarrow \cT$ for $\cat G=\SET{u^{\otimes n}}{n\in\bbZ}$. For simplicity, we'll just denote this by $\concentration{\cT}{u}$. We then have a commutative diagram
	\[\begin{tikzcd}
		\Spc(\cTc) \ar[d,"\varphi"'] \ar[dr,bend left,"\rho_u"] & \\
		\Spc(\concentration{\cT}{u}^c) \ar[r,"\rho_u"] & \Spec(R_u).
	\end{tikzcd}\]
	Moreover, note that all of these comparison maps factor through the concentration of $\cT$ at the Picard group: $\concentration{\cT}{\Pic(\cT)}$. For categories that are not generated by their Picard group, it is then of interest to consider the maps
	\[
		\Spc(\cTc) \twoheadrightarrow \Spc(\concentration{\cT}{\Pic(\cT)}^c) \twoheadrightarrow \Spcunit{\cT}.
	\]
\end{Rem}

\begin{Exa}\label{exa:ui-invertibles}
	For $\cT=\DPerm(G;k)$, we have $\unitation{\DPerm(G;k)} \simeq \Der(k)$ by \cref{exa:DPerm-inflation} and the usual graded comparison map provides very little information. However, in their computation of the spectrum of $\cT$ in \cite{BalmerGallauer23pp}, Balmer and Gallauer reduce to the case of an elementary abelian $p$-group $G=E$ and then use a multi-graded generalization of $\rho_u$. More precisely, let $N_1,\ldots, N_\ell$ be the maximal subgroups of $E$. For each $1 \le i \le \ell$, there is an associated invertible object $u_i \in \Pic(\cT)$. Also let $u_0 = \Sigma \unit$. They construct a continuous map
	\[
		\rho :\Spc(\cTc) \to \Spec(H^{\bullet,\bullet}(E))
	\]
	to the homogeneous spectrum of the ``twisted cohomology'' ring
	\[
		H^{\bullet,\bullet}(E) \coloneqq \Hom_{\cT}(\unit,u_0^{\otimes q_0}\otimes u_1^{\otimes q_1} \otimes u_2^{\otimes q_2}\otimes \cdots u_\ell^{\otimes q_\ell})
	\]
	which is graded over the monoid $\bbZ \times \bbN^{\ell}$. In general, the functoriality of this graded ring under a tt-functor involves a grading shift which depends on the particular tt-functor. However, this detail doesn't arise for a fully faithful functor. In particular, we may consider the concentration of $\cT$ at the subgroup of the Picard group generated by the $u_i$: $\cat G=\langle u_0, u_1, \ldots, u_\ell\rangle = \{\otimes_{i=0}^\ell u_i^{\otimes q_i} \mid q_i \in \bbZ \text{ for } 0\le i\le \ell\}$. We have a factorization
	\[\begin{tikzcd}
		\Spc(\cTc) \ar[d,"\varphi"'] \ar[dr,bend left,"\rho"] & \\
		\SpcTconc{\cat G} \ar[r,"\rho"] & \Spec(H^{\bullet,\bullet}(E)).
	\end{tikzcd}\]
	The top $\rho$ is an open embedding by \cite[Corollary~15.6]{BalmerGallauer23pp}. Hence the vertical~$\varphi$ is injective. Since it is a spectral quotient by \cref{thm:faithful-are-quotient}, we conclude that  $\varphi$ is a homeomorphism. It follows that any intermediate concentration also induces a homeomorphism. For example, the concentration $\Tconc{\Pic(\cT)} \hookrightarrow \cT$ to the entire Picard group induces a homeomorphism on Balmer spectra. However, this is not particularly interesting since Balmer and Gallauer prove that~$\cat T=\Tconc{\Pic(\cT)}$ in this example. Nevertheless, the above discussion provides some ideas that could be of interest in other examples.
\end{Exa}

\begin{Rem}
	Another approach to understanding the concentration $\concentration{\cT}{\cat G}$ at a subgroup $\cat G\le \Pic(\cT)$ is via the comparison map 
	\[
		\Spc(\cT^c) \to \Spec(\cat R_{\cat G})
	\]
	constructed by Dell'Ambrogio and Stevenson in \cite{DellAmbrogioStevenson14}. In their terminology, the tt-subcategory~$\thick\langle \cat G\rangle$ is a central 2-ring of~$\cat T$ and $\Spec(\cat R_{\cat G})$ is its associated Zariski spectrum. By construction, this map factors through $\Spc(\cT^c)\to\Spc(\concentration{\cT}{\cat G}^c)$.
\end{Rem}

\begin{Rem}
	An expansive and general investigation of $\SpcT \twoheadrightarrow \SpcTconc{\Pic(\cT)}$ will not be provided in this work, but let us give the following result:
\end{Rem}

\begin{Prop}\label{prop:directed-subgroups}
	Let $\cT$ be a rigidly-compactly generated tt-category and let $\cat H$ be a directed poset of subgroups of the Picard group $\Pic(\cT)$. For $G\coloneqq \bigcup_{H \in \cat H} H$, we have $\SpcTconc{G} \cong \lim_{H \in \cat H} \SpcTconc{H}$.
\end{Prop}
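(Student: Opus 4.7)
The plan is to reduce the claim to the standard fact that $\Spc(-)$ converts filtered colimits of essentially small rigid tt-categories into cofiltered limits of spectral spaces, which in this concrete setting we can also argue by hand in a few lines.

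The first step is to identify the compact objects:
\[
\Tconc{G}^c \;=\; \thick\langle G\rangle \;=\; \bigcup_{H \in \cat H} \thick\langle H\rangle \;=\; \bigcup_{H \in \cat H}\Tconc{H}^c,
\]
as a filtered union of rigid tt-subcategories of $\cT^c$. The inclusion $\supseteq$ is immediate, and for $\subseteq$ the right-hand side is a thick subcategory containing $G$: any morphism, cone, or retract involves only finitely many objects, which by directedness of $\cat H$ all lie in some common $\thick\langle H\rangle$. For $H \subseteq H'$ in $\cat H$, the fully faithful inclusions $\Tconc{H}^c \hookrightarrow \Tconc{H'}^c \hookrightarrow \Tconc{G}^c$ then induce a compatible family of spectral maps $\SpcTconc{G} \to \SpcTconc{H'} \to \SpcTconc{H}$, hence a canonical spectral map
\[
\Phi \colon \SpcTconc{G} \;\too\; \lim_{H \in \cat H} \SpcTconc{H}.
\]

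The second step constructs a set-theoretic inverse. A point of the limit is a family $\{\cat P_H\}_{H \in \cat H}$ of prime tt-ideals with $\cat P_H = \cat P_{H'} \cap \Tconc{H}^c$ whenever $H \subseteq H'$. Set $\cat P \coloneqq \bigcup_{H \in \cat H} \cat P_H \subseteq \Tconc{G}^c$. Using the filtered-union description of $\Tconc{G}^c$ from Step~1, one checks directly that $\cat P$ is a proper thick tensor-ideal: every finite configuration of objects of $\Tconc{G}^c$ lies in some single $\Tconc{H}^c$, and the corresponding verification then takes place inside $\cat P_H$. Similarly, if $x \otimes y \in \cat P$, choose $H$ with $x, y \in \Tconc{H}^c$; then $x \otimes y \in \cat P_H$, so $x \in \cat P_H \subseteq \cat P$ or $y \in \cat P_H \subseteq \cat P$. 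So $\cat P$ is prime. Compatibility $\cat P \cap \Tconc{H}^c = \cat P_H$ is built into the definition, giving an inverse to $\Phi$ on underlying sets.

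For the third step, we promote $\Phi$ to a homeomorphism of spectral spaces. The Thomason closed subsets $\supp(x)$ for $x \in \Tconc{G}^c$ form a basis of closed sets for $\SpcTconc{G}$, and by Step~1 any such $x$ lies in $\Tconc{H}^c$ for some $H \in \cat H$. For that $H$, the identity
\[
\cat P \in \supp_{\Tconc{G}}(x) \;\;\Longleftrightarrow\;\; x \notin \cat P \;\;\Longleftrightarrow\;\; x \notin \cat P \cap \Tconc{H}^c \;\;\Longleftrightarrow\;\; \Phi(\cat P)_H \in \supp_{\Tconc{H}}(x)
\]
shows that $\supp_{\Tconc{G}}(x)$ is the $\Phi$-preimage of the basic closed $p_H^{-1}(\supp_{\Tconc{H}}(x))$ in the limit, where $p_H$ is the projection from the limit. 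This simultaneously exhibits $\Phi$ as continuous, as a closed embedding onto its image, and as surjective (since every basic closed subset of the limit has this form). So $\Phi$ is a homeomorphism.

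The main point at which one must be careful is Step~2: one needs directedness of $\cat H$ to reduce finite constructions in $\Tconc{G}^c$ to a single stage $\Tconc{H}^c$, both when verifying $\thick\langle G\rangle = \bigcup_H \thick\langle H\rangle$ and when checking that the assembled $\cat P$ is a prime tt-ideal. Everything else is formal from the description of the Balmer spectrum on a filtered union of tt-categories.
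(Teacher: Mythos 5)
Your proof is correct, and its decisive step is the same as the paper's: using directedness of $\cat H$ to show $\Tconc{G}^c=\thick\langle G\rangle=\bigcup_{H\in\cat H}\Tconc{H}^c$, since any compact object lies in $\thick\langle u_1,\ldots,u_n\rangle$ for finitely many $u_i\in G$, all contained in a single $H$. Where you diverge is in what you do with this: the paper upgrades the statement to the level of the big categories (observing that $\bigcup_H\Tconc{H}$ is localizing, so $\Tconc{G}$ is the filtered colimit of the $\Tconc{H}$) and then simply invokes Gallauer's general result that $\Spc$ sends filtered colimits of essentially small tt-categories to cofiltered limits of spectral spaces, whereas you reprove that comparison by hand in this special case: assembling a prime of $\Tconc{G}^c$ as the union of a compatible family $(\cat P_H)_H$, checking it is a proper prime tt-ideal stagewise by directedness, and matching the topologies via $\supp_{\Tconc{G}}(x)=\Phi^{-1}\bigl(p_H^{-1}(\supp_{\Tconc{H}}(x))\bigr)$ for $x\in\Tconc{H}^c$, which, together with bijectivity, shows $\Phi$ is a closed continuous bijection and hence a homeomorphism (one should note the limit is taken in spectral spaces, but the forgetful functor to topological spaces creates such limits, so this is harmless). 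The trade-off is the usual one: the citation route is shorter and works verbatim for any filtered diagram of tt-subcategories, while your argument is self-contained, works directly with compacts (which is all $\Spc$ sees, so the localizing-subcategory step of the paper is not needed), and makes explicit how points of the limit correspond to primes of the union.
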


\begin{proof}
	The assignment $H \mapsto \concentration{\cT}{H}$ forms a (strict) diagram of tt-categories and (fully faithful) tt-functors indexed on the directed post~$\cat H$. We claim that 
	\begin{equation}\label{eq:filtered}
		\Tconc{G} = \bigcup_{H \in \cat H}\Tconc{H}
	\end{equation}
	so that $\Tconc{G}$ is the colimit of the diagram. The proposition will then follow from \cite[Proposition~8.2]{Gallauer18}. First observe that the right-hand side of \eqref{eq:filtered} is a localizing subcategory of $\cT$. It then follows that $\Loc\langle \bigcup_{H \in \cat H} \Tconc{H}^c\rangle = \bigcup_{H \in \cat H} \Tconc{H}$ and thus it suffices to establish that
	\begin{equation}\label{eq:filtered2}
		\Tconc{G}^c \subseteq \bigcup_{H \in \cat H}\Tconc{H}^c.
	\end{equation}
	If $x \in \Tconc{G}^c = \thick\langle G\rangle$ then $x \in \thick\langle u_1,\ldots,u_n\rangle$ for some collection $u_1,\ldots,u_n \in G$. Since $G=\bigcup_{H \in \cat H} H$ and $\cat H$ is directed under inclusion, there exists an $H \in \cat H$ which contains all of these $u_i$. This establishes \eqref{eq:filtered2} and the proof is complete.
\end{proof}

\begin{Exa}
	The theorem applies to $G=\Pic(\cT)$ and $\cat H$ the family of finitely generated subgroups. 
\end{Exa}

\begin{Exa}
	We can also take $G$ to be the torsion subgroup of $\Pic(\cT)$ and $\cat H$ the collection of finite subgroups of $\Pic(\cT)$. Just note that since $\Pic(\cT)$ is abelian, the subgroup $\langle H \cup K\rangle=HK$ is finite for any two finite subgroups $H,K \le \Pic(\cT)$.
\end{Exa}

\begin{Rem}
	We note in passing that since the functors in the filtered diagram are fully faithful, it follows from \cref{prop:cofiltered} that the canonical maps $\smash{\SpcTconc{G}} \to \smash{\SpcTconc{H}}$ are spectral quotient maps --- but of course we already know this directly since these maps are induced by fully faithful functors.
\end{Rem}

\section{Local unigenicity\footnote{See \cite{ArndtLappe14}.}}\label{sec:locally-unigenic}

We saw in \cref{sec:equivariant} that a fully faithful functor can induce a homeomorphism on Balmer spectra without being an equivalence (\cref{rem:not-p-group-ff}). We now investigate this phenomenon more closely.

\begin{Def}\label{def:locally-unigenic}
	We say that $\cT$ is \emph{locally unigenic} if the local category $\cT_{\cat P}\coloneqq \cT|_{\gen(\cat P)}$ is unigenic for each $\cat P\in\Spc(\cTc)$.
\end{Def}

\begin{Exa}
	For any quasi-compact and quasi-separated scheme $X$, the derived category $\Der(X)$ is locally unigenic.
\end{Exa}

\begin{Prop}\label{prop:homeo}
	Let $f^*:\cT \to \cS$ be a fully faithful functor such that $\varphi:\Spc(\cSc)\to \Spc(\cTc)$ is a homeomorphism. Assume $\Spc(\cSc)$ is noetherian. If $\cS$ is locally unigenic then $f^*$ is an equivalence.
\end{Prop}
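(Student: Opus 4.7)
By \cref{rem:equiv-cons}, it suffices to show $\thick\langle f^*(\cTc)\rangle = \cSc$, so the strategy is first to establish a local equivalence at each prime and then globalize via the noetherian hypothesis.

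For the local step, I would fix $\cat Q \in \Spc(\cSc)$ with $\cat P \coloneqq \varphi(\cat Q)$. Since $\varphi$ is a homeomorphism, $\varphi^{-1}(\gen(\cat P)) = \gen(\cat Q)$, so \cref{rem:corestriction-is-fully-faithful} supplies a fully faithful corestriction $\cT_{\cat P} \to \cS_{\cat Q}$. The target is unigenic by hypothesis, so it equals its own unitation. Pre-composing with the fully faithful inclusion $\unitation{\cT_{\cat P}} \hookrightarrow \cT_{\cat P}$ and applying \cref{exa:unigenic-fully-faithful} (whose source $\unitation{\cT_{\cat P}}$ is unigenic by definition), the composite $\unitation{\cT_{\cat P}} \hookrightarrow \cS_{\cat Q}$ is an equivalence. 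This forces $\unitation{\cT_{\cat P}} \hookrightarrow \cT_{\cat P}$ to be essentially surjective, whence $\cT_{\cat P} = \unitation{\cT_{\cat P}} \simeq \cS_{\cat Q}$.

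For the global step, I would show every $s \in \cSc$ lies in $\thick\langle f^*(\cTc)\rangle$ by noetherian induction on $\supp(s) = Z$. The case $Z = \emptyset$ is trivial. Otherwise pick a generic point $\cat Q$ of $Z$. By the local step there exists $t \in \cTc$ with $f^*(t)_{\cat Q} \cong s_{\cat Q}$ in $\cS_{\cat Q}$. Because $\Spc(\cSc)$ is noetherian, $Y \coloneqq \gen(\cat Q)^{\cc}$ is Thomason closed, so the Verdier quotient $\cSc / \cSc_Y$ models (up to idempotent completion) the compacts in $\cS_{\cat Q}$, and the local isomorphism lifts to a roof $s \xleftarrow{\alpha} u \xrightarrow{\beta} f^*(t)$ in $\cSc$ with $\cone(\alpha), \cone(\beta) \in \cSc_Y$. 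The triangles $u \to s \to \cone(\alpha)$ and $u \to f^*(t) \to \cone(\beta)$ then yield $s \in \thick\langle f^*(t), \cone(\alpha), \cone(\beta)\rangle$, and the inductive hypothesis completes the argument provided the cones can be arranged to have supports strictly contained in $Z$.

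The principal obstacle is this last support-control: a~priori the cones have support only in $Y$, not in $Z$. To remedy this, one works throughout inside the thick tt-ideal $\cSc_Z$, lifting the local isomorphism via a roof whose mediating object $u$ also has support contained in $Z$; this can be arranged, for instance, by replacing $t$ and $u$ by tensor products with a compact generator $z \in \cTc$ of $\cTc_Z$ as a thick tensor-ideal and verifying that the local isomorphism at $\cat Q$ is preserved (using that $z$ becomes $\otimes$-invertible at $\cat Q$ on $\cSc_Z$ via the local equivalence). Then $\supp(\cone(\alpha)), \supp(\cone(\beta)) \subseteq Z \cap Y \subsetneq Z$, where the strict inclusion comes from the genericity of $\cat Q$ in $Z$. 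The noetherian hypothesis is essential both for $Y$ being Thomason (so the recollement is well-behaved) and for the induction on Thomason closed subsets of $\Spc(\cSc)$ to terminate.
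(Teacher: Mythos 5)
Your reduction (via \cref{rem:equiv-cons}) and your local step are correct and essentially coincide with the first half of the paper's proof: for each $\cat Q\in\Spc(\cSc)$ the corestriction $\cT_{\varphi(\cat Q)}\to\cS_{\cat Q}$ is fully faithful, and since $\cS_{\cat Q}$ is unigenic it is an equivalence. The gap is in your global step, and the support-control problem you flag at the end is not a technicality but the crux; your proposed repair does not work. If $z\in\cTc$ has support $Z$, then $z_{\cat Q}$ is a nonzero compact supported on the unique closed point of the local space $\gen(\cat Q)$, and tensoring with it is \emph{not} invertible there, not even on objects supported at that closed point (e.g.\ $\cone(x)\otimes\cone(x)\cong\cone(x)\oplus\Sigma\cone(x)$ in $\Der(k[x]_{(x)})$). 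Consequently, after replacing $u$ and $t$ by $u\otimes f^*(z)$ and $t\otimes z$, the leg towards $s$ compares $u_{\cat Q}\otimes f^*(z)_{\cat Q}$ with $s_{\cat Q}$ and is no longer an isomorphism at $\cat Q$; so there is no reason the cones avoid $\cat Q$, hence no reason their supports are proper subsets of $Z$, and the induction does not close. Note also that $\thick\langle f^*(\cTc)\rangle$ is not a tensor-ideal, so you cannot fall back on the local-to-global principle for ideal membership; and a smaller wrinkle you ignore is that the compacts of $\cS_{\cat Q}$ are only the idempotent completion of $\cSc/\cSc_Y$, so the local step merely gives $f^*(t)_{\cat Q}\cong s_{\cat Q}\oplus w$.

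The paper closes the argument differently, and this is exactly where the noetherian hypothesis earns its keep: instead of proving $\thick\langle f^*(\cTc)\rangle=\cSc$ by an induction on supports of compact objects, it proves the equivalent statement that $f_*$ is conservative by working with \emph{big} objects. If $f_*(s)=0$ for $s\in\cS$, then the commutative square relating $f^*\dashv f_*$ with the corestrictions of \cref{rem:corestriction-is-fully-faithful} and the local equivalences $\cT_{\varphi(\cat Q)}\xrightarrow{\sim}\cS_{\cat Q}$ yields $\altmathbb{f}_{\gen(\cat Q)^{\cc}}\otimes s=0$, i.e.\ $\cat Q\notin\Supp(s)$ for the Balmer--Favi support, for every $\cat Q\in\Spc(\cSc)$; the detection property of $\Supp$ over a noetherian spectrum \cite[Theorem~3.22]{BarthelHeardSanders23a} then forces $s=0$. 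If you want to salvage a compact-level induction you would need a genuinely new mechanism for controlling $\supp(u)$ in your roofs; as written, the proof has a gap.
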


\begin{proof}
	It suffices to establish that the right adjoint $f_*$ is conservative (\cref{rem:equiv-cons}). To this end, consider any $\cat P \in \SpcS$. Since $\varphi$ is a homemorphism, we have
	\begin{equation}\label{eq:going-up}
		\varphi^{-1}(\gen(\varphi(\cat P)))= \gen(\cat P)
	\end{equation}
	and the associated corestriction
	\begin{equation}\label{eq:corestrict-blah2}
		\cT_{\varphi(\cat P)} = \cT|_{\gen(\varphi(\cat P))} \to \cS|_{\gen(\cat P)} = \cS_{\cat P}
	\end{equation}
	is fully faithful by \cref{rem:corestriction-is-fully-faithful}. Moreover, since $\cS_{\cat P}$ is unigenic, its right adjoint is conservative. Thus, \eqref{eq:corestrict-blah2} is an equivalence. Consider the commutative diagram
	\[\begin{tikzcd}
		\cT\ar[d,shift right] \ar[r,hook,"f^*"] & \cS \ar[d, shift right]\\
		\cT_{\varphi(\cat P)}\ar[u,shift right,hook] \ar[r,"\cong"'] & \cS_{\cat P}. \ar[u, hook, shift right]
	\end{tikzcd}\]
	A diagram chase shows that if $f_*(s)=0$ then $\altmathbb{f}_{\gen(\cat P)^\cc}\otimes s=0$ so that $\Supp(s) \subseteq \gen(\cat P)^\cc$ where $\Supp$ denotes the Balmer--Favi support \cite{BalmerFavi11,BarthelHeardSanders23a}. In particular, $\cat P \not\in\Supp(s)$. Since this is true for all $\cat P \in \SpcS$, we conclude that $s=0$ since the Balmer--Favi support has the detection property when $\SpcS$ is noetherian by \cite[Theorem~3.22]{BarthelHeardSanders23a}.
\end{proof}

\begin{Cor}\label{cor:homeo-unit}
	If $\Spc(\cTc)$ is noetherian and unitation $\unitation{\cT}\hookrightarrow \cT$ induces a homeomorphism~$\SpcT \smash{\xrightarrow{\sim}} \Spc(\unitation{\cT}^c)$ then $\cT$ is unigenic if and only if $\cT$ is locally unigenic.
\end{Cor}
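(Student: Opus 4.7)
The plan is to reduce both directions to already-established facts, so there will be no real obstacle here. The forward implication ``$\cT$ unigenic $\Rightarrow$ $\cT$ locally unigenic'' is essentially immediate: if $\unitation{\cT}=\cT$ then for each prime $\cat P\in\Spc(\cTc)$ the local category $\cT_{\cat P}=\cT|_{\gen(\cat P)}$ is a finite localization of the unigenic category $\cT$, hence is itself unigenic by \cref{exa:finite-loc-is-unigenic}. So the content is entirely in the reverse direction.

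For the reverse implication, I would simply apply \cref{prop:homeo} to the fully faithful geometric functor given by the unitation inclusion
\[
f^*\colon \unitation{\cT}\hookrightarrow \cT.
\]
By hypothesis the induced map on Balmer spectra is a homeomorphism, and $\Spc(\cSc)=\Spc(\cTc)$ is noetherian. The remaining hypothesis of \cref{prop:homeo} is that the target $\cS=\cT$ be locally unigenic, which is precisely what we are assuming in this direction. Thus \cref{prop:homeo} forces $f^*$ to be an equivalence, i.e., $\unitation{\cT}\simeq \cT$, so $\cT$ is unigenic.

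Note that the noetherian assumption enters only through \cref{prop:homeo} (where it is used so that the Balmer--Favi support detects vanishing), and the homeomorphism hypothesis is used both to verify the hypothesis of \cref{prop:homeo} and, implicitly, to transfer noetherianness between $\Spc(\cTc)$ and $\Spc(\unitation{\cT}^c)$. No further computation is needed; the corollary is a direct packaging of \cref{prop:homeo} in the special case where the fully faithful functor is the unitation.
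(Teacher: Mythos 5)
Your proof is correct and is essentially the paper's own argument: the paper's proof of \cref{cor:homeo-unit} is precisely to apply \cref{prop:homeo} to the unitation inclusion $\unitation{\cT}\hookrightarrow\cT$, with the forward implication left implicit (as you note, it follows from \cref{exa:finite-loc-is-unigenic} since each $\cT_{\cat P}$ is a finite localization). Your only superfluous remark is the ``transfer of noetherianness'': in the application of \cref{prop:homeo} the noetherian hypothesis is placed on $\Spc(\cSc)=\Spc(\cTc)$, which is exactly what the corollary assumes, so no transfer is needed.
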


\begin{proof}
	Apply \cref{prop:homeo} to $\cT_{\langle \unit \rangle} \hookrightarrow \cT$.
\end{proof}

\begin{Exa}
	Let $G$ be a finite group and $k$ a field of characteristic $p$. It follows from \cref{cor:homeo-unit} and \cref{exa:modular-unitation} that $\DRep(G;k)$ is unigenic if and only if it is locally unigenic. Moreover, this is the case if and only if $G$ is a $p$-group by \cref{prop:p-group}.
\end{Exa}

\begin{Rem}
	Similarly, \cref{cor:homeo-unit} and \cref{exa:stmod-unitation} imply that $\StMod(kG)$ is unigenic if and only if it is locally unigenic. Moreover, this holds if $G$ is a $p$-group, for example, by the localization $\DRep(G;k) \twoheadrightarrow \StMod(kG)$. However, it is possible for $\StMod(kG)$ to be unigenic more generally:
\end{Rem}

\begin{Exa}\label{exa:S3-unigenic}
	Let $G=S_3$ and $k=\Ftwo$. There are two simple modules: the trivial representation $k$ and the two-dimensional standard representation $V$. The regular representation decomposes as \[ k(S_3) \simeq k(S_3/A_3) \oplus V^{\oplus 2}.\] In particular, the two-dimensional simple representation is projective and hence annihilated in the stable module category. It follows that $\StMod(\Ftwo\hspace{1pt} S_3)$ is unigenic.
\end{Exa}

With the above results in mind, it will be convenient to introduce:

\begin{Def}\label{def:unigenic-locus}
	The \emph{unigenic locus} of $\cT$ is $\SET{\cat P \in \Spc(\cTc)}{\cT_{\cat P} \text{ is unigenic}}$. It is a generalization-closed subset of $\Spc(\cTc)$.
\end{Def}

\begin{Exa}
	It follows from \cref{thm:DHZG-comp} that the category of derived Mackey functors $\DHZG$ is not unigenic (unless $G=1$) since the comparison map to the Burnside ring is not injective.\footnote{For any prime $p$ dividing $|G|$ and Sylow $p$-subgroup $S \le G$, we have $\mathfrak p(S,p)=\mathfrak p(1,p)$ in the Burnside ring but $\cat P(S,p) \neq \cat P(1,p)$ in $\DHZG$; see \cite[Section~2]{PatchkoriaSandersWimmer22} and \cite[Section~3]{BalmerSanders17}.} What is its unigenic locus?
\end{Exa}

\begin{Prop}\label{prop:unigenic-locus-DHGZp}
	Let $G$ be a finite $p$-group. The unigenic locus of $\DHZpG$ is 
	\[
		\SETT{\cat P(G,p)} \cup \SET{\cat P(H,0)}{H \le G}.
	\]
\end{Prop}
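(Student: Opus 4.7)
The proof proceeds by analyzing the local category $\cT_{\cat P}$ for each type of prime $\cat P \in \Spc(\DHZpG^c)$. I would begin by recalling from \cite{PatchkoriaSandersWimmer22} the structure of this spectrum for $G$ a $p$-group: the primes are $\cat P(H,0)$ and $\cat P(H,p)$ indexed by conjugacy classes $(H)$ of subgroups, with specialization determined by $\cat P(K,p)\subseteq\cat P(H,p)$ iff $K\le_G H$ (using that every subgroup of a $p$-group is $p$-subnormal, as recalled in the proof of \cref{prop:DHZG-key-prop}), together with $\cat P(H,p)\subseteq\cat P(H,0)$ and the maximality of the rational primes. I would then compute $\gen(\cat P)$ for each type: $\gen(\cat P(H,0))=\{\cat P(H,0)\}$ by maximality; for the top $p$-prime, $\gen(\cat P(G,p))=\{\cat P(G,p),\cat P(G,0)\}$, since for $K<G$ the object $G/K_+\otimes\HZ_{(p)}$ lies in $\cat P(G,p)=\ker(\Phi^G)$ (as $(G/K)^G=\emptyset$) but not in $\cat P(K,0)$ (as $(G/K)^K\cong WK$ is nontrivial and persists rationally), forcing $\cat P(G,p)\not\subseteq\cat P(K,0)$; and for $H<G$, the set $\gen(\cat P(H,p))$ contains the two distinct primes $\cat P(H,p)$ and $\cat P(G,p)$.

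For the primes $\cat P(H,0)$ and $\cat P(G,p)$, I would identify the local category with a standard unigenic tt-category. The one-point local category $\cT_{\cat P(H,0)}$ is equivalent to $\Der(\bbQ)$ via rationalization and the Greenlees--May-type splitting $\DHQG\simeq\prod_{(K)}\Der(\bbQ)$ that picks out the $H$-factor. The two-point local category $\cT_{\cat P(G,p)}$ is equivalent to $\Der(\bbZ_{(p)})$ via the geometric fixed-points functor $\Phi^G$, which descends to an equivalence between the appropriate Bousfield localizations on both sides (each having spectrum $\{(p),(0)\}$). Both $\Der(\bbQ)$ and $\Der(\bbZ_{(p)})$ are unigenic, so these primes lie in the unigenic locus.

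For $\cat P=\cat P(H,p)$ with $H<G$, I would argue $\cT_{\cat P}$ is not unigenic by tracking primes through the unitation. The composite
\[
\Spc(\cT_{\cat P}^c)\to\Spc(\unitation{\cT_{\cat P}}^c)\to\Spc(\unitation{\cT}^c)\cong\Spec(A(G)_{(p)})
\]
agrees with the restriction of the unitation quotient $\varphi$ of \cref{thm:DHZG-comp} to $\gen(\cat P)\subseteq\Spc(\cT^c)$, using the factorization $\unitation{\cT}\hookrightarrow\cT\to\cT_{\cat P}$ through $\unitation{\cT_{\cat P}}$. Both $\cat P(H,p)$ and $\cat P(G,p)$ map to $\mathfrak p(G,p)$ under $\varphi$. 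Assuming the second map of the composite is injective, the unitation quotient $\Spc(\cT_{\cat P}^c)\to\Spc(\unitation{\cT_{\cat P}}^c)$ of \cref{thm:main-thmb} must itself already identify these two distinct primes, so it is not a bijection. Hence $\unitation{\cT_{\cat P}}\hookrightarrow\cT_{\cat P}$ is not an equivalence and $\cT_{\cat P}$ is not unigenic.

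The main obstacles are: (i) rigorously establishing $\cT_{\cat P(G,p)}\simeq\Der(\bbZ_{(p)})$ via the geometric fixed-points functor, which requires verifying that $\Phi^G$ induces an equivalence of the appropriate Bousfield localizations on both sides; and (ii) establishing the injectivity of the second map $\Spc(\unitation{\cT_{\cat P}}^c)\to\Spec(A(G)_{(p)})$ used above for the induced Bousfield localization $\unitation{\cT}\to\unitation{\cT_{\cat P}}$. This injectivity is not directly covered by \cref{rem:unitation-localization} because $\gen(\cat P(H,p))$ fails to be saturated under $\varphi$ when $H<G$, so it requires a separate argument tracking how the unitation functor interacts with the Bousfield localization $\cT\to\cT_{\cat P}$.
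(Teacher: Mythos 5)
The positive half of your proposal — $\cat P(H,0)$ via the rational splitting and $\cat P(G,p)$ via geometric fixed points — is essentially the paper's own argument, and your obstacle (i) is not a real one: $\gen(\cat P(G,p))=\{\cat P(G,p),\cat P(G,0)\}$ is exactly the complement of the Thomason subset supported on the $G/K_+$ for proper $K\lneq G$, so $\Phi^G$ \emph{is} the corresponding finite localization and $\cT_{\cat P(G,p)}\simeq \Der(\bbZ_{(p)})$ follows at once; no further matching of Bousfield localizations is needed.

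The genuine gap is in the negative half, and it is precisely the step you flag as obstacle (ii). Your argument that $\cT_{\cat P(H,p)}$ is not unigenic for $H\lneq G$ rests entirely on the injectivity of $\Spc(\unitation{(\cT_{\cat P})}^c)\to\Spc(\unitation{\cT}^c)\cong\Spec(A(G)_{(p)})$, for which you offer no proof. Nothing available yields it: \cref{rem:unitation-localization} does not apply since $\gen(\cat P(H,p))$ is not of the form $\varphi^{-1}(U)$ (its image contains the unique closed $p$-point of $\Spec(A(G)_{(p)})$, whose full preimage contains primes $\cat P(K,p)$ not lying in $\gen(\cat P(H,p))$), and the category $\unitation{(\cT_{\cat P})}$ is a priori unidentified — its unit has endomorphism ring $\End^\bullet_{\cT_{\cat P}}(\unit)$, built from the Balmer--Favi idempotents cutting out $\gen(\cat P(H,p))$, which you have not computed — so there is no reason its spectrum should inject into $\Spec(A(G)_{(p)})$, and establishing this looks at least as hard as the proposition itself. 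The paper sidesteps the issue: since the unigenic locus is generalization-closed (\cref{def:unigenic-locus}) and $\cat P(H,p)\subseteq\cat P(K,p)$ for any index-$p$ subgroup $K$ containing $H$, it suffices to rule out $\cat P(K,p)$ with $[G:K]=p$; the quotient functor $\widetilde{\Phi}^K:\DHZpG\to\DHZpGK\cong\DHZpCp$ is a finite localization identifying $\cT_{\cat P(K,p)}$ with the local category of $\DHZpCp$ at $\cat P(1,p)$, which is the unique closed point, so that local category is all of $\DHZpCp$; and $\DHZpCp$ is not unigenic because, by \cref{thm:DHZG-comp}, its unitation realizes on spectra the comparison map to $\Spec(A(C_p)_{(p)})$, which is not injective. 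Replacing your step (ii) by this reduction repairs the proof.
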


\begin{proof}
	The category of derived Mackey functors with rational coefficients splits as a product of semisimple categories (see \cite{Wimmer19pp,BoucDellAmbrogioMartos24pp} for instance):
	\[
		\DHZpG \twoheadrightarrow \DHQG \cong \prod_{(H)} \Der(\mathbb{Q}[W_G H]).
	\]
	The target is unigenic by \cref{exa:finite-product}. Hence $\DHZpG$ is unigenic at each height zero point $\cat P(H,0)$, $H\le G$. On the other hand, the geometric fixed points functor $\Phi^G:\DHZpG \to \DHZp$ exhibits~$\DHZp$ as the local category at~$\cat P(G,p)$. Hence $\DHZpG$ is unigenic at $\cat P(G,p)$. Then consider $\cat P(H,p)$ for a proper subgroup $H \lneq G$. Since $G$ is a $p$-group, there exists a $p$-subnormal tower from $H$ to $G$. In particular, there exists a subgroup $K\le G$ of index~$p$ which contains $H$. If the unigenic locus contains $\cat P(H,p)$ then it also contains $\cat P(K,p)$ since $\cat P(H,p)\subseteq \cat P(K,p)$. Thus, it suffices to prove that $\cat P(K,p)$ is not contained in the unigenic locus for each index $p$ subgroup of $G$. Since $\widetilde{\Phi}^K:\DHZpG \to \DHZpGK$ is a localization, $\DHZpG$ is unigenic at $\cat P(K,p)$ if and only if $\DHZpCp$ is unigenic at $\cat P(1,p)$. But note that $\cat P(1,p)$ is the unique closed point of $\DHZpCp$, so this would be saying that $\DHZpCp$ is unigenic. This is not the case. For example, it follows from the fact that the map on Balmer spectra of the unitation identifies with the comparison map to the Burnside ring (\cref{thm:DHZG-comp}), which is not injective.
\end{proof}

\begin{Cor}
	Let $G$ be a finite group which is not perfect. Then $\DHZG$ is not locally unigenic.
\end{Cor}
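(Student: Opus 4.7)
The plan is to locate a single prime $\cat P \in \Spc(\DHZG^c)$ whose local category fails to be unigenic, using \cref{prop:unigenic-locus-DHGZp} as the base case. Since $G$ is not perfect, its abelianization $G/[G,G]$ is a nontrivial finite abelian group and hence admits a surjection onto $C_p$ for some prime~$p$. Pulling back, we obtain a normal subgroup $N \lenormal G$ with $G/N \cong C_p$.

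Next I would consider the candidate prime $\cat P\coloneqq \cat P(N,p) \in \Spc(\DHZG^c)$. Since $N$ is normal, the modified geometric fixed point functor $\widetilde{\Phi}^N:\DHZG\to\DHZGN$ is a finite localization whose image on spectra is exactly the set of primes $\cat P(K,p,h)$ with $K\supseteq N$, and it sends $\cat P(N,p)$ to $\cat P(1,p)\in\Spc(\DHZGN^c)$. Because finite localizations nest and the complement of a Thomason subset is generalization-closed, the local category at $\cat P(N,p)$ is preserved:
\[
   (\DHZG)_{\cat P(N,p)} \;\simeq\; (\DHZGN)_{\cat P(1,p)}.
\]
Now $G/N\cong C_p$, and the further algebraic $p$-localization identifies $(\DHZGN)_{\cat P(1,p)}$ with $\DHZpCp$, since $\cat P(1,p)$ is the unique closed point of $\Spc(\DHZpCp^c)$ (as used in the proof of \cref{prop:unigenic-locus-DHGZp}).

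Finally, \cref{prop:unigenic-locus-DHGZp} — or equivalently the observation that the comparison map $\Spc(\DHZpCp^c)\to\Spec(A(C_p)_{(p)})$ is not injective (\cref{thm:DHZG-comp}) — shows that $\DHZpCp$ is not unigenic. Consequently $(\DHZG)_{\cat P(N,p)}$ is not unigenic and $\DHZG$ fails to be locally unigenic at $\cat P(N,p)$. The only genuinely delicate point is the compatibility in the second paragraph: one must be sure that first taking the geometric-fixed-point localization and then $p$-localizing reproduces the local category at $\cat P(N,p)$ in $\DHZG$; this is straightforward once one notes that $\gen(\cat P(N,p))\subseteq \widetilde{\Phi}^N\text{-image}$ (since generalizations of $\cat P(N,p)$ all have subgroup component containing $N$), so that the two successive finite localizations compose correctly.
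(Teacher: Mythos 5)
Your proposal is correct and takes essentially the same route as the paper: the paper chooses $N=O^p(G)$ (proper since $G$ is not perfect) and uses the localizations $\DHZG \to \DHZpG \to \DHZpGOp$ to identify the local category at $\cat P(O^p(G),p)$ with a non-unigenic local category of a nontrivial $p$-group quotient, which is exactly your reduction via an index-$p$ normal subgroup to the $C_p$ case of \cref{prop:unigenic-locus-DHGZp}. The compatibility point you flag at the end is indeed automatic, for the reason you yourself give: the image of a finite localization on spectra is the complement of a Thomason subset, hence generalization-closed, so localizing further at $\cat P(N,p)$ (equivalently at $\cat P(1,p)$ after applying $\widetilde{\Phi}^N$ and $p$-localizing) recovers the same local category.
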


\begin{proof}
	Since $G$ is not perfect there exists a prime $p$ such that $O^p(G)$ is a proper subgroup of $G$. We have localizations $\DHZG \to \DHZpG \to \DHZpGOp$ where the first is $p$-localization and the second is geometric fixed points. The local category of $\DHZG$ at $\cat P_G(O^p(G),p)$ then identifies with the local category of $\DHZpGOp$ at $\cat P_{G/O^p(G)}(1,p)$. Since $G/O^p(G)$ is a nontrivial $p$-group, this local category is not unigenic by \cref{prop:unigenic-locus-DHGZp}.
\end{proof}

\begin{Rem}
	Stated differently, \cref{exa:S3-unigenic} demonstrates that it is possible for the unigenic locus of $\DRep(G;k)$ to be a non-empty proper subset of $\Spc(\DRep(G;k)^c)$. For $G=S_3$ and $k=\Ftwo$, it consists of the entirety of the spectrum $\Spc(\DRep(G;k)^c)\cong\Spec^h(H^\bullet(G;k))$ except for the unique closed point.
\end{Rem}

\begin{Exa}\label{exa:coho-open}
	We have already observed in \cref{prop:DPerm-unigenic} that $\DPerm(G;k)$ is unigenic if and only if $G=1$ is the trivial group. On the other hand, if $G$ is a $p$-group then the unigenic locus of $\DPerm(G;k)$ is non-empty. Indeed, it is unigenic at every prime~$\cat P$ in the so-called ``cohomological open'' $V_G \subseteq \Spc(\DPerm(G;k)^c)$ of \cite[Proposition~3.22]{BalmerGallauer23pp}. Indeed, these are precisely those primes pulled back via the localization $\DPerm(G;k)\twoheadrightarrow \DRep(G;k)$ and $\DRep(G;k)$ is unigenic when $G$ is a $p$-group.
\end{Exa}

\begin{Rem}
	If $\cT$ is not unigenic, then it is natural to attempt to construct a unigenic localization of $\cT$. For a $p$-group $G$, the localization ${\DPerm(G;k) \twoheadrightarrow \DRep(G;k)}$ of \cref{exa:coho-open} is an illustrative example. However, as the next example demonstrates, it is possible for the unigenic locus to be empty. For such categories, the only (locally) unigenic localization is the zero category.
\end{Rem}

\begin{Exa}
	Let $\cT$ be a nonzero rigidly-compactly generated tt-category and let $\cT \times_{\bbZ/2} \cT$ denote the product triangulated category with the $\bbZ/2$-graded $\otimes$-structure discussed in \cref{exa:Z2-graded-product}. The inclusion $x \mapsto (x,0)$ of the first factor is a fully faithful geometric functor which is a homeomorphism on spectra; indeed, the primes in $\Spc((\cT \times_{\bbZ/2}\cT)^c)$ are all of the form $\cat P \times \cat P$ for some $\cat P \in \Spc(\cTc)$. As pointed out in \cite[Remark~4.24]{Sanders22}, $\cT\times_{\bbZ/2}\cT$ is not unigenic at $\cat P\times \cat P$. That is, its unigenic locus is empty.
\end{Exa}

\begin{Rem}
	Nevertheless, assuming that the unigenic locus is non-empty, it is natural to attempt to find a unigenic localization of $\cT=\Loc\langle \cat G\rangle$ by annihilating generators. For example, the finite localization which annihilates $\Loco{\cat G \setminus \{\unit\}}$ is unigenic, although it is of course very possible for it to be the zero category; for example, this will be the case if $\cat G \setminus\{\unit\}$ contains an invertible object. All of these caveats notwithstanding, a situation where things work well arises in equivariant homotopy theory (and related examples) in connection with geometric fixed point functors:
\end{Rem}

\begin{Rem}\label{rem:geom-open}
	Let $\mathbb{E} \in \CAlg(\Sp_G)$. The category $\mathbb{E}\text{-Mod}_{\Sp_G}$ is rigidly-compactly generated by $\SET{F_\mathbb{E}(G/H_+)}{H \le G}$ where $F_\mathbb{E}:\Sp_G \to \mathbb{E}\text{-Mod}_{\Sp_G}$ is the canonical functor. The finite localization which annihilates the generators $F_{\mathbb{E}}(G/H_+)$ for all proper subgroups $H \lneq G$ can be identified with $\Phi^G(\mathbb{E})\text{-Mod}_{\Sp}$. In other words, the analogue for $\mathbb{E}$ of the geometric fixed point functor is the localization
	\[
		\mathbb{E}\text{-Mod}_{\Sp_G} \to \Phi^G(\mathbb{E})\text{-Mod}_{\Sp}.
	\]
	This localization singles out a certain open piece of $\Spc(\mathbb{E}\text{-Mod}_{\Sp_G}^c)$ homeomorphic to $\Spc(\Phi^G(\mathbb{E})\text{-Mod}_{\Sp}^c)$ which we might call the ``geometric open'' following \cite{BalmerGallauer23pp}. As we have already seen, for some specific choices of $\mathbb{E}$ (such as $\mathbb{E}=\Sphere_G$ or $\mathbb{E}=\triv_G(\HZ)$) the resulting geometric fixed points are well-understood, whereas for other examples (such as $\mathbb{E}=\HA_G$ or $\mathbb{E} = \smash{\HZbar}$) the resulting geometric fixed points are more mysterious; see further discussion in \cite[Section~5]{PatchkoriaSandersWimmer22} and \cite[Section~3.2]{BCHNP1}. Nevertheless, even if we do not have a good understanding of the ring spectrum $\Phi^G(\mathbb{E})$, the category of modules $\Phi^G(\mathbb{E})\text{-Mod}_{\Sp}$ is unigenic. In other words, the unigenic locus of $\mathbb{E}\text{-Mod}_{\Sp_G}$ contains the geometric open. Thus, the unigenic locus is non-empty provided the geometric open is non-empty, that is, provided that~$\Phi^G(\mathbb{E})\neq 0$.
\end{Rem}

\begin{Exa}
	The geometric open of $\DHZG$ is $\SET{\cat P(G,p)}{p \text{ prime}} \cup \SETT{\cat P(G,0)}$. Compare with \cref{prop:unigenic-locus-DHGZp}.
\end{Exa}

\begin{Exa}
	Recall from \cref{exa:DRep-monadic} that $\DRep(G;k) \cong b_G(\Hk)\text{-Mod}_{\Sp_G}$. If~$G$ is not an elementary abelian $p$-group, then $\Phi^G(b_G(\Hk))=0$. This follows from the theory of derived defect bases \cite{Mathew16pp,MathewNaumannNoel19}. In other words, the geometric open is empty if $G$ is not an elementary abelian $p$-group.
\end{Exa}

\begin{Exa}
	The geometric open of $\DPerm(G;k)$ is discussed in \cite{BalmerGallauer23pp}. It is a copy of the spectrum of $\Phi^G(\Hkbar)\text{-Mod}_{\Sp}$ which is currently not well-understood. However, see \cite[Example~8.1]{BCHNP1} for some positive results. Note that if $G$ is not a $p$-group then the geometric open is empty, in other words $\Phi^G(\Hkbar)=0$. This follows from the fact that restriction to a $p$-Sylow subgroup is faithful due to the ``cohomological Mackey'' nature of~$\DPerm(G;k)$; see \cref{rem:faithful-restriction}.
\end{Exa}

\begin{Exa}\label{exa:DPerm-locally-unigenic}
	The derived category of permutation modules $\cT=\DPerm(G;k)$ is locally unigenic for any elementary abelian $p$-group. For a cyclic group of order $p$, the cohomological open and the geometric open cover the spectrum, so in this case the claim follows from \cref{exa:coho-open} and \cref{rem:geom-open}. More generally, Balmer and Gallauer \cite[Section~13]{BalmerGallauer23pp} construct an open cover of the spectrum which contains the cohomological and geometric opens and has the property that~$\cT|_U$ is unigenic for each open $U$ in the cover; see the proof of \cite[Theorem~15.3]{BalmerGallauer23pp}.
\end{Exa}

\section{Examples in algebraic geometry}\label{sec:alg-geom}

We now discuss some examples of (fully) faithful functors in algebraic geometry.

\begin{Rem}\label{rem:Spc-of-f}
	Any morphism $f:X\to Y$ of quasi-compact and quasi-separated schemes induces a geometric functor $f^*:\Der(Y)\to\Der(X)$ which in turn induces a morphism $\Spec(\Der(X)^c)\to\Spec(\Der(Y)^c)$ of locally ringed spaces. Moreover, the diagram
	\[\begin{tikzcd}[column sep=large]
		X \ar[r,"f"] \ar[d,"\cong"] & Y \ar[d,"\cong"]\\
		\Spec(\Der(X)^c) \ar[r,"\Spec(f^*)"] & \Spec(\Der(Y)^c)
	\end{tikzcd}\]
	commutes. This can be checked from the definition of the classifying map \cite[Theorem~3.2]{Balmer05a} together with \cite[Lemma~3.3(b)]{Thomason97}.
\end{Rem}

\begin{Exa}\label{exa:affine-ff}
	Let $A \to B$ be a homomorphism of commutative rings. It follows from \cref{prop:fully-faithful-chars} that the base-change functor $f^*:\Der(A)\to\Der(B)$ is fully faithful if and only if $A \to B$ is an isomorphism. We thus need to go beyond affine schemes to find non-trivial examples.
\end{Exa}

\begin{Exa}
	Let $\Pone \to \Spec(k)$ be the projective line over a field $k$. The structure morphism induces a fully faithful functor $\Der(k)\to\Der(\Pone)$ which by \cref{exa:unigenic-fully-faithful} identifies with the unitation: $\Der(k) \cong \Der(\Pone)_{\langle \unit \rangle}$. Thus, the map on spectra induced by the unitation of $\smash{\Der(\Pone)}$ is just $\smash{\Pone}\to\Spec(k)$.
\end{Exa}

\begin{Exa}[Projective bundles]
	More generally, let $\cat E$ be a vector bundle on a smooth projective variety $S$ and let $p:\PS(\cat E)\to S$ be its projectivization. Then the pullback $p^*:\Der(S)\to\Der(\PS(\cat E))$ is fully faithful. See \cite[Lemma~2.1]{Orlov92}, for example, bearing in mind \cref{prop:ff-on-compact}. Note that in this example, the fibers are copies of projective space and hence are, in particular, connected as \cref{thm:main-thmb} asserts must be the case.
\end{Exa}

\begin{Exa}[Birational morphisms]
	If $f:X \to Y$ is a proper birational morphism of smooth varieties over a perfect field then $f^*:\Der(Y)\to\Der(X)$ is fully faithful; see \cite{ChatzistamatiouRuelling11,ChatzistamatiouRuelling15}. For historical context, see \mbox{\cite[Problem~B]{GrothendieckICM}} and \mbox{\cite[p.~55]{GrothendieckSerreCorrespondence}.}
\end{Exa}

\begin{Exa}[Singularities]
	Let $X$ be a variety over a field of characteristic zero. If $X$ has rational singularities then any resolution of singularities $f:\smash{\widetilde{X}}\to X$ induces a fully faithful functor $f^*:\Der(X) \to \Der(\smash{\widetilde{X}})$. This is essentially the definition of rational singularities; see \cite[Section~6.2]{Ishii14} or \cite[Chapter~5]{KollarMori98}. Since rational singularities are known to be a weak type of singularity, many other types of singularities (such as log-canonical or stronger \cite[Theorem~6.2.12]{Ishii14}) give examples of fully faithful functors. An explicit class of examples are quotient singularities (e.g.~Du Val singularities).
\end{Exa}

\begin{Rem}
	Although it does not play a role in our work, we would be remiss not to mention the celebrated result of Orlov \cite{Orlov97,Olander24} asserting that if $X$ and~$Y$ are smooth proper varieties over a field $k$ then any fully faithful $k$-linear exact functor $\Der(Y)^c \to \Der(X)^c$ is a Fourier--Mukai transform.
\end{Rem}

\begin{Rem}
	Although nontrivial morphisms of affine schemes do not provide examples of fully faithful functors (\cref{exa:affine-ff}), there are interesting examples of \emph{faithful} functors:
\end{Rem}

\begin{Exa}\label{exa:affine-faithful}
	Let $A \to B$ be a homomorphism of commutative rings. For this example only we will use $f^*\coloneqq B\otimes_A -:\Mod(A) \to \Mod(B)$ to denote the \mbox{underived} base-change functor. According to \cref{prop:faithful-chars}, the derived functor ${\mathbb{L}f^*:\Der(A) \to \Der(B)}$ is faithful if and only if $A \to B$ is a split monomorphism in~$\Der(A)$ which is the case if and only if $A \to B$ is a split monomorphism in the category $\Mod(A)$. This certainly implies that the underived functor $f^*:{\Mod(A)\to\Mod(B)}$ is faithful. The converse is not true. As explained by Mesablishvili~\cite{Mesablishvili00}, $f^*:{\Mod(A)\to\Mod(B)}$ is faithful if and only if $A\to B$ is a so-called pure monomorphism. This is a ring homomorphism which is ``universally injective'' in the sense that for every $A$-module~$M$, the map $M \to B \otimes_A M$ is injective. Faithfully flat maps are precisely the flat pure monomorphisms;  see \cite[Ch.~I, \S~3.5, Prop.~9]{Bourbaki_CommAlg}. In summary:
	\begin{itemize}
		\item $\mathbb{L}f^*:\Der(A)\to\Der(B)$ is faithful if and only if $\begin{tikzcd}[cramped,sep=small,column sep=tiny] A \ar[r] & B\end{tikzcd}$ is a split monomorphism\linebreak of $A$-modules.
		\item $f^*:\Mod(A) \to \Mod(B)$ is faithful if and only if~$\begin{tikzcd}[cramped,sep=small,column sep=tiny] A \ar[r] & B\end{tikzcd}$~is a pure \mbox{monomorphism}\linebreak of $A$-modules.
	\end{itemize}
\end{Exa}

\begin{Rem}
	An example of a faithfully flat map $A\to B$ which is not a split monomorphism is given in \cite[Example~5.10]{ChakrabortyGurjarMiyanishi16}. Examples of non-flat morphisms which are pure but not split can be found in the study of non-regular $F$-pure rings; see \cite{MaPolstra25bpp}, for example.
\end{Rem}

\begin{Exa}
Let $G$ be a finite group acting on a commutative ring~$R$ such that the order of $G$ is invertible in $R$. The inclusion $R^G \hookrightarrow R$ of the ring of invariants is a split monomorphism (using the Reynolds operator) hence $\Der(R^G)\to\Der(R)$ is faithful. Thus $\Spec(R) \to \Spec(R^G)$ is a spectral quotient map; cf.~\cite[{\S}\,I.2]{MumfordFogartyKirwan94}.
\end{Exa}

\begin{Rem}
	If the order of $G$ is not invertible in $R$ then $R^G \hookrightarrow R$ need not be a split monomorphism. For example, \cite[Theorem~5.5]{Singh98} establishes that if the alternating group~$A_n$ acts on the polynomial ring $R=k[X_1,\ldots,X_n]$ by permuting the coordinates and the field $k$ has characteristic $p>2$, then $R^{A_n} \hookrightarrow R$ is a split monomorphism if and only if the order $|A_n|=n!/2$ is relatively prime to~$p$.
\end{Rem}

\begin{Exa}\label{exa:brenner}
	The homomorphism of noetherian commutative rings $A \to B$ described in \cite[Example~7]{Brenner03} is a split monomorphism in $\Mod(A)$ and has the property that $\Spec(B)\to\Spec(A)$ has neither the going-up nor the going-down property. Recall from \cite[Section~5.3]{DickmannSchwartzTressl19} that for a surjective spectral map the going-up property is equivalent to being a closed quotient map while the going-down property is equivalent to being a closed quotient map for the Hochster dual topologies. Moreover, any open map satisfies the going-down property. Thus this gives an example of a faithful functor $f^*:\Der(A)\to\Der(B)$ for which the associated topological quotient is neither a closed quotient map nor an open quotient map, which completes \cref{rem:faithful-not-closed}.
\end{Exa}

\begin{Rem}\label{rem:faithful-not-all-quotients}
	The notion of a faithful geometric functor does not encompass all topological quotients of schemes. This is related to the potential non-faithfulness of Zariski localization. Indeed, suppose $X$ and $Y$ are quasi-compact and quasi-separated schemes. Any surjective flat morphism $f:X \to Y$ is a topological quotient \cite[Corollaire~2.3.13]{EGA4}. For example, if $Y=\bigcup_{i=1}^n U_i$ is a Zariski cover by affine opens then the disjoint union $\sqcup_{i=1}^n U_i$ admits a surjective flat map $X\coloneqq \sqcup_{i=1}^n U_i \to Y$. However, the induced functor $\Der(Y)\to \Der(X)$ need not be faithful. Indeed, any short exact sequence of vector bundles splits over an affine scheme. Thus, if $Y$ admits a non-split short exact sequence of vector bundles $0\to a\to b\to c\to 0$, then the morphism $c \to a[1]$ is a nonzero morphism in $\Der(Y)$ which vanishes in $\Der(X)$. This occurs even with $Y=\Pone$; see \cite[Example~2.1]{Balmer12}. On the other hand, some surjective flat morphisms do provide faithful functors, such as the next example:
\end{Rem}

\begin{Exa}\label{exa:frobenius}
	Let $k$ be an algebraically closed field of characteristic $p>0$. The absolute Frobenius morphism $f:\Pone \to \Pone$ on the projective line induces a faithful functor $f^*:\Der(\Pone) \to \Der(\Pone)$ for which, topologically, $\Spc(f^*)=f$ is the identity. It is not a full functor; in fact, it is an affine morphism with $f_*(\cat O_{\Pone}) \simeq \cat O_{\Pone} \oplus \cat O_{\Pone}(-1)^{\oplus (p-1)}$; see \cite{Thomsen00} for example.
\end{Exa}

\begin{Exa}
	More generally, let $X$ be a variety over an algebraically closed field of characteristic $p>0$. The absolute Frobenius morphism $f:X\to X$ is a finite morphism, hence the higher cohomology of the direct image $f_*(\cat O_X)$ vanishes. Thus, $f^*:\Der(X)\to \Der(X)$ is faithful if and only if the variety $X$ is Frobenius split. For further details and numerous examples of Frobenius split varieties, see \cite[Section~1.1]{BrionKumar05}.
\end{Exa}

\begin{Rem}
	Finally, according to \cref{prop:faithful-chars}, the faithfulness of derived pullback functors in algebraic geometry is related to the notion of a derived splinter; see~\cite{Bhatt12}.
\end{Rem}

\addtocontents{toc}{\protect\enlargethispage{\baselineskip}}
\section{Affinization}\label{sec:affinization}

For derived categories of schemes, the property of being unigenic is related to the property of being affine. We would like to study this relationship more closely.

\begin{Rem}
	The inclusion of the category of affine schemes into the category of all locally ringed spaces has a left adjoint, which sends a locally ringed space $(X,\cat O_X)$ to its \emph{affinization} $\Aff(X,\OX)\coloneqq \Spec(\OXX)$. The unit of the adjunction provides a canonical morphism of locally ringed spaces $(X,\OX) \to \Aff(X,\OX)$. Explicitly, this map sends a point $x \in X$ to the image under $\Spec(\OXx)\to\Spec(\OXX)$ of the unique closed point of $\Spec(\OXx)$.
\end{Rem}

\begin{Prop}\label{prop:affinization}
	Let $X$ be a quasi-compact and quasi-separated scheme. Under the identification $\Spec(\Der(X)^c)\cong X$ of locally ringed spaces, the affinization $X \to \Aff(X)$ coincides with the comparison map $\Spec(\Der(X)^c)\to\Spec(\OXX)$.	
\end{Prop}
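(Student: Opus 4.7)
The plan is to exploit the universal property of affine schemes: morphisms of locally ringed spaces $(X,\OX) \to \Spec(A)$ are in natural bijection with ring homomorphisms $A \to \Gamma(X,\OX)$. Since both the affinization $X \to \Spec(\OXX)$ and the comparison map $\rho:\Spc(\Der(X)^c) \cong X \to \Spec(\OXX)$ are morphisms of locally ringed spaces, it will suffice to verify that they induce the same ring homomorphism $\OXX \to \Gamma(X,\OX) = \OXX$. By construction of the affinization, this is the identity; hence the goal is to show that the same is true for $\rho$.

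As a warm-up and to orient the set-theoretic picture, I would first check that the two maps agree on underlying points. Under Balmer's reconstruction, a point $x \in X$ corresponds to the tt-prime $\cat P_x = \SET{a \in \Der(X)^c}{a_x = 0 \text{ in } \Der(\OXx)}$, and unwinding the definition of $\rho$ gives
\[
	\rho(\cat P_x) = \SET{f \in \OXX}{\cone(f) \not\in \cat P_x} = \SET{f \in \OXX}{f \text{ is a non-unit in } \OXx},
\]
which is precisely the preimage of the maximal ideal $\frakm_x \subseteq \OXx$ under the canonical localization $\OXX \to \OXx$. This is exactly where the affinization sends $x$, so the two morphisms coincide on points.

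To upgrade this to a statement about locally ringed spaces, I would appeal to Balmer's construction of the structure sheaf $\OT$ on $\Spc(\cTc)$ for a small tensor-triangulated category, together with the fact that by construction the comparison map $\rho:\Spc(\cTc)\to\Spec(\End_{\cat T}(\unit))$ is the morphism of locally ringed spaces whose associated ring map on global sections is the canonical isomorphism $\End_{\cT}(\unit) \xrightarrow{\sim} \Gamma(\Spc(\cTc),\OT)$. Specializing to $\cT=\Der(X)$, Balmer's reconstruction theorem identifies $(\Spc(\Der(X)^c),\cat O_{\Der(X)})$ with $(X,\OX)$ as locally ringed spaces, under which the above canonical isomorphism becomes the identity on $\OXX$. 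By the universal property, $\rho$ and the affinization therefore coincide. The main (modest) obstacle lies in the bookkeeping: tracing through the layers of canonical identifications in Balmer's reconstruction to confirm that the map on global sections really is the identity, rather than just some unspecified isomorphism.
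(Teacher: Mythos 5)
Your proposal is correct, but it takes a genuinely different route from the paper. The paper's proof is a naturality argument: the affinization morphism $a\colon X\to\Spec(\OXX)$ induces a geometric functor $a^*\colon\Der(\OXX)\to\Der(X)$; the comparison map is natural with respect to $a^*$; the induced map $\Spec(a^*)$ agrees with $a$ under the identification of Balmer spectra of schemes with the schemes themselves (\cref{rem:Spc-of-f}); and the comparison map of the affine scheme $\Spec(\OXX)$ is the standard identification, so chasing the square gives $\rho=a$. You instead invoke the $\Gamma\dashv\Spec$ adjunction for locally ringed spaces and reduce to checking that both morphisms induce the same (identity) ring map on global sections. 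That is legitimate, but it shifts the burden onto two inputs: that $\rho$ is a morphism of locally ringed spaces whose global-sections component is the canonical map $R_{\cat K}\to A_{\cat K}=\OK(\Spc(\cat K))$ (this is \cite{Balmer10b}, Section~6, and is exactly the factorization $\rho=\beta\circ\alpha$ of \cref{exa:aff-of-spec}), and the bookkeeping that the reconstruction isomorphism $\Spec(\Der(X)^c)\cong(X,\OX)$ turns this canonical map into the identity of $\OXX$ --- which does go through, since on quasi-compact opens the presheaf is $U\mapsto\End_{\Der(U)^c}(\unit)\cong\OX(U)$ and at $U=X$ this is the identity, so sheafification changes nothing. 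Your pointwise warm-up is also correct ($\cone(f)_x=0$ iff the germ of $f$ is a unit in $\OXx$), though it becomes redundant once the adjunction argument is in place. One small imprecision: $\End_{\cT}(\unit)\to\Gamma(\Spc(\cTc),\OT)$ is in general only a ring homomorphism, not an isomorphism (cf.\ \cref{exa:aff-of-spec}); for $\Der(X)$ it is one, but your argument only needs the composite with the reconstruction identification to be the identity. The trade-off: the paper's route is shorter given \cref{rem:Spc-of-f}, while yours stays entirely within locally ringed spaces and makes the universal property underlying affinization explicit.
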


\begin{proof}
	This is straightforward using the naturality of the comparison map with respect to the geometric functor $f^*:\Der(\OXX) \to \Der(X)$ induced by affinization $X \to \Spec(\OXX)$ bearing in mind~\cref{rem:Spc-of-f}.
\end{proof}

\begin{Exa}
	The affinization of the projective line $\Pone$ is the structure morphism $\Pone \to \Spec(k)$.
\end{Exa}

\begin{Exa}
	Let $X=\mathbb{A}^2_k\setminus \{0\}$ be the affine plane with the origin removed. The affinization is the embedding $\mathbb{A}_k^2 \setminus\{0\} \hookrightarrow \mathbb{A}^2_k$. Thus, $\Der(\mathbb{A}^2\setminus\{0\})$ is an example of a tt-category whose comparison map is not surjective; cf.~\cite[Example~8.3]{Balmer10b}. 
\end{Exa}

\begin{Prop}[EGAII]\label{prop:quasi-affine}
	Let $X$ be a quasi-compact and quasi-separated scheme. The following conditions are equivalent:
	\begin{enumerate}
		\item $X$ is quasi-affine; that is, $X$ is isomorphic to a (quasi-compact) open subscheme of an affine scheme.
		\item The affinization map $X \to \Aff(X)$ is an open immersion.
		\item The affinization map $X \to \Aff(X)$ is a topological embedding.
		\item The structure sheaf $\OX$ is ample.
	\end{enumerate}
\end{Prop}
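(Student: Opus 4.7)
My plan is to reduce the proposition to classical content from EGA II, identifying (c) $\Rightarrow$ (b) as the only essentially new implication. The equivalence (a) $\Leftrightarrow$ (d) is the standard characterization of quasi-affine schemes via ampleness of the structure sheaf (EGA II, 5.1.2; cf.\ Stacks Project Tag 01P5), and similarly (a) $\Leftrightarrow$ (b) is the EGA II statement that for a quasi-compact and quasi-separated scheme, $X$ is quasi-affine if and only if the canonical map $X \to \Spec(\OXX)$ is a quasi-compact open immersion. The implication (b) $\Rightarrow$ (c) is immediate since open immersions are in particular topological embeddings.

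The core step is then (c) $\Rightarrow$ (b). Writing $A \coloneqq \OXX$, suppose $\varphi: X \to \Spec(A)$ is a topological embedding. I would first argue that $\varphi(X) \subseteq \Spec(A)$ is open. Since $X$ is quasi-compact and quasi-separated, $\varphi$ is a quasi-compact morphism, so $\varphi(X)$ is proconstructible by \cite[Corollary~1.3.23]{DickmannSchwartzTressl19}, and it then suffices to verify closure under generalization by \cite[Theorem~1.5.4]{DickmannSchwartzTressl19}. A generalization of $\varphi(x)$ in $\Spec(A)$ corresponds to a prime in the localization $A_{\varphi(x)}$; via the natural ring map $A_{\varphi(x)} \to \OXx$ this pulls back to a prime of $\OXx$, and hence to a point of $X$ generalizing $x$. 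The topological embedding hypothesis ensures this lifted generalization maps under $\varphi$ onto the original one. With $\varphi(X) \subseteq \Spec(A)$ now identified as an open subset, a local check on an affine cover $\{U_i = \Spec(B_i)\}$ of $X$ finishes the proof: each restriction corresponds to a ring map $A \to B_i$ which, by the topological embedding hypothesis combined with quasi-compactness, realizes $\Spec(B_i)$ as a standard open subscheme of $\Spec(A)$ via localization at finitely many sections. Gluing yields that $\varphi$ is an open immersion.

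The main obstacle is precisely the step that lifts generalizations from $\Spec(A)$ back to $X$: this is where the purely topological hypothesis (c) gets promoted to the scheme-theoretic conclusion, and the key input is the natural local-ring map $A_{\varphi(x)} \to \OXx$ at each point $x$. An alternative route, which may sidestep this topological lifting argument, is to observe that (c) together with quasi-compactness forces ampleness of $\OX$ directly: for any $x \in X$ and any affine open neighborhood $U$ of $x$, the topological embedding hypothesis guarantees the existence of sections of $\OX$ that separate $x$ from the closed complement $X \setminus U$, yielding a basic open of $\Spec(A)$ inside $U$. This would reduce (c) $\Rightarrow$ (b) to the classical (d) $\Rightarrow$ (b).
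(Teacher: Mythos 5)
Your main route for (c)\,$\Rightarrow$\,(b) has two genuine gaps. First, the topological reduction is invalid: \cite[Theorem~1.5.4]{DickmannSchwartzTressl19} says a \emph{pro}constructible set stable under specialization is closed; the dual criterion for openness requires the set to be \emph{ind}-constructible, whereas the image $\varphi(X)$ is only proconstructible. A proconstructible, generalization-stable subset need not be open: the generic point of $\Spec(\bbZ)$ — the image of the quasi-compact topological embedding $\Spec(\bbQ)\to\Spec(\bbZ)$ — is proconstructible and closed under generalization but not open. So ``proconstructible plus closed under generalization'' does not yield openness of $\varphi(X)$. Second, the lifting step runs in the wrong direction: a prime of $A_{\varphi(x)}$ does not ``pull back'' along the ring map $A_{\varphi(x)}\to\OXx$ to a prime of $\OXx$; that would require surjectivity of $\Spec(\OXx)\to\Spec(A_{\varphi(x)})$, which hypothesis (c) does not provide and which is essentially what one is trying to prove (it is automatic only once $\varphi$ is known to be an open immersion, when $A_{\varphi(x)}\cong\OXx$). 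The embedding hypothesis is not what rescues this — compatibility of the maps is just naturality; the missing point is the \emph{existence} of the lift. Finally, your concluding ``local check on an affine cover'' silently uses that $A_f\to\Gamma(X_f,\OX)$ is an isomorphism for quasi-compact quasi-separated $X$; this is exactly the nontrivial input behind (d)\,$\Rightarrow$\,(a) and cannot be waved through.

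Your alternative route at the end is the correct one, and it is essentially how the classical proof goes: for $x$ in an affine open $U\subseteq X$, write $\varphi(U)=V\cap\varphi(X)$ with $V\subseteq\Spec(A)$ open, choose $f\in A$ with $\varphi(x)\in D(f)\subseteq V$; injectivity gives $\varphi^{-1}(V)=\varphi^{-1}(\varphi(U))=U$, so $X_f=\varphi^{-1}(D(f))\subseteq U$ is a principal open of the affine $U$, hence affine, and $\OX$ is ample. Then (d)\,$\Rightarrow$\,(a)\,$\Rightarrow$\,(b) is the classical statement. Note also that condition (c) is not ``new'': it appears among the equivalent conditions of \cite[Proposition~5.1.2]{EGAII} itself, and the paper's proof is simply that citation together with \cite{stacks-project} to relax EGA's hypotheses (topologically noetherian, or quasi-compact and separated) to quasi-compact and quasi-separated. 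If you want a self-contained argument for (c), your second sketch, made precise as above, is the one to keep; the first should be discarded.
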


\begin{proof}
	This is (part of) \cite[Proposition~5.1.2, p.~94]{EGAII} which is stated for a scheme that is either topologically noetherian or quasi-compact and separated, but it holds more generally for a quasi-compact and quasi-separated scheme; cf.~\cite[Tag~01P5 and 01QD]{stacks-project}.
\end{proof}

\begin{Rem}
	If $\cat L$ is an ample line bundle on $X$ then $X$ is separated and $\Der(X)$ is compactly generated by 	$\SET{\cat L^{\otimes n}}{n\in \mathbb{Z}}$; see \cite[Example~1.10]{Neeman96}. Hence, if $X$ is quasi-affine then \cref{prop:quasi-affine} implies that $\Der(X)$ is unigenic. The following example shows that the converse is false:
\end{Rem}

\begin{Exa}\label{exa:3-point-scheme}
	Let $X$ be the ``smallest nonaffine scheme'' considered in \cite[Exercise~I-25]{EisenbudHarris00}. It has three points $X=\{p,q_1,q_2\}$ where $q_1$ and $q_2$ are closed points and $p$ is a generic point. The sheaf of rings is given by $\OX(\{p\}) = k(t)$ and $\OX(\{p,q_1\})=\OX(\{p,q_2\})=\OXX=k[t]_{(t)}$ for some field $k$. It is an example of a noetherian scheme which is not separated. In particular, it is not quasi-affine.
\end{Exa}

\begin{Prop}
	Let $X$ be the 3-point scheme of \cref{exa:3-point-scheme}. The derived category $\Der(X)$ is unigenic.
\end{Prop}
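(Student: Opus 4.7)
The plan is to exploit the affine cover $X = U_1 \cup U_2$, where $U_i = \{p, q_i\} \cong \Spec R$ for $R = k[t]_{(t)}$ and $U_{12} \coloneqq U_1 \cap U_2 = \{p\} \cong \Spec K$ for $K = k(t)$. Write $j_i : U_i \hookrightarrow X$ and $j_{12}: U_{12} \hookrightarrow X$ for the (quasi-compact) open immersions. For any $F \in \Der(X)$, the Mayer--Vietoris triangle
\[
F \to Rj_{1*}j_1^*F \oplus Rj_{2*}j_2^*F \to Rj_{12*}j_{12}^*F \to \Sigma F
\]
is available in $\Der(X)$, and each pullback $j^*$ preserves perfect complexes so each $Rj_*$ preserves coproducts. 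Since $\Der(U_1)$, $\Der(U_2)$, $\Der(U_{12})$ are each unigenic, every term $Rj_*j^*F$ lies in the localizing subcategory generated by $Rj_*\cat O_{U_\bullet}$. Thus it will suffice to show that $Rj_{1*}\cat O_{U_1}$, $Rj_{2*}\cat O_{U_2}$ and $Rj_{12*}\cat O_{U_{12}}$ all lie in $\Loc\langle \OX \rangle$.

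For $Rj_{12*}\cat O_{U_{12}}$, I would identify it with the sequential colimit $\colim(\OX \xrightarrow{t} \OX \xrightarrow{t} \cdots)$ in $\Der(X)$; restricting to each $U_i$ recovers $\colim(R \xrightarrow{t} R \to \cdots) = R[1/t] = K$, matching the triple description $(K,K,\mathrm{id})$ of $Rj_{12*}\cat O_{U_{12}}$ under the equivalence $\mathrm{QCoh}(X) \simeq \Mod(R) \times_{\Mod(K)} \Mod(R)$. To handle $Rj_{1*}\cat O_{U_1}$, which corresponds to the triple $(R,K,\mathrm{id})$, I would analyze the cofiber $C$ of the canonical map $\OX \to Rj_{12*}\cat O_{U_{12}}$. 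This cofiber is the quasi-coherent sheaf with triple $(K/R, K/R, \mathrm{id})$ and lies in $\Loc\langle \OX \rangle$ by the previous step. Because $(K/R) \otimes_R K = 0$, the gluing datum in $C$ is vacuous, yielding a canonical decomposition $C \cong (K/R, 0, \mathrm{triv}) \oplus (0, K/R, \mathrm{triv})$ in the category of triples. Localizing subcategories being closed under summands, both pieces lie in $\Loc\langle \OX \rangle$. The second summand is precisely the cofiber of $\OX \to Rj_{1*}\cat O_{U_1}$, whence $Rj_{1*}\cat O_{U_1} \in \Loc\langle \OX \rangle$; the symmetric argument handles $Rj_{2*}\cat O_{U_2}$.

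The main subtle point is the splitting of $C$ as a direct sum indexed by the two disjoint closed points $q_1, q_2$. Morally this just reflects the fact that $C$ is supported on $\{q_1\} \sqcup \{q_2\}$, but it needs explicit verification in the triple description: because both components of $C$ become zero after tensoring with $K$, any pair of morphisms between the candidate summands satisfies the compatibility condition over $U_{12}$ for free, so the decomposition can be exhibited by hand. Once this is in place, the three reductions combine via the Mayer--Vietoris triangle to yield $\Der(X) = \Loc\langle \OX \rangle$, as desired.
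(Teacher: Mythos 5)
Your argument is correct, but it takes a genuinely different route from the paper. The paper's proof is indirect and leans on its own machinery: it identifies the comparison map with the affinization $X \to \Spec(k[t]_{(t)})$ (\cref{prop:affinization}), factors it through the unitation, and observes that since the fiber over the closed point would be the disconnected set $\{q_1,q_2\}$, \cref{thm:main-thmb} forces the unitation to be a homeomorphism on spectra; then local unigenicity of $\Der(X)$ plus noetherianity upgrade this homeomorphism to an equivalence via \cref{cor:homeo-unit} (which ultimately invokes the detection property of Balmer--Favi supports). You instead prove generation by $\OX$ directly: the Mayer--Vietoris triangle for the cover $U_1\cup U_2$, coproduct-preservation of the pushforwards (via preservation of perfects by $j^*$), the telescope identification $Rj_{12*}\cat O_{U_{12}} \simeq \colim(\OX \xrightarrow{t} \OX \to \cdots)$ using $t \in \Gamma(X,\OX)=k[t]_{(t)}$ and $K=k[t]_{(t)}[1/t]$, and the splitting of the torsion cofiber $(K/R,K/R,\id_0)$ into the two summands supported at $q_1$ and $q_2$, the second of which is exactly the cofiber of $\OX \to Rj_{1*}\cat O_{U_1}$. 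All of these steps check out (including the gluing-datum triviality, since $(K/R)\otimes_R K = 0$, and closure of localizing subcategories under summands). What your approach buys is an elementary, self-contained, explicit generation argument independent of the paper's main theorems; what the paper's approach buys is brevity given the surrounding machinery and, more to the point of that section, a demonstration of how the connected-fibers theorem and the locally-unigenic criterion can be used to detect unigenicity abstractly, without exhibiting generators.
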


\begin{proof}
	The affinization $X \to \Spec(k[t]_{(t)})$ is (topologically) the map
	\begin{equation}\label{eq:3-point-affinization}\begin{tikzcd}[column sep=tiny,row sep=small]
		\bullet &								  &\bullet  \\
		&	\bullet  \ar[ur,dash,thick]\ar[ul,dash,thick] & 
	\end{tikzcd}
	\to
	\begin{tikzcd}[column sep=tiny,row sep=small]
		&	\bullet  \ar[d,dash,thick] \\
		&		\bullet  
	\end{tikzcd}\end{equation}
	Now consider the unitation $\unitation{\Der(X)} \hookrightarrow \Der(X)$. Its spectrum fits between these two spaces by \cref{prop:affinization} and \eqref{eq:factorization}:
	\[\begin{tikzcd}[column sep=tiny,row sep=small]
			\bullet &								  &\bullet  \\
		&	\bullet  \ar[ur,dash,thick]\ar[ul,dash,thick] & 
	\end{tikzcd}
		\xrightarrow{\varphi}\;\;
		\Spc(\Der(X)_{\langle \unit \rangle}^c)
		\xrightarrow{\rho}\;\;
	\begin{tikzcd}[column sep=tiny,row sep=small]
		&	\bullet  \ar[d,dash,thick] \\
		&		\bullet  
	\end{tikzcd}\]
	Either $\varphi$ or $\rho$ must be a homeomorphism, since $\varphi$ is surjective. But \cref{thm:main-thmb} asserts that the fibers of $\varphi$ are connected. Hence we must have that $\varphi$ is a homeomorphism. Thus, the unitation induces a homeomorphism on spectra. Since the category $\Der(X)$ is locally unigenic, this implies that the functor is an equivalence by \cref{cor:homeo-unit}. We conclude that $\Der(X)$ is unigenic.
\end{proof}

\begin{Exa}\label{exa:graded-noetherian-not-connected}
	In the above example, observe that one of the fibers of the comparison map $\rho:X \to \Spec(k[t]_{(t)})$ is not connected. Note moreover that in this example the graded comparison map coincides with the ungraded comparison map since for dimension reasons all elements in $H^1(X;\OX)$ are nilpotent. Moreover, by computing the cohomology one readily checks that $H^\bullet(X;\OX)$ is a graded-noetherian ring. This thus provides an example showing that the fibers of the map in \cref{thm:coherent-spectral} need not be connected. It also shows that \cref{cor:ungraded-comparison} fails in general if the category is not connective.
\end{Exa}

\begin{Que}
	For which quasi-compact and quasi-separated schemes $X$ is $\Der(X)$ unigenic? The above discussion shows that this class of schemes properly contains the quasi-affine schemes.
\end{Que}

\begin{Rem}
	From the perspective of \cref{rem:spectra}, $\Der(X)$ is unigenic if and only if it is equivalent $\Derdg(\eend(\OX)) \smash{\xrightarrow{\simeq}} \Der(X)$ to the derived category of the $\mathbb{E}_\infty$-dg-algebra $\eend(\OX) \in \CAlg(\HZ\text{-Mod})$; see \cite{Shipley07,RichterShipley17}.
\end{Rem}

\begin{Exa}\label{exa:aff-of-spec}
	The Balmer spectrum of an essentially small tt-category $\cat K$ is a locally ringed space $\Spec(\cat K)=(\Spc(\cat K),\OK)$ and we may also consider \emph{its} affinization $\Aff(\Spec(\cat K)) = \Spec(\OK(\Spec(\cat K)))$. Let us avoid such heavy-handed notation by defining $A_{\cat K} \coloneqq \OK(\Spec(\cat K))$ to be the global sections of the structure sheaf so that $\Aff(\Spec(\cat K))=\Spec(A_{\cat K})$. By the universal property, the ungraded comparison map $\rho_{\cat K}$ factors uniquely through the affinization:
	\begin{equation}\label{eq:beta}\begin{tikzcd}
		\Spec(\cat K) \ar[r,"\alpha"] \ar[rr,bend left=20,"\rho"] &\Aff(\Spec(\cat K)) \ar[r,"\exists! \beta"] & \Spec(R_{\cat K}).
	\end{tikzcd}\end{equation}
	Recall from \cite[Section~6]{Balmer10b} that the structure sheaf $\OK$ of $\Spec(\cat K)$ is the sheafification of the presheaf $p\OK$ defined on a quasi-compact opens by $p\OK(U) \coloneqq R_{\cat K|_U}$ with stalks $\OKP \cong R_{\cat K_{\cat P}}$. Explicitly, the global sections (i.e.~the elements of $A_{\cat K}$) are locally compatible families of germs $(s_{\cat P} \in R_{\cat K_{\cat P}})_{\cat P \in \Spc(\cat K)}$. The morphism $\beta$ in \eqref{eq:beta} is given by the ring homomorphism $R_{\cat K} \to A_{\cat K}$ which sends $f \in R_{\cat K}$ to $(f_{\cat P})_{\cat P}$. Its kernel consists entirely of nilpotent elements by \cite[Proposition~2.21]{Balmer05a}. However, there is no \emph{a priori} reason for $\beta$ to be an isomorphism since morally there is no \emph{a priori} reason why the ring of endomorphisms of $\unit$ in $\cat K$ need behave like a sheaf with respect to the Balmer topology. Thus, our next aim is to provide some sufficient criteria for the affinization and the comparison map to coincide. We start with the following cheap observation:
\end{Exa}

\begin{Lem}\label{lem:AK-local}
	If $\cat K$ is local then the canonical map $R_{\cat K} \to A_{\cat K}$ is an isomorphism.
\end{Lem}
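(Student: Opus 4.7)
The plan is to identify both $R_{\cat K}$ and $A_{\cat K}$ with the stalk of $\OK$ at the unique closed point $\frakm \in \Spc(\cat K)$, and to check that the two identifications differ exactly by the canonical map.

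First I would show that the \emph{only} open neighborhood of $\frakm$ in $\Spc(\cat K)$ is $\Spc(\cat K)$ itself. If $U \ni \frakm$ is open, then $U^\cc$ is closed and does not contain $\frakm$; if it were nonempty, then, being a closed subspace of a spectral space, $U^\cc$ would contain a point closed in $U^\cc$, which would then also be closed in $\Spc(\cat K)$ (closedness in $U^\cc$ plus $U^\cc$ closed in $X$), contradicting the uniqueness of $\frakm$. So $U = \Spc(\cat K)$.

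Next, since $\frakm$ is the unique closed point, every point of $\Spc(\cat K)$ specializes to $\frakm$, i.e.\ $\gen(\frakm) = \Spc(\cat K)$; consequently the local category is $\cat K_{\frakm} = \cat K|_{\gen(\frakm)} = \cat K$. Recalling from \cref{exa:aff-of-spec} that $\OK$ is the sheafification of the presheaf $p\OK$ with $p\OK(U) = R_{\cat K|_U}$ and whose stalks satisfy $(\OK)_{\cat P} = R_{\cat K_{\cat P}}$, this gives $(\OK)_{\frakm} = R_{\cat K_{\frakm}} = R_{\cat K}$.

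Finally, since $\Spc(\cat K)$ is the only open neighborhood of $\frakm$, the colimit computing the stalk has a terminal object, so the restriction maps $p\OK(\Spc(\cat K)) \to (p\OK)_{\frakm}$ and $\OK(\Spc(\cat K)) \to (\OK)_{\frakm}$ are both isomorphisms. Combined with the equality of stalks of a presheaf and its sheafification, this yields a commutative square
\[
\begin{tikzcd}[column sep=large]
R_{\cat K} = p\OK(\Spc(\cat K)) \ar[r] \ar[d,"\cong"'] & \OK(\Spc(\cat K)) = A_{\cat K} \ar[d,"\cong"] \\
(p\OK)_{\frakm} \ar[r,"\cong"'] & (\OK)_{\frakm}
\end{tikzcd}
\]
whose top arrow is precisely the sheafification map, which sends $f \in R_{\cat K}$ to the global section $(f_{\cat P})_{\cat P}$ and thus coincides with the canonical map $R_{\cat K} \to A_{\cat K}$ of \cref{exa:aff-of-spec}. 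Commutativity forces this top arrow to be an isomorphism. No serious obstacle is expected; the only care required is the purely topological verification that $\frakm$ has no proper open neighborhoods.
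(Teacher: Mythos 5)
Your proof is correct and rests on the same key observation as the paper's: since $\frakm$ is the unique closed point, its only open neighbourhood is all of $\Spc(\cat K)$, so the global sections of both the presheaf $p\OK$ and the sheaf $\OK$ compute the stalk at $\frakm$. The paper unwinds this into an explicit injectivity/surjectivity check with germs and locally compatible families, whereas you package the same point formally via ``sheafification preserves stalks'' --- a cosmetic difference only (your explicit verification that $\frakm$ admits no proper open neighbourhood, which the paper merely asserts, is a fine addition; the aside identifying $\cat K_{\frakm}$ with $\cat K$ is harmless but not actually needed for your square argument).
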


\begin{proof}
	Recall that the map of commutative rings $R_{\cat K} \to A_{\cat K}\coloneqq\OK(\Spec(\cat K))$ sends an element $a \in R_{\cat K}$ to the collection of germs $(a_{\cat P} \in R_{\cat K_{\cat P}})_{\cat P \in \Spec(\cat K)}$. Since $\cat K$ is local it has a unique closed point $\cat M$. Hence $R_{\cat K} \to A_{\cat K}$ is injective since $a = a_{\cat M}$. On the other hand, the only open neighbourhood of $\cat M$ is the whole space $\Spec(\cat K)$. Thus the local compatibility of an element $(s_{\cat P})_{\cat P}$ at $\cat M$ implies that there exists $t \in R_{\cat K}$ such that $t_{\cat P} = s_{\cat P}$ for each $\cat P$. In other words, the ring homomorphism $R_{\cat K} \to A_{\cat K}$ is surjective.
\end{proof}

\begin{Prop}\label{prop:when-scheme}
	If $\cat K$ is local then the following are equivalent:
	\begin{enumerate}
		\item $\Spec(\cat K)$ is a scheme;
		\item $\Spec(\cat K)$ is an affine scheme;
		\item The comparison map $\rho_{\cat K} :\Spc(\cat K)\to\Spec(\RK)$ is a homeomorphism.
	\end{enumerate}
\end{Prop}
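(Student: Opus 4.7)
The plan is to establish the equivalences $(a) \Leftrightarrow (b) \Leftrightarrow (c)$, with most of the substance concentrated in $(c) \Rightarrow (b)$. The implication $(b) \Rightarrow (a)$ is tautological. For $(a) \Rightarrow (b)$, I will combine the definition of a scheme (covered by affine opens) with the assumption that $\cat K$ is local: since the underlying space $\Spc(\cat K)$ has a unique closed point and every open cover of it is trivial, one of the affine opens in any such cover must equal all of $\Spec(\cat K)$, making $\Spec(\cat K)$ itself affine. For $(b) \Rightarrow (c)$, affinity of $\Spec(\cat K)$ means the affinization morphism $\alpha : \Spec(\cat K) \to \Spec(A_{\cat K})$ is an isomorphism of locally ringed spaces, and Lemma~\ref{lem:AK-local} gives that $\beta : \Spec(A_{\cat K}) \to \Spec(R_{\cat K})$ is also an isomorphism; composing, $\rho = \beta \circ \alpha$ is a homeomorphism.

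For the main implication $(c) \Rightarrow (b)$, I will start by invoking Lemma~\ref{lem:AK-local} to conclude that $\beta$ is an isomorphism, so that $\alpha = \beta^{-1} \circ \rho$ is at least a homeomorphism of underlying spaces. The goal is then to upgrade $\alpha$ to an isomorphism of locally ringed spaces by verifying that its induced map on stalks is an isomorphism at every $\cat P \in \Spc(\cat K)$. Recall that the stalk of $\OK$ at $\cat P$ is $R_{\cat K_{\cat P}}$ for the local category $\cat K_{\cat P} = \cat K|_{\gen(\cat P)}$, whereas the stalk of $\cat O_{\Spec(A_{\cat K})}$ at $\alpha(\cat P) = \rho(\cat P)$ is $(R_{\cat K})_{\rho(\cat P)}$; I need the natural comparison between these two rings to be an isomorphism.

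The hard part will be identifying the tt-localization $\cat K|_{\gen(\cat P)}$ with the algebraic localization $\cat K_{\rho(\cat P)}$ of Remark~\ref{rem:algebraic} whenever $\rho$ is a homeomorphism. The plan is to argue as follows: since $\rho$ preserves specialization, one has $\gen(\cat P) = \rho^{-1}(\gen(\rho(\cat P)))$, and its complement is exactly the Thomason subset $\bigcup_{s \notin \rho(\cat P)} \supp(\cone(s))$ defining $\cat K_{\rho(\cat P)}$. Hence both localizations arise as Verdier quotients of $\cat K$ by the same radical thick tensor ideal (via Balmer's classification) and therefore coincide, yielding $R_{\cat K_{\cat P}} \cong (R_{\cat K})_{\rho(\cat P)}$. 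With all stalks matched, $\alpha$ is an isomorphism of locally ringed spaces, so $\Spec(\cat K) \cong \Spec(A_{\cat K})$ is affine.
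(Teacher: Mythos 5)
Your proof is correct, and for most of the statement it follows the paper's route: (a)$\Leftrightarrow$(b) from locality of $\Spc(\cat K)$, and (b)$\Rightarrow$(c) by combining \cref{lem:AK-local} with the universal property of affinization. The difference is in the key implication (c)$\Rightarrow$(b): the paper disposes of it by citing \cite[Proposition~6.11(b)]{Balmer10b}, which says that $\rho_{\cat K}$ is an isomorphism of locally ringed spaces as soon as it is a homeomorphism, whereas you reprove this fact by hand via a stalk comparison. Your argument --- when $\rho$ is a homeomorphism one has $\gen(\cat P)=\rho^{-1}(\gen(\rho(\cat P)))$, so the prime $\cat P=\SET{x}{\supp(x)\subseteq \gen(\cat P)^\cc}$ coincides with the radical thick $\otimes$-ideal corresponding to the Thomason subset $\bigcup_{s\notin\rho(\cat P)}\supp(\cone(s))$, whence the local category $\cat K_{\cat P}$ agrees with the algebraic localization $\cat K_{\rho(\cat P)}$ and the stalk $\OKP\cong R_{\cat K_{\cat P}}$ identifies with $(\RK)_{\rho(\cat P)}$ --- is essentially the content of the cited proposition, so you are unpacking a citation rather than taking a structurally new path; what this buys is self-containedness (and it makes visible exactly where the homeomorphism hypothesis enters), at the cost of length. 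Two points to tighten: the inclusion $\rho^{-1}(\gen(\rho(\cat P)))\subseteq\gen(\cat P)$ does not follow from $\rho$ ``preserving specializations'' (continuity only gives the opposite inclusion); it uses that the homeomorphism $\rho$ also \emph{reflects} specializations, which is of course available under (c). And the final step $R_{\cat K_{\rho(\cat P)}}\cong(\RK)_{\rho(\cat P)}$, together with the check that this isomorphism really is the stalk map of $\alpha$ at $\cat P$, rests on the fact that algebraic localization localizes morphism groups (as recorded in \cref{rem:algebraic}, going back to Balmer's central localization theorem); this ingredient should be invoked explicitly rather than left implicit in ``therefore coincide, yielding''.
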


\begin{proof}
	The equivalence of $(1)$ and $(2)$ follows from the fact that any open cover of $\Spc(\cat K)$ includes $U=\Spc(\cat K)$ itself since $\cat K$ is local. On other hand, the affinization coincides with the comparison map by \cref{lem:AK-local}. This provides $(2)\Rightarrow (3)$. The converse $(3) \Rightarrow (2)$ then follows from the fact that $\rho_{\cat K}$ is an isomorphism of locally ringed spaces if it is a homeomorphism; see \cite[Proposition~6.11(b)]{Balmer10b}.
\end{proof}

\begin{Exa}
	The spectrum of $\SH_{(p)}$ is not a scheme.
\end{Exa}

\begin{Exa}
	It follows from \cref{prop:when-scheme}, \cref{thm:DHZG-comp} and \cite[Proposition~6.11]{Balmer10b} that $\Spec(\unitation{\DHZG})\cong \Spec(A(G))$ is an affine scheme even though the tt-categories $\unitation{\DHZG}$ and $\Der(A(G))$ are not equivalent for any nontrivial $G$; see~\cref{rem:DHZG-not-burnside}.
\end{Exa}

\begin{Prop}\label{prop:aff-comp-domain}
	Suppose that the algebraic localization $\cat K_{\mathfrak m}$ is local for each closed point $\mathfrak m \in \Spec(\RK)$. If $\RK$ is a domain then the affinization of $\Spec(\cat K)$ coincides with the comparison map.
\end{Prop}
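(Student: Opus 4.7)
The plan is to show that the canonical ring homomorphism $\beta^\sharp \colon \RK \to \AK$ is an isomorphism; by \cref{exa:aff-of-spec}, this will imply the stated claim. Injectivity is immediate: $\ker \beta^\sharp$ consists of nilpotent elements by \cite[Proposition~2.21]{Balmer05a}, and the domain $\RK$ is reduced.

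For surjectivity, fix $s \in \AK$ and set $K \coloneqq \mathrm{Frac}(\RK)$. Using the hypothesis that $\cat K_{\mathfrak m}$ is local together with \cref{lem:AK-local}, I obtain for each closed point $\mathfrak m \in \Spec(\RK)$ an isomorphism $A_{\cat K_{\mathfrak m}} \cong R_{\cat K_{\mathfrak m}} = (\RK)_{\mathfrak m} \subseteq K$, and the algebraic localization $\cat K \to \cat K_{\mathfrak m}$ carries $s$ to some $s_{\mathfrak m} \in (\RK)_{\mathfrak m}$. Once I show that the various $s_{\mathfrak m}$ agree pairwise as elements of $K$, the classical identity $\RK = \bigcap_{\mathfrak m}(\RK)_{\mathfrak m}$ inside $K$ (valid for any integral domain) will yield a common preimage $t \in \RK$, and it will remain only to verify $\beta^\sharp(t) = s$. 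For this last verification, I use that every $\cat P \in \SpcK$ lies in $\Spc(\cat K_{\mathfrak m}) = \rho^{-1}(\gen(\mathfrak m))$ for any maximal $\mathfrak m$ containing $\rho(\cat P)$; the stalk of $\AK$ at $\cat P$ factors through $A_{\cat K_{\mathfrak m}}$, and by construction the image of $\beta^\sharp(t) - s$ in $A_{\cat K_{\mathfrak m}} = (\RK)_{\mathfrak m}$ vanishes, forcing the stalk of $\beta^\sharp(t) - s$ at $\cat P$ to be zero.

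The crux is to establish the pairwise compatibility $s_{\mathfrak m_1} = s_{\mathfrak m_2}$ in $K$; for this I will pass through the generic point $(0) \in \Spec(\RK)$. The composite $\cat K \to \cat K_{\mathfrak m_i} \to \cat K_{(0)}$ coincides with the direct algebraic localization $\cat K \to \cat K_{(0)}$, so both $s_{\mathfrak m_1}$ and $s_{\mathfrak m_2}$ will map to the same element $s_{(0)} \in A_{\cat K_{(0)}}$. Naturality of the comparison maps shows that these maps factor through $K = R_{\cat K_{(0)}} \to A_{\cat K_{(0)}}$; the latter has nilpotent kernel and a field as source, hence is injective (as long as $\cat K \neq 0$), forcing $s_{\mathfrak m_1} = s_{\mathfrak m_2}$ in $K$. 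This compatibility step, which rests crucially on the domain hypothesis so that $K$ is a field into which every $(\RK)_{\mathfrak m}$ embeds, will be the main technical obstacle; without this hypothesis one would have no natural common overring in which to compare the various local data $s_{\mathfrak m}$.
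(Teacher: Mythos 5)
Your proposal is correct and follows essentially the same route as the paper's proof: injectivity from the nilpotent kernel plus reducedness of the domain $\RK$, and surjectivity by identifying the local data at each closed point $\mathfrak m$ with an element of $(\RK)_{\mathfrak m}$ (via locality of $\cat K_{\mathfrak m}$), comparing inside the fraction field through the generic localization, and invoking $\bigcap_{\mathfrak m}(\RK)_{\mathfrak m}=\RK$. The only difference is bookkeeping: the paper works directly with compatible families of germs and reduces to the closed points of $\Spc(\cat K)$ via \cref{rem:alt-local-hyp}, whereas you push the global section forward along the algebraic localizations using \cref{lem:AK-local} and finish with a stalkwise check — a harmless repackaging of the same argument.
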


\begin{proof}
	In this proof we will write $\cat K/\cat P$ for the local category $\cat K_{\cat P}$ in order to distinguish it more clearly from the algebraic localization $\cat K_{\mathfrak p}$. The map $\beta:R_{\cat K} \to A_{\cat K}$ is injective since $R_{\cat K}$ has no nilpotents (\cref{exa:aff-of-spec}). Consider a family of compatible germs $(a_{\cat P} \in R_{\cat K/\cat P})_{\cat P \in \Spc(\cat K)} \in \AK$. Note that if $\cat Q \specializesto \cat P$ then the germ $a_{\cat P}$ determines the germ $a_{\cat Q}$ via the localization $R_{\cat K/\cat P} \to R_{\cat K/\cat Q}$. Thus, to show that $\RK \to \AK$ is surjective, it is enough to construct $r \in R_{\cat K}$ such that $r_{\cat M} = a_{\cat M}$ for every closed point $\cat M \in \Spc(\cat K)$. By our hypothesis, the closed points of $\Spc(\cat K)$ are all of the form $f(\mathfrak m)$ for closed points $\mathfrak m$ of $\Spec(R_{\cat K})$ using the notation of \cref{rem:alt-local-hyp}. Recall also that $\cat K/f(\mathfrak m) = \cat K_{\mathfrak m}$. Thus, we need to construct an $r \in R$ such that $r_{f(\mathfrak m)} = a_{f(\mathfrak m)} \in R_{\cat K/f(\mathfrak m)} = R_{\mathfrak m}$ for each closed point $\mathfrak m \in \Spec(R_{\mathfrak m})$. Let $\eta = (0) \in \Spec(R_{\cat K})$ be the generic point so that $R_{\eta} = K$ is the field of fractions. It follows from the commutative diagram
	\[\begin{tikzcd}
		R \ar[r] \ar[dr,hook,bend right] & A \ar[dr,bend left,end anchor={[shift={(5pt,0)}]north west}] \ar[d] & \\
									& R_{\mathfrak m} \ar[r,hook] & R_\eta = K
	\end{tikzcd}\]
	that $a_{\eta}$ belongs to $\bigcap_{\mathfrak m} R_{\mathfrak m} = R \subset K$ (see \cite[Theorem~4.7]{Matsumura89}) and thus that there exists $r \in R$ such that $r_{\mathfrak m} = a_{\mathfrak m}$ for each $\mathfrak m$.
\end{proof}

\begin{Exa}\label{exa:SH-affinization}
	The affinization of $\Spec(\SH^c)$ coincides with the comparison map ${\Spec(\SH^c) \to \Spec(\bbZ)}$. In particular, $\Spec(\SH^c)$ is not an affine scheme.
\end{Exa}

\begin{Prop}\label{prop:mayer-vietoris}
	Let $\cat K$ be an idempotent-complete rigid tt-category. Suppose that every open cover of $\Spc(\cat K)$ by quasi-compact opens can be refined into an open cover $\Spc(\cat K)=\bigcup_{i \in I} U_i$ with the property that $\Hom_{\cat K|_{U_i \cap U_j}}(\Sigma \unit,\unit)=0$ for each~$i,j \in I$. Then the presheaf $p\OK$ is separated and the canonical map $R_{\cat K} \to A_{\cat K}$ is an isomorphism. Thus the affinization of $\cat K$ coincides with its comparison map.
\end{Prop}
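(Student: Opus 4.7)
The structure sheaf $\OK$ is by definition the sheafification of the presheaf $p\OK$, and the canonical map $R_{\cat K}\to A_{\cat K}$ is the unit of this sheafification at $\Spc(\cat K)$. My plan is to show that for every quasi-compact open cover $\mathcal U=\{U_i\}$ of $\Spc(\cat K)$ satisfying the pair-vanishing condition of the hypothesis (which form a cofinal family by assumption, and which we may assume finite by quasi-compactness of $\Spc(\cat K)$), the augmented \v Cech sequence
\[
0\to R_{\cat K}\to\textstyle\prod_{i}R_{\cat K|_{U_i}}\to \prod_{i<j}R_{\cat K|_{U_i\cap U_j}}
\]
is exact. This yields both $R_{\cat K}\xrightarrow{\sim} A_{\cat K}=\colim_{\mathcal U}\check H^0(\mathcal U,p\OK)$ and, via the same argument applied to each local $\cat K|_U$, the separatedness of $p\OK$.

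The key computational input is the pair Mayer--Vietoris. For any two quasi-compact opens $V_1,V_2\subseteq\Spc(\cat K)$ with complements $Y_i=V_i^\cc$, the exact triangle of left idempotents from~\cite[Theorem~5.18]{BalmerFavi11}, combined with the octahedral axiom applied to $\altmathbb{e}_Y\to\unit\to\fY_Y$, yields the analogous exact triangle of right idempotents
\[
\fY_{(V_1\cap V_2)^\cc}\to\fY_{V_1^\cc}\oplus\fY_{V_2^\cc}\to\fY_{(V_1\cup V_2)^\cc}\to\Sigma\fY_{(V_1\cap V_2)^\cc}.
\]
Applying $\Hom_{\cT}(\unit,-)$ and recalling that $\Hom_{\cT}(\Sigma^n\unit,\fY_{U^\cc})\cong \Hom_{\cat K|_U}(\Sigma^n\unit,\unit)$ produces a long exact sequence containing
\[
\Hom_{\cat K|_{V_1\cap V_2}}(\Sigma\unit,\unit)\to R_{\cat K|_{V_1\cup V_2}}\to R_{\cat K|_{V_1}}\oplus R_{\cat K|_{V_2}}\to R_{\cat K|_{V_1\cap V_2}}.
\]
Under the pair-vanishing hypothesis the leftmost term vanishes, so the sheaf axiom holds on any 2-cover whose intersection satisfies the vanishing: matching pairs glue uniquely.

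The main obstacle is promoting this 2-case to an arbitrary finite pair-vanishing cover $\{U_1,\ldots,U_n\}$. I would induct by applying the 2-MV to the pair $\{W_{k-1},U_k\}$ covering $W_k=U_1\cup\cdots\cup U_k$. The subtlety is that this step requires the vanishing $\Hom_{\cat K|_{W_{k-1}\cap U_k}}(\Sigma\unit,\unit)=0$, which is \emph{not} directly guaranteed by pair vanishing on the original cover. The remedy uses the refinement clause of the hypothesis at each stage: the cover $\{U_i\cap U_k\}_{i<k}$ of $W_{k-1}\cap U_k$ admits a pair-vanishing refinement, and this allows one to check the equality $s_{W_{k-1}}|_{W_{k-1}\cap U_k}=s_{U_k}|_{W_{k-1}\cap U_k}$ locally (invoking the inductive hypothesis on strictly smaller covers) before gluing via the 2-MV. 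Carefully setting up this nested induction --- on cover size and, at each step, on refinement depth --- is the main technical difficulty. Once the Čech exact sequence is established on the cofinal family of pair-vanishing covers, both conclusions of the proposition follow, and the final assertion that the affinization coincides with the comparison map is immediate from the isomorphism $R_{\cat K}\xrightarrow{\sim} A_{\cat K}$ via the factorization $\rho=\beta\circ\alpha$ recalled in~\cref{exa:aff-of-spec}.
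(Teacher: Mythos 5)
Your overall architecture is the same as the paper's --- establish \v{C}ech exactness (separatedness plus gluing) for finite pair-vanishing covers by an inductive use of a two-open Mayer--Vietoris sequence, then deduce $R_{\cat K}\xrightarrow{\sim}A_{\cat K}$ and the statement about affinization from the factorization $\rho=\beta\circ\alpha$ --- but two points keep the proposal from being a proof. First, your Mayer--Vietoris input is manufactured inside an ambient rigidly-compactly generated category via the idempotents of \cite{BalmerFavi11}. The proposition is stated for an arbitrary essentially small idempotent-complete rigid tt-category $\cat K$, which need not be (known to be) the compact part of such a big category, so this input is not available in the stated generality; the paper instead invokes the Mayer--Vietoris exact sequence of \cite[Corollary~5.8]{BalmerFavi07}, which is proved directly at the level of $\cat K$ and its localizations $\cat K|_U$. (A minor slip: your displayed triangle of right idempotents has its terms in the wrong order --- it should read $\altmathbb{f}_{(V_1\cup V_2)^\cc}\to\altmathbb{f}_{V_1^\cc}\oplus\altmathbb{f}_{V_2^\cc}\to\altmathbb{f}_{(V_1\cap V_2)^\cc}$ --- although the four-term Hom sequence you then write down is the correct one.)

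Second, and more seriously, the step you yourself call ``the main technical difficulty'' is exactly where the content of the proposition lies, and your sketch does not close it. Note that the vanishing hypothesis is only needed for uniqueness: exactness of the four-term sequence at the middle spot glues matching pairs with no vanishing assumption at all. Accordingly, the paper first proves separatedness of $p\OK$ and deduces injectivity via stalks, and only then runs the gluing induction, where the matching of the partial glue with $s_k$ on $W_{k-1}\cap U_k$ is checked on the cover $\{U_i\cap U_k\}_{i<k}$ and upgraded to equality using separatedness. Your proposed remedy for the missing vanishing of $\Hom_{\cat K|_{W_{k-1}\cap U_k}}(\Sigma\unit,\unit)$ --- refine $\{U_i\cap U_k\}_{i<k}$ into a pair-vanishing cover and run a nested induction on ``refinement depth'' --- does not work as described: the refinement clause of the hypothesis applies to covers of $\Spc(\cat K)$, not to covers of the quasi-compact open $W_{k-1}\cap U_k$, and vanishing of $\Hom(\Sigma\unit,\unit)$ does not pass to smaller opens, so you cannot simply intersect a global refinement down; moreover, even granting such a refinement, the separatedness statement you would need one level down requires vanishing on the pairwise intersections of $\{U_i\cap U_k\}_{i<k}$, i.e.\ on triple overlaps of the original cover, which the hypothesis does not supply --- the recursion reproduces the same missing input and has no evident termination. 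Until this inductive scheme is actually formulated and carried out (it is the substance behind the paper's ``inductive application of the Mayer--Vietoris exact sequence''), the argument remains a plan rather than a proof.
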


\begin{proof}
	The key to this result is the Mayer--Vietoris gluing of \cite{BalmerFavi07}. Let $U$ be a quasi-compact open set and let $U=\bigcup_{i \in I} U_i$ be a cover by quasi-compact opens. Let $f \in p\OK(U)=R(\cat K|_U)$ be such that $f|_{U_i} = 0$ in $R(\cat K|_{U_i})$ for each $i\in I$. We may assume without loss of generality that this cover of $U$ has the property in the hypothesis, namely that $\Hom(\Sigma \unit,\unit)=0$ in $\cat K|_{U_i \cap U_j}$ for each $i,j\in I$. We can also assume that it is a finite cover by the quasi-compactness of $U$. An inductive application of the Mayer--Vietoris exact sequence of~\cite[Corollary~5.8]{BalmerFavi07} then implies that $f=0$. This establishes that the partially-defined presheaf $p\OK$ is separated. Moreover, recall that $R(\cat K_{\cat P})$ is the stalk at $\cat P$ by \cite[Lemma~6.3]{Balmer10b}. Thus, if $f \in R(\cat K|_U)$ is such that $f_{\cat P}=0$ in $R(\cat K_{\cat P})$ for all $\cat P \in U$ then $f=0$. In particular, the map $\RK\to\AK$ is injective.

	Finally, consider a family of compatible germs $(a_{\cat P})_{\cat P} \in A_{\cat K}$. The compatibility of these germs implies that there is an open cover $\Spc(\cat K)=\bigcup_{i \in I} U_i$ by quasi-compact opens and elements $s_i \in R(\cat K|_{U_i})$ such that $(s_i)_{\cat P} = a_{\cat P}$ for each $\cat P \in U_i$. Moreover, since $p\OK$ is separated, we have that $(s_i)|_{U_i \cap U_j} = (s_j)|_{U_i \cap U_j}$ for each $i,j\in I$. Since we may assume the cover is finite, an inductive application of the Mayer--Vietoris exact sequence of \cite[Corollary~5.8]{BalmerFavi07} implies that there exists a global section~$s \in R_{\cat K}$ such that $s|_{U_i} = s_i$ for each $i\in I$. It maps to our germ $(a_{\cat P})_{\cat P}$ under the canonical map $R_{\cat K} \to A_{\cat K}$. Thus the canonical map is surjective.
\end{proof}

\begin{Exa}
	If $X$ is a quasi-compact and quasi-separated scheme then $\Der(X)^c$ satisfies the hypotheses of \cref{prop:mayer-vietoris}. Thus \cref{prop:affinization} follows from the more general \cref{prop:mayer-vietoris}.
\end{Exa}

\begin{Rem}
	Since the comparison map $\rho:\Spc(\cat K)\to \Spec(\RK)$ factors through both the affinization $\Spc(\cat K)\to \Spec(\AK)$ and the unitation $\Spc(\cat K)\to\Spc(\unitation{\cat K})$, one may wonder what relation these two constructions have with each other. Note that they need not coincide, since the affinization is not always surjective whereas the unitation always is. In general, there is no \emph{a priori} reason why either map need factor through the other. For example, if $X$ is the 3-point scheme of \cref{exa:3-point-scheme} then the unitation $X\xrightarrow{\simeq} \Spc(\unitation{\Der(X)}^c)$ does not factor through the affinization depicted in~\eqref{eq:3-point-affinization}.
\end{Rem}

\begin{Exa}\label{rem:exceptional}
	For a quasi-compact and quasi-separated scheme $X$, the affinization $X \to \Aff(X)$ induces a geometric functor $\Der(\Aff(X)) \to \unitation{\Der(X)}$ which is an equivalence if and only if $H^i(X,\OX)=0$ for $i>0$. (Recall \cref{rem:geom-between-unigenic}.) There are plenty of examples of smooth projective varieties whose structure sheaf is exceptional. For example, smooth Fano varieties over a field of characteristic zero have this property by the Kodaira vanishing theorem. For such varieties, the unitation and the derived category of the affinization coincide --- they are just the derived category of the base field. Although this may seem trivial, these are examples where the unitation, the affinization and both the graded and ungraded comparison maps all coincide and the fibers of this map need not be local. They demonstrate that the unigenic hypothesis in \cref{thm:connective-unigenic} cannot be dropped.
\end{Exa}

\addtocontents{toc}{\protect\enlargethispage{2\baselineskip}}

\bibliographystyle{alpha}\bibliography{bibliography}

\end{document}